\def\longversion{0} 
\def\diffcolor{black} 

\documentclass[10pt]{article}
\usepackage[utf8]{inputenc}
\usepackage[T1]{fontenc}
\usepackage{comment}
\usepackage{pgfplots} 
\usepackage{adjustbox}
\pgfplotsset{compat=1.15}
\usepackage{mathrsfs}
\usetikzlibrary{arrows}

\usepackage{marvosym}
\usepackage{dsfont}
\usepackage{rotating}
\usepackage{amssymb}
\usepackage{amsmath}
\usepackage{amsthm}
\usepackage{tabularx}
\usepackage{verbatim}
\usepackage[linesnumbered, ruled, boxed, vlined]{algorithm2e}
\usepackage{enumitem}
\usepackage{framed}
\usepackage{url}
\usepackage{float}
\usepackage{mathtools}
\usepackage{array}
\usepackage[colorlinks=true, linkcolor=blue, citecolor=blue, urlcolor=blue]{hyperref}
\usepackage{cleveref}

\DeclareMathOperator{\theend}{end}

\newcommand{\first}{\deg^+}
\newcommand{\second}{\deg^-}
\DeclareMathOperator{\Cov}{Cov}
\DeclareMathOperator{\Var}{Var}

\DeclareMathOperator{\Seq}{Seq}

\DeclareMathOperator{\myskew}{skew}
\DeclareMathOperator{\Ima}{Im}
\DeclareMathOperator{\BAD}{BAD}
\DeclareMathOperator{\troot}{root}
\DeclareMathOperator{\Deg}{Deg}

\newcommand{\Tend}{T_{\theend}}

\newcommand{\towriteornottowrite}[2]{\ifthenelse{\equal{\longversion}{1}}{{\color{\diffcolor}#1}}{{\color{\diffcolor}#2}}}

\newcommand{\Fast}{F^\ast}

\usepackage{xcolor}
\definecolor{mygreen}{rgb}{0.01, 0.75, 0.24}
\usepackage{tikz}
\usepackage{tikz-3dplot}

\def\mycolor{black}

\newcommand{\newdot}[2]{
	\node[shape=circle, minimum size = 5pt, inner sep=0pt, outer sep=0pt, fill=\mycolor, color=\mycolor] (#1) at (#2){};
}

\newcommand{\codegree}{\Delta}
\newcommand{\onedegree}{\Delta_1}
\newcommand{\twodegree}{\Delta_2}

\usepackage[a4paper, margin=2.5cm,top=3cm, bottom=3cm]{geometry}

\setlist[enumerate,1]{label={\textnormal{(\roman*)}}}

\theoremstyle{plain}

\usepackage{aliascnt}

\newtheorem{theorem}{Theorem}[section]

\newaliascnt{lemma}{theorem}
\newtheorem{lemma}[lemma]{Lemma}
\aliascntresetthe{lemma}
\crefname{lemma}{lemma}{lemmas}
\Crefname{lemma}{Lemma}{Lemmas}

\newaliascnt{proposition}{theorem}
\newtheorem{proposition}[proposition]{Proposition}
\aliascntresetthe{proposition}
\crefname{proposition}{proposition}{propositions}
\Crefname{proposition}{Proposition}{Propositions}

\newaliascnt{property}{theorem}

\aliascntresetthe{property}
\crefname{property}{property}{properties}
\Crefname{property}{Property}{Properties}

\newaliascnt{claim}{theorem}
\newtheorem{claim}[claim]{Claim}
\aliascntresetthe{claim}
\crefname{claim}{claim}{claims}
\Crefname{claim}{Claim}{Claims}

\newaliascnt{conjecture}{theorem}

\aliascntresetthe{conjecture}
\crefname{conjecture}{conjecture}{conjectures}
\Crefname{conjecture}{Conjecture}{Conjectures}

\newaliascnt{observation}{theorem}
\newtheorem{observation}[observation]{Observation}
\aliascntresetthe{observation}
\crefname{observation}{observation}{observations}
\Crefname{observation}{Observation}{Observations}

\newaliascnt{question}{theorem}

\aliascntresetthe{question}
\crefname{question}{question}{questions}
\Crefname{question}{Question}{Questions}

\newaliascnt{corollary}{theorem}
\newtheorem{corollary}[corollary]{Corollary}
\aliascntresetthe{corollary}
\crefname{corollary}{corollary}{corollaries}
\Crefname{corollary}{Corollary}{Corollaries}

\theoremstyle{definition}

\newaliascnt{definition}{theorem}
\newtheorem{definition}[definition]{Definition}
\aliascntresetthe{definition}
\crefname{definition}{definition}{definitions}
\Crefname{definition}{Definition}{Definitions}

\newaliascnt{remark}{theorem}
\newtheorem{remark}[remark]{Remark}
\aliascntresetthe{remark}
\crefname{remark}{remark}{remarks}
\Crefname{remark}{Remark}{Remarks}

\newaliascnt{fact}{theorem}
\newtheorem{fact}[fact]{Fact}
\aliascntresetthe{fact}
\crefname{fact}{fact}{facts}
\Crefname{fact}{Fact}{Facts}

\crefname{exercise}{exercise}{exercises}
\Crefname{exercise}{Exercise}{Exercises}

\AtBeginDocument{%
	\def\MR#1{}
}

\newcommand{\oldqed}{}
\def\endofClaim{\hfill\scalebox{.6}{$\Box$}}

\newenvironment{claimproof}[1][Proof]{
	\renewcommand{\oldqed}{\qedsymbol}
	\renewcommand{\qedsymbol}{\endofClaim}
	\begin{proof}[#1]
	}{
	\end{proof}
	\renewcommand{\qedsymbol}{\oldqed}
}

\def\textcol{black}
\newcommand{\textbe}[2]{
	\node[label=below:{\textcolor{\textcol}{#2}}] at (#1) {};
}
\newcommand{\textab}[2]{
	\node[label=above:{\textcolor{\textcol}{#2}}] at (#1) {};
}
\newcommand{\textat}[2]{
	\node at (#1) {\textcolor{\textcol}{#2}};
}
\usepackage{savesym}
\savesymbol{textle}
\newcommand{\textle}[2]{
	\node[label=left:{\textcolor{\textcol}{#2}}] at (#1) {};
}
\newcommand{\textri}[2]{
	\node[label=right:{\textcolor{\textcol}{#2}}] at (#1) {};
}

\newcommand{\floor}[1]{\left\lfloor#1\right\rfloor}
\newcommand{\ceil}[1]{\left\lceil#1\right\rceil}
\newcommand\eps{\varepsilon}
\newcommand\N{\mathbb{N}}
\newcommand{\E}{\mathbb{E}}
\newcommand{\coleq}{\coloneqq}
\renewcommand\P{\mathbb{P}}
\newcommand\cP{\mathcal{P}}
\newcommand\cF{\mathcal{F}}
\newcommand\cC{\mathcal{C}}

\newcommand\cS{\mathcal{S}}
\renewcommand\O{\mathcal{O}}
\newcommand\cG{\mathcal{G}}

\newcommand\cK{\mathcal{K}}

\newcommand{\calZ}{\mathcal Z}
\newcommand{\calE}{\mathcal E}
\newcommand{\F}{\mathcal F}
\newcommand{\Z}{\mathbb Z}
\newcommand{\C}{\mathcal C}

\newcommand{\B}{\mathcal B}
\newcommand{\Y}{\mathcal Y}

\newcommand{\subs}{\subseteq}
\newcommand{\abs}[1]{\mathopen{}\left\lvert#1\right\rvert\mathclose{}}

\newcommand{\extratightcycle}[2]{EC_{#1}^{(#2)}}
\newcommand{\extratightpath}[2]{EP_{#1}^{(#2)}}

\newcommand{\set}[2]{\{#1\,:\;#2\}}

\DeclareMathOperator{\super}{super}
\def\claimproof{\removelastskip\penalty55\medskip\noindent{\em Proof of claim: }}
\def\noclaimproof{{\unskip\nobreak\hfill\penalty50\hskip2em\hbox{}\nobreak\hfill%
		$-$\parfillskip=0pt\finalhyphendemerits=0\par}\goodbreak}

\def\endclaimproof{\noclaimproof\medskip}
\newcommand{\Gto}[1]{G^{(#1)}}
\newcommand{\Gskew}{G_{\myskew}}
\newcommand{\Gsuper}{G_{\super}}

\usetikzlibrary{arrows.meta, positioning,decorations.markings}
\tikzset{
	theorem/.style={
		rectangle,
		rounded corners,
		draw=black,
		fill=blue!10,
		very thick,
		minimum width=3cm,
		text width=3.5cm,
		align=center,
		inner sep=5pt
	},
	arrow/.style={
		-{Stealth[length=3mm]},
		thick
	},
	edgelabel/.style={
		text width=2.5cm,
		align=center,
		fill=white,  
		draw=black!30,   
		rounded corners,
		inner sep=2pt
	}
}
\newlist{proofoutline}{enumerate}{1}
\setlist[proofoutline,1]{label=\textbf{Step \arabic*}, leftmargin=*}

\author{Stefan Glock\thanks{Fakultät für Informatik und Mathematik, Universität Passau, Innstraße 41, 94032 Passau, Germany.\\ \textit{Email:} \href{mailto:stefan.glock@uni-passau.de}{\texttt{stefan.glock@uni-passau.de}}.}
	\and Olaf Parczyk\thanks{Fachbereich Mathematik und Informatik, Freie Universität Berlin, Arnimallee 3, 14195 Berlin, Germany.\\ 
		\textit{Emails:} \href{mailto:parczyk@mi.fu-berlin.de}{\texttt{parczyk@mi.fu-berlin.de}}, 
		\href{mailto:szabo@mi.fu-berlin.de}{\texttt{szabo@mi.fu-berlin.de}}, 
		\href{mailto:s.rathke@fu-berlin.de}{\texttt{s.rathke@fu-berlin.de}}.}
	\thanks{Department AI in Society, Science, and Technology, Zuse Institute Berlin, Takustraße 7, 14195 Berlin, Germany.}
	\and Silas Rathke\footnotemark[2] 
	\and Tibor Szabó\footnotemark[2]}

\date{}

\title{The maximum diameter of $d$-dimensional simplicial complexes}

\begin{document}
	
	\maketitle
	
	\begin{abstract}
		For every fixed dimension $d$ and sufficiently large $n$, we 
		determine the maximum possible diameter of a strongly connected $d$-dimensional simplicial complex on $n$ vertices. 
		This improves on a sequence of previous results and settles a problem of Santos from 2013. On the way, as a special case, we also characterise the existence of an extra-tight Euler tour 
		in the complete $d$-uniform hypergraph on $n$ vertices.
	\end{abstract}
	
	\section{Introduction}
	
	The Polynomial Hirsch Conjecture is a central problem of Discrete Geometry, stating that the diameter of the vertex/edge graph of a polytope is at most a polynomial in the number of its facets and the dimension. While this conjecture is certainly captivating for its own sake, its broader importance stems from its direct connection to one of the most fascinating major open problems of Discrete Optimization: the polynomiality of the Simplex Method. Should the conjecture turn out to be false, then no pivot rule can be polynomial in the worst case. (The original conjecture of Hirsch from 1957, stating the specific upper bound of $n-d$, was famously disproved by Santos~\cite{HirschCounterexample} in 2010.) 
	Due to the apparent difficulty of understanding the fine geometry of polytopes, most approaches towards bounding the diameter rely on combinatorial abstractions. 
	These strip away much of the geometry and focus solely on well-chosen combinatorial properties of the set of facets. The classic quasi-polynomial upper bound of Kalai and Kleitman~\cite{KalaiKleitman}, which is the basis of the subsequent best known bounds (Todd~\cite{Todd2024}, Sukegawa~\cite{Sukegawa2019}), also turns out to be of this type.  The combinatorial {\em base abstractions} of Eisenbrand, H\"ahnle, Razborov, and Rothvo\ss~\cite{Eisenbrand2010} generalize many of the known abstractions into the same simple framework.  The Polymath3 project \cite{polymath3} initiated by Gil Kalai in 2009 was specifically targeted towards identifying and making progress on various combinatorial abstractions.
	
	In this paper, we consider the diameter problem for abstract simplicial complexes, a natural abstraction studied by Santos, and provide the precise answer. Our investigations lead us to define a certain highly regular, global combinatorial structure and we characterise its existence in the complete uniform hypergraph. 
	
	A \emph{simplicial complex} on $n$ vertices is a family $\C$ of subsets of $[n]$ which is closed under taking subsets. The maximal elements of $\C$ are called \emph{facets}. We call $\C$ a \emph{(simplicial) $d$-complex}, if each of its facets has size $d+1$.
	The \emph{dual graph} $G(\C)$ of a simplicial $d$-complex $\C$ is defined on the set ${\cal F}(\cC)$ of facets as its vertex set, with two facets forming an edge in $G(\C)$ if their intersection has size~$d$. (Sets of size $d$ are also called \emph{ridges} and the dual graph is sometimes referred to as the facet-ridge graph.)
	We say $\C$ is \emph{strongly connected} if $G(\C)$ is connected.
    The \emph{diameter} of $\C$ is the diameter of its dual graph $G(\C)$ (i.e., the maximum, taken over all pairs $u,v\in V(G(\C))$ of vertices, of the length of a shortest $u$-$v$-path).
    Santos~\cite{santosdefinesH} defined $H_s(n,d)$ to be the maximum diameter of a strongly connected simplicial $d$-complex on~$[n]$ (note that in his notation, $d$ is shifted by one). We refer the reader to \Cref{sec: Conclusion} for a description of the connection to the diameter of polytopes.  
	Santos~\cite{santosdefinesH} proved for fixed $d\ge 2$ that
	\begin{equation*}\label{eq:santosbounds} \Omega\!\left(n^{\frac{2d+2}{3}}\right)\le H_s(n,d) \le \frac{1}{d}\binom nd.
	\end{equation*} 
	For the upper bound, his simple volume argument in fact implies
	\begin{equation}\label{eq:upper bound}
		H_s(n,d)\le \floor{\frac{1}{d}\binom{n}{d}-\frac{d+1}{d}}.
	\end{equation}
	For the lower bound, Santos~\cite{santosdefinesH} gives an explicit product construction.
    An alternative lower bound of the order $n^{d/4}$ is contained in a paper by Kim~\cite{MR3152073} (cf.~\cite{criado2017maximum}). 
    Subsequently, a series of authors used a variety of different tools to construct complexes of larger and larger diameter. 
	Criado and Santos~\cite{criado2017maximum} gave an explicit algebraic construction of simplicial $d$-complexes using finite fields, whose diameter, $\Theta(n^d)$, matched the order of magnitude of the upper bound for every fixed~$d$. Criado and Newman~\cite{criado2021randomized} introduced probabilistic constructions to the problem and reduced the gap between the upper and lower bounds, from a factor exponential in $d$ to $\mathcal O(d^2)$.
	Most recently, Bohman and Newman~\cite{bohman2022complexes} managed to pin down the precise asymptotics for every fixed $d\geq 2$ applying the differential equations method to track the evolution of a random greedy algorithm to construct the desired $d$-complex with large diameter:  
	\begin{equation}\label{eq:bohmanbound}  
		\left( \frac{1}{d} - (\log n)^{-\eps} \right) \binom{n}{d} \le H_s(n,d) \, ,
	\end{equation}
	where $\eps < 1/d^2$ and $n$ is sufficiently large.
    We note (cf.~\cite{2-dimensional}) here that an earlier result of D{\k{e}}bski, Lonc, and Rz{\k{a}}{\.z}ewski~\cite{dkebski2017harmonious} on harmonious colorings of fragmentable $k$-uniform hypergraphs also can be used to derive an asymptotically precise lower bound of $\left(\frac{1}{d}-o(1)\right) {\binom n d}$ on $H_s(n,d)$, without an explicit error term. Just very recently, Gould and Kelly~\cite{gould2025advancing} improved the error term in \eqref{eq:bohmanbound} from logarithmic to $\mathcal O(n^{-1/d})$ (as an application of a general theorem on certain hypergraph matchings).
	
	In \cite{2-dimensional}, the last three authors gave explicit constructions of simplicial $2$-complexes whose diameter attains the upper bound \eqref{eq:upper bound} for all $n$, except for $6$:
	\begin{equation}\label{eq: H_s(n,2)}H_s(n,2) = \begin{cases}
			\floor{\frac{1}{2}\binom{n}{2}-\frac{3}{2}} & n\ne 6\\
			5=\floor{\frac{1}{2}\binom{6}{2}-\frac{3}{2}}-1 & n=6.
	\end{cases}\end{equation}
	
	In \cite{2-dimensional} it was also conjectured that the simple upper bound \eqref{eq:upper bound} of Santos can in fact be achieved for all fixed $d$, as long as $n$ is large enough. Here we prove this conjecture.
	\begin{theorem}\label{theo: precise value of H(n d)}
		For every positive integer $d\ge 2$, there exists a positive integer $n_0$ such that for all $n>n_0$, \[H_s(n,d)=\floor{\frac{1}{d}\binom{n}{d}-\frac{d+1}{d}}.\]
	\end{theorem}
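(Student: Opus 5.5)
The upper bound is Santos' volume argument \eqref{eq:upper bound}, so only the lower bound needs work. We need, for large $n$, a strongly connected $d$-complex whose dual graph has diameter $D\coloneqq\floor{\frac1d\binom nd-\frac{d+1}{d}}$. The natural extremal object is a complex whose dual graph is a path $F_0,F_1,\dots,F_D$ of facets, where consecutive facets share a ridge. The volume argument shows how tight this must be. The first facet contributes $d+1$ new ridges and each later one contributes at least $d-1$ new ridges. Moreover, no ridge may be contained in two non-consecutive facets; otherwise there would be a shortcut, since the dual graph must be an induced path. So the plan is to build an \emph{extra-tight path}: a sequence of $(d+1)$-sets with $|F_i\cap F_{i+1}|=d$, such that the $d$-subsets (ridges) of the $F_i$ are pairwise distinct apart from the shared ridge $F_i\cap F_{i+1}$. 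Such a path covers almost all of $\binom{[n]}{d}$, leaving only the few ridges forced by divisibility. I would phrase this via the $d$-uniform complete hypergraph $K_n^{(d)}$. Each step $F_i\to F_{i+1}$ uses $d-1$ new $d$-edges, so we want a decomposition of (almost all of) $K_n^{(d)}$ into an extra-tight trail, that is, an extra-tight Euler tour up to a small leftover. We must also check that the resulting complex has no extra adjacencies: two facets $F_i,F_j$ with $|i-j|\ge 2$ and $|F_i\cap F_j|=d$ would share a ridge, which edge-disjointness forbids. Hence the dual graph is exactly the path and its diameter is its length.

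The construction would follow an absorption strategy, in the spirit of the \cite{2-dimensional} construction for $d=2$ combined with modern hypergraph decomposition tools. In the first step, I would set aside a small random ``reservoir''/absorbing structure: a sparse random subhypergraph $R\subseteq K_n^{(d)}$ containing many flexible gadgets. Each gadget is a short extra-tight path segment that can either use or skip a prescribed small set of edges, and which can connect any two prescribed ridges. In the second step, I would cover almost all of $K_n^{(d)}\setminus R$ by vertex-disjoint-in-edges extra-tight paths. This could use an iterative random greedy/nibble argument, or a hypergraph-matching theorem with conflicts as in Gould--Kelly \cite{gould2025advancing}, applied to an auxiliary hypergraph whose edges are the edge sets of short extra-tight segments. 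The leftover would be a sparse, well-spread set of $d$-edges. In the third step, I would link the many path pieces into one path using connectors from $R$. Finally, I would absorb the leftover edges with the absorbers, so that exactly the divisibility-determined number of ridges (at most about $d$ of them) remains uncovered. The counting then gives length exactly $D$: the floor and the $-\frac{d+1}{d}$ account for the initial facet and the residue of $\binom nd-(d+1)$ modulo $d$.

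I expect the absorption step to be the main obstacle. The leftover edges must be absorbed exactly, which requires designing gadgets for extra-tight paths in $K_n^{(d)}$ that can swallow an arbitrary small set of $d$-edges while keeping the extra-tightness condition globally consistent. That consistency condition means no ridge of any absorbed facet may coincide with a ridge used elsewhere. A further requirement is that the final residual set of size $<d$ (plus the start surplus) matches the bound. I would first try to prove an exact ``extra-tight Euler tour'' characterisation under divisibility conditions, namely $d-1\mid\binom nd-(d+1)$ type conditions together with degree conditions. For general $n$ I would then delete a suitable small set of edges first to reach the divisible case, and check that the deleted edges can be arranged so as not to create shortcuts.
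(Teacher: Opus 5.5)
\textbf{Gap 1: the divisibility step fails.} Your plan reduces everything to an extra-tight trail or Euler tour, i.e.\ a straight complex given by a vertex sequence. For such complexes the obstruction is not a global condition of the type ``$d-1\mid\binom nd-(d+1)$'' but a local one at every vertex. Each interior occurrence of a vertex lies in exactly $d^2$ covered ridges (\Cref{obs: degree in trails}). So every vertex outside the at most $2d$ end vertices has covered degree divisible by $d^2$.

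Attaining the bound allows at most $d-1$ uncovered ridges, since $|\partial\cF|=d|\cF|+1$. (Each facet after the first contributes exactly $d$ new ridges, not $d-1$ as you wrote.) These uncovered ridges touch at most $d(d-1)$ vertices, and all remaining vertices have covered degree $\binom{n-1}{d-1}$. So whenever $d^2\nmid\binom{n-1}{d-1}$, no straight complex is optimal. This happens for a positive proportion of $n$; for $d=2$ it happens whenever $n\not\equiv 1\pmod 4$. In those cases, ``deleting a suitable small set of edges to reach the divisible case'' would need $\Omega(n)$ deleted edges, far beyond the budget of $d-1$.

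The missing idea is a controlled departure from straightness. The turns of \Cref{obs: C vs C'} replace the facet $\{v_{d+3},\dots,v_{2d+3}\}$ of a straight segment by $\{v_{d+2},v_{d+4},\dots,v_{2d+3}\}$. This keeps the dual graph a path and shifts $d-1$ units of degree from $v_{d+3}$ to $v_{d+2}$. Since $\gcd(d-1,d^2)=1$, a short initial segment containing $O(n)$ turns, scheduled by an auxiliary digraph of bounded degree, corrects every residue. Only after that does the uncovered $d$-graph satisfy \eqref{eq: d-trail divisible}. Consequently, the Euler-trail statement you actually need is not one for $K_n^{(d)}$. It must hold for $d$-graphs with $\delta(G)\ge(1-\alpha)n$ and prescribed ends (\Cref{theo: large min degree implies extra-tight trail}), so that it can be glued onto the initial segment.

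\textbf{Gap 2: absorption has no mechanism.} You rightly flag this step as the heart of the matter, but you give no way to carry it out. The one-shot design you sketch, a sparse random reservoir with gadgets able to swallow arbitrary leftover edges, runs into two concrete problems.

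First, the nibble leftover can lie anywhere among the $\binom nd$ ridges, and a sparse reservoir cannot host gadgets for all potential locations. The paper avoids this space barrier by iterative absorption. It uses a vortex $U_0\supseteq\dots\supseteq U_\ell$ with $|U_\ell|$ constant, and a Cover Down Lemma that pushes all leftover into the next vortex set. That lemma needs two further ingredients:
\begin{itemize}
\item an approximate decomposition whose leftover has codegree at most $\gamma n$ with $\gamma\ll\alpha$;
\item a supercomplex-based decomposition of the remainder into extra-tight cycles with at most $d$ vertices outside the next vortex set, so that they can be cut open there and reconnected.
\end{itemize}

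Second, even inside a constant-size set, the natural absorbing move does not preserve the covered edge set. Splicing an extra-tight cycle into the trail at a shared $(d-1)$-tuple newly covers the $2(d-1)$ edges of $E_1$ in \eqref{eq: E1} and drops those of $E_2$ in \eqref{eq: E2}. The paper repairs this with switchers: pairs of extra-tight trails whose edge sets differ exactly by $E_1$ versus $E_2$. They are obtained by swapping $u_0$ and $w_0$ in a trail whose links at $u_0$ and $w_0$ come from two $\extratightpath{2d}{d-1}$-decompositions whose underlying tight paths share no $d$-edge.

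Without the turns, and without a working substitute for these absorption ingredients, your outline does not yet establish the lower bound.
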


	Our construction of a $d$-dimensional simplicial complex for Theorem~\ref{theo: precise value of H(n d)} starts out with a reduction to a design theoretic problem on almost-complete $d$-uniform hypergraphs. 
	For this, the concept of ``turns'', which was used in \cite{2-dimensional} to handle the ad hoc analysis of certain cases of the proof, will be instrumental and employed in a far more abstract setting. With them, we will be able to construct a relatively short simplicial complex whose dual graph is a path and has a given modular degree sequence. The rest of the simplicial complex can then be obtained through the existence of a structure we call extra-tight Euler trails in almost-complete hypergraphs. The proof of this existence theorem (Theorem~\ref{theo: large min degree implies extra-tight trail}) takes up a significant portion of our paper.
	
	The proliferation of ad hoc complications that arise in some of the $2$-dimensional constructions of \cite{2-dimensional} suggests that an explicit approach in higher dimension is not likely to be workable. 
	And indeed, our construction for Theorem~\ref{theo: large min degree implies extra-tight trail} employs the probabilistic technique of absorption. For its proof we take inspiration from several earlier papers \cite{gishboliner2023tight, glock2020euler, glock2023existence}, and combine concepts and ideas developed there with new ingredients. 
	
	{\bf Remark.} As it was noted by Santos~\cite{santosdefinesH}, the maximum diameter $H_s(n,d)$ is equal to the length of the longest induced path in the Johnson graph $J(n,d+1)$, which is thus also determined in Theorem~\ref{theo: precise value of H(n d)}.

	\subsection{Hypergraphs}
	
	Simplicial $d$-complexes $\cC$ are in one-to-one correspondence with $(d+1)$-graphs ${\cal F}$ where ${\cal F}$ is the set of facets of $\cC$ and $\cC$ is the simplicial $d$-complex $\langle {\cal F} \rangle : = \{ F' : \exists F \in \cF, F' \subseteq F \}$ generated by $\cF$. 
	We define the dual graph $G(\mathcal F)$ and diameter of a $(d+1)$-graph ${\cal F}$ to be those of 
	the $d$-complex $\langle {\cal F}\rangle$.
	The {\em shadow} $\partial \cF$ of $\cF$ is the $d$-graph containing all those $d$-sets that are subsets of a member of $\cF$. 
	
	Since for any $(d+1)$-graph $\cG$, the dual graph of a subfamily $\cF \subseteq \cG$ corresponding to the vertices of a shortest path in $G(\cG )$ is a path itself, the maximum diameter $H_s(n,d)$ can always be attained by a $(d+1)$-family whose dual graph is a path.  
	The first part of the next observation gives the simple argument for \eqref{eq:upper bound} when the dual graph is a path, which thus implies the upper bound in \Cref{theo: precise value of H(n d)}. Moreover, we also obtain simple conditions, in terms of the shadow, for attaining this bound. 
	
	\begin{observation}\label{obs: size of F and its shadow}
		Let $\cF$ be a $(d+1)$-graph on vertex set $[n]$. 
		\begin{enumerate}
			\item\label{enum: path vs shadow} If $G(\cF)$ is a path, then $|\partial \cF | = |\cF| \cdot d +1$. In particular, the length of the path is at most  $\floor{\frac{1}{d}\binom{n}{d}-\frac{d+1}{d}}$ and equality holds if and only if ${\binom n d} - |\partial \cF| \leq d-1$.
			\item\label{enum: cycle vs shadow} If $G(\cF)$ is a cycle, then $|\partial \cF | = |\cF| \cdot d$. In particular, if $\partial \cF = {\binom{[n]}d}$ then for any $F\in \cF$ the $(d+1)$-graph $\cF'= \cF \setminus \{ F\}$ has diameter $\floor{\frac{1}{d}\binom{n}{d}-\frac{d+1}{d}}$. 
		\end{enumerate}
	\end{observation}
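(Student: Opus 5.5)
Both parts come from counting ridges, the $d$-subsets of the facets, and then comparing $|\partial\cF|$ with $\binom nd$.

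\textbf{Part (i).} Write the path as $F_1,\dots,F_m$, where $m=|\cF|$ and consecutive facets share a ridge; the length is $m-1$. The key claim is that distinct consecutive pairs give distinct shared ridges, and that no $d$-set lies in two facets unless they are consecutive. The second half is the crucial point. If a $d$-set $R$ were contained in two facets $F_i,F_j$, then $|F_i\cap F_j|\ge d$, so $F_i$ and $F_j$ are adjacent in $G(\cF)$. Since $G(\cF)$ is a path, this forces $|i-j|=1$. The first half follows similarly: if $F_i\cap F_{i+1}=F_{i+1}\cap F_{i+2}=R$, then $F_i\cap F_{i+2}\supseteq R$, so $F_i$ and $F_{i+2}$ would be adjacent, which a path forbids.

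Now count with multiplicity. Each facet contains $d+1$ ridges, so the sum is $m(d+1)$. The shared ridges are exactly $m-1$ in number, and each is counted twice. No ridge is counted three or more times, since three facets on one ridge would be pairwise adjacent and form a triangle. Hence
\[
|\partial\cF| = m(d+1)-(m-1) = md+1 .
\]
Using $|\partial\cF|\le\binom nd$ gives $m\le (\binom nd-1)/d$. So the length satisfies $m-1\le \frac1d\binom nd-\frac{d+1}{d}$, and since it is an integer it is at most the floor. Equality of the length with the floor means $m-1=\lfloor(\binom nd-1-d)/d\rfloor$. Equivalently, $\binom nd-1-dm$ lies in $\{0,\dots,d-1\}$. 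Since $\binom nd-1-dm=\binom nd-|\partial\cF|$, this is exactly the condition $\binom nd-|\partial\cF|\le d-1$.

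\textbf{Part (ii).} The cycle has $m$ facets and $m$ edges, and I assume $m\ge4$. The same argument shows each edge gives a distinct ridge and ridges have multiplicity at most $2$. For $m=3$, a triangle $F_1F_2F_3$ could in principle share one ridge among all three facets; I would handle this degenerate case separately, or assume it away. The count becomes $m(d+1)-m=md$.

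If $\partial\cF=\binom{[n]}d$, then $m=\binom nd/d$. Remove a facet $F$ from the cycle. The remaining $m-1$ facets induce a path in $G(\cF')$: the adjacencies among them are unchanged, and they are precisely the cycle edges not involving $F$. This path has length $m-2=\frac1d\binom nd-2$. Here $\frac1d\binom nd$ is an integer, so
\[
\Bigl\lfloor \tfrac1d\tbinom nd-\tfrac{d+1}{d}\Bigr\rfloor=\tfrac1d\tbinom nd-2,
\]
as required. Since $G(\cF')$ is a path, its diameter equals its length.

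\textbf{Main obstacle.} The only delicate step is justifying that a ridge cannot lie in three facets, or be shared by non-consecutive facets. This is handled by the observation that any two facets sharing a $d$-set are adjacent, so a path or cycle (of length at least $4$) in the dual graph has no chords.
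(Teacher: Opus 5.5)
Your argument is correct and is essentially the paper's ridge count. There are two small differences in presentation. The paper counts new $d$-sets facet by facet along the path ($d+1$ for the first facet, then $d$ for each later one), while you count with multiplicity and subtract one per dual edge. For the second half of (ii) you compute the length $m-2$ directly, rather than passing through $|\partial\cF'|=\binom nd-(d-1)$ and part (i).

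Your caveat about $m=3$ is well founded and exposes a flaw in the statement itself. Take $\cF=\{R\cup\{a\},R\cup\{b\},R\cup\{c\}\}$ with $|R|=d$. Its dual graph is a triangle, but $|\partial\cF|=3d+1$. The paper's sequential count (``the last vertex needs only $d-1$ new $d$-subsets'') silently overlooks this configuration. So the first sentence of (ii) needs $|\cF|\ge 4$ or an explicit exclusion of it; this is harmless for the paper, which applies (ii) only to long extra-tight tours.

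The gap in your proposal is that you leave this case unresolved for the ``in particular'' claim, which does remain true for $d\ge 2$; you should not simply assume it away. Closing it is quick.

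\begin{itemize}
\item A triangle in $G(\cF)$ either has all three pairwise intersections equal (the fan above) or pairwise distinct. If two are equal, say $F_1\cap F_2=F_1\cap F_3=R$, then $R\subseteq F_2\cap F_3$, which forces the third to be equal as well.
\item For $d\ge 2$ the fan misses every $d$-set containing both $a$ and $b$, since each facet contains at most one of them. Hence the fan cannot satisfy $\partial\cF=\binom{[n]}{d}$.
\item In the remaining case no ridge lies in all three facets, and the three shared ridges are distinct. Your multiplicity count then gives $|\partial\cF|=3d$, and the rest of your argument applies verbatim.
\end{itemize}
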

	
	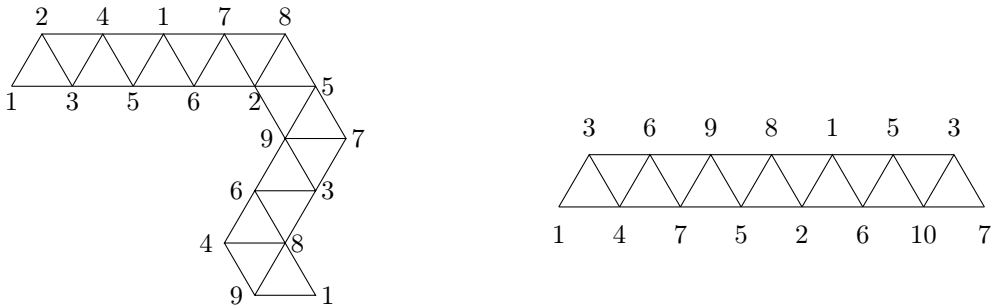
\begin{figure}[hbt]
		\centering
		\begin{tikzpicture}[scale=0.8]
			\foreach \x in {0,...,6} {
				\begin{scope}[shift={(\x,0)}]
					\draw (0,0) -- (0:1);
					\draw[] (0:1) -- (60:1) -- (0,0);
				\end{scope}
			}
			\draw (0.5,0.86603) -- (6.5,0.86603);
			\textbe{0,0}{1}
			\textbe{1,0}{4}
			\textbe{2,0}{7}
			\textbe{3,0}{5}
			\textbe{4,0}{2}
			\textbe{5,0}{6}
			\textbe{6,0}{10}
			\textbe{7,0}{7}
			\textab{0.5,0.86603}{3}
			\textab{1.5,0.86603}{6}
			\textab{2.5,0.86603}{9}
			\textab{3.5,0.86603}{8}
			\textab{4.5,0.86603}{1}
			\textab{5.5,0.86603}{5}
			\textab{6.5,0.86603}{3}
			\begin{scope}[shift={(-9,2)}]
				\foreach \x in {0,...,4} {
					\begin{scope}[shift={(\x,0)}]
						\draw (0,0) -- (0:1);
						\draw[] (0:1) -- (60:1) -- (0,0);
					\end{scope}
				}
				\draw (0.5,0.86603) -- (4.5,0.86603) (4,0) -- (4.5,-0.866) -- (5,0) -- (5.5,-.866) -- (4.5,-0.866) -- (5,-1.732) -- (5.5,-.866) (4.5,-.866) -- (4,-1.732) -- (5, -1.732) -- (4.5,-2.598) -- (3.5,-2.598) -- (4,-1.732) -- (4.5,-2.598) -- (4,-3.464)  (3.5,-2.598) -- (4,-3.464) -- (5,-3.464) -- (4.5,-2.598);
				\textbe{0,.2}{1}
				\textbe{1,.2}{3}
				\textbe{2,.2}{5}
				\textbe{3,.2}{6}
				\textbe{4,.2}{2}
				\textat{5.2,0}{5}
				\textat{4.2,-0.866}{9}
				\textat{3.7,-1.732}{6}
				\textat{3.2,-2.598}{4}
				\textat{5.7,-0.866}{7}
				\textat{5.2,-1.73}{3}
				\textat{4.7,-2.598}{8}
				\textat{5.2,-3.464}{1}
				\textat{3.7,-3.464}{9}
				\textab{.5,.7}{2}
				\textab{1.5,.7}{4}
				\textab{2.5,0.7}{1}
				\textab{3.5,0.7}{7}
				\textab{4.5,0.7}{8}
			\end{scope}
		\end{tikzpicture}
		\caption{On the left: A strongly connected simplicial $2$-complex on $[9]$ with maximum diameter. The facets are given by the triangles. To ensure that the dual graph is a path, each pair in $\binom{[9]}{2}$ appears at most once as an edge. The diameter is maximum by \Cref{obs: size of F and its shadow} because only the pair $\{4,7\}$ does not appear as an edge.
			On the right: A (non-optimal) straight simplicial $2$-complex on $[10]$.}
		\label{fig:straight vs. non-straight}
	\end{figure}
	
	\begin{proof}
		For part \ref{enum: path vs shadow}, we consider the members $F_1, \ldots , F_{|\cF|}$ of $\cF$ in the order they appear on the path of the dual graph. The set $F_1$ contains $(d+1)$ $d$-subsets and each subsequent $F_i$ contains $d$ such $d$-subsets that were not contained in any of the previous $F_j$s (otherwise $G(\cF)$ contains a cycle).  The bound on the length $|\cF| -1$ of the path then follows since $\partial \cF \subseteq {\binom{[n]}d}$.  Equality holds if and only if $\frac1 d(|\partial \cF| -1) =  |\cF| =\floor{\frac{1}{d}\left({\binom n d} - 1\right)}$, which is equivalent to ${\binom n d}-(d-1) \leq |\partial \cF|$. \\
		For part \ref{enum: cycle vs shadow}, we consider the members of $\cF$ in an order as they appear on the cycle of the dual graph. The counting of $d$-subsets is the same as in part \ref{enum: path vs shadow} until the very last vertex, which has two neighbours the $d$-subsets of which were already accounted for. So for the last vertex of the cycle we need to count only $(d-1)$ new $d$-subsets, hence the formula for the shadow follows. Removing $F$ from $\cF$  makes the dual graph into a path and from the shadow $(d-1)$ of the $d$-subsets get removed. Now if $\partial \cF = {\binom{[n]} d}$, then $|\partial (\cF \setminus \{ F \} )| = {\binom n d} -d+1$ so the diameter of $\cF \setminus \{ F \}$ is $\floor{\frac{1}{d}\binom{n}{d}-\frac{d+1}{d}}$ by part \ref{enum: path vs shadow}. 
	\end{proof}
	
	A special, vertex-sequential way of constructing a $(d+1)$-family with a path dual graph is via an appropriate sequence $T=(v_1, \dots , v_k) \in [n]^{k}$ of (not necessarily distinct) vertices, where one takes the sets formed by $d+1$ consecutive entries of $T$. 
	To have a simplicial $d$-complex, one should of course assume that no element of $[n]$ repeats within $d+1$ consecutive entries of $T$, but then consecutive $(d+1)$-sets automatically have a $d$-element intersection, so the dual graph 
	{\em contains} a spanning path (of length $k-d-1$). We are interested in the case when there are no more edges in it.
	
	\begin{definition}
		\label{def: straight}
		For a sequence $T=(v_1, \dots , v_k) \in [n]^{k}$, let $\cS_T = \cS_T^{(d+1)} := \{\{ v_i, v_{i+1}, \ldots , v_{i+d}\} : i=1, \ldots, k-d\}$.
		If the hypergraph $\cS_T$ is $(d+1)$-uniform and $G(\cS_T)$ is a path, then $\cS_{T}$, as well as the $d$-complex it generates,  is called {\em straight}.
	\end{definition}
	
	The $d$-complexes of \cite{criado2021randomized,criado2017maximum, bohman2022complexes}, achieving maximum diameter in the order of magnitude, are all of this special form. 
	The extremal $2$-complexes of \cite{2-dimensional}  also contain significant straight portions, but, for most values of $n$, are not entirely of that type. 
	The situation will be similar for our $d$-complexes establishing the precise \Cref{theo: precise value of H(n d)}. 
	This is because $(d+1)$-graphs achieving maximum diameter can only be straight for certain values of $n$, those satisfying some divisibility conditions.  
	Yet, as we will see, a straight $(d+1)$-graph of a maximum diameter {\em is} possible for an infinite sequence of $n$, leading us to the definition and construction of certain highly regular combinatorial structures we call extra-tight Euler tours. 
	
	The key property of straight simplicial complexes, that 
	their dual graph $G(\cS_T)$ does not contain {\em more} edges than its canonical spanning path, means that the non-neighbouring $(d+1)$-intervals of $T$ must have at most $d-1$ common elements.
	This is most conveniently formulated in the realm of $d$-graphs: 
	every $d$-subset of $[n]$ should occur at most once among $d+1$ consecutive entries of $T$.

	\begin{definition}\label{def: extra-tight trail tour}
		Given a $d$-graph $H$, an \emph{extra-tight trail} $T$ in $H$ is a sequence $(v_1,\dots,v_k)$ of vertices of $H$ such that the sets $e_{\iota,\sigma}\coleq \{v_\iota,\dots,v_{\iota+d}\}\backslash\{v_{\iota+\sigma}\}$ with $\iota\in[k-d]$, $\sigma\in[d]$ and the set $e_{k-d+1,d}\coleq \{v_{k-d+1},\dots,v_k\}$ are all pairwise distinct and in $H$.
        The \emph{ends} of $T$ are $(v_1,\dots,v_d)$ and $(v_{k-d+1},\dots,v_k)$.
		An \emph{extra-tight tour} $C$ in $H$ is a (cyclic) vertex sequence $(v_1,\dots,v_k)$ such that the edges $e_{\iota,\sigma}\coleq \{v_\iota,\dots,v_{\iota+d}\}\backslash\{v_{\iota+\sigma}\}$ with $\iota\in[k]$, $\sigma\in[d]$ are all pairwise distinct and all in $H$ (where $v_{k+i}\coleq v_i$ for $i\in[d]$). 
        We say that the above edges $e_{\iota, \sigma}$ are {\em covered} by the extra-tight trail (respectively, tour).
        \end{definition}
	
	Observe that a sequence $T=(v_1,\dots,v_k) \in [n]^k$ forms an extra-tight trail in $K_n^{(d)}$ if and only if $\cS_T$ is a straight $(d+1)$-graph. 
	Analogously, a (cyclic) sequence $T=(v_1,\dots,v_k)$ forms an extra-tight tour in $K_n^{(d)}$ if and only if the hypergraph $\cC_T := \{\{ v_i, v_{i+1}, \ldots , v_{i+d}\} : i=1, \ldots, k\}$ (where $v_{k+j}\coleq v_j$ for $j\in[d]$) is a $(d+1)$-graph whose dual graph is a cycle.   

    \begin{remark}
        Observe that for the sequence $T$ of an extra-tight trail (respectively, tour) in $K_n^{(d)}$, the edges covered by $T$ are exactly the ones contained in the $d$-graph $\partial\cS_T$ (respectively, $\partial\cC_T$).
        For ease of notation, whenever it does not cause confusion, we will also use the letter $T$ to denote the $d$-graph $\partial\cS_T$ (respectively, $\partial\cC_T$) itself.
    \end{remark}

    \begin{definition}
    An extra-tight trail (respectively, tour) $T$ in a $d$-graph $H$ which covers all edges of $H$
    is called an \emph{extra-tight Euler trail} (respectively, \emph{extra-tight Euler tour}) of $H$.
    \end{definition}

	If $T$ is an extra-tight Euler tour of $K_n^{(d)}$ then $\partial \cC_T = {\binom{[n]} d}$, so
	by \Cref{obs: size of F and its shadow}\ref{enum: cycle vs shadow} deleting any member of $\cC_T$ provides a construction establishing \Cref{theo: precise value of H(n d)}. 
    
	The name of the concept in the previous definition is inspired by the generalisation of usual Euler tours in graphs to $d$-graphs. A (cyclic) sequence 
    $T$ of vertices in a $d$-graph $\cG$ is called an {\em Euler tour} if 
    every $d$ cyclically consecutive entries of the sequence forms an edge of $\cG$ and every edge of $\cG$ appears exactly once as such. An Euler tour of the complete $d$-graph $\cK_n^{(d)}$ means a (cyclic) sequence of ${\binom n d}$ vertices from $[n]$, such that each $d$-subset of $[n]$ appears uniquely as $d$ cyclically consecutive entries of the sequence. The existence of this beautiful object of coding theory was famously conjectured by Chung, Diaconis, and Graham~\cite{CHUNG199243} (they called them universal cycles), whenever $d$ divides ${\binom{n-1}{d-1}}$. 
	Glock, Joos, K\"uhn, and Osthus~\cite{glock2020euler} proved this in much greater generality: for $d$-graphs, that besides satisfying the immediate divisibility conditions on vertex degrees, might have a  
	minimum $(d-1)$-degree which is a linear fraction less than that of the complete graph. 
	
	For the existence of an extra-tight Euler tour in $K_n^{(d)}$ certain degree conditions are also necessary. Namely, if $T$ is an extra-tight Euler tour of a $d$-graph $G$, then every occurrence of an element $a\in [n]$ as an entry of $T$ is contained in $d^2$ edges of $T$ (cf.\@ \Cref{obs: degree in trails}). Since the definition of an extra-tight Euler tour requires these $d$-subsets to be distinct for different occurrences of  $a$ in $T$ and together they should include all $d$-subsets  of $G$ containing $a$, an extra-tight Euler tour can only exist if the degree of $a$ in $G$ is divisible by $d^2$. In particular, if an extra-tight Euler tour exists in the clique $K_n^{(d)}$, then $d^2$ divides~${\binom{n-1}{d-1}}$.
	
	Our next theorem states that these divisibility conditions are also sufficient for large enough $n$, even if the minimum $(d-1)$-degree is a small linear fraction smaller than that of the clique.  
	
	\begin{theorem}\label{cor: extra-tight tour covering everything}
		For every integer $d\ge 2$, there are an $\alpha>0$ and a positive integer $n_0$ such that the following holds for every integer $n\ge n_0$. If $G$ is an $n$-vertex $d$-graph with $\delta(G)\ge(1-\alpha)n$ for which all vertex degrees are divisible by $d^2$, then it has an extra-tight Euler tour.
		In particular, $K_n^{(d)}$ has an extra-tight Euler tour if and only if $d^2$ divides $\binom{n-1}{d-1}$. 
	\end{theorem}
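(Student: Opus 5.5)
This theorem should follow from a more general existence result for extra-tight Euler trails in almost-complete $d$-graphs (announced in the introduction as Theorem~\ref{theo: large min degree implies extra-tight trail}). The proof of that result is by absorption. So the plan is to prove the trail version first, with prescribed ends and degree conditions adjusted at the ends, and then close the trail into a tour. The "in particular" statement has two directions. Necessity is the counting argument given just before the theorem. Each occurrence of a vertex $a$ in an extra-tight tour lies in exactly $d^2$ covered edges containing $a$, and these edges are distinct across occurrences. Hence $d^2 \mid \deg(a) = \binom{n-1}{d-1}$. Sufficiency follows by applying the main statement to $G = K_n^{(d)}$, whose minimum $(d-1)$-degree is $n-d+1 \ge (1-\alpha)n$.

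For the main statement, the steps are as follows.

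\textbf{Step 1: absorbers.} Reserve a small random vertex set and build a sparse absorbing structure $A\subseteq G$ that is itself an extra-tight trail. Given any "leftover" $d$-graph $L$ in $G - A$ that is sparse and has all degrees divisible by $d^2$ (more precisely, a $d^2$-divisible configuration), the union $A\cup L$ should again be an extra-tight trail with the same ends. The natural building block is a switcher gadget. It consists of two short extra-tight trails with the same ends, one of which covers a small divisible configuration (for example a "$d^2$-cycle" of edges) and the other of which avoids it. These gadgets are chained together following the approach of \cite{glock2020euler,gishboliner2023tight}.

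\textbf{Step 2: cover-down.} Take a random near-complete extra-tight trail cover of $G - A$. To do this, first find an approximate decomposition of almost all of $G-A$ into long extra-tight trails. This can be obtained from a near-perfect matching in an auxiliary hypergraph whose vertices are the edges of $G$ and whose hyperedges are the edge sets of short extra-tight trails (Pippenger--Spencer, or the matching theorem of \cite{gould2025advancing}). The leftover should have small maximum degree, and a cover-down lemma pushes it onto the reserved vertex set. Because $\delta(G)\ge(1-\alpha)n$, many vertices remain available for each step, so greedy and random choices succeed.

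\textbf{Step 3: connecting.} Join the many trails into one using short connecting extra-tight paths through fresh vertices. These exist because minimum-degree arguments give many common neighbourhoods. The total number of edges used is small.

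\textbf{Step 4: absorbing and closing.} Every trail covers degree $d^2$ per internal occurrence, and each original degree is divisible by $d^2$. Therefore the uncovered remainder $L$ is again $d^2$-divisible up to an end-correction, and $A$ absorbs it. Finally, close the trail into a tour by choosing the two ends so that they glue cyclically. For example, start the construction from a fixed short segment and reserve its ends, so that the last connecting path can return to it.

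The main obstacle I expect is Step 1. One has to design absorbers that can absorb an arbitrary divisible leftover while keeping the extra-tight structure, in which every $d$ consecutive positions and all $(d+1)$-windows minus one vertex must be distinct edges. Divisibility by $d^2$ rather than $d$ suggests that the basic absorbable configurations are more rigid than those in Euler-tour settings. I would first try to reduce every divisible leftover, via a "transformer" step, to a disjoint union of a few canonical configurations, and then build one explicit switcher for each canonical configuration by hand.
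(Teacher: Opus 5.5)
\textbf{The reduction to the trail theorem is missing.} Your ``in particular'' argument is fine, and you correctly guess that the tour statement should follow from the trail theorem (Theorem~\ref{theo: large min degree implies extra-tight trail}). But you never carry out that reduction, and it is the entire content of the proof. The trail theorem cannot be applied to $G$ as it stands, for two reasons. Every degree of $G$ is $\equiv 0 \pmod{d^2}$, whereas the theorem needs the $2d$ end vertices to have degrees $\equiv i(d-1)+1$. Also, the two ends must be edges of the host graph, since the trail covers them.

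The missing step goes as follows. Find an extra-tight path $P=(v_1,\dots,v_{2d})$ in $G$ (Lemma~\ref{lem: glue two extra-tight trails} with empty $A,B$). Then apply the trail theorem to $G':=(G\setminus P)\cup\{\{v_1,\dots,v_d\},\{v_{d+1},\dots,v_{2d}\}\}$. Putting the two end edges back is essential. By Observation~\ref{obs: degree in trails}, $\deg_P(v_i)=i(d-1)+1$ for $i\in[d]$, so $\deg_{G'}(v_i)\equiv -i(d-1)\equiv (d+1-i)(d-1)+1 \pmod{d^2}$, and symmetrically for $v_{d+1},\dots,v_{2d}$. Without the $+1$ from the re-added edge, every congruence is off by one. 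These are exactly the conditions \eqref{eq: d-trail divisible} for the \emph{reversed} ends $(v_d,\dots,v_1)$ and $(v_{2d},\dots,v_{d+1})$, and $\delta(G')\ge(1-2\alpha)n$. The resulting trail $(v_d,\dots,v_1,\dots,v_{2d},\dots,v_{d+1})$, read cyclically, is an extra-tight tour. The windows that wrap around run through the reversed $P$. They add exactly the edges of $P$ other than its two end edges, and those two are already covered by the trail as its own end edges. So the tour covers $G'\cup P=G$ with no repetition. Your remarks about reserving a short segment ``so that the last connecting path can return to it'' and ``up to an end-correction'' point this way. But none of the three ingredients appears: re-adding the end edges, reversing the orientation of the ends, and checking the congruences. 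As written, the closing step is not a proof.

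\textbf{Steps 1--4 fail at Step 1 as a standalone argument.} These steps re-sketch the proof of the trail theorem, which you do not need here. A switcher-based absorber needs a separate gadget at every place where a piece of leftover might have to be spliced in; in the paper, that is one switcher per ordered $(d+1)$-tuple of the absorbing set. After a single cover-down onto a reserved set of linear size, you would have to fit $\Theta(n^{d+1})$ edge-disjoint gadgets into a graph with at most $\binom nd$ edges. This is the space barrier the paper points out.

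The paper avoids it with a vortex $U_0\supseteq\dots\supseteq U_\ell$ with $|U_\ell|$ constant. It iterates the Cover Down Lemma, which requires an approximate decomposition whose leftover codegree $\gamma n$ is much smaller than the defect $\alpha n$. It then absorbs only the dense, $d^2$-divisible leftover on $U_\ell$, in three moves:
\begin{itemize}
\item the leftover is decomposed into extra-tight cycles by the $F$-design theorem;
\item each cycle is spliced in at a prepared segment $(w_0,u_1,\dots,u_{d-1},w_d)$;
\item the resulting $E_1/E_2$ discrepancy is undone by a switcher obtained by swapping $u_0$ and $w_0$.
\end{itemize}
It does not use hand-built gadgets for canonical configurations, which the paper notes seem infeasible for $d\ge 3$.
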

	
	As we argued above, this theorem together with \Cref{obs: size of F and its shadow} implies \Cref{theo: precise value of H(n d)} whenever $d^2$ divides $\binom{n-1}{d-1}$.  
	This means a positive fraction of the integers $n$ for any $d$, including for instance every $n\equiv 1 \pmod{d^2}$.
	The correct asymptotics is also implied for every $n$, with an error term of $O(n^{-1})$, which is better than that of any of the previous asymptotic results \cite{bohman2022complexes,dkebski2017harmonious, gould2025advancing}. Yet, from the point of view of the precise value, this residue class looks like the ``easiest'' of the cases, as the optimal  $(d+1)$-graph must miss covering $d-1$ of the  $d$-subsets. According to \Cref{obs: size of F and its shadow}, this is the most  ``wiggle room'' we can possibly have.
	And indeed, the resolution of the $2$-dimensional case in \cite{2-dimensional} should be a warning sign. 
	There nice, explicit cyclic constructions were found whenever $n-1$ is divisible by $2^2=4$. For the other residue classes, however, more and more intricate ad hoc adjustments had to be introduced to achieve the optimal diameter. So even if we had access to relatively simple explicit extra-tight Euler tours for some residue class (which we do not have, not even for $d=3$), the complications to extend this to all $n$ would quite likely be horrendous/infeasible.  
	
	Indeed our approach for the proof of \Cref{theo: precise value of H(n d)} is very different. It avoids the separate treatment of different residue classes, and rather relies on the existence of very long extra-tight Euler trails in hypergraphs that are allowed to be considerably sparser than complete. Our proof of \Cref{cor: extra-tight tour covering everything} will also go through this theorem.
	For extra-tight trails, the divisibility constraints on the degrees are a bit more complicated for the vertices at the ends, but again we can show that these, together with a high enough minimum $(d-1)$-degree are sufficient.

	\begin{theorem}\label{theo: large min degree implies extra-tight trail}
		For every integer $d\ge 2$, there are an $\alpha>0$ and a positive integer $n_0$ such that the following holds for every integer $n\ge n_0$. Let $G$ be an $n$-vertex $d$-graph with $\delta(G)\ge(1-\alpha)n$ and $\{v_1,\dots,v_d\},\{v_1',\dots,v_d'\}$ disjoint edges in $G$ such that
		\begin{equation}\label{eq: d-trail divisible}
			\deg_G(v)\equiv\begin{cases}
				i(d-1)+1\pmod{d^2}&\exists \, i\in[d]:v\in\lbrace v_i,v'_i\rbrace\\
				0\pmod{d^2}&\text{else.}
			\end{cases}
		\end{equation}
		Then there exists an extra-tight Euler trail with the ends $(v_1,\dots,v_d)$ and $(v'_d,\dots,v'_1)$.
	\end{theorem}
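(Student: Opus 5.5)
We will prove \Cref{theo: large min degree implies extra-tight trail} by the absorption method, following the strategy used for (tight) Euler tours in \cite{glock2020euler, gishboliner2023tight}.

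\textbf{Step 1: reserve absorbers.} At the start, set aside a sparse random-like subgraph $A\subseteq G$ with $\Delta(A)$ at most a small linear fraction. It is built as a vertex-disjoint collection of ``absorbing gadgets''. The defining property is that for every small leftover $d$-graph $L$ whose vertex degrees are divisible by $d^2$ (and which lies in a suitable lattice of ``extra-tight decomposable'' graphs), the graph $A\cup L$ has an extra-tight trail between two prescribed ends covering it exactly.

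The basic gadget absorbs a single extra-tight tour fragment, or a minimal ``$d^2$-divisible'' configuration. Via the turns mentioned in the introduction, this reduces to showing that every $d$-graph $L$ with all degrees $\equiv 0 \pmod{d^2}$ decomposes, after adding some edges of the gadget, into short extra-tight tours. So we need a lattice/degree argument: the decomposition lattice generated by extra-tight tours of bounded length is exactly the set of $d$-graphs whose degrees are divisible by $d^2$. The necessity of this condition was noted via \Cref{obs: degree in trails}.

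\textbf{Step 2: cover almost everything.} On $G-A$, first fix short extra-tight trails starting at $(v_1,\dots,v_d)$ and ending at $(v'_d,\dots,v'_1)$. The end degree conditions $i(d-1)+1$ account for the deficit at position $i$ of an end, so after removing these end trails all remaining degrees are $\equiv 0 \pmod{d^2}$.

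Next, apply an approximate decomposition. Use a Pippenger--Spencer or nibble-type matching in the auxiliary hypergraph whose vertices are the edges of $G$ and whose hyperedges are the edge sets of short extra-tight tours (say, of bounded length). This covers all but $o(n^d)$ edges, and the leftover has small maximum degree. A random greedy cover-down then reduces the leftover to a graph $L$ on a small vertex set, using the minimum degree $(1-\alpha)n$ so that every $(d-1)$-set has many neighbours.

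\textbf{Step 3: connect and finish.} Link all the tours found in Step 2, together with the end trails, into a single extra-tight trail. Connections are made using short connecting extra-tight paths through reserved random edges; the high $(d-1)$-degree gives many such connectors. Splicing tours into a trail at a common $d$-tuple requires some care, which is handled by ``rotating'' along turns. Whatever remains still has all degrees divisible by $d^2$, since every covered piece is degree-balanced. The absorber $A$ then swallows this remainder, yielding the Euler trail.

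\textbf{Main obstacle.} The hardest part is Step 1: constructing absorbers for extra-tight structures and proving the lattice statement that $d^2$-divisibility is sufficient for short extra-tight tours to integrally generate the graph. Extra-tightness imposes $d^2$ distinct edges per occurrence, so the gadgets are much more rigid than in the tight Euler tour setting. The first approach I would try is to find explicit small ``switchers'', meaning pairs of extra-tight trails with the same ends whose edge sets differ by a simple configuration, and then build absorbers hierarchically from these as in \cite{glock2023existence}.
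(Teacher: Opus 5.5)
The step that fails is the one you pass over in Step~3: splicing tours into a trail. Suppose the trail contains $w_0,u_1,\dots,u_{d-1},w_d$ and the extra-tight cycle is $(u_0,u_1,\dots,u_d,c_1,\dots,c_k)$. Then the spliced sequence $w_0,u_1,\dots,u_d,c_1,\dots,c_k,u_0,u_1,\dots,u_{d-1},w_d$ does not cover the union of the two.
\begin{itemize}
\item The $2(d-1)$ edges $E_2$ of \eqref{eq: E2}, which skip one of $u_1,\dots,u_{d-1}$, are no longer covered.
\item The $2(d-1)$ mixed edges $E_1$ of \eqref{eq: E1} are covered in addition.
\end{itemize}
So every splice needs $E_1$ to be unused and creates a new uncovered set $E_2$. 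Absorbing that by further splices just reproduces the defect.

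Turns do not help. The turn of \Cref{obs: C vs C'} yields a complex that is no longer of the form $\cS_T$, i.e.\ no longer an extra-tight trail at all. It even moves $d-1$ between two vertex degrees, which is incompatible with the residues \Cref{obs: degree in trails} prescribes. The paper uses turns only to deduce \Cref{theo: precise value of H(n d)} from the present theorem.

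The missing ingredient is the $(E_1,E_2)$-switcher of \Cref{lem: switcher}. This is a pair of extra-tight trails $T_1,T_2$ with equal ends, $T_1\setminus T_2=E_1$ and $T_2\setminus T_1=E_2$. It is placed in the absorber so that each splice is undone by replacing $T_1$ with $T_2$. Your closing remark points this way, but you never say which configuration must be switched, and explicit switchers seem out of reach for $d\ge 3$. The paper instead takes $T_2$ to be $T_1$ with $u_0$ and $w_0$ swapped. The links of $u_0$ and $w_0$ in $T_1$ are $(d-1)$-graphs $\tilde G_1,\tilde G_2$ differing exactly in $E_1(u_0)$ versus $E_1(w_0)$. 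Each is decomposed into copies of $\extratightpath{2d}{d-1}$ by \Cref{theo: design paper 2 uniformities}, the second time with a forbidden $d$-graph $O$ so that no $d$-edge is covered twice.

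Outside the absorber the paper never splices. Cycles are cut open at $2d$ consecutive vertices inside the next vortex set, so the $d^2-1$ edges lost by cutting lie in $G[U]$. The resulting paths are then joined by fresh connectors (\Cref{lem: path connecting}).

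The second gap is the global architecture. An absorber of this type needs a gadget (a switcher plus a docking sequence $(w_0,u_1,\dots,u_{d-1},w_d)$) for every potential attachment tuple $(u_0,\dots,u_d)$ of the leftover's vertex set. That is $\Theta(m^{d+1})$ constant-size gadgets, which for linear $m$ would exceed $n^d$ edges. So it is only possible if the leftover is confined to a set of constant size $m$ with $1/n\ll 1/m$ (\Cref{lem: Absorber Lemma}).

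Hence the leftover must be pushed down a vortex $U_0\supseteq\dots\supseteq U_\ell$ with $|U_\ell|=m$ (\Cref{lem:There is a vortex}). This is done by a cover-down step (\Cref{lem: Cover Down}) which must:
\begin{itemize}
\item cover \emph{all} of $G[U_i]\setminus G[U_{i+1}]$;
\item use only $\mu^2|U_i|$ of the degree inside $U_{i+1}$;
\item keep all free trail ends inside $U_{i+1}$.
\end{itemize}
The last point matters because removing your end trails does not make all degrees divisible by $d^2$: their free ends pick up nonzero residues.

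A random greedy procedure cannot cover all these edges; it gets stuck on the last sparse leftover edges at vertices outside $U_{i+1}$. The paper instead reserves a sparse $G'$ in advance (\Cref{lem: Supercomplex Lemma}). For every sparse $L$ of the right divisibility, $G'\cup L$ has a packing of extra-tight cycles, each with at most $d$ vertices outside $U$, covering everything outside $G[U]$. Proving this requires a skew supercomplex, degree fixing and the $F$-decomposition theorem for supercomplexes.

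Your nibble step also does not give $o(n^d)$. With $\delta(G)\ge(1-\alpha)n$, the numbers of tours through different edges can differ by a factor $1-\Omega(\alpha)$. So an unweighted nibble only guarantees a leftover of order $\alpha n^d$. The iteration, however, needs leftover degree $\gamma n$ with $\gamma\ll\alpha$, and pieces of length $t\gg 1/\alpha$ so that connectors are negligible, which makes the irregularity worse. The paper's boosted \Cref{lem: approximate decomposition} samples paths from a random walk driven by a normal fractional $K_{d+1}$-decomposition, so every edge is equally likely to be covered. It then uses a weighted nibble (\Cref{theo: nibble}) that also bounds how often each $(d-1)$-set is a path end.

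Finally, an integral lattice statement yields no actual edge-disjoint decomposition. The paper only decomposes the final leftover on $U_\ell$, which is still dense, and does so via the design theorem.
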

	
	The straight simplicial complex of Bohman and Newman~\cite{bohman2022complexes} is constructed through a random sequence of vertices forming an extra-tight trail and thus provides an asymptotically optimal solution for Theorem~\ref{theo: large min degree implies extra-tight trail} in $K_n^{(d)}$.
	Their random process however gets stuck at some point, in the sense that the addition of any vertex as the next entry of the sequence would violate the extra-tight trail conditions. 
	
	To avoid getting stuck, it is natural to employ the method of absorption. 
	We start with constructing a special extra-tight trail $A$ called ``absorber'', which has a lot of built-in flexibility, so that it is able to incorporate into itself the leftover edges $L$ of {\em any} asymptotically optimal extra-tight trail our process ends up building. By this we mean that $A\cup L$ can be covered by an extra-tight trail whose ends are the same as those of $A$. 
	We make sure that the asymptotically optimal extra-tight trail our process builds contains $A$ as a subsequence and hence when the process gets stuck, the leftover can be absorbed into it, i.e. the subsequence of $A$ is replaced by that of the extra-tight Euler trail of $A\cup L$.  
	
	For our proof we adapt some of the heavy machinery developed by the first author together with Kühn, Lo, and Osthus~\cite{glock2023existence} in their work on decompositions of hypergraphs and combine it with several novel ideas. One of these contributions includes intricate switcher mechanisms, built into the absorber trail, the role of which is to correct the ``mistakes'' which happen during the absorption process. Another novelty is a significant strengthening of the existing asymptotically optimal constructions \cite{bohman2022complexes,dkebski2017harmonious,gould2025advancing}, which also ensures that the maximum degree of the leftover hypergraph can be arbitrarily small. 
	
	\subsection{Proof overview and organization of the paper.}\label{sec: Proof overview}
	
	\def\withalltheos{1}
	\begin{figure}[htpb]
		\centering
		\resizebox{\linewidth}{!}{
			\begin{tikzpicture}[node distance=4cm and 2.5cm]
				\node[theorem] (Approx) {Approximate Decomposition \Cref{lem: approximate decomposition}};
				\node[theorem, right=of Approx] (Supercomplex) {Supercomplex Lemma};
				
				\node[theorem, below=of Supercomplex] (Cover Down) {Cover Down \Cref{lem: Cover Down}};
				\node[theorem, below=of Cover Down] (extra-tight) {\Cref{theo: large min degree implies extra-tight trail}};
				\node[theorem, right=of Cover Down] (Absorber) {Absorber \Cref{lem: Absorber Lemma}};
				\node[theorem, left= of Cover Down] (Vortex) {Vortex \Cref{lem:There is a vortex}};
				\node[theorem, below=of extra-tight](diameter) {\Cref{theo: precise value of H(n d)}};
				\node[theorem, left=of extra-tight] (Turn) {\Cref{obs: C vs C'}};
				\ifthenelse{\equal{\withalltheos}{1}}{\node[theorem, right=of diameter] (closed) {\Cref{cor: extra-tight tour covering everything}};
					\draw[arrow] (extra-tight) --node[edgelabel]{quickly implies} (closed);
				}{}
				
				\draw[arrow] (Approx) -- node[edgelabel] {covers most of the edges} (Cover Down);
				\draw[arrow] (Supercomplex) -- node[edgelabel] {sets aside edges before to absorb leftover} (Cover Down);
				\draw[arrow] (Vortex) -- node[edgelabel] {gives vortex} (extra-tight);
				\draw[arrow] (Cover Down) -- node[edgelabel] {iteratively covers everything of a level in the vortex} (extra-tight);
				\draw[arrow] (Absorber) --node[edgelabel]{sets aside edges before to absorb leftover of last level} (extra-tight);
				\draw[arrow] (extra-tight) --node[edgelabel]{implies once degrees are fixed} (diameter);
				\draw[arrow] (Turn) --node[edgelabel]{fixes degrees using turns} (diameter);
				
		\end{tikzpicture}}
		\caption{Overall proof structure}
		\label{fig: Float chart}
	\end{figure}
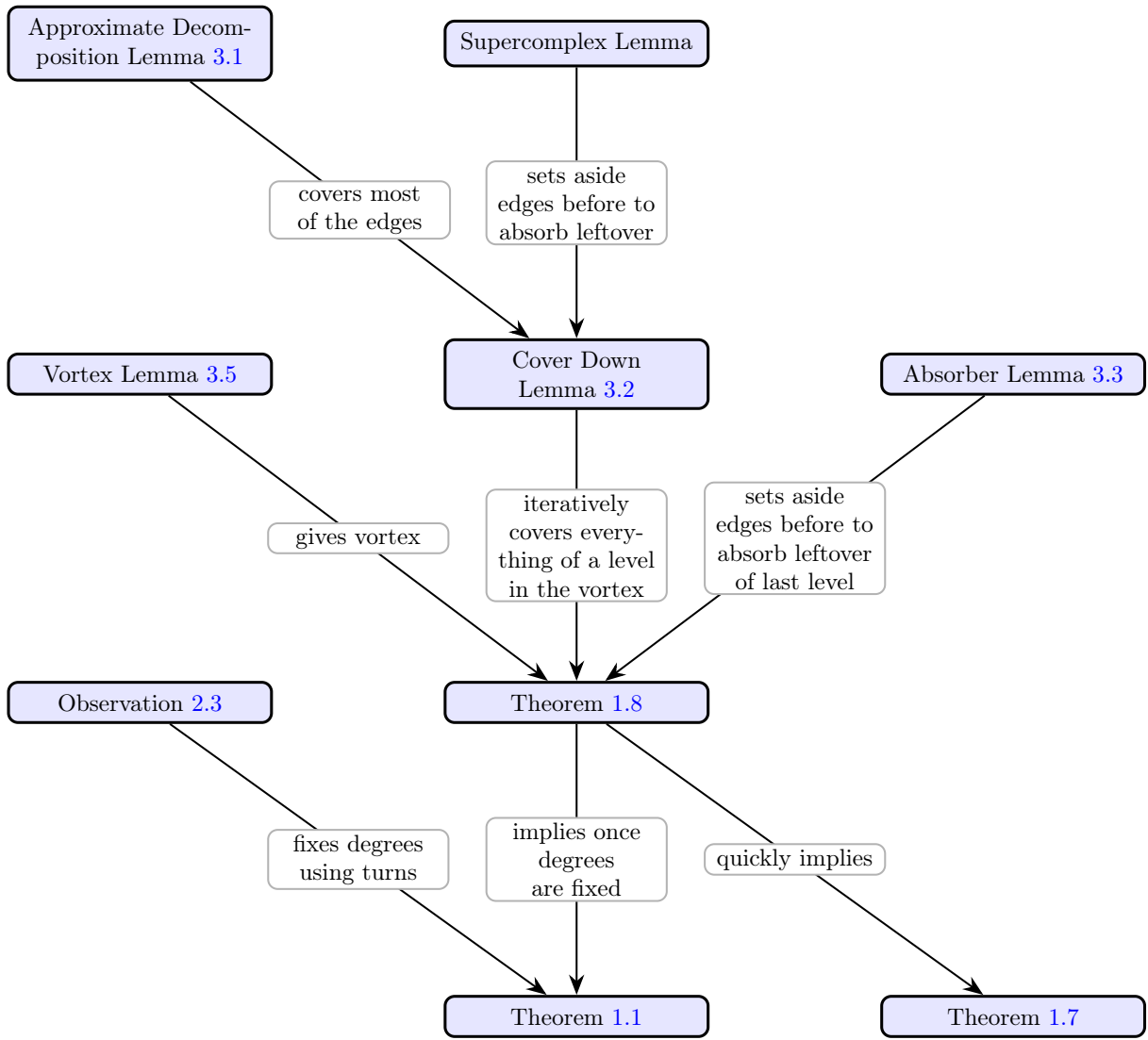
	
	We now outline the overall proof structure which is also illustrated in Figure~\ref{fig: Float chart}. 
	In Section~\ref{sec:proof H(n d)}, we derive  
	Theorem~\ref{cor: extra-tight tour covering everything} 
	and \Cref{theo: precise value of H(n d)}
	from \Cref{theo: large min degree implies extra-tight trail}. 
	The former is quite straightforward; for the latter, one needs to resolve the issue that the divisibility condition on the vertex degrees might not be satisfied. 
	We overcome this obstacle by using the fact that our simplicial $d$-complex is not required to be straight, but can make ``turns'' (cf.~\Cref{fig:straight vs. non-straight}). 
	This will allow us to build an initial segment of our simplicial complex such that removing the $d$-sets they cover (together with up to $d-1$ additional $d$-sets), we are left with a $d$-graph satisfying the divisibility and minimum degree conditions of \Cref{theo: large min degree implies extra-tight trail}. The application of \Cref{theo: large min degree implies extra-tight trail} then gives us a straight simplicial complex, which can be concatenated with the initial segment, resulting in a simplicial complex covering all but at most $d-1$ of the $d$-sets of $[n]$, which is optimal by Observation~\ref{obs: size of F and its shadow}. 
	This reduction is an essential step in our translation of the problem on diameter of simplicial complexes to design theory.  
	
	The proof of \Cref{theo: large min degree implies extra-tight trail} is inspired by the approach of Glock, Joos, Kühn, and Osthus~\cite{glock2020euler} for proving the conjecture of Chung, Diaconis, and Graham~\cite{CHUNG199243} that $K_n^{(d)}$ has a tight $d$-uniform Euler tour when $\binom{n-1}{d-1}$ is divisible by~$d$ and $n$ is large enough. Very roughly speaking, their approach works as follows. First, they construct a relatively short closed tight walk in which every ordered $(d-1)$-set of vertices appears at least once consecutively. This walk serves as a ``backbone'' of the final tour. 
	Afterwards, they remove the edges of this tour and decompose the remaining edges into tight cycles. This second step is achieved by applying a deep result from design theory~\cite{glock2023existence}, a generalization of Keevash's existence of designs \cite{Keevash} to decompositions into arbitrary hypergraphs~$F$. The simple but crucial point is then that these cycles can be merged into the initial tour to form one tour.
	
	The analogue of this argument does not work in the extra-tight setting. While one could still find a ``backbone'' extra-tight trail and could also apply the $F$-decomposition theorem to decompose into extra-tight cycles, the problem is that those extra-tight cycles cannot be simply merged into the backbone trail. Indeed, due to the edges that skip a vertex in the sequence, some edges that were covered before by either the backbone trail or the extra-tight cycle will not be used anymore, while new edges are needed that might not be available for this. 
	
	One of the main novel ingredients in our approach is the construction of \emph{switchers}. These are flexible structures, akin to absorbers, that allow us to reverse the side effects of the merging operation. The construction of these switchers is intricate, and again showcases the increase in difficulty when transitioning from the $2$-dimensional case to higher dimensions: For $d=2$, one can construct the necessary switchers explicitly, but for $d\ge 3$, this seems infeasible. 
    
	A new challenge that arises with this is that we cannot build these switchers for all possible locations because of space barriers. To overcome this, we will employ the iterative absorption method. This will enable us to ensure that the final merging step of inserting extra-tight cycles into a backbone trail is only needed on a set $U_\ell$ of constant size. For this, in a first step, a vortex is constructed, which is a sequence of nested vertex subsets, the final one being~$U_\ell$. In each step, the leftover is pushed into the next subset. This is achieved by the Cover Down Lemma, which in turn relies crucially on an Approximate Decomposition Lemma similar to the asymptotic solutions in \cite{bohman2022complexes,dkebski2017harmonious}. However, for the iterative procedure to not get out of control, we need an Approximate Decomposition Lemma with ``boosted'' error parameters. In spirit, this is similar to the Boost Lemma from~\cite{glock2023existence}, and the implementation is inspired by the recent work in~\cite{gishboliner2023tight}.
	
	The proof of Theorem~\ref{theo: large min degree implies extra-tight trail} stretches over several sections.
	In \Cref{sec:proof extra-tight trail}, we first state the three key lemmas (the Approximate Decomposition Lemma, the Cover Down Lemma, and the Absorber Lemma) and use them to derive \Cref{theo: large min degree implies extra-tight trail}. The proofs of these lemmas are then done in the subsequent sections. The Absorber Lemma is proved in \Cref{sec: Absorber}, the Approximate Decomposition Lemma in \Cref{sec: Approximate}, and the Cover Down Lemma in \Cref{sec: Cover Down}. The latter two use the concept of rooted embeddings from \cite{glock2023existence}. In \Cref{sec: Rooted Embeddings}, we quickly repeat their definition and state the lemmas that will be used.	
		
		\subsection{Notation and Probabilistic Tools}
		We will follow the same notation as \cite{glock2023existence}. For completeness, we state them again here. 
		
		For a positive integer, let $[n]\coleq\{1,\dots,n\}$ and $[n]_{0}\coleq \{0,1,\dots,n\}$. For a set $X$ and $i\in\N_0$, let $\binom{X}{i}$ be the set of all $i$-subsets of~$X$. 
		
		By the hierarchy $x\ll y$ we mean that for every $y\in(0,1)$, there is an $x_0\in (0,1)$ such that for all $0<x<x_0$, the subsequent statement holds. If a hierarchy contains several $\ll$, they are to be read from right to left. Furthermore, if a hierarchy contains $1/x$, $x$ is assumed to be a positive integer.
		
		A \emph{$d$-graph} $H$ is a $d$-uniform hypergraph, where we often identify $H$ with its edge set $E(H)$.
        If $G$ is a $d$-graph and $S\subs V(G)$ a set with $\abs{S}\in[d]_0$, the \emph{link graph} $G(S)$ is the $(d-\abs{S})$-graph that has vertex set $V(G)\backslash S$ and contains all subsets $S'$ such that $S\cup S'\in G$. We write $\deg(S)\coleq\abs{G(S)}$. For $i\in[d-1]_0$, the \emph{minimum} resp.\@ \emph{maximum $i$-degree} $\delta_i(G)$ resp.\@ $\Delta_i(G)$ is the minimum resp.\@ maximum value of $\deg(S)$ over all $S\in \binom{V(G)}{i}$. Moreover, $\delta(G)\coleq \delta_{d-1}(G)$ and $\Delta(G)\coleq \Delta_{d-1}(G)$. Finally, if $S\in \binom{V(G)}{d-1}$ and $U\subs V(G)$, $\deg_G(S,U)$ is the number of $u\in U$ such that $S\cup\{u\}\in G$.
		
		The hypergraphs of extra-tight trails and extra-tight tours with no repeated vertex will be especially important in our constructions. 
		
		\begin{definition} 
			The $d$-graphs $\cS_T^{(d)}$ and $\C_T^{(d)}$ created from a sequence $T=(v_1,\dots,v_k)$ of pairwise {\em distinct} vertices is called the ($d$-uniform) \emph{tight path}, denoted by $P_{k}^{(d)}$, and the ($d$-uniform) \emph{tight cycle}, denoted by $C_k^{(d)}$, respectively. The shadows $\partial P_{k}^{(d+1)}$ and $\partial C_k^{(d+1)}$ are called the ($d$-uniform) {\em extra-tight path} and {\em extra-tight cycle} and are denoted by $\extratightpath{k}{d}$ and $\extratightcycle{k}{d}$, respectively.
		\end{definition}

        We will need to pay particular attention to the ends of paths and extra-tight trails.

		\begin{definition}
			If $T$ is an extra-tight trail, then a set $S\subs V(H)$ \emph{appears at an end} of $T$ if $S\subs\{v_1,\dots,v_d\}$ or $S\subs\{v_{k-d+1},\dots,v_k\}$ and analogously for an extra-tight path if $T$ does not repeat vertices.
		\end{definition}
		
		We will use the following version of a Chernoff bound several times.
		
		\begin{theorem}[\cite{janson2011random}, Corollary 2.3]\label{theo: Chernoff}
			Let $X\coleq\sum_{i\in [n]} X_i$ where the $X_i$ are independently distributed in $[0,1]$ for $i\in[n]$. Then, for every $0<\beta\le 3/2$, we have
			\[\P\big[\abs{X-\E[X]}\ge\beta\E[X]\big]\le 2\exp\!\left(-\frac{\beta^2}{3}\E[X]\right).\]
		\end{theorem}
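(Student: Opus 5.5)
This is the standard Chernoff bound, and I would prove it with the exponential moment method, treating the upper and lower tails separately and adding the two estimates to get the factor $2$. Write $\mu\coleq\E[X]=\sum_{i\in[n]}p_i$ with $p_i\coleq\E[X_i]$. Since $x\mapsto e^{\lambda x}$ is convex, for $x\in[0,1]$ and every real $\lambda$ we have $e^{\lambda x}\le 1+x(e^\lambda-1)$. Taking expectations and using $1+y\le e^y$ gives
\[\E\big[e^{\lambda X_i}\big]\le \exp\big(p_i(e^\lambda-1)\big).\]
By independence, $\E[e^{\lambda X}]\le\exp(\mu(e^\lambda-1))$ for all $\lambda\in\mathbb R$.

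\emph{Upper tail.} For $\lambda>0$, Markov's inequality applied to $e^{\lambda X}$ gives
\[\P[X\ge(1+\beta)\mu]\le\exp\big(\mu(e^\lambda-1)-\lambda(1+\beta)\mu\big).\]
I would choose $\lambda=\log(1+\beta)$, which yields the bound $\exp(-\mu\varphi(\beta))$ with $\varphi(x)\coleq(1+x)\log(1+x)-x$.

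\emph{Lower tail.} For $0<\beta<1$, I would apply Markov to $e^{-\lambda X}$ with $\lambda=-\log(1-\beta)>0$. This gives
\[\P[X\le(1-\beta)\mu]\le\exp(-\mu\varphi(-\beta)).\]
The degenerate cases are handled directly. If $\beta>1$, the event $X\le(1-\beta)\mu$ is empty when $\mu>0$, since $X\ge0$. If $\beta=1$, the choice $\lambda\to\infty$ gives $\P[X\le 0]\le e^{-\mu}\le e^{-\mu/3}$. If $\mu=0$, there is nothing to prove.

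It then remains to verify two elementary inequalities:
\[\varphi(-\beta)\ge\beta^2/2 \quad\text{for }\beta\in(0,1), \qquad \varphi(\beta)\ge\beta^2/3 \quad\text{for }\beta\in(0,3/2].\]
For the first, I would use the series $\varphi(-x)=\sum_{k\ge2}x^k/(k(k-1))$, which has only nonnegative terms, so it is at least its first term $x^2/2$. For the second, I would prove the sharper standard estimate
\[\varphi(x)\ge\frac{x^2}{2(1+x/3)}\qquad (x\ge0).\]
To do so, set $g(x)\coleq\varphi(x)-x^2/(2+2x/3)$. Then $g(0)=g'(0)=0$, and I would show $g'\ge0$ by comparing $\log(1+x)$ with the rational function obtained from the derivative of $x^2/(2+2x/3)$; alternatively, one can differentiate once more and check the sign of $g''$. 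Given this estimate, $2(1+x/3)\le3$ whenever $x\le3/2$, which yields $\varphi(\beta)\ge\beta^2/3$ on the required range.

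Adding the two tail bounds gives the claimed $2\exp(-\beta^2\mu/3)$. I expect the only step requiring care to be the calculus verification of $\varphi(x)\ge x^2/(2(1+x/3))$. This is precisely where the restriction $\beta\le3/2$ in the statement enters; the rest of the argument is routine.
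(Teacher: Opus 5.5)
Your proof is correct and is the standard exponential-moment argument behind the cited Corollary 2.3 of Janson, {\L}uczak and Ruci\'nski, including the convexity bound $\E[e^{\lambda X_i}]\le 1+p_i(e^\lambda-1)$ for $[0,1]$-valued summands; the paper itself gives no proof and simply quotes that result. The calculus step you left open does go through by your second suggestion: with $g(x)=\varphi(x)-\frac{3x^2}{2(3+x)}$ one gets $g''(x)=\frac{1}{1+x}-\frac{27}{(3+x)^3}\ge 0$ for $x\ge 0$, because $(3+x)^3\ge 27(1+x)$.
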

		
		A very similar result holds for hypergeometric distributions as well:
		
		\begin{theorem}[\cite{janson2011random}, Theorem 2.10, Equation (2.6)]\label{theo: hypergeometric}
			Let $X$ have a hypergeometric distribution\footnote{Recall that a random variable $X$ has hypergeometric distribution with parameters $N$, $n$, $m$ if $X\coleq\abs{S\cap [m]}$ where $S$ is a subset chosen uniformly at random among all subsets of $[N]$ of size $n$.} with parameters $N$, $n$, $m$. Then for all $0\le t$, we have
			\begin{align}
				\P\big[X\le \E[X]-t\big]&\le \exp\!\left(-\frac{t^2}{2\E[X]}\right).\label{eq: absolute hypergeometric chernoff}
			\end{align}
		\end{theorem}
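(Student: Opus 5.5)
The plan is to reduce the hypergeometric lower tail to the binomial one, using Hoeffding's comparison between sampling without and with replacement, and then run the standard exponential-moment (Chernoff) argument for the binomial. Write $X=\abs{S\cap[m]}$ with $S$ uniform among $n$-subsets of $[N]$, let $p\coleq m/N$ and $\mu\coleq\E[X]=np$. Let $Y\sim\mathrm{Bin}(n,p)$, realised as the number of draws landing in $[m]$ when $n$ elements are drawn from $[N]$ \emph{with} replacement. Then $\E[Y]=\mu$ as well.

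\textbf{Step 1 (comparison).} I will show that $\E[f(X)]\le\E[f(Y)]$ for every convex $f$, and in particular $\E[e^{-\lambda X}]\le\E[e^{-\lambda Y}]$ for all $\lambda\ge0$. This is Hoeffding's classical observation. One writes $X=\sum_{j\le n}\mathds{1}[s_j\in[m]]$ for a uniformly random ordered sample $(s_1,\dots,s_n)$ without replacement, and $Y$ analogously with replacement. One then conditions on the multiset of distinct values appearing in the with-replacement sample: the with-replacement sum is then a convex combination of sums over distinct elements, and Jensen's inequality applies. Equivalently, $X$ is a martingale-type conditional expectation of an average of copies of $Y$-like sums, so $X\preceq_{\mathrm{cx}}Y$. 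I expect this to be the only step needing genuine care. The cleanest route is to quote Hoeffding (1963, Theorem 4), since the paper only uses the resulting bound.

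\textbf{Step 2 (binomial Chernoff bound).} For $\lambda\ge0$, Markov's inequality and independence give
\[
\P[X\le\mu-t]\le e^{\lambda(\mu-t)}\,\E[e^{-\lambda X}]\le e^{\lambda(\mu-t)}\big(1-p+pe^{-\lambda}\big)^n\le \exp\!\big(\lambda(\mu-t)+\mu(e^{-\lambda}-1)\big),
\]
using $1+x\le e^x$. For $0\le t<\mu$, optimise by choosing $\lambda=-\log(1-t/\mu)$. This yields $\P[X\le\mu-t]\le\exp(-\mu\,\varphi(-t/\mu))$, where $\varphi(x)\coleq(1+x)\log(1+x)-x$. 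For $t\ge\mu$ the claim is either trivial or follows by letting $\lambda\to\infty$: if $t>\mu$ the event is empty, since $X\ge0$, and if $t=\mu$ the bound $(1-p)^n\le e^{-\mu}\le e^{-\mu/2}$ suffices.

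\textbf{Step 3 (elementary inequality).} Finally I will check that $\varphi(-x)\ge x^2/2$ for $x\in[0,1]$. Set $g(x)\coleq\varphi(-x)-x^2/2$. Then $g(0)=g'(0)=0$, and $g''(x)=\frac{1}{1-x}-1\ge0$. Hence $\mu\,\varphi(-t/\mu)\ge t^2/(2\mu)$, which gives exactly \eqref{eq: absolute hypergeometric chernoff}.
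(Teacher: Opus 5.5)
Your proof is correct and follows the same route as the cited source. The paper gives no proof of its own; it only refers to Janson--{\L}uczak--Ruci\'nski, Theorem~2.10, whose argument is exactly yours: Hoeffding's comparison $\E f(X)\le\E f(Y)$ for convex $f$ with $Y\sim\mathrm{Bin}(n,m/N)$, then the exponential-moment bound $\exp(-\mu\varphi(-t/\mu))$, then $\varphi(-x)\ge x^2/2$ on $[0,1]$.

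The one slip is in your informal sketch of Step~1, where the averaging is stated backwards. It is the without-replacement sum that is an average, over relabellings, of sums with the with-replacement repetition pattern, and this is what makes Jensen give $\E f(X)\le\E f(Y)$. Your ``equivalently'' sentence and the appeal to Hoeffding (1963, Theorem~4) state it in the correct direction.
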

		
		At one point, we will need the following variant which is called McDiarmid's Inequality and can also be derived from the Azuma-Hoeffding Inequality.
		
		\begin{theorem}[\cite{janson2011random}, Remark 2.28]\label{lem: McDiarmid}
			Let $X_1,\dots,X_n$ be independent random variables with $X_i$ taking values in $A_i$ for each $i\in [n]$. Furthermore, let $c_1,\dots,c_n$ be positive numbers and let $f\colon A_1\times \dots\times A_n\to \mathbb R$ be a function satisfying $\abs{f(x)-f(\bar x)}\le c_i$ whenever the vectors $x,\bar x\in A_1\times\dots\times A_n$ differ only in the $i$-th coordinate. Then, the random variable $Y\coleq f(X_1,\dots,X_n)$ satisfies
			\[\P\big[\abs{Y-\E[Y]}>t\big]\le 2\exp\!\left(-\frac{2t^2}{\sum_{i=1}^nc_i^2}\right)\]
			for all $t>0$.
		\end{theorem}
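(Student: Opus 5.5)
We prove the statement via the Doob martingale of $Y$ and the Azuma--Hoeffding exponential moment method, taking care to obtain the sharp constant $2$ in the exponent. Let $\mathcal F_i$ be the $\sigma$-algebra generated by $X_1,\dots,X_i$ (with $\mathcal F_0$ trivial), and set $Z_i\coleq \E[Y\mid\mathcal F_i]$. Then $Z_0=\E[Y]$, $Z_n=Y$, and $D_i\coleq Z_i-Z_{i-1}$ are martingale differences, so that $Y-\E[Y]=\sum_{i=1}^n D_i$.

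The first key step is to show that, conditionally on $\mathcal F_{i-1}$, the increment $D_i$ lies in an interval of length at most $c_i$; note that the cruder bound $\abs{D_i}\le c_i$ would only give $2t^2/(4\sum c_i^2)$. By independence of the $X_j$, we can write $Z_i=g_i(X_1,\dots,X_i)$ with
\[g_i(x_1,\dots,x_i)=\E\big[f(x_1,\dots,x_i,X_{i+1},\dots,X_n)\big].\]
For fixed $x_1,\dots,x_{i-1}$, any two values $g_i(\dots,x)$ and $g_i(\dots,x')$ differ by at most $c_i$, since the integrands differ only in coordinate $i$. Hence $D_i\in[L_i,L_i+c_i]$, where
\[L_i=\inf_x g_i(X_1,\dots,X_{i-1},x)-Z_{i-1}\]
is $\mathcal F_{i-1}$-measurable. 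Moreover, $\E[D_i\mid\mathcal F_{i-1}]=0$. Independence is used crucially here: it lets the conditional expectation be computed by integrating out $X_{i+1},\dots,X_n$ with the same law regardless of $x_i$.

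Second, we apply Hoeffding's lemma: a mean-zero random variable supported in an interval of length $c$ satisfies $\E[e^{sD}]\le e^{s^2c^2/8}$. We prove it by bounding $e^{sx}$ by the chord of the convex function on the interval, which reduces to showing that $\varphi(u)=-pu+\log(1-p+pe^{u})$ satisfies $\varphi(0)=\varphi'(0)=0$ and $\varphi''\le 1/4$, and then using Taylor's theorem. Applying this conditionally gives $\E[e^{sD_i}\mid\mathcal F_{i-1}]\le e^{s^2c_i^2/8}$. Peeling off the factors by the tower property, one for each $i=n,n-1,\dots,1$, then yields
\[\E\big[e^{s(Y-\E[Y])}\big]\le \exp\!\Big(\tfrac{s^2}{8}\textstyle\sum_i c_i^2\Big).\]

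Finally, Markov's inequality gives
\[\P\big[Y-\E[Y]>t\big]\le \exp\!\Big(-st+\tfrac{s^2}{8}\textstyle\sum c_i^2\Big).\]
Choosing $s=4t/\sum c_i^2$ makes the right-hand side $\exp(-2t^2/\sum c_i^2)$. Applying the same argument to $-f$, which satisfies the same hypotheses, bounds the lower tail, and a union bound produces the factor $2$. The main obstacle is the interval-length claim in the first step together with a clean proof of Hoeffding's lemma. Measurability issues for general sets $A_i$ are routine, for instance by assuming the $A_i$ are standard Borel spaces or finite, which suffices for our applications.
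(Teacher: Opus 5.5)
Your proof is correct and is the standard Doob-martingale derivation of McDiarmid's inequality. The paper gives no proof of its own; it cites \cite{janson2011random} and only remarks that the bound follows from Azuma--Hoeffding, so your argument is the derivation it alludes to, and you rightly point out that the sharp constant $2$ needs each increment to lie in an $\mathcal F_{i-1}$-measurable interval of length $c_i$ together with Hoeffding's lemma, since plain Azuma--Hoeffding with $\abs{D_i}\le c_i$ only gives $\exp\bigl(-t^2/(2\sum_i c_i^2)\bigr)$.
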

		
		\section{Proof of Theorem~\ref{theo: precise value of H(n d)} and \ref{cor: extra-tight tour covering everything}
		}\label{sec:proof H(n d)}
		
		In this section, we show how the main \Cref{theo: precise value of H(n d)} and \Cref{cor: extra-tight tour covering everything} can be derived from \Cref{theo: large min degree implies extra-tight trail}. The remainder of the paper will then concern the proof of \Cref{theo: large min degree implies extra-tight trail}.
		
		First, we need an observation that justifies the degree requirement \eqref{eq: d-trail divisible}.
		
		\begin{observation}\label{obs: degree in trails}
			If $C=(v_1,\dots,v_k)$ is an extra-tight tour in a $d$-graph with $k\ge d+3$, then $\deg_C(v)\equiv 0\pmod{d^2}$ for all $v\in V(C)$. If, moreover, $C$ is an extra-tight cycle, then $\deg_C(v)=d^2$. If $T=(v_1,\dots,v_k)$ is an extra-tight trail in a $d$-graph with $k\ge 2d$ where the $2d$ vertices at the ends of $T$ are pairwise distinct, then
			\begin{align*}\deg_T(v)&\equiv\begin{cases}
					i(d-1)+1&\exists i\in[d]:v\in\lbrace v_i,v_{k+1-i}\rbrace\\
					0&\text{else}
				\end{cases}\pmod{d^2}.
				\intertext{If, moreover, $T$ is an extra-tight path, then }
				\deg_T(v)&=\begin{cases}
					i(d-1)+1&\exists i\in[d]:v\in\lbrace v_i,v_{k+1-i}\rbrace\\
					d^2&\text{else.}
			\end{cases}\end{align*}
		\end{observation}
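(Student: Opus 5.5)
The approach is a local double count around each occurrence of a vertex in the sequence. Fix an index $j$ and consider the occurrence $v=v_j$. Define $N_j$ to be the set of covered edges $e_{\iota,\sigma}$ that contain the entry at position $j$ (rather than $v$ at some other position). An edge $e_{\iota,\sigma}$ is the $(d+1)$-window $\{v_\iota,\dots,v_{\iota+d}\}$ with the entry at position $\iota+\sigma$ removed, where $\sigma\in[d]$. Here the first entry is never the one removed.

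In the tour case, indices are taken cyclically. The position $j$ lies in exactly $d+1$ windows, namely $\iota=j-d,\dots,j$. The window starting at $\iota=j$ has $v_j$ as its first entry, so all $d$ of its edges contain $v_j$. Each of the other $d$ windows contains $v_j$ at a position $\iota+\sigma_0$ with $\sigma_0\in[d]$. Such a window contributes the $d-1$ edges obtained by removing some $\sigma\ne\sigma_0$. This gives $|N_j|=d+d(d-1)=d^2$.

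Next I check that the sets $N_j$ over occurrences of $v$ partition the edges of $C$ containing $v$. Every covered edge containing $v$ contains some occurrence of it, so the $N_j$ cover these edges. For disjointness, I use that within any $d+1$ consecutive entries no vertex repeats; this is implicit because all $e_{\iota,\sigma}$ must be $d$-sets. Hence an edge, which is a set of positions inside one window of length $d+1$, contains at most one position carrying the label $v$. The definition of extra-tight tour requires the edges $e_{\iota,\sigma}$ to be pairwise distinct as sets. So distinct (window, removed position) pairs give distinct edges, and no edge is counted under two occurrences. The hypothesis $k\ge d+3$ is what I would use here: it ensures the $d+1$ windows through $j$ are distinct cyclic windows, so no window is counted twice. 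Therefore $\deg_C(v)=d^2\cdot(\#\text{occurrences})\equiv0$. For an extra-tight cycle each vertex occurs once, so $\deg_C(v)=d^2$.

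For a trail the same count holds, except near the ends. Windows run over $\iota\in[k-d]$, and we add the extra edge $e_{k-d+1,d}=\{v_{k-d+1},\dots,v_k\}$. For an interior occurrence, i.e.\ $d<j\le k-d$, the count is again $d^2$. For a position $j=i\le d$, only the windows $\iota=1,\dots,i$ exist. Window $\iota=i$ gives $d$ edges, and each of the $i-1$ earlier windows gives $d-1$ edges, for a total of $(i-1)(d-1)+d=i(d-1)+1$. For a position $j=k+1-i$ with $i\in[d]$, the windows $\iota=j-d,\dots,k-d$ exist; there are $i$ of them, and each gives $d-1$ edges. The extra edge $e_{k-d+1,d}$ contains $v_j$, giving $i(d-1)+1$ in total. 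The check that the window with first entry $v_j$ is absent is routine.

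Since the $2d$ end vertices are pairwise distinct, each vertex has at most one end occurrence. All its other occurrences are interior and contribute $d^2$ each; here $k\ge2d$ guarantees that the two end blocks do not overlap. This gives the congruence. For an extra-tight path every vertex occurs once, which gives the exact values. The only real care point is the disjointness/distinctness bookkeeping in the second paragraph; everything else is direct counting.
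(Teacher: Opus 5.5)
Your proof is correct and follows essentially the same route as the paper. Both arguments make the same local count of $(d+1)(d-1)+1=d^2$ covered edges per occurrence, truncated to $i(d-1)+1$ at the ends of a trail, and both get disjointness across occurrences from the pairwise distinctness of the $e_{\iota,\sigma}$.

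One remark on the hypothesis $k\ge d+3$. Distinctness of the windows through a position only needs $k\ge d+1$. The paper instead uses $k\ge d+3$ to show that every covered $d$-set arises from a \emph{unique} pair $(\iota,\sigma)$. That is what makes the cycle clause hold for $\extratightcycle{k}{d}$ defined as the shadow $\partial C_k^{(d+1)}$, where distinctness is not part of the definition. Indeed it fails for $k=d+2$: the shadow is then $K_{d+2}^{(d)}$, whose degrees are $\binom{d+1}{2}\ne d^2$. This is also the form in which the observation is used in \Cref{lem: Deg-vector of Fcycle and Fpath}.
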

		\begin{proof}
			Given an edge $e$ in $C$, $k\ge d+3$ implies that at least 3 vertices are not in~$e$. By the definition of extra-tightness, each edge can skip at most one vertex. Therefore, all vertices covered by $e$ are consecutive vertices on $C$ except for at most one. Thus, there are unique $\iota\in[k]$, $\sigma\in[d]$ such that $e=e_{\iota,\sigma}$ (cf.~\Cref{def: extra-tight trail tour}). Hence, $k\ge d+3$ implies that the $e_{\iota,\sigma}$ are pairwise distinct.
			
			The label $v_\ell$ is exactly in the $e_{\iota,\sigma}$ with $\iota\in\{\ell-d,\ell-d+1,\dots,\ell\}$ and $\sigma\in[d]\backslash\{\ell-\iota\}$ (where $e_{\iota,\sigma}\coleq e_{\iota+k,\sigma}$ for $-d<\iota<0$). These are exactly $(d+1)(d-1)+1=d^2$ edges (the $+1$ comes from $\iota=\ell$ where $\sigma\in[d]$). Thus, $\deg_C(v)\equiv 0\pmod{d^2}$ for all $v\in V(C)$.
			
			Next, we consider the extra-tight trail~$T$. If $v_\ell$ is a label that is not at the end of $T$, then $v_\ell$ is in the same $d^2$ edges as above. Since the extra-tight trails are symmetric, we only have to consider $v_\ell$ with $\ell\in[d]$. There, the label $v_\ell$ is exactly in the $e_{\iota,\sigma}$ with $\iota\in[\ell]$ and $\sigma\in[d]\backslash\{\ell-\iota\}$ (since $k\ge 2d$, $[\ell]\subs [k-d]$ whence all these $e_{\iota,\sigma}$ are indeed defined). Therefore, $v_\ell$ is in $\ell(d-1)+1$ edges. Since the vertices at the end are pairwise distinct, the result follows.
		\end{proof}
	
	Frequently, we will have to connect several extra-tight trails into one long extra-tight trail. The following lemma allows us to do this.
	
	\begin{lemma}\label{lem: glue two extra-tight trails}
		Let $n$, $d$, $k$, $\ell$ be non-negative integers with $d\ge 2$. Let $A=(a_1,\dots,a_k)$ and $B=(b_1,\dots,b_\ell)$ be the (possibly empty) sequences of two extra-tight trails in an $n$-vertex graph $G$ such that $A\cap B=\emptyset$. Furthermore, assume that $U\subs V(G)$ such that \[\abs U> d^2\big(n-\delta(G)+\Delta(A)+\Delta(B)+2d^3\big)+4d.\] 
		Then there are $2d$ distinct vertices $v_1,\dots,v_{2d}\in U$ such that
		\[(a_1,\dots,a_k,v_1,\dots,v_{2d},b_1,\dots,b_\ell)\]
		is the sequence of an extra-tight trail in $G$.
	\end{lemma}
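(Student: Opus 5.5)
We choose the vertices $v_1,\dots,v_{2d}$ greedily, one at a time, in this order. Write $S=(s_1,\dots,s_{k+2d+\ell})$ for the concatenated sequence. The new edges of the extra-tight trail are exactly the sets $e_{\iota,\sigma}$ of $S$ that contain at least one $v_j$. Every such set consists of at most $d+1$ consecutive entries of $S$ minus one, and each lies in a window that meets $\{v_1,\dots,v_{2d}\}$. Since there are $2d\ge d$ new vertices, no window of length $d+1$ meets both $A$ and $B$. Hence every new edge either meets the tail $(a_{k-d+1},\dots,a_k)$ of $A$, or meets the head $(b_1,\dots,b_d)$ of $B$, or consists only of $v$'s. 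The edges internal to $A$ and to $B$ are already distinct and lie in $G$.

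When we choose $v_j$, we must ensure the following conditions.
\begin{enumerate}
\item[(a)] $v_j\in U$ and $v_j$ is distinct from all vertices of $A$, of $B$, and from $v_1,\dots,v_{j-1}$.
\item[(b)] Every new edge whose last-added vertex is $v_j$ (i.e.\ the edges inside the window ending at $v_j$, and for $j>d$ also those that will involve $b$'s) lies in $G$.
\item[(c)] These edges are distinct from each other and from all edges already covered, namely edges of $A$, of $B$, and earlier new edges.
\end{enumerate}

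For (b), each new edge $e$ containing $v_j$ has the form $e=S'\cup\{v_j\}$ with $S'$ a $(d-1)$-set of already determined vertices (for $j\le d$: earlier $v$'s and $a$'s; for $j>d$: also $b$'s, which are fixed from the start). The number of such edges is at most $d^2$ by \Cref{obs: degree in trails}. For each one, the number of $u$ with $S'\cup\{u\}\notin G$ is at most $n-\delta(G)$, which excludes at most $d^2(n-\delta(G))$ candidates. For (c), a clash $S'\cup\{v_j\}\in A$ forces $v_j\in A$, which is already excluded by (a), so such edges are automatically new. A clash between two new edges containing $v_j$ with different $S'$ is impossible. A clash with an edge of $B$ is again excluded by $v_j\notin B$. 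The only remaining issue is a clash with an earlier new edge $S''\cup\{v_i\}$, $i<j$, which would require $v_i\in S'$ and $v_j\in S''$. Since each such relation pins down $v_j$ uniquely once $S'$ is fixed, this excludes at most $O(d^3)$ further candidates, which the $2d^3$ term absorbs. It remains to bound the exclusions from (a). We cannot afford to exclude all of $A$ and $B$ when they are long, so we need a finer argument. Excluding vertices of $A$ is only needed to prevent repeats within windows, but a repeat outside windows would create an edge coinciding with an existing one. Instead of excluding all of $A$, we exclude those $u$ for which some $S'\cup\{u\}$ is already an edge of $A$ or $B$. For fixed $S'$, the number of such $u$ is at most $\deg_A(S')+\deg_B(S')\le\Delta(A)+\Delta(B)$. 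Together with the at most $4d$ vertices at the ends of $A$ and $B$ and the previously chosen $v$'s, this gives the total excluded count
\[
d^2\bigl(n-\delta(G)+\Delta(A)+\Delta(B)+2d^3\bigr)+4d<|U|,
\]
so a valid choice of $v_j$ always exists.

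The main obstacle is the bookkeeping for (c). We must check that every clash between a new edge and an old one (in $A$, in $B$, or among the new edges themselves) is captured by a condition of the form ``$S'\cup\{u\}$ already covered'', so that it is controlled by $\Delta(A)+\Delta(B)$ plus $O(d^3)$. We must also confirm that no window contains a repeated vertex, which would make the sets non-$d$-uniform. The latter holds because each window contains only $v$'s together with end vertices of $A$ or of $B$, and all of these are excluded explicitly.
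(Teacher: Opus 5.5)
Your final argument is the paper's. Greedily choose $v_1,\dots,v_{2d}$, excluding only the at most $4d$ vertices formed by the tail of $A$, the head of $B$ and the earlier $v$'s. For each of the at most $d^2$ sets $S'$ in the link of $v_j$, require $S'\cup\{v_j\}$ to be an edge of $G$ not already covered by $A$, by $B$ or by an earlier new edge; the paper phrases this as choosing $v_j$ in the common neighbourhood within $G_j=G\setminus(A_j\cup B)$, using $\delta(G_j)\ge\delta(G)-\Delta(A)-\Delta(B)-2d^3$.

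Drop your first version of (a), which excludes all of $V(A)\cup V(B)$, together with the clash argument that relied on it: only the refined exclusion you switch to afterwards fits the stated bound on $|U|$.
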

    \begin{proof}
    We select the vertices $v_1,\dots,v_{2d}$ one after the other.
    For $i=1,\dots,2d$, let $E_i$ be the link of $v_i$ in the subgraph of the extra-tight trail $(a_1,\dots,a_k,v_1,\dots,v_{2d},b_1,\dots,b_\ell)$ induced by the vertex set $\{a_1,\dots,a_k, v_1,\dots,v_{i}, b_1,\dots,b_\ell\}$.
    Note that, by \Cref{obs: degree in trails}, $|E_i| \le d^2$.
    
    Now suppose, for $i\in [2d-1]$, we have found vertices $v_1,\dots,v_{i-1}\in U\backslash\{a_{k-d+1},\dots,a_k,b_1,\dots,b_d\}$ such that $A_i:=(a_1,\dots,a_k,v_1,\dots,v_{i-1})$ is an extra-tight trail that is disjoint from $B$.
    This clearly holds for $i=0$, where $A_0=A$.
    We pick $v_{i} \in U\backslash\{a_{k-d+1},\dots,a_k,v_1,\dots,v_{i-1},b_1,\dots,b_d\}$ such that $\{v_i\} \cup e$ is an edge of $G_i:=G \setminus (A_i \cup B)$ for each $e \in E_i$.
    With $\delta(G_i)\ge \delta(G)-\Delta(A_i)-\Delta(B)\ge \delta(G)-\Delta(A)-\Delta(B)-2d \cdot d^2$ this is possible, because
    \begin{equation*}
		\begin{adjustbox}{max width=\linewidth}
		$\begin{aligned}
			\abs{\left(\bigcap_{S\in E_i}G_i(S)\right)\backslash\{a_{k-d+1},\dots, a_k,v_1,\dots,v_{i-1},b_1,\dots,b_d\}}&\ge \abs{\bigcap_{S\in E_i}G_i(S)}-4d\\
			&\ge n-\abs{E_i}\cdot \big(n-\delta(G')\big)-4d\\
			&\ge n-d^2\cdot \big(n-\delta(G)+\Delta(A)+\Delta(B)+2d^3\big)-4d\\
			&>n-\abs{U}.
		\end{aligned}$
        \end{adjustbox}
        \end{equation*}
    For $i \le 2d-2$, as all new edges are contained in $\{a_{k-d+1},\dots, a_k,v_1,\dots,v_{i-1}\}$ and these vertices are all different, it follows that $A_{i+1}:=(a_1,\dots,a_k,v_1,\dots,v_i)$ is an extra-tight trail that is disjoint from $B$ and we can continue to the next $i$.
    
    For $i=2d-1$, besides the edges of the extra-tight trail $(a_1,\dots,a_k,v_1,\dots,v_{2d-1})$, we now also identified in $G_{2d-1}$ the edges $\{v_{d-i},\dots,v_{2d-1},b_1,\dots,b_{d-i}\}$ for $i=1,\dots,d-1$, which are pairwise different, because they are contained in $\{v_{d+1},\dots,v_{2d-1},b_1,\dots,b_{d}\}$ and these are all different.
    It only remains to pick $v_{2d} \in U\backslash\{a_{k-d+1},\dots,a_k,v_1,\dots,v_{2d-1},b_1,\dots,b_d\}$ such that $\{v_{2d}\} \cup e$ is an edge of $G_{2d}:=G\setminus(A_{2d} \cup B)$ for each $e \in E_{2d}$.
    This is possible by the same calculation.
    Moreover, all the edges  $\{v_{2d}\} \cup e$ for $e \in E_{2d}$ are pairwise different because they are contained in $\{v_{d+1},\dots,v_{2d},b_1,\dots,b_d\}$  and these are all different.
    Hence we indeed get an extra-tight trail $(a_1,\dots,a_k,v_1,\dots,v_{2d},b_1,\dots,b_\ell)$.
    \end{proof}
	
    We start by proving \Cref{cor: extra-tight tour covering everything} before proving \Cref{theo: precise value of H(n d)} at the end of this section.
	\begin{proof}[Proof of \Cref{cor: extra-tight tour covering everything}]
	Let $G$ be an $n$-vertex $d$-graph with $\delta(G)\ge (1-\alpha)n$ where every vertex degree is divisible by $d^2$. First, we want to find an extra-tight path $P=(v_1,\dots,v_{2d})$ in~$G$. We can do this by applying \Cref{lem: glue two extra-tight trails} with $A_{\ref{lem: glue two extra-tight trails}}\coleq B_{\ref{lem: glue two extra-tight trails}}\coleq ()$ and $U_{\ref{lem: glue two extra-tight trails}}=V(G)$.
			
			Now consider the graph $G'\coleq \big(G\backslash P\big)\cup\big\{\{v_1,\dots,v_d\},\{v_{d+1},\dots,v_{2d}\}\big\}$. Since we assumed that $\deg_G(v)\equiv 0\pmod {d^2}$ holds for all $v\in V(G)$ and by \Cref{obs: degree in trails}, we get, for $i\in[d]$
			\[\deg_{G'}(v_i)\equiv -\big(i(d-1)+1\big)+1=-i(d-1)\equiv(d+1-i)(d-1)+1\pmod{d^2}\]
			and similarly, for $i\in\{d+1,\dots,2d\}$. We can conclude that $G'$ satisfies \eqref{eq: d-trail divisible} with the ends $(v_d,v_{d-1},\dots,v_1)$ and $(v_{2d},\dots,v_{d+1})$. Furthermore, $\delta(G')\ge (1-2\alpha)n$ if $n$ is large enough. Thus, we can apply \Cref{theo: large min degree implies extra-tight trail} with $\alpha_{\ref{theo: large min degree implies extra-tight trail}}\coleq 2\alpha$ and get an extra-tight trail $T=(v_d,\dots,v_1,\dots,v_{2d},\dots,v_{d+1})$ with ends $(v_d,\dots,v_1)$ and $(v_{2d},\dots,v_{d+1})$ covering all edges of~$G'$. But together with the edges of $P$, the vertex sequence of $T$ forms an extra-tight tour that covers all edges of~$G$.
		\end{proof}
		
		Next, we will prove \Cref{theo: precise value of H(n d)}. For this, we will find a simplicial $d$-complex $\C$ on $[n]$ whose dual graph $G(\C)$ is a path and where all but at most $d-1$ of the sets in $\binom{[n]}{d}$ appear in~$\C$. 
		
		The simplest way to do this would be to define a straight simplicial $d$-complex with the required properties. However, this is not always possible: by \Cref{obs: degree in trails}, in a straight simplicial $d$-complex $\C$, the number of $d$-sets a fixed element $i\in[n]$ appears in is divisible by $d^2$ unless $i$ is among the $2d$ elements that occur at one of the ends of~$\C$. Therefore, if the vertex degree $\binom{n-1}{d-1}$ in $K_n^{(d)}$ is not divisible by $d^2$, then it is impossible that a straight simplicial complex has the maximum possible diameter. 
		
		Fortunately, we can adjust this degree issue by first defining a relatively short simplicial $d$-complex $\C_1$ that uses ``turns''. Afterwards, the graph of the $d$-sets that are not elements in $\C_1$ (minus at most $d-1$ edges) will satisfy \eqref{eq: d-trail divisible} such that we can cover it with a straight simplicial $d$-complex $\C_2$. We will make sure that $\C_1$ and $\C_2$ have the same ends so that they can be combined into one long simplicial $d$-complex of maximum diameter. 
		
		To get $\C_1$, we will start with a straight simplicial $d$-complex and insert turns one after the other. The following observation specifies how we insert turns and how this influences the number of $d$-sets of the simplicial complex a fixed element is contained in. 

        \tdplotsetmaincoords{60}{345}  
		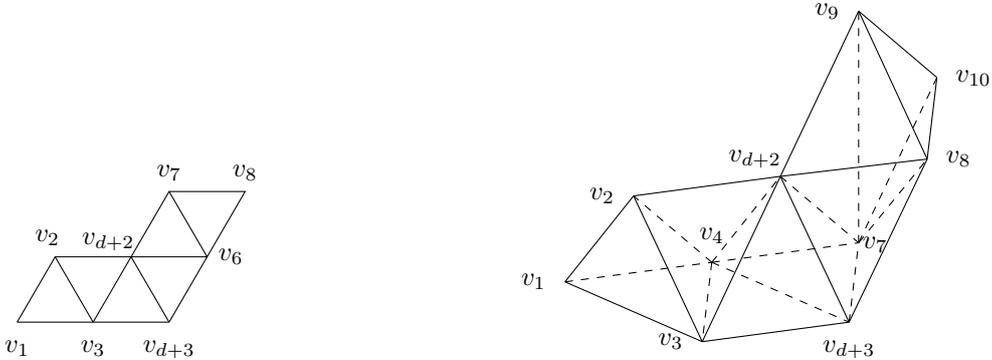
\begin{figure}[H]
			\centering
			\begin{tikzpicture}
				\foreach \x in {0,1} {
					\begin{scope}[shift={(\x,0)}]
						\draw (0,0) -- (0:1);
						\draw[] (0:1) -- (60:1) -- (0,0);
					\end{scope}
				}
				\draw (0.5,0.86603) -- (2.5,0.86603) (2,0) -- (3,1.73206) -- (2,1.73206) -- (2.5,0.86603) (2,1.73206) -- (1.5,0.86603);
				\textbe{0,0}{$v_1$}
				\textab{0.4,0.76603}{$v_2$}
				\textbe{1,0}{$v_3$}
				\textat{1.2,1.06603}{$v_{d+2}$}
				\textbe{2,0}{$v_{d+3}$}
				\textri{2.4,0.86603}{$v_{6}$}
				\textab{2,1.63206}{$v_7$}
				\textab{3,1.63206}{$v_8$}
			\end{tikzpicture}
			\hspace{3cm}
			\begin{tikzpicture}[tdplot_main_coords, scale=2]
				\coordinate (B) at (0.333,1,0);
				\coordinate (C) at (0.666,0.5,0.887);
				\coordinate (D) at (1,0,0);
				\coordinate (E) at (1.333,1,0);
				\coordinate (F) at (1.666,0.5,0.887);
				\coordinate (G) at (2,0,0);
				\coordinate (H) at (2.333,1,0);
				\coordinate (I) at (2.666,0.5,0.87);
				\coordinate (J) at (2.333,1,1.774);
				\coordinate (K) at (3,1.5,0.887);
				\foreach \x/\y in {D/B,B/C,C/D,D/F,D/G,G/I,I/F,F/C,F/J,G/F,J/I,K/I,K/J%
				} {
					\draw (\x) -- (\y);
				}
				\foreach \x/\y in {B/E,E/C,E/D,E/G,E/F,E/H,H/I,H/J,G/H,F/H,H/K%
				} {
					\draw[dashed] (\x) -- (\y);
				}
				\textle{B}{$v_1$}
				\textle{C}{$v_2$}
				\textle{D}{$v_3$}
				\textab{E}{$v_4$}
				\textle{1.8,0.5,1}{$v_{d+2}$}
				\textbe{G}{$v_{d+3}$}
				\textri{2.233,1,0}{$v_7$}
				\textri{I}{$v_8$}
				\textle{J}{$v_9$}
				\textri{K}{$v_{10}$}
			\end{tikzpicture}
			\caption{The simplicial $d$-complex $\C'$ for $d=2$ and $d=3$.}
			\label{fig: turns for d=2 and d=3}
		\end{figure}
		
		\begin{observation}\label{obs: C vs C'}
			Let $\C$ be a simplicial $d$-complex on $[n]$ with facet set $\mathcal F$ whose dual graph $G(\C)$ is a path~$P$. Let $d+4$ consecutive vertices of $P$ be the facets of the straight simplicial $d$-complex $(v_1,\dots,v_{2d+4})$. Furthermore, assume that $\C$ does not contain any of the $d$-sets $\{v_{d+2},v_{d+4},v_{d+5},\dots,v_{2d+3}\}\backslash\{v_i\}$ with $i\in \{d+4,\dots, 2d+2\}$. Let $\C'$ be the simplicial $d$-complex generated by $\mathcal F'\coleq \mathcal F\backslash\big\{\{v_{d+3},\dots,v_{2d+3}\}\big\}\cup\big\{\{v_{d+2},v_{d+4},\dots,v_{2d+3}\}\big\}$. Then $G(\C')$ is also a path and 
			\[\abs{(\partial\F)(v)}-\abs{(\partial \F')(v)}=\begin{cases}-(d-1)&v=v_{d+2}\\
				d-1&v=v_{d+3}\\
				0&\text{else.}
			\end{cases}\]
		\end{observation}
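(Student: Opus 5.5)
The plan is to compare the facet sets $\F$ and $\F'$ directly. Write $F_j\coleq\{v_j,\dots,v_{j+d}\}$ for $j\in[d+4]$; these are the $d+4$ consecutive facets on the path $P$. The facet being removed is $F_{d+3}$, and its two neighbours on $P$ are $F_{d+2}$ and $F_{d+4}$. The new facet is $F'\coleq\{v_{d+2},v_{d+4},\dots,v_{2d+3}\}$. Throughout I use the basic fact from the proof of \Cref{obs: size of F and its shadow}: since $G(\C)$ is a path, every $d$-subset of a facet $F_j$ lies in no other facet, except the (at most two) sets $F_j\cap F_{j\pm1}$.

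\textbf{Step 1: $F'$ is a genuine $(d+1)$-set.} The only possible coincidence is $v_{d+2}=v_{2d+3}$, since the other entries lie within $d+1$ consecutive positions and are therefore distinct. If $v_{d+2}=v_{2d+3}$, then $\{v_{d+2},v_{d+4},\dots,v_{2d+2}\}=\{v_{d+4},\dots,v_{2d+3}\}$. This $d$-set lies in both $F_{d+2}$ and $F_{d+4}$, so it would give an extra edge $F_{d+2}F_{d+4}$ in $G(\C)$, contradicting that $G(\C)$ is a path.

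\textbf{Step 2: list the $d$-subsets of $F'$.} They fall into three groups.
\begin{itemize}
\item $F'\setminus\{v_{d+2}\}=\{v_{d+4},\dots,v_{2d+3}\}=F_{d+3}\cap F_{d+4}$. After deleting $F_{d+3}$, this set lies only in $F_{d+4}$.
\item $F'\setminus\{v_{2d+3}\}=\{v_{d+2},v_{d+4},\dots,v_{2d+2}\}$. This is a $d$-subset of $F_{d+2}$ different from $F_{d+2}\cap F_{d+1}$ and from $F_{d+2}\cap F_{d+3}$, so it lies only in $F_{d+2}$.
\item The $d-1$ sets $F'\setminus\{v_i\}$ with $i\in\{d+4,\dots,2d+2\}$. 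By hypothesis these are not in $\C$.
\end{itemize}
Hence $F'$ is adjacent in $G(\C')$ to exactly $F_{d+2}$ and $F_{d+4}$. Deleting $F_{d+3}$ splits $P$ into two subpaths ending at $F_{d+2}$ and $F_{d+4}$, and $F'$ joins them, so $G(\C')$ is a path. We also need $F'\notin\F$; this holds because $F'$ contains $d$-sets not in $\C$.

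\textbf{Step 3: degree count.} The $d$-subsets of $F_{d+3}$ are $F_{d+3}\setminus\{v_{d+3}\}$ (shared with $F_{d+4}$), $F_{d+3}\setminus\{v_{2d+3}\}$ (shared with $F_{d+2}$), and the $d-1$ sets $F_{d+3}\setminus\{v_i\}$ with $i\in\{d+4,\dots,2d+2\}$. By the path property, these last $d-1$ sets lie in no other facet, so
\[
\partial\F\setminus\partial\F'=\{F_{d+3}\setminus\{v_i\}: d+4\le i\le 2d+2\},\qquad \partial\F'\setminus\partial\F=\{F'\setminus\{v_i\}: d+4\le i\le 2d+2\}.
\]
For each $i$, the lost set and the gained set agree except that $v_{d+3}$ is replaced by $v_{d+2}$. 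So $v_{d+3}$ loses $d-1$ $d$-sets, $v_{d+2}$ gains $d-1$, and every other vertex loses and gains the same number. Distinctness of the vertices involved (Step 1, together with distinctness within consecutive windows) ensures $v_{d+2}\ne v_{d+3}$ and that no vertex is double counted. This gives the stated formula.

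The only delicate point is Step 1 together with checking that the two shared $d$-sets lie in no further facet; both follow from the path property of $G(\C)$.
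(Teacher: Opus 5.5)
Your proof is correct and follows the same route as the paper. You show that $F'$ is adjacent in $G(\C')$ exactly to $F_{d+2}$ and $F_{d+4}$, then read off the degree change from the $d-1$ lost sets $F_{d+3}\setminus\{v_i\}$ and the $d-1$ gained sets $F'\setminus\{v_i\}$, $i\in\{d+4,\dots,2d+2\}$; your Step 1 ($v_{d+2}\neq v_{2d+3}$) and the check $F'\notin\F$ are details the paper leaves implicit, and you handle them correctly.
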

		
		\Cref{fig: turns for d=2 and d=3} shows $\C'$ in the case $d=2$ and $d=3$ where $\C$ is just the straight simplicial $d$-complex $(v_1,\dots,v_{2d+4})$. Notice that, in the left figure, the vertex $v_{d+2}$ is contained in $5=4+(d-1)$ edges, whereas $v_{d+3}$ is contained in $3=4-(d-1)$. Similarly, in the right figure, $v_{d+2}$ is contained in $11=9+(d-1)$ triangles and $v_{d+3}$ is contained in $7=9-(d-1)$ triangles.
		
		\begin{proof}[Proof of \Cref{obs: C vs C'}]
			Since $\C$ does not contain any of the $d$-sets $\{v_{d+2},v_{d+4},v_{d+5},\dots,v_{2d+3}\}\backslash\{v_i\}$ with $i\in\{d+4,\dots,2d+2\}$, the only facets in $\C'$ that have an intersection of size $d-1$ with the facet $\{v_{d+2},v_{d+4},v_{d+5},\dots,v_{2d+3}\}$ are $\{v_{d+2},\dots,v_{2d+2}\}$ and $\{v_{d+4},\dots,v_{2d+4}\}$, i.e.\@ the same facets that had an intersection of size $(d-1)$ with $\{ v_{d+3},\dots,v_{2d+3}\}$. Therefore, $G(\C')$ is still a path.
			
			The $d$-sets that $\C$ contains but not $\C'$ are $\{v_{d+3},v_{d+4},v_{d+5},\dots,v_{2d+3}\}\backslash\{v_i\}$ with $i\in\{d+4,\dots,2d+2\}$. The $d$-sets that $\C'$ contains but not $\C$ are $\{v_{d+2},v_{d+4},v_{d+5},\dots,v_{2d+3}\}\backslash\{v_i\}$ with $i\in\{d+4,\dots,2d+2\}$.
		\end{proof}
		
		\begin{proof}[Proof of \Cref{theo: precise value of H(n d)}] Let $G$ be the complete $d$-graph on the vertex set $[n]$. We will successively build a simplicial $d$-complex $\C$ on $[n]$. If a $d$-set $\{i_1,\dots,i_d\}$ appears in $\C$, we will delete it from the edge set of~$G$. That way, $G$ will always encode the set of $d$-sets that still need to appear in~$\C$.
			
			Our final simplicial $d$-complex will mostly be a straight simplicial $d$-complex but with a few turns throughout. It will start with a straight simplicial $d$-complex starting with $(1,\dots,d)$ and end with a straight simplicial $d$-complex ending with $(n-d+1,\dots,n)$. To be able to do that, we must make sure that the $d$-sets $\{1,\dots,d\}$ and $\{n-d+1,\dots,n\}$ are not used anywhere else.
			
			Note that the $d$-uniform Handshake Lemma implies that the sum of vertex degrees $n\binom{n-1}{d-1}$ in $G$ is divisible by~$d$. Let $s\in [d-1]_0$ be such that $n\binom{n-1}{d-1}\equiv d(s+1)\pmod{d^2}$. Let $M$ be a set of $s$ disjoint edges of $G$ that do not contain any of $1,\dots,d,n-d+1,\dots,n$. Delete $M$ from~$G$. Now, the sum of 1-degrees in $G$ is $\equiv d\pmod{d^2}$. The edges in $M$ will be the only $d$-sets not covered by our final simplicial $d$-complex. This together with \Cref{obs: size of F and its shadow} \ref{enum: path vs shadow} then the simplicial $d$-complex has the diameter $\floor{\frac{1}{d}\binom{n}{d}-\frac{d+1}{d}}$ as desired.
			
			Next, we will adjust the vertex degrees in $G$ using turns. By \Cref{obs: C vs C'}, each turn ``moves'' $d-1$ from one vertex degree to another vertex degree. Our goal now is to find a set of pairs of vertices that encode how one could fix the degree with these turns.
			
			\textbf{Claim.} There is a digraph $D$ on the vertex set $V(G)$ such that the following conditions hold:
			\begin{enumerate}
				\item\label{enum:S fixes degrees} for all $v\in V(G)$, if $\first_D(v)$ is the number of outgoing edges of $v$, and $\second_D(v)$ the number of incoming edges of $v$, then 
				\[\deg_G(v)+\big(\first_D(v)-\second_D(v)\big)(d-1)\equiv \begin{cases}
					i(d-1)+1&\exists i\in[d]:v\in\{i,{n+1-i}\}\\
					0&\text{else}
				\end{cases}\pmod{d^2};\]
				\item\label{enum:S no element too often} for all $v\in V(G)$, $\first_D(v)+\second_D(v)\le 5d^2$;
				\item\label{enum:S at most one at end} if $d=2$, then for all $(u,v)\in D$, $\{u,v\}\not\in \big(M\cup\big\{\{1,2\},\{n-1,n\}\big\}\big)$.
			\end{enumerate}
			\begin{claimproof}
				Start with $D=\emptyset$. Then \ref{enum:S no element too often} and \ref{enum:S at most one at end} are already satisfied. We will go from $v=1$ to $v=n$ and satisfy \ref{enum:S fixes degrees} for $v$ while maintaining the other conditions and \ref{enum:S fixes degrees} for all $u$ with $u<v$ adding at most $2d^2$ edges to $D$. Thus, throughout, $D$ will have at most $2d^2n$ edges.
				
				Suppose, we want to satisfy \ref{enum:S fixes degrees} for $v$ next where $v\in[n-1]$. Let $a\in [d^2-1]_0$ be the number such that 
				\[\deg_G(v)+(a+\first(v)-\second(v))(d-1)\equiv \begin{cases}
					i(d-1)+1&\exists i\in[d]:i\in\{i,n+1-i\}\\
					0&\text{else.}
				\end{cases}\pmod{d^2}\]
				Note that this number exists because $d-1$ and $d^2$ are coprime. We want to add $a$ more edges to $D$ where $v$ is the first element. For this, let $T$ be the set of vertices that already appear in $3d^2-1$ edges of $D$ or already appear in an edge with $v$ or ${v+1}$ in $D$. Since $|D|\le 2d^2n$, $T$ will contain at most $\frac{2}{2.9}n+2\cdot 5d^3$ vertices. Next, pick $a$ distinct vertices $u_1,\dots,u_a$ in $V(G)\backslash \big(T\cup \{v,{v+1},1,2,n-1,n\}\cup\bigcup_{e\in M}e\big)$ and add the $2a\le 2d^2$ edges $(v,u_j),(u_j,{v+1})$ with $j\in[a]$ to $D$. One can quickly see that \ref{enum:S at most one at end} and \ref{enum:S no element too often} are all still satisfied, and \ref{enum:S fixes degrees} is now satisfied for $1,\dots,v$.
				
				Continuing that way, we can satisfy \ref{enum:S fixes degrees} for all $1,\dots,{n-1}$. We claim that then \ref{enum:S fixes degrees} is also satisfied for~$n$. Indeed, using that $\sum_{v\in V(G)}\deg_G(v)\equiv d\pmod{d^2}$ and $\sum_{v\in V(G)}(\first(v)-\second(v))(d-1)=0$, we get
				\begin{align*}
					d&\equiv\sum_{v\in V(G)}\big(\deg_G(v)+(\first_D(v)-\second_D(v))(d-1)\big)\\
					&\overset{\ref{enum:S fixes degrees}}{=}\deg_G(n)+\big(\first_D(n)-\second_D(n)\big)(d-1)+2\sum_{i=1}^d\big(i(d-1)+1\big)-d\\
					&=\deg_G(n)+\big(\first_D(n)-\second_D(n)\big)(d-1)+d(d+1)(d-1)+2d-d\\
					&\equiv\deg_G(n)+\big(\first_D(n)-\second_D(n)\big)(d-1).
				\end{align*}
				This concludes the proof of the claim.
			\end{claimproof}
			
			Let $D$ be the digraph given by the claim and let $D=\{p_1,\dots,p_t\}$. We will now start to build the simplicial complex by defining a sequence 
			\[\Seq=\big(v^0_{d+5},v^0_{d+6}\dots,v^0_{2d+4},v^1_1,v^1_2,\dots,v^1_{2d+4},v^2_1,\dots,v^2_{2d+4},\dots,v^t_1,\dots,v^t_{2d+4}\big)\]
			such that the following conditions are satisfied:
			\begin{enumerate}[label=\arabic*)]
				\item\label{enum: Seq start} $v^0_{d+5}=1,v^0_{d+6}=2,\dots,v^0_{2d+4}=d$;
				\item\label{enum: Seq end}$\big\{v^t_{d+5},\dots,v^t_{2d+4}\big\}\cap\{n+1-d,\dots,n\}=\emptyset$;
				\item\label{enum: Seq turn vertices}  if $p_i=({j_1}, {j_2})$, then $v^i_{d+2}=j_1$ and $v^i_{d+3}=j_2$ for all $i\in[t]$;
				\item\label{enum: Seq edges distinct} all the $d$-sets of the straight simplicial $d$-complex defined via the sequence $\Seq$ and all the $d$-sets of the form $\{v^i_{d+2},v^i_{d+4},v^i_{d+5},\dots,v^i_{2d+3}\}\backslash \{v^i_{j}\}$ with $j\in \{d+4,\dots,2d+2\}$ are pairwise distinct and in $G\backslash\big\{\{n-d+1,\dots,n\}\big\}$;
				\item\label{enum: Seq no vertex too often} no element of $[n]$ appears in $\Seq$ more than $24d^3$ times.
			\end{enumerate}
			We start by defining $v^0_{d+5},v^0_{d+6} \dots,v^0_{2d+4},v^1_{d+2},v^1_{d+3},v^2_{d+2},v^2_{d+3}, \dots,v^t_{d+2},v^t_{d+3}$ such that \ref{enum: Seq start} and \ref{enum: Seq turn vertices} are satisfied. Note that this will not violate \ref{enum: Seq edges distinct} since we ensured that $\{1,2,\dots,d\}$ is still an edge in~$G$. For $d=2$, we have to be more careful here, but property \ref{enum:S at most one at end} of the claim ensures that every 2-set of \ref{enum: Seq edges distinct} that is already fully defined is unique and in $G\backslash\big\{\{n-1,n\}\big\}$. 
			
			The remaining elements of $\Seq$ are defined from left to right. Suppose we want to define the element $v^i_j$ next with $i\in[t]$ and $j\in[2d+4]$. Let $V'$ be the set of elements in $[n]$ that already appear $24d^3$ many times in $\Seq$. Since $\Seq$ consists of $d+t\cdot (2d+4)\le 12d^3n$ elements, $V'$ contains at most $\frac{12d^3n}{24d^3}=\frac{n}{2}$ elements. Let $E'$ be the set of $d$-sets mentioned in \ref{enum: Seq edges distinct}. Let $E''\subs E'$ be the subset of $d$-sets that are already completely defined. For each $e\in E'$ that contains $v^i_j$ where the other $d-1$ elements of $e$ are already defined, let $V_e$ be the set of elements $v\in [n]$ where if we defined $v^i_j$ to be $v$, $e$ would become an edge that is already in $E''$ or not in~$G$. There can be at most $(d^2+d)$ such $e$ and each $V_e$ has a size of at most $24d^3\cdot (d^2+d)\le 25d^5$. Define $v^i_j$ to be any element of $[n]\backslash \Big(V'\cup \bigcup_{v_j^i\in e\in E'}V_e\cup\{n+1-d,\dots,n\}\Big)$ that is not also already an element of $\Seq$ that appears among the $d$ elements before or after $v^i_j$. If $n$ is large enough with respect to $d$, we always have an option for $v^i_j$. Thus, we can define the sequence $\Seq$ such that it fulfills the required properties.
			
			Let $\C$ be the straight simplicial $d$-complex with vertex sequence $\Seq$. For each $i\in[t]$ we apply \Cref{obs: C vs C'} on $\C$ and the vertex sequence $v_1^i,\dots,v_{2d+4}^i$. By \ref{enum: Seq edges distinct}, the required $d$-sets are missing from~$\C$. We end up with a simplicial $d$-complex $\C_1$. By \ref{enum: Seq edges distinct} again, all $d$-sets of $\C_1$ are in~$G$. Let $G'$ be the $d$-graph that contains all edges of $G$ that are not $d$-sets in $\C_1$. Furthermore, we add the edge $\big\{v_{d+5}^t,\dots,v_{2d+4}^t\big\}$ to $G'$. By \Cref{obs: C vs C'}, \ref{enum: Seq end}, \ref{enum: Seq turn vertices}, and property \ref{enum:S fixes degrees} of the claim, we now have
			\[\deg_{G'}(v)\equiv\begin{cases}i(d-1)+1&\exists i\in[d]:v\in \big\{v^t_{d+4+i},n+1-d\big\}\\0&\text{else}
			\end{cases}\pmod{d^2}.\]
			Furthermore, $G''$ has minimum $(d-1)$-degree at least $n-24d^3\cdot (d^2+d-1)$ by \ref{enum: Seq no vertex too often}. Thus, we can apply \Cref{theo: large min degree implies extra-tight trail} and get an extra-tight trail with ends $\big(v^t_{d+5},v^t_{d+6},\dots,v^t_{2d+4}\big)$ and $(n-d+1,\dots,n)$. But this corresponds to a straight simplicial $d$-complex $\C_2$ which we can combine with $\C_1$ to get a simplicial $d$-complex whose dual graph is a path and which has every element of $\binom{[n]}{d}\backslash M$ as $d$-sets. Since these are all but at most $d-1$ many $d$-sets, \Cref{obs: size of F and its shadow} shows that the simplicial $d$-complex has diameter
			\[\floor{\frac{1}{d}\binom{n}{d}-\frac{d+1}{d}}.\qedhere\]
		\end{proof}
		
		\section{Proof of Theorem~\ref{theo: large min degree implies extra-tight trail} }\label{sec:proof extra-tight trail}
		
		In this section, we will first state all the key lemmas, i.e.\@ the Approximate Decomposition Lemma, the Cover Down Lemma, and the Absorber Lemma. Afterwards, we will prove \Cref{theo: large min degree implies extra-tight trail} assuming all the stated lemmas. 
		The proofs of the three main lemmas will follow in subsequent sections.
		
		\subsection{Key Lemmas}
	
		The result of Bohman and Newman~\cite{bohman2022complexes} already yields (in the complete $d$-graph) an extra-tight trail that covers all but a negligible proportion of the edges. Our first key lemma is a strengthening of this. Namely, we need to find such an extra-tight trail in $d$-graphs with minimum degree $\delta(G)\ge (1-\alpha)n$. While it may appear plausible that for very small $\alpha$, the analysis of Bohman and Newman still works, this would effect the size of the leftover. The crucial feature of our result is that we can ensure the leftover to be still arbitrarily small, that is, the quality of the approximate decomposition does not depend on~$\alpha$.
		
		\begin{lemma}[Approximate Decomposition Lemma]\label{lem: approximate decomposition}
			Suppose $1/n\ll \gamma\ll\alpha\ll1/d\le 1/2$. Let $G$ be an $n$-vertex $d$-graph with $\delta(G)\ge(1-\alpha)n$. Then there exists an extra-tight trail such that the leftover $L$ of all edges of $G$ not covered by the trail satisfies $\codegree(L)\le \gamma n$.
		\end{lemma}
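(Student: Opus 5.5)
We will reduce the statement to an approximate decomposition of $G$ into many vertex-disjoint-in-edges extra-tight paths, and then glue these paths together into one trail with \Cref{lem: glue two extra-tight trails}. The key point is that the leftover must satisfy $\codegree(L)\le\gamma n$ with $\gamma\ll\alpha$, so the quality of the covering cannot depend on $\alpha$. Following the boosting idea mentioned in the overview (inspired by \cite{gishboliner2023tight} and the Boost Lemma of \cite{glock2023existence}), we proceed in two stages.

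\textbf{Stage 1: a fractional/regularising step.} Fix a constant $k$ with $\gamma\ll 1/k\ll\alpha$, and consider the $k$-vertex extra-tight path $F:=\extratightpath{k}{d}$. We first find a weighting of copies of $F$ in $G$ (equivalently a random sparse family) such that every edge of $G$ receives total weight $1\pm\gamma^2$. Since $\delta(G)\ge(1-\alpha)n$, every edge lies in roughly the same number of copies of $F$ up to a factor $1\pm O(k^2\alpha)$. This alone is not good enough, so we boost it by a correction step. Each edge $e$ whose count is too high is compensated by reweighting, using local gadgets of the form ``add weight on copies through edges of deficient degree, subtract on copies through edges of surplus degree''. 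The rooted-embedding lemmas of \cite{glock2023existence} (Section on Rooted Embeddings) give enough copies of $F$ rooted at any edge, avoiding small forbidden sets, to do this. The result is an edge-weighting that is uniform up to $1\pm n^{-c}$, independently of $\alpha$.

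\textbf{Stage 2: a nibble / random greedy.} We feed this nearly regular weighted auxiliary hypergraph into a standard Pippenger--Spencer/Frankl--Rödl type theorem, or equivalently a conflict-free hypergraph matching theorem as in \cite{gould2025advancing}. The auxiliary hypergraph has vertices equal to the edges of $G$ and hyperedges equal to the (edge sets of) copies of $F$. It is $\binom{\cdot}{}$-uniform of bounded uniformity $|F|\le dk$, is almost regular with degree $D$, and has codegrees $O(D/n)$. We apply the version that also controls that the leftover is ``well distributed'': for each $(d-1)$-set $S$, the set of $d$-edges through $S$ is a trackable test set of size $\Theta(n)$. The conclusion is that at most $\gamma n/2$ of them stay uncovered. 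This yields a collection $\mathcal P$ of edge-disjoint extra-tight paths covering everything except a leftover $L_0$ with $\codegree(L_0)\le\gamma n/2$. To ensure we can later glue, we additionally reserve a random set $R$ of vertices of size $\beta n$ with $\gamma\ll\beta$, and also set aside a random $\beta$-fraction of edges inside the link structure used for connecting. We run Stage 1–2 on the rest, keeping degrees into $R$ small via Chernoff (\Cref{theo: Chernoff}).

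\textbf{Stage 3: gluing.} We order the paths in $\mathcal P$ and concatenate them one by one via \Cref{lem: glue two extra-tight trails}, each time using $2d$ new connecting vertices from $R$. The trail built so far has maximum codegree growing. The main obstacle is exactly that \Cref{lem: glue two extra-tight trails} needs $\Delta(A)$ small, whereas the trail we are building has large codegree. We resolve it by replacing $\Delta(A)$ with the codegree of the trail restricted to edges meeting $R$; in the lemma's proof only edges touching the new vertices matter. We choose the connecting vertices randomly among valid options and ensure no vertex of $R$ or $(d-1)$-set is used more than $\gamma^2 n$ times. The number of paths is $O(n^d/k)$ and each connection consumes $O(d^3)$ edges touching $R$, which, spread evenly over $R$, gives codegree usage $O(n/(k\beta))\ll\beta n$ by $1/k\ll\beta$. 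Finally, the edges inside $R$ or touching $R$ that remain unused are added to the leftover. Their contribution to codegrees is at most $\beta n$, so we take $\beta\le\gamma/4$. This forces the hierarchy $1/k\ll\beta\ll\gamma$, which is consistent. Hence $L$ satisfies $\codegree(L)\le\gamma n$.

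I expect Stage 1, the boosting to near-perfect regularity independent of $\alpha$, to be the hardest step.
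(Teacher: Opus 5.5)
Your proposal has a genuine gap in Stage 1, which carries the whole difficulty of the lemma and, as sketched, cannot work. For the gluing to be affordable the paths must be long: a $(d-1)$-set lies at an end of order $n/k$ of them, so you need $1/k\ll\gamma\ll\alpha$. Your Stage 3 indeed arrives at $1/k\ll\beta\ll\gamma$, which contradicts the choice $\gamma\ll 1/k$ in Stage 1.

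In this regime $k\alpha\gg 1$. So the starting estimate ``each edge lies in the same number of copies of $\extratightpath{k}{d}$ up to $1\pm O(k^2\alpha)$'' is vacuous. A perturbative gadget correction (in the style of Barber, K\"uhn, Lo, Montgomery and Osthus) only works when the discrepancy is small compared with constants depending on $|V(F)|$, i.e.\ when $\alpha$ is small in terms of $k$. That is the opposite of what you have, and the rooted-embedding lemmas do not change this.

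The missing idea is to regularise at the level of $K^{(d)}_{d+1}$ rather than at the level of the long path. The paper takes a $0.9$-normal fractional $K_{d+1}$-decomposition $\mathbf{x}$ of $G$, which exists as soon as $\alpha\ll 1/d$, whatever the path length. It then samples paths by a random walk that appends $v$ to the current ordered edge $Z_i$ with probability $\mathbf{x}(Z_i\cup\{v\})$. Since $\mathbf{x}$ is a fractional decomposition, the uniform distribution on ordered edges is stationary. Consequently, any $d$ out of $d+1$ consecutive positions of the walk form a given ordered edge with probability exactly $1/|(V)_d|$, and conditioning on self-avoidance only costs a factor $1+O_t(1/n)$. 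Thus $\P[e\in P]=e^\ast/|G|+O_t(n^{-d-1})$ for every edge and every length $t$, independently of $\alpha$. This exact uniformity is what feeds the nibble.

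Two further steps also need repair.
\begin{itemize}
\item Your nibble only tracks the leftover. The gluing additionally needs each $(d-1)$-set to lie at an end of at most $\gamma n$ of the chosen paths; the paper adds the test functions $\omega'_S$ to the nibble for this. The end sets are fixed by the packing, so randomising the connecting vertices in Stage 3 cannot compensate.
\item The leftover accounting with a reserved vertex set $R$ fails. If the paths avoid $R$, as your phrase ``edges inside $R$ or touching $R$ \dots\ are added to the leftover'' suggests, then every $(d-1)$-set meeting $R$ keeps almost all of its roughly $n$ edges uncovered. The paper reserves no vertex set at all, only a random edge set of density $\beta$ taken before the packing, so every $(d-1)$-set loses at most $2\beta n$ edges. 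It then joins all paths simultaneously inside this edge set via \Cref{lem: path connecting}, which is exactly where the end control is used.
\end{itemize}
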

		
		Here, the maximum degree condition on $L$ ensures that the number of uncovered edges is less than $\gamma n^d$, and that these edges are not concentrated at a few vertices.
		
		With this lemma, we end up with one extra-tight trail that covers most of the edges of $G$, but there we are stuck, unless we prepared well ahead some extra uncovered structure that would help us to continue. This is exactly the idea of the iterative absorption method. 
		For that, one builds randomly a ``vortex'', i.e. a sequence $V(G)=U_0\supseteq U_1\supseteq\dots\supseteq U_{\ell}$ of subsets with natural regularity properties, such that $\abs{U_i}/\abs{U_{i+1}}$ and $\abs{U_\ell}$ are constant.  
		
		In each iteration $i$ of the extra-tight trail-building process, we use the Approximate Decomposition Lemma and additional arguments to finds an extra-tight trail covering all uncovered edges in $G[U_i]\backslash G[U_{i+1}]$ while not using too many edges of $G[U_{i+1}]$.
		Here it is crucial that we can boost the parameters, meaning the maximum degree of the extra-tight trail in $G[U_{i+1}]$ is much smaller than the maximum degree of the complement of $G[U_{i+1}]$.
		This will ensure that at the end of this process, {\em all} edges in $G\backslash G[U_\ell]$ are covered, while almost all edges in $G[U_{\ell}]$ are uncovered.  
		
		The following Cover Down Lemma captures the iterative step. There we have to assume that certain degree conditions are fulfilled to be able to cover all desired edges by an extra-tight trail. Later, when we apply the Cover Down Lemma in the proof of \Cref{theo: large min degree implies extra-tight trail}, these degree conditions will be automatically satisfied by the fact that we started with a graph that satisfies \eqref{eq: d-trail divisible}. 
		
		\begin{lemma}[Cover Down Lemma]\label{lem: Cover Down}
			Suppose $1/n\ll\alpha,\mu\ll 1/d\le 1/2$ with $\alpha<\mu^{1.1}$. Let $G$ be a $d$-graph on $n$ vertices and $U\subs V(G)$ with $\abs U=\mu n$. Suppose that $\delta(G)\ge (1-\alpha)n$ and $\deg(v)\equiv0\pmod{d^2}$ holds for all $v\in V(G)\backslash U$. Then there exists an extra-tight trail $T$ whose ends are in $U$ and such that $G\backslash G[U] \subs T\subs G$ and $\Delta(T[U])\le \mu^2 n$.
		\end{lemma}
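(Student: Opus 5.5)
We follow the iterative-absorption template of \cite{glock2023existence,gishboliner2023tight}, in three stages: cover the bulk of $G\backslash G[U]$ approximately while leaving a reservoir of edges inside $U$ and between $U$ and its complement; cover the leftover edges meeting $\overline U\coleq V(G)\backslash U$ in at least two vertices using short rooted gadgets that dip into $U$; and finally cover the leftover \emph{crossing} edges (exactly one vertex in $\overline U$) with gadgets centred at single vertices of $\overline U$. All the resulting short trails are concatenated into one trail with \Cref{lem: glue two extra-tight trails}, with connecting vertices taken from $U$. Since the number of pieces is $O(n^{d-1})$, we have to spread the connections over many disjoint copies to keep $\Delta(T[U])$ small.

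\textbf{Stage 1.} We first set aside a random reservoir $R\subs G$, including each edge meeting $U$ independently with probability $p$, where $\alpha\ll p\ll\mu$. By \Cref{theo: Chernoff}, every $(d-1)$-set has about $p\abs U$ reservoir neighbours in $U$, and $R$ has maximum degree about $pn$. We then apply \Cref{lem: approximate decomposition} to $G'\coleq G\backslash(G[U]\cup R)$, which has $\delta(G')\ge(1-\alpha-p)n$ after adding dummy edges inside $U$ if necessary. The parameter $\gamma$ is chosen so that $\gamma\ll p\mu$, which gives a trail $T_0$ whose leftover $L$ satisfies $\Delta(L)\le\gamma n$. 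Here the ``boosted'' feature of the lemma is essential: $\gamma$ must not depend on $\alpha$, since otherwise the leftover could exceed the reservoir.

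\textbf{Stage 2.} Next we cover every edge $e\in L$ with $\abs{e\cap\overline U}\ge 2$. For each such $e$ we find a constant-size extra-tight trail that contains $e$ as an edge, uses otherwise only edges of $R$ (so they have at most one vertex in $\overline U$), and whose ends lie in $U$. We embed these greedily or randomly via rooted embeddings, which \Cref{sec: Rooted Embeddings} is designed for. Since each $(d-1)$-set is in at most $\gamma n$ leftover edges, a greedy choice avoiding overused vertices keeps the used reservoir degree $O(\gamma n/\mu)\ll p\mu n$.

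\textbf{Stage 3.} What remains is a $d$-graph $L'$ of crossing edges of the form $\{x\}\cup S$ with $x\in\overline U$ and $S\in\binom{U}{d-1}$, where $L'$ consists of the leftover crossing edges together with the unused crossing edges of $R$. Since $\deg_G(x)\equiv0\pmod{d^2}$ and every trail covers a multiple of $d^2$ edges at each non-end vertex (\Cref{obs: degree in trails}, all ends being in $U$), we get $\deg_{L'}(x)\equiv0\pmod{d^2}$, and the link $L'(x)$ is a dense $(d-1)$-graph on $U$. We must therefore decompose each link into structures of the following form: $(u_1,\dots,u_d,x,u_{d+1},\dots,u_{2d})$ with $u_i\in U$ contributes exactly $d^2$ crossing edges at $x$, namely the edges of $\extratightpath{2d+1}{d}$ through $x$, plus edges inside $U$. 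Equivalently, we need an $F$-decomposition of the link $L'(x)$, where $F$ is the link of the middle vertex in $\extratightpath{2d+1}{d}$, a $(d-1)$-graph with $d^2$ edges. The $u_i$ are chosen so that the induced edges inside $U$ are unused edges of $G[U]$, kept disjoint across different $x$ by a random partition of $G[U]$.

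I expect Stage 3 to be the main obstacle: decomposing the link exactly, with divisibility only $d^2\mid\deg(x)$ and no finer conditions on the link. I would first try a random greedy (nibble) partial decomposition of each link, then absorb the remainder by reserving in advance, for each $x$, flexible ``switcher''-type gadgets inside $R$. This is the analogue of the vertex-rooted covering step of \cite{glock2023existence}, using $\mu^{1.1}>\alpha$ to make room. The bound $\Delta(T[U])\le\mu^2n$ follows because the edges inside $U$ come only from connectors and gadgets, at most $O(pn+n/\mu^{d})\cdot$const per $(d-1)$-set. This requires choosing $p\le\mu^3$ and spreading the gadgets randomly.
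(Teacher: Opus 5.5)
\paragraph{Gap 1: Stage 2 fails for $d\ge 3$.} An extra-tight trail covers \emph{every} $d$-subset of each window $\{v_\iota,\dots,v_{\iota+d}\}$; the subset omitting $v_\iota$ is $e_{\iota+1,d}$ or the end edge. So if a trail covers an edge $e$ with $x,y\in e\setminus U$, it also covers the $d-2\ge 1$ other $d$-subsets of that window that contain $\{x,y\}$.
\begin{itemize}
\item If $R$ contains only crossing edges, as your parenthetical says, these extra edges cannot come from $R$. The only other unused edges with two vertices outside $U$ form the sparse leftover $L$ itself, so the gadget cannot be built.
\item If $R$ contains all edges meeting $U$, as you first say, Stage~2 becomes feasible. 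But then the unused reservoir edges with at least two vertices outside $U$ survive, and Stage~3 no longer deals only with crossing edges.
\end{itemize}
Either way, higher-type edges need a genuine type-by-type cover-down as in \cite{glock2023existence}, or a device that treats all types at once.

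\paragraph{Gap 2: Stage 3 is the crux and is left open.} Divisibility is not the difficulty. By \Cref{lem: Deg-vector of Fcycle and Fpath}, $\Deg(\extratightpath{2d}{d-1})=(d^2,1,\dots,1)$, so $d^2\mid\deg(x)$ is exactly what a single link needs. The difficulty is that the $\Theta(n)$ link decompositions interact.
\begin{itemize}
\item A copy at $x$ with sequence $(u_1,\dots,u_{2d})$ also covers the $d+1$ edges of the tight path $P^{(d)}_{2d}$ on $(u_1,\dots,u_{2d})$ inside $U$. These must be distinct across all copies and all $x$.
\item A random partition of $G[U]$ with one class per $x$ gives classes of expected codegree about $\mu/(1-\mu)<1$. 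That is far too sparse to host such paths.
\item With fewer classes, the same coordination problem reappears inside each class.
\item ``Nibble plus per-$x$ switchers'' does not specify an actual absorber.
\end{itemize}
Separately, you must first move the ends of $T_0$ into $U$; your divisibility count at $x$ assumes this but you never construct it.

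\paragraph{How the paper avoids both problems.} It uses \Cref{lem: Supercomplex Lemma}.
\begin{itemize}
\item Before the approximate decomposition, it sets aside $G'$, the $d$-layer of a sparse supercomplex inside $\Gskew$. This is the complex generated by $f$-cliques with at most $d$ vertices outside $U$.
\item After the approximate trail's ends are extended into $U$ through a small reservoir, $G'\cup L$ satisfies $d^2\mid\deg(v)$ for all $v\notin U$.
\item $G'\cup L$ is then packed by copies of $\extratightcycle{f}{d}$ with at most $d$ vertices outside $U$, covering everything outside $G[U]$. To do this, $L$ is covered by rooted cycles dipping into $U$. The $1$-degrees and the edge count modulo $fd$ are fixed by deleting edges inside $U$. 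Shifters rooted into $U$ give $\Fast_f$-divisibility, and then \Cref{theo: weakly regular + super implies decomposition} is applied.
\item Since $f$ is large, each cycle has $2d$ consecutive vertices in $U$. It is cut there into a path with both ends in $U$.
\item All paths are joined inside $G[U]$ by \Cref{lem: path connecting}.
\end{itemize}
This handles edges of all types simultaneously, and no individual vertex link ever has to be decomposed.
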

		
		To also incorporate the leftover in $U_\ell$ into our extra-tight trail, we want to use absorbers. In our case, this means we set aside an extra-tight trail before using the Cover Down Lemma that can absorb the leftover edges. However, we do not know beforehand how the leftover will look like. Hence, the absorber must work for all possible leftovers. There, $U_\ell$ being of constant size is vital because this allows us to build an absorber that works for all possibilities.
		
		\begin{lemma}[Absorber Lemma]\label{lem: Absorber Lemma}
			Let $1/n\ll \eta\ll 1/m\ll \alpha\ll 1/d\le 1/2$. Let $G$ be a $d$-graph on $n$ vertices with $\delta(G)\ge (1-\alpha)n$ and $U\subs V(G)$ a vertex set of size~$m$. Then $G$ contains an extra-tight trail $A$ 
			such that
			\begin{itemize}
				\item $V(A) \supseteq U$ and $U$ is an independent set in $A$;
				\item $\Delta(A)\le \eta n$;
				\item for each $d$-graph $L\subs G[U]$ with $\delta(L)\ge (1-\alpha)m$ where each 1-degree is divisible by $d^2$, there is an extra-tight trail with edge set $L\cup A$ having the same ends as $A$.
			\end{itemize}
		\end{lemma}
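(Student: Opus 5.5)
The plan is to build $A$ as a concatenation of many small gadgets, glued with \Cref{lem: glue two extra-tight trails}, using only vertices outside $U$ except where $U$-vertices are forced. Since $m$ is constant and $n$ is huge, we may prepare a separate gadget for every one of the boundedly many candidate leftovers. More economically, we prepare gadgets for every elementary ``transformation'' that turns one candidate leftover into another.

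\textbf{Step 1: a reference leftover.} Fix one $d$-graph $L_0\subs G[U]$ with $\delta(L_0)\ge(1-\alpha)m$ and all $1$-degrees divisible by $d^2$. By \Cref{cor: extra-tight tour covering everything} applied inside $U$ (with $m$ large), $L_0$ has an extra-tight Euler tour $C_0$. Any admissible $L$ differs from $L_0$ by a difference $L\triangle L_0$ whose vertex degrees are $\equiv 0\pmod{d^2}$ in the signed sense. I would decompose such a signed difference, in the spirit of the degree-shifting arguments of \cite{glock2023existence}, into bounded-size signed elementary pieces. Each piece is the difference $E^+-E^-$ of two edge sets $E^\pm\subs\binom{U}{d}$ with equal vertex-degree vectors, for instance two extra-tight cycles on the same vertex set with different orderings. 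Their number is bounded in terms of $m$.

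\textbf{Step 2: switchers.} For each elementary pair $(E^+,E^-)$ and each of a bounded number of copies, I would build a switcher. This is an extra-tight trail $S$ using fresh vertices outside $U$, together with an alternative extra-tight trail $S'$ with the same ends, such that $S'$ has edge set $S\cup E^+$ minus $E^-$ (or with the roles swapped). I would construct $S$ by embedding a fixed bounded template hypergraph via greedy/rooted embedding: minimum degree $(1-\alpha)n$ and $\Delta$ small leave room for $O_m(1)$ templates while keeping $\Delta(A)\le\eta n$. The template is where the ``turn''-type surgery of \Cref{obs: C vs C'} enters. Reversing or re-routing a segment of an extra-tight trail changes exactly the skipping edges near the splice points, and the switcher has to absorb those side effects.

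\textbf{Step 3: the absorber and its use.} The absorber $A$ is the concatenation of three parts: a trail that realises $C_0$'s edges as an insertable block, one switcher per elementary piece and copy, and connecting vertices from \Cref{lem: glue two extra-tight trails}. The default ``off'' state of $A$ must not cover any edge of $G[U]$, so that $U$ is independent in $A$. I would therefore arrange for $C_0$ to be spliced in only in the ``on'' state: $A$ contains a segment whose replacement trail covers that segment plus $E(C_0)$. Given $L$, we switch this segment and then switch the appropriate switchers to convert $L_0$ into $L$. All replacements are local with the same ends, so the result is an extra-tight trail with edge set $L\cup A$ and the ends of $A$.

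\textbf{Main obstacle.} Constructing the switcher gadget is the hardest part. One needs an explicit finite extra-tight configuration in which two different vertex orderings cover edge sets that differ by exactly $E^+\triangle E^-$, and making an extra-tight splice side-effect free in dimension $d\ge3$ is delicate. I would first try to build it from two parallel extra-tight paths through the same fresh vertices whose orderings differ by a single transposition near $U$-vertices. I would then compose such transpositions, checking by degree counting (\Cref{obs: degree in trails}) that all side-effect edges cancel. Failing that, I would fall back on the $F$-decomposition machinery of \cite{glock2023existence} to find the gadget non-explicitly.
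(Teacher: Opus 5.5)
\textbf{Verdict: there is a genuine gap.} It sits where you locate the main obstacle: the switcher is never constructed. The architecture around it also fails as written.

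\textbf{Problems with Steps 1 and 3.}
\begin{itemize}
\item Step 1 is circular. It invokes \Cref{cor: extra-tight tour covering everything}, which is derived from \Cref{theo: large min degree implies extra-tight trail}, and the proof of that theorem applies the Absorber Lemma itself.
\item The gadget as you define it can only add edges. $S$ uses only fresh vertices outside $U$, so it covers no edge of $E^-\subs\binom{U}{d}$, and ``$S\cup E^+$ minus $E^-$'' is just $S\cup E^+$.
\item Edges of $L_0\setminus L$ can therefore never be removed. They are covered by the spliced-in $C_0$-block, and replacing some other segment with the same ends only changes the edges of that segment. If a switcher's off-state did cover $E^-$, then $U$ would not be independent in $A$, and $E^-$ would be covered twice once $C_0$ is spliced in.
\item Your example pieces cannot span the required differences. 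Two extra-tight cycles on the same vertex set have the same size and the same vertex degrees. An admissible $L\triangle L_0$ generally has nonzero size and degree differences, divisible only by $d$ and $d^2$ respectively. No general decomposition is supplied.
\end{itemize}
The workable version is $L_0=\emptyset$, decomposing $L$ itself into extra-tight cycles and only ever \emph{adding} them. First greedily remove one $\extratightcycle{3d+k}{d}$ so that $3d^2$ divides the remaining number of edges. The remaining graph is typical (from $\delta(L)\ge(1-\alpha)m$) and $\extratightcycle{3d}{d}$-divisible by \Cref{lem: Deg-vector of Fcycle and Fpath}. Then apply \Cref{theo: design paper 2 uniformities}.

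\textbf{The missing key idea: how to absorb a single cycle.}
\begin{itemize}
\item For every ordered tuple $S=(u_0,\dots,u_d)$ of vertices of $U$ spanning an extra-tight path, $A$ contains $D_S=(w_0,u_1,\dots,u_{d-1},w_d)$ with fresh $w_0,w_d\notin U$.
\item A cycle $C=(u_0,\dots,u_d,c_1,\dots,c_k)$ is spliced in by replacing $D_S$ with $(w_0,u_1,\dots,u_d,c_1,\dots,c_k,u_0,\dots,u_{d-1},w_d)$. This covers $A\cup C$, except that the explicit sets $E_1$ are gained and $E_2$ is lost.
\item The $(E_1,E_2)$-switcher is a vertex swap. $T_2$ is $T_1$ with all occurrences of $u_0$ and $w_0$ exchanged, so it is automatically an extra-tight trail with the same ends.
\item The edge sets then differ exactly by $E_1\leftrightarrow E_2$, provided the links $\tilde G_1=T_1(u_0)$ and $\tilde G_2=T_1(w_0)$ satisfy $\tilde G_1\setminus\tilde G_2=E_1(u_0)$ and $\tilde G_2\setminus\tilde G_1=E_1(w_0)$.
\item Such a $T_1$ comes from $\extratightpath{2d}{d-1}$-decompositions of the $(d-1)$-graphs $\tilde G_1$ and $\tilde G_2$ via \Cref{theo: design paper 2 uniformities}. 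The second decomposition uses $O$ equal to the union of the underlying tight paths $P^{(d)}_{2d}$ of the first, so that no $d$-edge repeats. Insert $u_0$ (respectively $w_0$) into the middle of each copy and pad with fresh vertices.
\end{itemize}
Your transposition idea points in the right direction. However, all the content lies in designing $T_1$ so that the two swapped vertices have links differing by exactly the right sets. Degree counting via \Cref{obs: degree in trails} gives only necessary conditions: it cannot certify which specific edges are covered, or that no edge is repeated.
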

		With some extra work, one could omit the minimum degree requirement on $L$. However, since we get this for free in our main proof, we omit this here.
		
		\subsection{Proof of Theorem~\ref{theo: large min degree implies extra-tight trail}}
		In this subsection, we will prove our main \Cref{theo: large min degree implies extra-tight trail} assuming the Absorber Lemma and the Cover Down Lemma. While the Approximate Decomposition Lemma is not needed in this proof, we will need it when we prove the Cover Down Lemma.
		
		We start by formally defining the concept of a vortex and proving its existence in~$G$.
		
		\begin{definition}
			Let $G$ be a $d$-graph on $n$ vertices. An \emph{$(\alpha,\mu,m)$-vortex in $G$} is a sequence $U_0\supseteq U_1\supseteq\dots\supseteq U_\ell$ such that 
			\begin{enumerate}
				\item $U_0=V(G)$;
				\item $\abs{U_i}=\floor{\mu\abs{U_{i-1}}}$ for all $i\in[\ell]$;
				\item $\abs{U_{\ell}}=m$;
				\item\label{enum: vortex, minimum degree} $\deg_G(S,U_i)\ge(1-\alpha)\abs{U_i}$ for all $i\in[\ell]$ and $S\in \binom{U_{i-1}}{d-1}$.
			\end{enumerate}
		\end{definition}
		The following lemma and proof are similar to Lemma 3.7 in \cite{barber2020minimalist} where this is done for graphs instead of hypergraphs.
		\begin{lemma}\label{lem:There is a vortex}
			Let $1/m'\ll \alpha,\mu\ll 1/d\le 1/2$. Suppose $G$ is a $d$-graph on $n\ge m'$ vertices with $\delta(G)\ge (1-\alpha)n$. Then $G$ has an $(\alpha+\mu^{2},\mu,m)$-vortex for some $\floor{\mu m'}\le m\le m'$. 
		\end{lemma}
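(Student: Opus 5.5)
We build the vortex greedily from the top, choosing each $U_i$ as a uniformly random subset of $U_{i-1}$ of size $\floor{\mu\abs{U_{i-1}}}$. Set $n_0\coleq n$ and $n_i\coleq\floor{\mu n_{i-1}}$, and let $\ell$ be the first index with $n_\ell\le m'$. Then $n_{\ell-1}>m'$, so $n_\ell\ge\floor{\mu m'}$, and we put $m\coleq n_\ell$. If $n\le m'$ already, we take $\ell=0$ and $m=n$, and there is nothing to check.

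The only real condition is \ref{enum: vortex, minimum degree}, which we track via an error parameter. Define $\alpha_0\coleq\alpha$ and $\alpha_i\coleq\alpha_{i-1}+n_{i-1}^{-1/3}$. We prove inductively that, given $U_{i-1}$ with $\deg_G(S,U_{i-1})\ge(1-\alpha_{i-1})n_{i-1}$ for all $S\in\binom{U_{i-1}}{d-1}$ (for $i=1$ this is $\delta(G)\ge(1-\alpha)n$), a random $U_i$ satisfies $\deg_G(S,U_i)\ge(1-\alpha_i)n_i$ for all $S\in\binom{U_{i-1}}{d-1}$ with positive probability. For fixed $S$, the quantity $\deg_G(S,U_i)$ is hypergeometric with mean at least $(1-\alpha_{i-1})n_i$. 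By \Cref{theo: hypergeometric} with $t=n_{i-1}^{-1/3}n_i\approx\mu n_{i-1}^{2/3}$, the failure probability is at most $\exp(-\Omega(\mu n_{i-1}^{1/3}))$. A union bound over at most $n_{i-1}^{d-1}$ sets $S$ then works, since $n_{i-1}\ge m'$ and $1/m'\ll\mu,1/d$. Restricting $S$ to $U_{i-1}$ is legitimate because the hypothesis for the next step only concerns $S\subs U_i\subs U_{i-1}$. So a suitable $U_i$ exists, and we iterate.

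The main obstacle is keeping the accumulated error below $\mu^2$, so that the final parameter is $\alpha+\mu^2$ rather than something depending on $\ell$. Since $n_{i-1}\ge m'$ decreases roughly geometrically (ratio about $\mu$, up to floors, which for $n_{i-1}\ge m'$ large only perturb this slightly), the sum $\sum_{i\le\ell} n_{i-1}^{-1/3}$ is dominated by its last term. Hence it is at most $O\bigl(m'^{-1/3}/(1-\mu^{1/3})\bigr)\le\mu^2$ by $1/m'\ll\mu$. Consequently $\alpha_i\le\alpha+\mu^2$ for all $i\le\ell$, and the sequence $U_0\supseteq\dots\supseteq U_\ell$ is an $(\alpha+\mu^2,\mu,m)$-vortex.
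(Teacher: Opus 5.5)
Your proposal is correct and follows essentially the same route as the paper: nested uniformly random subsets $U_i\subseteq U_{i-1}$ of size $\lfloor\mu|U_{i-1}|\rfloor$, \Cref{theo: hypergeometric} with deviation of order $n_{i-1}^{-1/3}n_i$ plus a union bound over $\binom{U_{i-1}}{d-1}$, and a geometric summation of the accumulated errors to keep the total below $\mu^2$. Your bookkeeping is slightly tidier than the paper's: you anchor the geometric sum at its last term via $n_j\ge n_{j+1}/\mu$, so the floors actually work in your favour, whereas the paper normalises each error term by $n^{-1/3}\mu^{-(j-1)/3}$.
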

		\begin{proof}
			We define $n_0\coleq n$ and $n_i\coleq\floor{\mu n_{i-1}}$. Note $\mu^in\ge n_i\ge \mu^i n-\frac{1}{1-\mu.}$. Let $\ell\coleq 1+\max\lbrace i\ge 0:n_i\ge m'\rbrace$ and let $m\coleq n_\ell$. By definition, $\floor{\mu m'}\le m\le m'$. Finally, for $i\in[\ell]$, let 
			\begin{equation}\label{eq: alpha_i <= alpha/3}
				\mu_i\coleq n^{-1/3}\sum_{j=1}^i\mu^{-(j-1)/3}=n^{-1/3}\frac{\mu^{-i/3}-1}{\mu^{-1/3}-1}\le\frac{\big(\mu^{i-1}n\big)^{-1/3}}{1-\mu^{1/3}}\le\frac{m'^{-1/3}}{1-\mu^{1/3}}\le\frac{\mu^{2}}{3},
			\end{equation}
			and let $\mu_0\coleq 0$. 
			
			Suppose that for some $i\in[\ell]$, we have already found an $(\alpha +3\mu_{i-1},\mu,n_{i-1})$-vortex $U_0,\dots,U_{i-1}$ in~$G$. This is true for $i=1$. In particular, $\delta(U_{i-1})\ge (1-\alpha-3\mu_{i-1}) n_{i-1}$. Let $U_i$ be a subset chosen uniformly at random among all subsets of $U_{i-1}$ of size $n_{i}$. For each $S\in \binom{U_{i-1}}{d-1}$, we have $\E[\deg_{G}(S,U_i)]\in[(1-\alpha-3\mu_{i-1})n_i,n_i]$. Therefore, the hypergeometric variant \eqref{eq: absolute hypergeometric chernoff} of the Chernoff bound implies
			\begin{align*}\P\Big[&\deg_{G}(S,U_i)\le\big(1-\alpha-3\mu_{i-1}-2n_{i-1}^{-1/3}\big)n_i\Big]\le\P\Big[\deg_{G}(S,U_i)\le \E[\deg_{G}(S,U_i)]- 2n_{i-1}^{-1/3}n_i\Big]\\
				&\le \exp\!\left(-\frac{\left(2n_{i-1}^{-1/3}n_i\right)^2}{2\E[\deg_G(S,U_i)]}\right)\le \exp\!\left(-\frac{\left(2n_{i-1}^{-1/3}n_i\right)^2}{2n_i}\right)= \exp\!\left(-2n_{i-1}^{-2/3}n_i\right)\\
				&\le \exp\!\left(-2n_{i-1}^{-2/3}(\mu n_{i-1}-1)\right)\le \exp\!\left(-\mu n_{i-1}^{1/3}\right).\end{align*}
			Thus, if $m'\le n_{i-1}$ is large enough, we can use a union bound over all $S\in\binom{U_{i-1}}{d-1}$ and get that, with high probability, there is a choice for $U_i$ such that $\deg_{G}(S,U_i)\ge\big(1-\alpha-3\mu_{i-1}-2n_{i-1}^{-1/3}\big)n_i$ holds for all $S\in \binom{U_{i-1}}{d-1}$. Fix such a choice of $U_i$. Then \[3\mu_{i-1}+2n_{i-1}^{-1/3}\le3n^{-1/3}\sum_{j=1}^{i-1}\mu^{-(j-1)/3}+2\mu^{-(i-1)/3}n^{-1/3} \le 3n^{-1/3}\!\left(\sum_{j=1}^{i-1}\mu^{-(j-1)/3}+\mu^{-(i-1)/3}\right)= 3\mu_i\] implies that $U_0,\dots,U_i$ form a $(\alpha +3\mu_{i},\mu,n_{i})$-vortex in~$G$. Repeating this for all $i\in[\ell]$, we finally obtain an $(\alpha +3\mu_{\ell},\mu,m)$-vortex and because of \eqref{eq: alpha_i <= alpha/3}, $\mu_\ell\le\mu^{2}/3$, the lemma follows.
		\end{proof}
		
		We are now ready to prove the main theorem on extra-tight Euler trails. The proof consists of the following steps:
		\begin{proofoutline}
			\item Fix a vortex: Apply \Cref{lem:There is a vortex} to get a vortex $U_0\supseteq U_1\supseteq\dots\supseteq U_\ell$.
			\item Build an absorber for the final set: Apply \Cref{lem: Absorber Lemma} to obtain an absorber for $U_\ell$ and connect it to one of the given ends $(v_d',\dots,v_1')$. The other end of the absorber is extended into $U_\ell$ such that it can be connected at the very end when all but the edges in $U_\ell$ are used. We call the resulting extra-tight trail $\Tend$. In the following, we want to cover the edges of $G'\coleq G-\Tend$. The other end $(v_1,\dots,v_d)$ is extended by $d$ arbitrary vertices to form the extra-tight trail $T_0$.
			\item Iterative usage of Cover Down: Given an extra-tight trail $T_i$ covering $G'\backslash G'[U_i]$, we show how this can be extended to an extra-tight trail $T_{i+1}$ covering $G'\backslash G'[U_{i+1}]$ using the Cover Down \Cref{lem: Cover Down}. This is illustrated in \Cref{fig: Cover Down step}. 
			\item Final connecting and absorbing the leftover. In the end, we have an extra-tight trail $T_\ell$ which covers all edges outside of $G'[U_\ell]$. We connect it to the end of $\Tend$. By the property of the absorber, we can absorb the remaining uncovered edges into $\Tend$, yielding an extra-tight trail covering all edges of $G$.
		\end{proofoutline}
		
		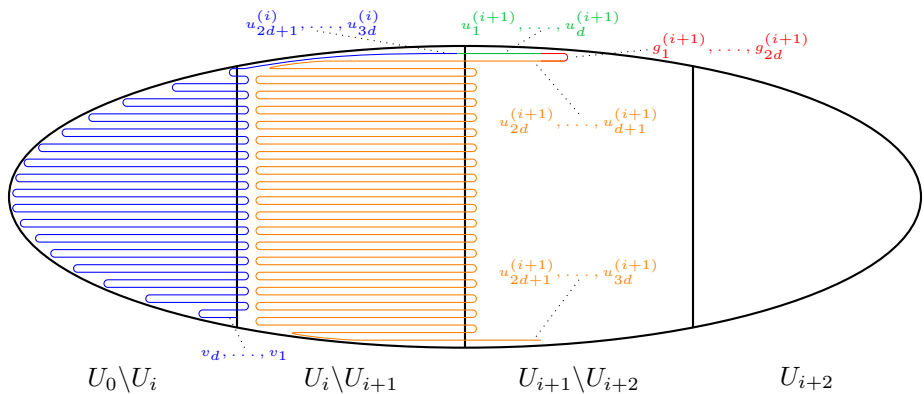
\begin{figure}[h]
			\centering
			\begin{tikzpicture}
				\draw[thick] (0,0) ellipse [x radius=6cm, y radius=2cm];
				\draw[thick] (-3,-1.732) -- (-3,1.732);
				\draw[thick] (0,-2)      -- (0,2);
				\draw[thick] (3,-1.732)  -- (3,1.732);
				\draw[blue] (-3.,-1.6) -- (-3.45,-1.6) arc (270:90:0.05) -- (-2.9,-1.5) arc (-90:90:0.05) -- (-4.15,-1.4) arc (270:90:0.05)-- (-2.9,-1.3) arc (-90:90:0.05) -- (-4.7,-1.2) arc (270:90:0.05) -- (-2.9,-1.1) arc (-90:90:0.05)
				-- (-5.1,-1.) arc (270:90:0.05) -- (-2.9,-.9) arc (-90:90:0.05)
				-- (-5.4,-.8) arc (270:90:0.05) -- (-2.9,-.7) arc (-90:90:0.05)
				-- (-5.6,-.6) arc (270:90:0.05) -- (-2.9,-.5) arc (-90:90:0.05)
				-- (-5.8,-.4) arc (270:90:0.05) -- (-2.9,-.3) arc (-90:90:0.05)
				-- (-5.9,-.2) arc (270:90:0.05) -- (-2.9,-.1) arc (-90:90:0.05)
				-- (-5.9,-.0) arc (270:90:0.05) -- (-2.9,.1) arc (-90:90:0.05)
				-- (-5.86,.2) arc (270:90:0.05) -- (-2.9,.3) arc (-90:90:0.05)
				-- (-5.75,.4) arc (270:90:0.05) -- (-2.9,.5) arc (-90:90:0.05)
				-- (-5.56,.6) arc (270:90:0.05) -- (-2.9,.7) arc (-90:90:0.05)
				-- (-5.25,.8) arc (270:90:0.05) -- (-2.9,.9) arc (-90:90:0.05)
				-- (-4.9,1) arc (270:90:0.05) -- (-2.9,1.1) arc (-90:90:0.05)
				-- (-4.45,1.2) arc (270:90:0.05) -- (-2.9,1.3) arc (-90:90:0.05)
				-- (-3.8,1.4) arc (270:90:0.05) -- (-2.9,1.5) arc (-90:90:0.05)
				-- (-3.05,1.6) arc (270:90:0.05) -- (-2.9,1.7) to[in=180,out=10] (-.1,1.9);
				\draw[mygreen](-.1,1.9) -- (1.,1.9);
				\draw[red](1.0,1.9) -- (1.3,1.9) arc (90:-90:0.05) -- (1.0,1.8);
				\draw[orange](1.0,-1.9) -- (-1.4,-1.9) to [in=180,out=180] (-2.2,-1.8)
				-- (.1,-1.8) arc (-90:90:0.05) -- (-2.7,-1.7) arc (270:90:0.05)
				-- (.1,-1.6) arc (-90:90:0.05) -- (-2.7,-1.5)arc (270:90:0.05)
				-- (.1,-1.4) arc (-90:90:0.05) -- (-2.7,-1.3)arc (270:90:0.05)
				-- (.1,-1.2) arc (-90:90:0.05) -- (-2.7,-1.1)arc (270:90:0.05)
				-- (.1,-1.) arc (-90:90:0.05) -- (-2.7,-.9)arc (270:90:0.05)
				-- (.1,-.8) arc (-90:90:0.05) -- (-2.7,-.7)arc (270:90:0.05)
				-- (.1,-.6) arc (-90:90:0.05) -- (-2.7,-.5)arc (270:90:0.05)
				-- (.1,-.4) arc (-90:90:0.05) -- (-2.7,-.3)arc (270:90:0.05)
				-- (.1,-.2) arc (-90:90:0.05) -- (-2.7,-.1)arc (270:90:0.05)
				-- (.1,-.) arc (-90:90:0.05) -- (-2.7,.1)arc (270:90:0.05)
				-- (.1,.2) arc (-90:90:0.05) -- (-2.7,.3)arc (270:90:0.05)
				-- (.1,.4) arc (-90:90:0.05) -- (-2.7,.5)arc (270:90:0.05)
				-- (.1,.6) arc (-90:90:0.05) -- (-2.7,.7)arc (270:90:0.05)
				-- (.1,.8) arc (-90:90:0.05) -- (-2.7,.9)arc (270:90:0.05)
				-- (.1,1.) arc (-90:90:0.05) -- (-2.7,1.1)arc (270:90:0.05)
				-- (.1,1.2) arc (-90:90:0.05) -- (-2.7,1.3)arc (270:90:0.05)
				-- (.1,1.4) arc (-90:90:0.05) -- (-2.7,1.5)arc (270:90:0.05)
				-- (.1,1.6) arc (-90:90:0.05) -- (-2.5,1.7) to[out=180,in=180] (-1.8,1.8)
				-- (1.3,1.8);
				\textbe{-4.5,-2}{$U_{0}\backslash U_i$}
				\textbe{-1.5,-2}{$U_i\backslash U_{i+1}$}
				\textbe{1.5,-2}{$U_{i+1}\backslash U_{i+2}$}
				\textbe{4.5,-2}{$U_{i+2}$}
				\def\textcol{orange}
				\textat{1.5,-1}{\tiny $u_{2d+1}^{(i+1)},\dots,u_{3d}^{(i+1)}$}
				\textat{1.5,1}{\tiny $u_{2d}^{(i+1)},\dots,u_{d+1}^{(i+1)}$}
				\draw[dotted] (1.5,-1.1) -- (0.9,-1.9) (1.5,1.1) -- (0.9,1.8) (-2.9,-2) -- (-3.1,-1.6) (-1.95,2.2) -- (-.2,1.9)
				(1,2.2) -- (.5,1.9) (2.4,1.95) -- (1.4,1.85);
				\def\textcol{mygreen}
				\textat{1,2.3}{\tiny $u_1^{(i+1)},\dots,u_d^{(i+1)}$}
				\def\textcol{red}
				\textat{3.5,2}{\tiny $g_1^{(i+1)},\dots,g_{2d}^{(i+1)}$}
				\def\textcol{blue}
				\textat{-2,2.3}{\tiny$u_{2d+1}^{(i)},\dots,u_{3d}^{(i)}$}
				\textat{-2.9,-2.1}{\tiny$v_d,\dots,v_1$}
			\end{tikzpicture}
			\caption{The inductive step using the Cover Down Lemma. By the inductive hypothesis, there is a (blue) extra-tight trail covering $G'\backslash G'[U_i]$. One end is extended by a (green) extra-tight trail into $U_{i+1}\backslash U_{i+2}$. Then, the Cover Down \Cref{lem: Cover Down} gives us an (orange) extra-tight trail covering all remaining edges in $G'[U_i]\backslash G'[U_{i+1}]$. Finally, these two trails are connected via a short (red) extra-tight trail.}
			\label{fig: Cover Down step}
		\end{figure}
		
		\begin{proof}[Proof of \Cref{theo: large min degree implies extra-tight trail}]
			Let $1/n\ll \eta \ll 1/m'\ll\alpha,\mu\ll 1/d$ with $\mu^{1.5}<\alpha<\frac{1}{2}\mu^{1.1}$.
			
			\textbf{Step 1.} Apply \Cref{lem:There is a vortex} on $G$ to get an $(\alpha+\mu^2,\mu,m)$-vortex $U_0\supseteq U_1\supseteq\dots\supseteq U_\ell$ for some $\floor{\mu m'}\le m\le m'$. In particular, we have $1/n\ll \eta \ll 1/m\ll\alpha,\mu\ll 1/d$.
			
			\textbf{Step 2.} We apply the Absorber \Cref{lem: Absorber Lemma} with the parameters $U_{\ref{lem: Absorber Lemma}}\coleq U_\ell$, $\alpha_{\ref{lem: Absorber Lemma}}\coleq 2(\mu+\alpha)$, and $G_{\ref{lem: Absorber Lemma}}\coleq G[(U_0\backslash U_1)\cup U_\ell]-\big\{\{v_1,\dots,v_d\},\{v_1',\dots,v_d'\}\big\}$ to get an extra-tight trail $A=(a_1,\dots,a_q)$ with ends $(a_1,\dots, a_d)$ and $(a_{q-(d-1)},\dots,a_q)$ such that $U_\ell$ is independent in $A$ and $\Delta(A)\le \eta n$. Furthermore, $A$ has the following absorbing property: For each $d$-graph $L$ on $U_\ell$ with $\delta(L)\ge (1-4\mu)m$ where each 1-degree is divisible by $d^2$, there is an extra-tight trail with edge set $L\cup A$ having the same ends $(a_1,\dots, a_d)$ and $(a_{q-(d-1)},\dots,a_q)$. Note that since $U_\ell$ is independent in $A$ and by the way we chose $G_{\ref{lem: Absorber Lemma}}$, we are guaranteed that neither $\{v_1,\dots,v_d\}$ nor $\{v_1',\dots,v_d'\}$ is in $A$.
			
			Let $\{g'_1,\dots,g_d'\}\in G[U_\ell]-\big\{\{v_1,\dots,v_d\},\{v_1',\dots,v_d'\}\big\}$. We start building our final extra-tight trail by gluing one end of $A$ to one of the given ends $(v_d',\dots,v_1')$ and the other end of $A$ to some vertices to $(g_1',\dots,g_d')$: by \Cref{lem: glue two extra-tight trails}, we can find vertices $g_1,\dots,g_{4d}\in U_0\backslash U_1$ such that \[\big(g_1',\dots,g_d',g_1,\dots,g_{2d},a_1,\dots,a_q,g_{2d+1},\dots,g_{4d},v_d',\dots,v_1'\big)\] is an extra-tight trail $\Tend$ in~$G$. Note that $\Delta(\Tend)\le 2\eta n$ and $\Delta(\Tend[U_i])\le 2$ for all $i>0$. Let $G'\coleq G\backslash \Tend$.
			
			\textbf{Step 3.}  Using \Cref{lem: glue two extra-tight trails}, we greedily pick some $u_{2d+1}^{(0)},\dots,u_{3d}^{(0)}\in U_0\backslash U_1$ such that
			\[\big(v_1,\dots,v_d,u_{2d+1}^{(0)},\dots,u_{3d}^{(0)}\big)\] forms an extra-tight trail $T_0$ in $G'$. Inductively, assume that for some $i\in[\ell-1]_0$, we have already found an extra-tight trail $T_i$ in $G'$ with ends $(v_1,\dots,v_d)$ and $\big(u_{2d+1}^{(i)},\dots,u_{3d}^{(i)}\big)$ such that
			\begin{enumerate}
				\item\label{enum: indu: u in U} $u_{2d+1}^{(i)},\dots,u_{3d}^{(i)}\in U_{i}$
				\item\label{enum: indu: E(T_i)} $G'\backslash G'[U_i]\subs T_i\subs \big(G'\backslash G'[U_{i+1}]\big)\cup\big\{\{v_1,\dots,v_d\}\big\}$
				\item\label{enum: indu: Delta(U_i)} $\Delta(T_i[U_i])\le 2\mu^2\abs{U_i}$.
			\end{enumerate}
			Note that these conditions are satisfied for $i=0$. We show now how to get the same result for $i+1$ instead of $i$ (where $U_{\ell+1}\coleq \emptyset$).
			
			Using \Cref{lem: glue two extra-tight trails}, we can find $u_1^{(i+1)},\dots,u_{d+1}^{(i+1)}\in U_{i+1}\backslash U_{i+2}$ such that 
			\[\big(\underbrace{v_1,\dots,v_d,\dots,u_{2d+1}^{(i)},\dots,v_{3d}^{(i)}}_{T_i},u_1^{(i+1)},\dots,u_d^{(i+1)}\big)\]
			forms an extra-tight trail $T_{i+1}'$ in $G'$. Note that by \ref{enum: indu: Delta(U_i)}, $\Delta(T_{i+1}'[U_{i}])\le 3 \mu^2 |U_i|$. Let $G^{(i+1)}\coleq G'\backslash T_{i+1}'$. Thus, $G^{(i+1)}$ is obtained from $G$ by deleting the edge-sets of two extra-tight trails $T_{i+1}'$ and $\Tend$. Since two of the ends of $T_{i+1}'$ and $\Tend$ are $(v_1,\dots,v_d)$ and $(v_d',\dots,v_1')$, and the other two ends only have vertices in $U_{i+1}$, the degree condition \eqref{eq: d-trail divisible} on $G$ implies that all vertices in $U_i\backslash U_{i+1}$ have degree divisible by $d^2$ in $G^{(i+1)}$. 
			
			Hence, we can apply the Cover Down \Cref{lem: Cover Down} with $G_{\ref{lem: Cover Down}}\coleq G^{(i+1)}[U_i]-G^{(i+1)}[U_{i+2}]$, $U_{\ref{lem: Cover Down}}\coleq U_{i+1}$, and $\alpha_{\ref{lem: Cover Down}}\coleq 2\alpha$. 
			To check the minimum degree condition of the Cover Down Lemma, let $S\in \binom{U_i}{d-1}$. By property \ref{enum: vortex, minimum degree} of an $\big(\alpha+\mu^2,\mu,m\big)$-vortex, we have $\deg_G(S,U_i)\ge \big(1-\alpha-\mu^2\big)\abs{U_i}$. 
			Therefore, \begin{align*}\deg_{G_{\ref{lem: Cover Down}}}(S)&\ge \big(1-\alpha-\mu^2\big)\abs{U_i}-\Delta(T_{i+1}'[U_i])-\Delta(\Tend[U_i])-\abs{U_{i+2}}\\&\ge \big(1-\alpha-\mu^2\big)\abs{U_i}-3\mu^2 |U_i|
				-\Delta(\Tend[U_i])-\mu^{2} |U_i|\\&\ge (1-2\alpha)\abs{U_i}.\end{align*}
			Hence, all conditions of the Cover Down \Cref{lem: Cover Down} are satisfied and we get $u_{d+1}^{(i+1)},\dots,u_{3d}^{(i+1)}\in U_{i+1}$ together with an extra-tight trail $T_{i+1}''$ with ends $\big(u_{d+1}^{(i+1)},\dots,u_{2d}^{(i+1)}\big)$ and $\big(u_{2d+1}^{(i+1)},\dots, u_{3d}^{(i+1)}\big)$ such that $G'\backslash G'[U_{i+1}]\subs T_{i+1}''\subs G'\backslash G'[U_{i+2}]$ and $\Delta(T_{i+1}''[U_{i+1}])\le \mu^2\abs{U_i}$. 
			By \Cref{lem: glue two extra-tight trails}, we can find vertices $g_1^{(i+1)},\dots,g_{2d}^{(i+1)}\in U_{i+1}\backslash U_{i+2}$ such that
			\[\big(\underbrace{v_1,\dots,v_d,\dots,u_1^{(i+1)},\dots,u_d^{(i+1)}}_{T_{i+1}'},g_1^{(i+1)},\dots,g_{2d}^{(i+1)},\underbrace{u_{d+1}^{(i+1)},\dots,u_{2d}^{(i+1)},\dots,u_{2d+1}^{(i+1)},\dots,u_{3d}^{(i+1)}}_{T_{i+1}''}\big)\] forms an extra-tight trail $T_{i+1}$ in $G'$ which concludes the inductive step.
			
			\textbf{Step 4.} In the end, we have an extra-tight trail $T_{\ell}$ with ends $(v_1,\dots,v_d)$ and $\big(u_{2d+1}^{(\ell)},\dots,u_{3d}^{(\ell)}\big)$ where $u_{2d+1}^{(\ell)},\dots,u_{3d}^{(\ell)}\in U_\ell$, $G'\backslash G'[U_\ell]\subs T_{\ell}\subs G'$, and $\Delta(T_{\ell}[U_\ell])\le 2\mu^2\abs{U_\ell}$. Let $G''\coleq G'[U_\ell]\backslash T_{\ell}$. It consists of $m$ vertices and by property \ref{enum: vortex, minimum degree} of an $\big(\alpha+\mu^2,\mu,m\big)$-vortex, we have $\delta(G'')\ge \big(1-\alpha-\mu^2\big)m-\Delta(T_{\ell}[U_\ell])\ge (1-1.5\alpha)m$. Thus, \Cref{lem: glue two extra-tight trails} implies there exist $2d$ vertices in $U_\ell$ that connect the end $\big(u_{2d+1}^{(\ell)},\dots,u_{3d}^{(\ell)}\big)$ of $T_{\ell}$ to the end $(g_1',\dots,g_d')$ of $\Tend$ to form one long extra-tight trail $T_{\ell+1}$ with the ends $(v_1,\dots,v_d)$ and $(v_d',\dots,v_1')$. But now, the remaining graph $L$ of uncovered edges of $G[U_\ell]$ fulfills the conditions of the absorbing property of~$A$. Hence, by changing the vertex sequence of $A$ within $\Tend$ within $T_{\ell+1}$, we get an extra-tight trail that covers all the edges of~$G$.
		\end{proof}

		\section{Absorber Lemma}\label{sec: Absorber}

The goal of this section is to prove the Absorber \Cref{lem: Absorber Lemma}, i.e.\@ constructing an extra-tight trail $A$  that can absorb the edges of any leftover $L \subseteq G[U]$ into an extra-tight trail with the same ends.
As indicated in Section~\ref{sec: Proof overview}, we draw inspiration from the approach of Glock, Joos, Kühn, and Osthus in \cite{glock2020euler}, where they construct a tight $d$-uniform Euler tour in dense enough hypergraphs by first constructing an appropriate  ``backbone'' tight trail and then merging into it one by one the tight cycles of an appropriate decomposition of the rest.
If we were to follow~\cite{glock2020euler} closely, we would just build the sequence of our extra-tight trail $A$ in such a way that it contains, disjointly, every $(d-1)$-tuple $(u_1,\dots,u_{d-1})$ of distinct vertices from $U$. Then, for each cycle $C$ of a decomposition of some $L$ into extra-tight cycles, we would divert the sequence of $A$ immediately after the appearance of some consecutive $(d-1)$-tuple of $C$, sending it first to go around in $C$ and only after that proceed further on $A$ (see Figure~\ref{fig:cycle absorption}). 
\begin{figure}[h] 
    \centering
    \begin{tikzpicture}[scale = 0.8]
    \draw[red] (0,1.1) ellipse [x radius = 4cm, y radius = 1cm];
    
\def\s{1}

\draw[
  draw=none,
  preaction={
    decorate,
    decoration={
      markings,
      mark=between positions 0 and 1 step 15pt
        with {\fill circle[radius=2.5pt];}
    }
  }
]
  (0,1.1) ++(25:{4cm*\s} and {1cm*\s})
  arc[
    start angle=25,
    end angle=155,
    x radius={4cm*\s},
    y radius={1cm*\s}
  ];
    
    \draw[blue] (-4,0) -- (4,0);
    \draw[fill=\mycolor, draw=\mycolor] (0,.1) ellipse [x radius=3pt, y radius=7pt];
    \draw[fill=\mycolor, draw=\mycolor] (1,0.1) ellipse [x radius=3pt, y radius=7pt];
    \draw[fill=\mycolor, draw=\mycolor] (2,0.1) ellipse [x radius=3pt, y radius=7pt];
    \draw[fill=\mycolor, draw=\mycolor] (-1,0.1) ellipse [x radius=3pt, y radius=7pt];
    \draw[fill=\mycolor, draw=\mycolor] (-2,0.1) ellipse [x radius=3pt, y radius=7pt];
    \newdot{w0}{-3,0}
    \newdot{wd}{3,0}
    \newdot{ck}{-3.7,.75}
    \newdot{c1}{3.7,.75}
    \newdot{ud}{{0 + 4*cos(-45.40933)}, {1.1 + 1*sin(-45.40933)}}
    \newdot{u0}{{0 + 4*cos(225.40933)}, {1.1 + 1*sin(225.40933)}}
    \textbe{w0}{$w_0$}
    \textbe{-2,0}{$u_1$}
    \textbe{-1,0}{$u_2$}
    \textbe{2,0}{$u_{d-1}$}
    \textbe{wd}{$w_d$}
    \textat{3.9,.45}{$c_1$}
    \textat{3.9,1.8}{$c_2$}
    \textat{-3.9,1.9}{$c_{k-1}$}
    \textat{-3.9,.45}{$c_k$}
    \textab{u0}{$u_0$}
    \textab{ud}{$u_d$}
    {\def\textcol{blue}\textat{3.8,-0.3}{\scalebox{1.3}{$A$}}}
    {\def\textcol{red}\textat{0,1}{\scalebox{1.3}{$C$}}}
    \textat{5.5,1.1}{\scalebox{2}{$\leadsto$}}
    \begin{scope}[shift = {(11,0)}]
    \draw[draw=purple]
    ({0 + 4*cos(-45.40933)}, {1.1 + 1*sin(-45.40933)}) arc[start angle=-45.40933, end angle=225.40933, x radius=4cm, y radius=1cm];

\draw[
  draw=none,
  preaction={
    decorate,
    decoration={
      markings,
      mark=between positions 0 and 1 step 15pt
        with {\fill circle[radius=2.5pt];}
    }
  }
]
  (0,1.1) ++(25:{4cm*\s} and {1cm*\s})
  arc[
    start angle=25,
    end angle=155,
    x radius={4cm*\s},
    y radius={1cm*\s}
  ];
    
    \newdot{ud}{{0 + 4*cos(-45.40933)}, {1.1 + 1*sin(-45.40933)}}
    \newdot{u0}{{0 + 4*cos(225.40933)}, {1.1 + 1*sin(225.40933)}}
    \node (a) at ({0 + 4*cos(-45.40933)}, {1.1 + 1*sin(-45.40933)}) {};
    \draw[purple] (-4,0) to[out = 0, in=193.84] (ud);
    \draw[purple] (4,0) to[out = 180, in=-13.84] (u0);
    \draw[fill=\mycolor, draw=\mycolor] (0,-.05) ellipse [x radius=3pt, y radius=7pt];
    \draw[fill=\mycolor, draw=\mycolor] (1,-0.) ellipse [x radius=3pt, y radius=7pt];
    \draw[fill=\mycolor, draw=\mycolor] (2,0.08) ellipse [x radius=3pt, y radius=7pt];
    \draw[fill=\mycolor, draw=\mycolor] (-1,-0.) ellipse [x radius=3pt, y radius=7pt];
    \draw[fill=\mycolor, draw=\mycolor] (-2,0.08) ellipse [x radius=3pt, y radius=7pt];
    \newdot{w0}{-3,0}
    \newdot{wd}{3,0}
    \newdot{ck}{-3.7,.75}
    \newdot{c1}{3.7,.75}
    \textbe{w0}{$w_0$}
    \textbe{-2,0}{$u_1$}
    \textbe{-1.,-.08}{$u_2$}
    \textbe{2,0}{$u_{d-1}$}
    \textbe{wd}{$w_d$}
    \textat{3.9,.45}{$c_1$}
    \textat{3.9,1.8}{$c_2$}
    \textat{-3.9,1.9}{$c_{k-1}$}
    \textat{-3.9,.45}{$c_k$}
    \textab{u0}{$u_0$}
    \textab{ud}{$u_d$}
    \end{scope}
    \end{tikzpicture}
    \caption{Absorbing a cycle $C$ into the trail $A$. Vertices that appear in both $A$ and $C$ are shown in thicker ellipses.
   The new sequence is $(\ldots , w_0, u_1, u_2, \ldots , u_{d-1}, u_d, c_1 , \ldots , c_k, u_0, u_1, u_2, \ldots , u_{d-1}, w_d, \ldots )$.
    \label{fig:cycle absorption}}
\end{figure}
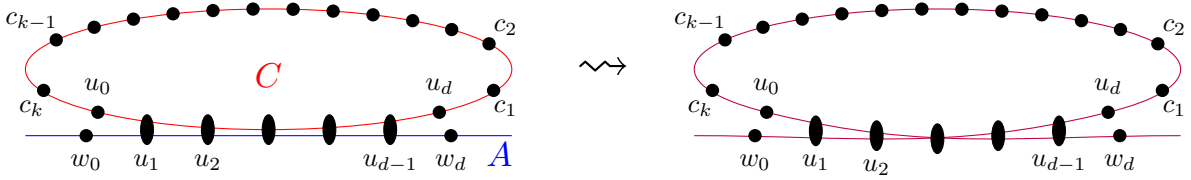

This absorption approach works like a charm in \cite{glock2020euler} for tight structures, but for extra-tight structures problems arise:  the insertion of $C$ into the sequence of $A$ produces a sequence which covers a set of edges a little bit different from $A\cup C$. 

Namely, the $2(d-1)$ edges in 
\begin{equation} \label{eq: E1} E_1 = \Big\{\{u_0,u_1,\dots,u_{i-1},u_{i+1},\dots, u_{d-1},w_d\}, \{w_0,u_1,\dots,u_{i-1},u_{i+1},\dots, u_{d-1},u_d\}: i\in [d-1]\Big\},\end{equation} which were not covered here by $A\cup C$, will be covered  here unnecessarily by the ``rewired'' sequence. On the other hand, those $2(d-1)$ edges of $A$ and $C$ which jump over some vertex in $(u_1, \ldots , u_{d-1})$, i.e.  those contained in
\begin{equation} \label{eq: E2} E_2 =\Big\{\{w_0,u_1,\dots,u_{i-1},u_{i+1},\dots, u_{d-1},w_d\}, \{u_0,u_1,\dots,u_{i-1},u_{i+1},\dots, u_{d-1},u_d\}: i\in [d-1]\Big\}, \end{equation}
are not covered anymore by the new, rewired sequence (cf.~\Cref{fig:cycle absorption}).

To overcome this, we introduce our notion of an $(E_1,E_2)$-switcher, which is a pair of extra-tight trails whose edge set differs exactly in $E_1$ and $E_2$. 
\begin{definition}
 Given $d$-graphs $E_1, E_2$, an \emph{$(E_1,E_2)$-switcher} 
 is a pair $(T_1, T_2)$ of extra-tight trails with the same ends such that $T_1\setminus T_2 =E_1$, $T_2\setminus T_1 =E_2$ and $V(E_1\cup E_2)$ is independent in $T_1\setminus E_1$.
\end{definition}

The last property, that $V(E_1\cup E_2)$ is independent in $T_1\setminus E_1$, will later be helpful to ensure that no edge is repeated in the final absorber $A$ and that $U$ is independent in $A$.

The plan is that our eventual absorption trail $A$ will also have a segment where the sequence $T_1$ will be contained for every pair $(E_1,E_2)$ sets which could arise at some cycle-insertion.
If our absorption procedure for the cycles of a decomposition of  a particular $L$ happened to use a cycle-insertion with corresponding pair of edge sets $(E_1, E_2)$, then we also switch the corresponding sequence  $T_1$ in $A$ to $T_2$, thus undoing the wrong the cycle-insertion did to the set of edges covered.   The main result of the next subsections is that the desired switchers do exist.

\begin{lemma}\label{lem: switcher}
Let $d\ge 2$, $u_0,\dots,u_d,w_0,w_d$ be $d+3$ distinct vertices. Then for the two families $E_1$ and $E_2$ of $d$-sets defined in (\ref{eq: E1}) and (\ref{eq: E2}), there exists an $(E_1,E_2)$-switcher $(T_1,T_2)$.
\end{lemma}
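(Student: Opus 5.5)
Since $E_1$ and $E_2$ are fixed $d$-graphs on the $d+3$ vertices $u_0,\dots,u_d,w_0,w_d$, the switcher can be built explicitly from small gadgets, using a host $d$-graph with unlimited fresh vertices. All edges other than $E_1\cup E_2$ will contain at least one fresh vertex; this automatically gives that $V(E_1\cup E_2)$ is independent in $T_1\setminus E_1$. The switcher notion composes well: if $(T_1,T_2)$ is an $(E_1,E_2)$-switcher and $(T_1',T_2')$ is an $(E_1',E_2')$-switcher on disjoint fresh vertices, then joining them with \Cref{lem: glue two extra-tight trails} (fresh connecting vertices, taken in the complete hypergraph on a huge vertex set) gives an $(E_1\cup E_1',E_2\cup E_2')$-switcher, provided the $E$'s are disjoint. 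So the plan is to decompose the required switch into \emph{elementary switches} $(\{e\},\{f\})$ exchanging one edge for another. Then we pair the $2(d-1)$ edges of $E_1$ with those of $E_2$ and take the union.

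A single-edge exchange cannot be realised directly, by a degree count. \Cref{obs: degree in trails} shows that two extra-tight trails with the same ends give every vertex the same degree mod $d^2$. Swapping $e$ for $f$ changes degrees by $\pm1$ on $e\triangle f$, so elementary switches must come in combinations that balance degrees. Accordingly, I would first check that $E_1$ and $E_2$ have equal vertex degrees. Indeed, $u_0$ and $u_d$ each lie in $d-1$ edges of $E_1$ and of $E_2$, and so do $w_0$ and $w_d$. Each $u_j$ with $j\in[d-1]$ lies in $2(d-2)$ edges of each. So the degree obstruction vanishes. I would then decompose $E_1\to E_2$ into \emph{pair exchanges}: replace $\{w_0\}\cup X$ and $\{u_0\}\cup Y$ by $\{u_0\}\cup X$ and $\{w_0\}\cup Y$, and similarly for $u_d,w_d$. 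Here $X,Y$ are $(d-1)$-sets and all four sets are distinct. Concretely, pair $\{w_0,\hat U_i,u_d\}\in E_1$ with $\{u_0,\hat U_i,w_d\}\in E_1$, where $\hat U_i=\{u_1,\dots,u_{d-1}\}\setminus\{u_i\}$. These are to be exchanged for $\{u_0,\hat U_i,u_d\}$ and $\{w_0,\hat U_i,w_d\}\in E_2$. This is a ``vertex swap'' $w_0\leftrightarrow u_0$ applied to a pair, which suggests a single gadget type.

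The core step is a \emph{swap gadget}. For distinct vertices $a,b$ and disjoint $(d-1)$-sets $X,Y$, I would build a pair of extra-tight trails with equal ends, differing exactly by $\{a\}\cup X,\{b\}\cup Y$ versus $\{b\}\cup X,\{a\}\cup Y$. My first attempt is the cycle-insertion trick itself in reverse. Take an extra-tight trail in which $X$ (resp.\ $Y$) occurs as consecutive $(d-1)$-tuples flanked by fresh vertices. Then reroute the sequence through short fresh extra-tight cycles containing $a$ or $b$ adjacent to $X$ or $Y$. By the computation preceding \eqref{eq: E1}, each rerouting exchanges explicit edge families, and with fresh flanking vertices all parasitic edges contain fresh vertices. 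The parasitic edges would then have to be cancelled by arranging, in $T_2$, a second rerouting that covers them back. Alternatively, one can use the symmetry of placing $a$ and $b$ in mirrored positions of two segments, so that exchanging which segment $a$ visits exchanges exactly the two target edges.

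\textbf{Main obstacle.} I expect the hardest part to be making the parasitic edges cancel exactly, for general $d$. For $d=2$ this can be checked by hand. For $d\ge3$ I would proceed in two steps. First, I would set up a linear bookkeeping in which each local rerouting is a vector of $\pm$ edges. Then I would show that elementary reroutings with fresh vertices generate every degree-balanced exchange. This would be by induction: a degree-balanced difference whose edges each contain a fresh vertex can be reduced to pair exchanges involving fresh vertices only, and such exchanges can be realised by fresh extra-tight paths traversed in two orders. If this generation argument fails, the fallback is an explicit, $d$-dependent sequence construction verified edge by edge.
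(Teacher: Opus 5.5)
There is a genuine gap. Several parts of your reduction are fine. The degree count for $E_1,E_2$ is correct. Pairing $\{w_0\}\cup\hat U_i\cup\{u_d\}$ with $\{u_0\}\cup\hat U_i\cup\{w_d\}$ does express $E_1\to E_2$ as $d-1$ exchanges $w_0\leftrightarrow u_0$. Gluing switchers that live on disjoint fresh vertex sets is plausible.

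However, the whole content of the lemma lies in the swap gadget, and you never construct it. Rerouting the trail through a fresh cycle creates parasitic edges, the analogues of \eqref{eq: E1} and \eqref{eq: E2}. Cancelling them is itself a switcher problem, so that route is circular. The claim that elementary reroutings ``generate every degree-balanced exchange'' is only announced. You do not explain how generators would be realised as actual pairs of trails with identical ends, or how cancelling terms would be combined, since your composition rule requires disjoint $E$'s. The fallback, ``an explicit $d$-dependent construction'', is not a proof either.

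There is also a concrete mismatch. Your gadget is stated for \emph{disjoint} $(d-1)$-sets $X,Y$. The sets you actually need are $X=\hat U_i\cup\{u_d\}$ and $Y=\hat U_i\cup\{w_d\}$, which share the $(d-2)$-set $\hat U_i$. This set is nonempty for every $d\ge 3$.

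The missing idea is to make the swap \emph{global}. Take $T_2$ to be $T_1$ with every occurrence of $u_0$ and $w_0$ interchanged. Then $T_2$ is automatically an extra-tight trail with the same ends, provided neither vertex sits at an end. If no edge contains both vertices, only edges through $u_0$ or $w_0$ change status. Everything then reduces to prescribing the two links: $T_1(u_0)\setminus T_1(w_0)=E_1(u_0)$ and $T_1(w_0)\setminus T_1(u_0)=E_1(w_0)$.

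Each interior occurrence of a vertex contributes one copy of $\extratightpath{2d}{d-1}$ to its link. So you need $\extratightpath{2d}{d-1}$-decompositions $\F$ of $\tilde G_1$ and $\F'$ of $\tilde G_2$. Here $\tilde G_1,\tilde G_2$ are almost complete $(d-1)$-graphs on a large auxiliary set that differ exactly by $E_1(u_0)$ versus $E_1(w_0)$, after deleting a common set $E'$ to make the edge counts divisible by $d^2$. Moreover, the underlying tight $d$-paths of $\F$ and $\F'$ must share no $d$-set; otherwise $T_1$ repeats an edge avoiding $u_0$ and $w_0$.

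The paper gets both decompositions from \Cref{theo: design paper 2 uniformities}. The second application forbids, via the $d$-graph $O$, the tight paths underlying the first decomposition. The paper then concatenates blocks $(v_1,\dots,v_d,u_0,v_{d+1},\dots,v_{2d})$ and $(v_1,\dots,v_d,w_0,v_{d+1},\dots,v_{2d})$ separated by fresh padding vertices.

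Your ``mirrored segments'' remark points in this direction, but it cannot stay local. Moving one occurrence of $a$ into another segment changes all $d^2$ edges in the link of that occurrence. Hence the links of $a$ and $b$ must agree globally up to the target edges. For $d\ge 3$ this is a genuine design problem; the paper notes it found no explicit construction beyond $d=2$.
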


For the proof of the above Switcher Lemma note that one can get from the family $E_1$ to the family $E_2$ by switching the occurrences of the vertices $u_0$ and $w_0$, in other words $E_1(u_0)  = E_2 (w_0)$ and $E_1(w_0) = E_2 (u_0)$.
        This motivates us to build an $(E_1,E_2)$-switcher $(T_1,T_2)$ where one obtains the vertex sequence of the extra-tight trail $T_2$ from the vertex sequence of the extra-tight trail $T_1$ by just swapping all occurrences of the vertices $u_0$ and $w_0$. Doing such a swap in {\em any} extra-tight trail creates another extra-tight trail and the status of an edge, in terms of whether it is covered, can only change if it contains exactly one of the swapped vertices. We will in fact make sure that $T_1$ does not cover an edge containing both $u_0$ and $w_0$. The precise set of edges that changes its coverage status is 
\begin{align*}
T_1\setminus T_2 & = \big(\!\left(T_1 (u_0) \setminus T_2(u_0) \right) \vee \{ u_0\} \big) \cup \big(\!\left( T_1 (w_0) \setminus T_2(w_0)\right)  \vee \{ w_0\} \big)\\
T_2 \setminus T_1 & = \big(\!\left(T_2 (u_0) \setminus T_1(u_0) \right) \vee \{ u_0\} \big) \cup \big(\!\left( T_2 (w_0) \setminus T_1(w_0)\right)  \vee \{ w_0\} \big),
\end{align*} 
where $A \vee B:= \{ a \cup b \mid a \in A,b \in B  \}$.
The swap causes the link graphs of $u_0$ and $w_0$ to swap, i.e.~$T_2(u_0) = T_1(w_0)$ and $T_2(w_0) = T_1(u_0)$. 
Thus, in order for $T_1$ and $T_2$ to satisfy the switcher property, we will need to make sure that the links of $u_0$ and $w_0$ in $T_1$ differ in exactly the right way, namely that
\begin{align*}
    T_1(u_0)\backslash T_1(w_0) & =E_1(u_0)  = E_2 (w_0)
    \mbox{ and}  \\ T_1(w_0)\backslash T_1(u_0) & =E_1(w_0) = E_2 (u_0).
\end{align*}

Since the construction of an $(E_1,E_2)$-switcher is quite complicated, as a warm-up, in the next subsection we motivate our approach in the simplest case $d=2$. This is only meant for the benefit of the exposition, the reader is welcome to skip over to the actual general proof.

\subsection{Motivating sketch for \texorpdfstring{$d=2$}{d=2}}

The issue of inserting the sequence of an extra-tight cycle into the sequence of an extra-tight trail is illustrated in \Cref{fig:switch for d=2} for the case $d=2$. 
There, the black and red edges form an extra-tight trail $(\dots,w_{-1},w_0,u_1,w_2,w_3,\dots)$ and an extra-tight cycle $(\dots,c_k,u_0,u_1,u_2,c_1,\dots)$ intersecting in the vertex~$u_1$.
If one attempted to merge the edges of these two into one extra-tight trail by simply taking the vertex-sequence $(\dots, w_0,u_1,u_2,c_1,\dots,c_k,u_0,u_1,w_2,\dots)$, then the set $E_2=\{w_0w_2,u_0u_2\}$ of red edges would no longer be used, and the set $E_1=\{u_0w_2,w_0u_2\}$ of blue edges would be used extra.

\begin{figure}[h]
    \centering
    \scalebox{1}{
    \begin{tikzpicture}[scale=2]
    	\def\y{0.3}
    	\draw[thick,line width=1mm, red, opacity=0.4](2,\y) -- (3,\y);
        \draw (0,\y) -- (2,\y) -- (2.5,0.87) -- (3,\y) -- (5,\y) -- (4.5,.87) -- (.5,.87) -- (0,\y) (.5,.87) -- (1,\y) -- (1.5,.87) -- (2,\y) -- (2.5,.87) -- (3,\y) -- (3.5,.87) -- (4,\y) -- (4.5,.87) -- (5,\y) (4.5,0.87) -- (5,0.87) (0,0.87) -- (.5,0.87);
        \draw[thick](2,\y) -- (3,\y);
        \begin{scope}[shift = {(2.5,2.12)}]
        	\pgfmathsetmacro{\x}{0.45*1.25}
        	\draw[thick,line width=1mm, red, opacity=0.4] (295.714:\x) -- (244.285:\x);
        	\draw[thick,line width=1mm, blue, opacity=0.4] (295.714:\x) to[out=230,in=80] (-.5,{\y-2.12}) (.5,{\y-2.12}) to[in=310,out=100] (244.285:\x);
        	\draw[thick] (295.714:\x) -- (244.285:\x);
            \draw (270:1.25) -- (321.428:1.25) -- (12.857:1.25) -- (64.285:1.25) -- (115.714:1.25) -- (167.142:1.25) -- (218.571:1.25) -- (270:1.25);
            \draw (295.714:\x) -- (347.142:\x) -- (38.571:\x) -- (90:\x) -- (141.428:\x) -- (192.857:\x) -- (244.285:\x);
            \draw[dashed, thick] (295.714:\x) to[out=230,in=80] (-.5,{\y-2.12}) (.5,{\y-2.12}) to[in=310,out=100] (244.285:\x);
            \draw (270:1.25) -- (295.714:\x) -- (321.428:1.25) -- (347.142:\x) -- (12.857:1.25) -- (38.571:\x) --(64.285:1.25) -- (90:\x) --(115.714:1.25) -- (141.428:\x) -- (167.142:1.25) -- (192.857:\x) --(218.571:1.25) -- (244.285:\x) -- (270:1.25);
            \textat{-.5,{\y-2.2}}{$w_0$}
            \textat{-1,-1.15}{$w_{-1}$}
            \textat{0,-1.45}{$u_1$}
            \textat{1,-1.15}{$w_3$}
            \textat{.5,{\y-2.2}}{$w_2$}
            \textat{-1,-.9}{$c_k$}
            \textat{1,-.9}{$c_1$}
            \textat{-.25,-.37}{$u_0$}
            \textat{.23,-.37}{$u_2$}
        \end{scope}
    \end{tikzpicture}}
    \caption{Merging the sequence of an extra-tight cycle into an extra-tight trail in the two-dimensional case.}
    \label{fig:switch for d=2}
\end{figure}
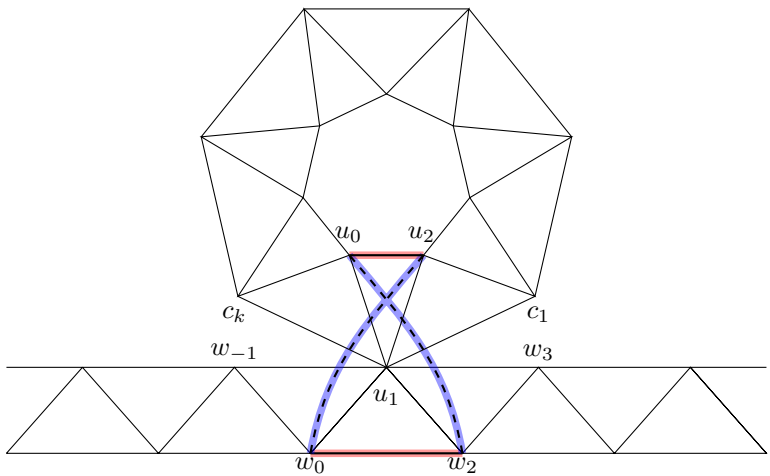

By our general contemplation, in order to construct the switcher, we need to make sure the ($1$-uniform) links of $u_0$ and $w_0$ in $T_1$ differ exactly the right way. 
\begin{align*}
    T_1(u_0)\backslash T_1(w_0) & =  \{w_2\} 
    \mbox{ and}  \\ T_1(w_0)\backslash T_1(u_0) & = \{u_2\}.
\end{align*}
Denoting these graphs by $\tilde G_1:=T_1(u_0)$ and $\tilde G_2:=T_1(w_0)$ this means $\tilde{G}_1 \setminus \tilde{G}_2 = \{ w_2\}$ and $\tilde{G}_2\setminus \tilde{G}_1 = \{ u_2 \}$.
In the case $d=2$, this turns out to be not that hard.
For example,
\begin{align*}
T_1=(x_1,w_2,u_0,x_2,x_3,w_0,u_2,x_1),
\end{align*}
works as $\tilde{G}_1=\{x_1,w_2,x_2,x_3\}$ and $\tilde{G}_2=\{x_2,x_3,u_2,x_1\}$.
With $T_2=(x_1,w_2,w_0,x_2,x_3,u_0,u_2,x_1)$ obtained by switching just $w_0$ and $u_0$, $(T_1,T_2)$ is indeed an $(E_1,E_2)$-switcher.

Unfortunately, we were not able to find a generalisation of this simple construction for larger $d$.
Towards the general case, as a simpler goal, we now indicate how to build an extra-tight trail $T$ where $u_0$ and $w_0$ have the same link graph $\tilde{G}$, and hence swapping all occurrences of $u_0$ and $w_0$ in the sequence produces an extra-tight trail covering the very same set of edges.
Note that each occurrence of a vertex $v$ within the sequence of an extra-tight trail, which is not at one of the ends, results in four neighbours of $v$, spanning a path of three edges in the extra-tight trail. Namely, in the extra-tight trail $( \ldots , v_1, v_2, v, v_3, v_4, \ldots )$, the neighbours $v_1, v_2, v_3, v_4$ of $v$ span the edges $v_1v_2, v_2v_3, v_3v_4$ in the extra-tight trail. Furthermore, different occurrences of $v$ in the sequence necessarily generate vertex-disjoint paths. 

On the one hand, for our task, the vertices occurring in the link of $u_0$ and $w_0$ have to be the same. On the other hand, as an extra-tight trail should cover every $2$-set at most once, we ask the vertex-disjoint paths of length three that appear in the extra-tight trail around an occurrence of $u_0$ in the sequence to be edge-disjoint from those that appear around an occurrence of $w_0$ in the sequence.
While this is not necessary as the example above shoes, it will be crucial for our construction of the switcher.

This motivates us choosing $16$ vertices for the ($1$-uniform) link $\tilde{G}$ of $u_0$ and $w_0$ and aiming to cover them by two sets of four paths of length three in an edge-disjoint fashion (see the blue and red paths on \Cref{fig: switcher motivation}).
We then create our sequence $T$ of length $68$ (cf.~\Cref{fig: switcher motivation}) by inserting $u_0$ in the middle of each blue path and $w_0$ in the middle of each red path and connecting up these eight sequences of five vertices with, say, four new padding vertices in between them (see \Cref{lem: glue two extra-tight trails}). 
The padding vertices are chosen to be all distinct and their role is just to make sure that no pair of vertices appears more than once within three consecutive entries of the sequence. 
The key property of the sequence on \Cref{fig: switcher motivation} is that swapping the occurrences of $u_0$ and $w_0$ does not change the underlying graph.

\begin{figure}[h]
    \centering
    \begin{tikzpicture}
        \foreach \x in {1,...,4} {
            \foreach \y in {1,...,4} {
        \newdot{a\x\y}{{2*\y},{-2*\x}}
        }
        \newdot{a\x5}{10,{-2*\x}}
    }
    \newdot{a01}{2,0}
    \newdot{a02}{4,0}
    \newdot{a03}{6,0}
    \newdot{a04}{8,0}
    \newdot{a20}{0,-4}
    \newdot{a30}{0,-6}
    \newdot{a40}{0,-8}
    \newdot{a51}{2,-10}
    \newdot{a52}{4,-10}
    \newdot{a53}{6,-10}
    \foreach \y in {1,...,4} {
    \newdot{w0}{5,{-2*\y}}
    \textab{w0}{$w_0$}
    \draw[mygreen](w0) to[out=20, in=160] (a\y4);
    \draw[mygreen](a\y1) to[out=20, in=160] (w0);
    }
    \foreach \y in {1,...,4} {
    \newdot{u0}{{2*\y},-5}
    \textle{u0}{$u_0$}
    \draw[mygreen](u0) to[out=-110, in=110] (a4\y);
    \draw[mygreen](a1\y) to[out=-110, in=110] (u0);
    }
    \draw[red] (a11) -- (a14) (a21) -- (a24) (a31) -- (a34) (a41) -- (a44);
    \draw[blue] (a11) -- (a41) (a12) -- (a42) (a13) -- (a43) (a14) -- (a44);
    \draw[mygreen] (a12) -- (a13) (a22) -- (a23) (a32) -- (a33) (a42) -- (a43);
    \draw[mygreen] (a21) -- (a31) (a22) -- (a32) (a23) -- (a33) (a24) -- (a34);
    \draw[red] (a12) to[out=-20, in=200] (a13);
    \draw[red] (a22) to[out=-20, in=200] (a23);
    \draw[red] (a32) to[out=-20, in=200] (a33);
    \draw[red] (a42) to[out=-20, in=200] (a43);
    \draw[blue] (a21) to[out=-70, in=70] (a31);
    \draw[blue] (a22) to[out=-70, in=70] (a32);
    \draw[blue] (a23) to[out=-70, in=70] (a33);
    \draw[blue] (a24) to[out=-70, in=70] (a34);
    \foreach \x in {2,...,4} {
        \draw[dotted] (a0\x) -- (a1\x);
        \draw[dotted] (a\x0) -- (a\x1) (a\x4) -- (a\x5);
    }
    \foreach \x in {1,...,4} {
        \foreach \y in {1,...,4} {
            \draw (a\x\y) circle (2mm);
    		\newdot{a\x\y}{{2*\y},{-2*\x}}
    	}
    }
    \foreach \y in {1,...,4} {
    \newdot{w0}{5,{-2*\y}}
    }
    \foreach \y in {1,...,4} {
    \newdot{u0}{{2*\y},-5}
    }
     \draw (a11) circle (2mm);
    \draw (a12) circle (2mm);
    \draw (a13) circle (2mm);
    \draw (a14) circle (2mm);
    \draw[dotted] (a42) -- (a52);
    \draw[dotted] (a43) -- (a53);
    \draw[dotted] (a45)--(a35) (a25)--(a15)--(a14) (a20) -- (a30) (a41)--(a51) -- (a40) (a02)--(a01) -- (a11) (a04) -- (a03) (a52)--(a53);
    \newdot{b}{0,-5.333}
    \newdot{b}{0,-4.666}
    \newdot{b}{10,-7.333}
    \newdot{b}{10,-6.666}
    \newdot{b}{10,-3.333}
    \newdot{b}{10,-2.666}
    \newdot{b}{4.666,-10}
    \newdot{b}{5.333,-10}
    \newdot{b}{2.666,0}
    \newdot{b}{3.333,0}
    \newdot{b}{6.666,0}
    \newdot{b}{7.333,0}
    \newdot{b}{0.666,-8.666}
    \newdot{b}{1.333,-9.333}
    \textle{1,-2}{start}
    \draw[->] (.9,-2) -- (1.7,-2);
    \textle{8.5,-9.3}{end}
    \draw[->] (8,-8.3) -- (8,-9.1);
    \end{tikzpicture}
    \caption{Extra-tight trail where $w_0$ and $u_0$ have the same link $\tilde{G}$. The 1-edges of $\tilde{G}$ are given by circles. Only those edges of the extra-tight trail are shown (in red, blue, or green) that do not contain a padding vertex. Swapping $u_0$ and $w_0$ in the indicated sequence of vertices results in an extra-tight trail covering the same edges. 
    }
    \label{fig: switcher motivation}
\end{figure}
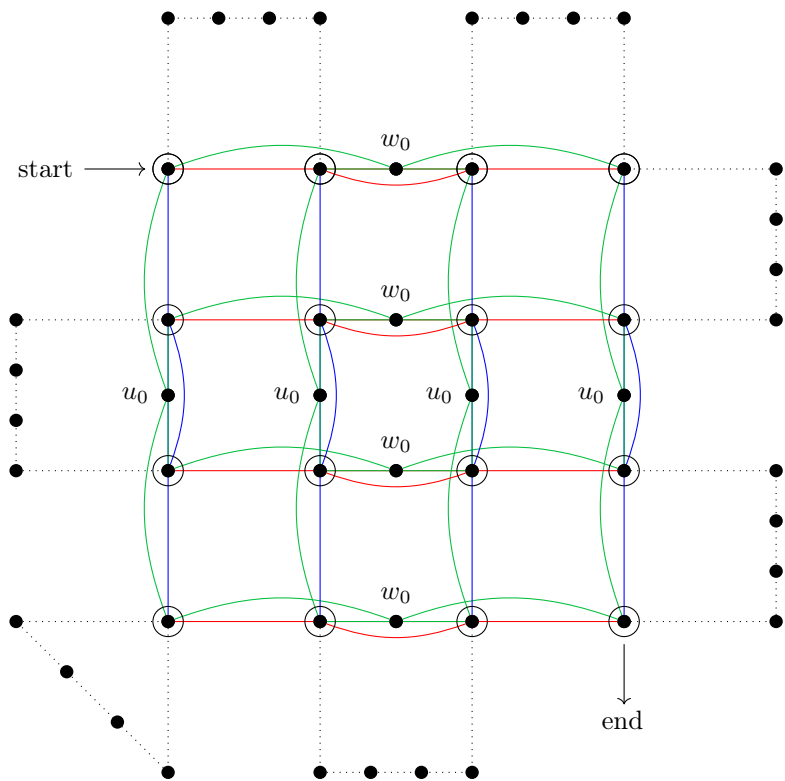

Recall, that in the actual proof, the link graphs of $u_0$ and $w_0$ will not be the same graph $\tilde{G}$, but rather well chosen $\tilde{G}_1$ and $\tilde{G}_2$ such that $\tilde{G}_1 \setminus \tilde{G}_2 = \{ w_2\}$ and $\tilde{G}_2\setminus \tilde{G}_1 = \{ u_2 \}$. 
Then, we still plan to cover both graphs with paths of order $4$ where the corresponding 2-edges are distinct.

\subsection{Constructing switchers for arbitrary \texorpdfstring{$d$}{d}}

For general $d$, we proceed analogously to the above, but the structures get more complex. Similarly to the above construction for $d=2$, we will need to get  hold of some $(d-1)$-graphs $\tilde{G}_1$ and $\tilde{G}_2$ that can play the role of the link of $u_0$ and $w_0$, and hence differ in a very specific way. 
In an extra-tight path $\extratightpath{2d+1}{d}$, the link of the middle vertex, which we think of being an occurrence of $u_0$ or $w_0$ within a long extra-tight trail, is an extra-tight path $\extratightpath{2d}{d-1}$ of length one less.
Hence, we will need to find $\extratightpath{2d}{d-1}$-decompositions of $\tilde{G}_1$ and~$\tilde{G}_2$.
Each copy of $\extratightpath{2d}{d-1}$ is the shadow of some tight path $P_{2d}^{(d)}$, which are the red and blue edges above, and we furthermore will have to make sure that they are all distinct in the decompositions.

To find the $\extratightpath{2d}{d-1}$-decompositions for the construction of switchers and the decomposition of the leftover $L$ in the Absorber Lemma into extra-tight cycles of constant length, we will use the general theorem of \cite{glock2023existence} about $F$-decompositions.
For this, we need to introduce some terminology.

\begin{definition}[\cite{glock2023existence}]\label{def: divisibility vector}
    For a $d$-graph $F$, the \emph{divisibility vector} $\Deg(F)\coleq (g_0,\dots,g_{d-1})\in\N^d$ is given by $g_i\coleq\gcd\big\{\abs{F(S)}:S\in\binom{V(F)}{i}\big\}$ for $i \in [d-1]_0$. In particular, $g_0=\abs{F}$ and we write $\Deg(F)_i\coleq g_i$. Given two $d$-graphs $F$ and $G$, $G$ is called \emph{$F$-divisible} if $\Deg(F)_i$ divides $\abs{G(S)}$ for all $i\in[d-1]_0$ and all $S\in\binom{V(G)}{i}$. An \emph{$F$-packing} in $G$ is a collection of edge-disjoint copies of $F$ in~$G$. An $F$-packing $\F$ is an \emph{$F$-decomposition} of $G$ if every edge of $G$ is contained in (exactly) one copy of $F$ in~$\F$.
\end{definition}

    It is easy to see that $G$ can only have an $F$-decomposition if $G$ is $F$-divisible. When $G$ is a dense quasirandom $d$-graph, then the simple divisibility condition is also sufficient for an $F$-decomposition. We use the following convenient notion of quasirandomness. 

\begin{definition}[\cite{Keevash}]
     A $d$-graph $G$ on $n$ vertices is called \emph{$(c,h,p)$-typical} if for any family $A$ of $(d-1)$-subsets of $V(G)$ with $\abs{A}\le h$, we have $\abs{\bigcap_{S\in A}G(S)}=(1\pm c)p^{\abs A}n$.
\end{definition}
    For us, finding an $F$-decomposition is not enough. When we build the $(E_1,E_2)$-switcher, we have to find two $\extratightpath{2d}{d-1}$-decompositions where certain $d$-sets are distinct (the red and blue edges in \Cref{fig: switcher motivation}). The following theorem enables us to do this.

\begin{theorem}[\cite{glock2023existence}, Theorem 9.6]\label{theo: design paper 2 uniformities}
Let $1/n\ll\gamma\ll c,1/h\ll p,1/f$ and $r\in[f-1]$.
Let $F$ be any $r$-graph on $f$ vertices. Suppose that $G$ is a $(c,h,p)$-typical $F$-divisible $r$-graph on $n$ vertices. Let $O$ be an $(r+1)$-graph on $V(G)$ with $\Delta(O)\le\gamma n$. Then $G$ has an $F$-decomposition $\F$ such that $\bigcup_{F\in \cF} {\binom{V(F)}{r+1}}$ and $O$ are disjoint.
\end{theorem}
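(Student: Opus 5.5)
This is Theorem~9.6 of \cite{glock2023existence}, quoted there as a black box. Here is how I would derive it from the main $F$-decomposition theorem of that paper, which is formulated for supercomplexes. The plan is to encode the forbidden $(r+1)$-graph $O$ into the underlying complex, so that every copy of $F$ produced by the decomposition theorem automatically has vertex set avoiding $O$.

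\textbf{Defining the complex.} For $i\le r$, the $i$-sets of the complex are all $i$-sets, and the $r$-sets carry $G$ itself. For $i>r$, a set $X$ of size $i\le f$ is included precisely when every $r$-subset of $X$ lies in $G$ and no $(r+1)$-subset of $X$ lies in $O$. In the supercomplex framework a copy of $F$ in a decomposition is always embedded so that its vertex set spans an $f$-set of the complex. Hence, once the theorem is applied to this complex, every $F\in\F$ satisfies $\binom{V(F)}{r+1}\cap O=\emptyset$, which is exactly the required property. The $F$-divisibility condition concerns only the $r$-graph $G$, so it carries over unchanged.

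\textbf{Main obstacle: the supercomplex conditions.} I must check that the modified complex is still an $(\eps,\xi,f,r)$-supercomplex, i.e.\ that it satisfies the regularity, density and extendability conditions. Without $O$, these follow from $(c,h,p)$-typicality of $G$ by the standard counting: the number of $f$-cliques extending a given $r$-set, or a bounded family of rooted $r$-sets, is $(1\pm O(c))$ times the expected value $p^{\text{(number of required edges)}}n^{(\text{number of new vertices})}$.

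Removing $O$ destroys a clique extending a fixed root only if one of its new vertices completes an $(r+1)$-set of $O$ together with $r$ other vertices of the clique. Since $\Delta(O)\le\gamma n$, for any fixed $r$-set only $\gamma n$ choices of that vertex are bad. So the proportion of destroyed extensions is $O_f(\gamma/p^{\binom fr})$, which is far smaller than $c$ because $\gamma\ll c,1/h\ll p,1/f$.

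Consequently every count changes by a factor of only $1\pm O(c)$, and regularity holds with slightly worse constants. Extendability requires many rooted clique extensions, and it also survives: typicality gives a linear number of extensions, while $O$ removes only a $\gamma$-fraction. Subsequently, the same estimates applied to the "$i$-th order" structure give the random-greedy robustness conditions used in the supercomplex definition, once $n$ is large. I expect the careful bookkeeping of these conditions at all levels of the supercomplex definition to be the main technical step.

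\textbf{Conclusion.} Applying the main decomposition theorem of \cite{glock2023existence} to this $F$-divisible supercomplex yields an $F$-decomposition $\F$ of $G$ in which each copy of $F$ spans a clique of the complex. By construction, $\bigcup_{F\in\F}\binom{V(F)}{r+1}$ is then disjoint from $O$, as claimed.
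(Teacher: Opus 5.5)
The paper does not prove this statement; it is quoted as a black box from \cite{glock2023existence}. Your device of taking the clique complex of $G$ and deleting every set that contains a member of $O$ is sound. Any extension of a root destroyed by the deletion contains an $(r+1)$-set of $O$ meeting the new vertices, so only a $\O_f(\gamma)$-fraction of extensions is lost. Hence regularity and density survive, and extendability, which only involves the $r$-layer, is untouched. Every copy of $F$ whose vertex set is a set of the complex then avoids $O$.

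\textbf{The gap: arbitrary $F$.} The theorem you apply, Theorem~4.7 of \cite{glock2023existence} (quoted here as \Cref{theo: weakly regular + super implies decomposition}), requires $F$ to be \emph{weakly regular}. The statement is for an arbitrary $r$-graph $F$, and the paper applies it to extra-tight cycles, which it explicitly notes are not weakly regular. So the step ``apply the main decomposition theorem to this $F$-divisible supercomplex'' is not available, and your argument only covers weakly regular $F$.

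What is missing is the reduction from \cite{glock2023existence}:
\begin{enumerate}
\item Take a weakly regular $F^\ast$ on $f^\ast\le 2f\cdot f!$ vertices with an $F$-decomposition (\Cref{lem: F decomposes weakly regular graph}).
\item Since $G$ is only $F$-divisible, first remove a sparse $F$-decomposable subgraph $H$, built from shifters, so that $G-H$ is $F^\ast$-divisible (\Cref{lem: make divisible}).
\item Only then apply the supercomplex theorem with $F^\ast$. This means you must verify the supercomplex conditions with $f^\ast$ in place of $f$.
\end{enumerate}

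Your encoding of $O$ protects the copies of $F$ sitting inside copies of $F^\ast$, but not the copies in $H$. Those arise from rooted embeddings of shifters into the $r$-graph; their vertex sets are not sets of your complex (in general not even cliques of $G$). So $O$-avoidance must be built into the embedding step itself, as in Lemmas~5.20 and~9.4 of \cite{glock2023existence}; the versions in this paper drop $O$. This works for the following reason:
\begin{itemize}
\item Each copy of $F$ in a shifter's decomposition contains at most one of $x^0_i,x^1_i$ for every $i$.
\item Hence it meets the $2k$ roots in at most $k\le r-1$ vertices.
\item So every $(r+1)$-subset of its vertex set contains a greedily embedded vertex.
\item For that vertex, $\Delta(O)\le\gamma n$ forbids only few choices.
\end{itemize}

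\textbf{Two smaller points.} First, your complex stops at size $f$, but $(\xi,f+r,r)$-density requires sets of size $f+r$ (size $f^\ast+r$ after the reduction). Second, the supercomplex definition contains no ``random-greedy robustness conditions''. What must be checked is that every $\bigcap_{b\in B}G(b)$ with $B\subseteq G^{(h)}$ and $1\le|B|\le 2^h$ is an $(\eps,\xi,f-h,r-h)$-complex. Your counting does give this, because a root $e$ of such a link satisfies $|e\cup b|=r$, so any member of $O$ killing an extension must contain a new vertex.
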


We will apply the previous theorem for the $(d-1)$-uniform extra-tight path $\extratightpath{2d}{d-1}$ when building the $(E_1,E_2)$-switcher. In the proof of the Absorber Lemma, we will also apply the theorem for the $d$-uniform extra-tight cycle~$\extratightcycle{f}{d}$. To this end we compute the divisibility vector of these two graphs.

\begin{lemma}\label{lem: Deg-vector of Fcycle and Fpath}If $d\ge 2$ and $f\ge d+3$ are fixed integers, then \begin{equation*}
    \Deg\!\left(\extratightcycle{f}{d} \right)_i=\begin{cases} f\cdot d &i=0\\d^2&i=1\\1&\text{else,}\end{cases}
    \end{equation*}
    for $i\in[d-1]_0$ and 
    \begin{equation*}\Deg\!\left(\extratightpath{2d}{d-1} \right)_i=\begin{cases} d^2&i=0\\1&\text{else,}\end{cases}
    \end{equation*}
    for $i\in[d-2]_0$.
\end{lemma}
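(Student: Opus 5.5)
The plan is to compute every entry directly from the definitions. For the top entries I count edges, for $i=1$ I use \Cref{obs: degree in trails}, and for the remaining $i$ I show the gcd is $1$ by exhibiting two $i$-sets whose degrees are coprime. Throughout I write $v_1,\dots,v_f$ for the cyclic sequence of $\extratightcycle{f}{d}$ and $v_1,\dots,v_{2d}$ for the sequence of $\extratightpath{2d}{d-1}$. I use the edges $e_{\iota,\sigma}$ of \Cref{def: extra-tight trail tour}: $e_{\iota,\sigma}$ lies in the window $W_\iota=\{v_\iota,\dots,v_{\iota+d}\}$, omits exactly one vertex $v_{\iota+\sigma}$ with $\sigma\in[d]$, and never omits the first vertex of its window.

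\emph{Edge counts and $1$-degrees.} For the cycle, the proof of \Cref{obs: degree in trails} shows that for $f\ge d+3$ the edges $e_{\iota,\sigma}$, $\iota\in[f]$, $\sigma\in[d]$, are pairwise distinct and exhaust the graph. Hence $g_0=fd$. By the same observation every vertex has degree exactly $d^2$, so $g_1=d^2$.

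The path $\extratightpath{2d}{d-1}$ is the shadow of the tight path on $2d$ vertices, which has $d+1$ windows of size $d$. By \Cref{obs: size of F and its shadow}\ref{enum: path vs shadow}, with dimension $d-1$, it therefore has $(d+1)(d-1)+1=d^2$ edges, so $g_0=d^2$. Applying \Cref{obs: degree in trails} with $d-1$ in place of $d$, the end vertices $v_1$ and $v_2$ have degrees $(d-2)+1=d-1$ and $2(d-2)+1=2d-3$. Since $\gcd(d-1,2d-3)=1$, we get $g_1=1$ for the path. (For $d=2$ the path is a $1$-graph and only $i=0$ occurs.)

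\emph{The entries with $2\le i\le d-1$ for the cycle.} For each such $i$ I take two explicit $i$-sets and count their degrees window by window.
\begin{itemize}
\item Let $S=\{v_1,v_{d+1}\}\cup R$ with $R$ any $(i-2)$-subset of $\{v_2,\dots,v_d\}$. The only window containing both $v_1$ and $v_{d+1}$ should be $W_1$. The edges of $W_1$ containing $S$ are those omitting a vertex of $\{v_2,\dots,v_d\}\setminus R$. This gives degree $d+1-i$.
\item Let $S'=\{v_1,v_d\}\cup R'$ with $R'$ an $(i-2)$-subset of $\{v_2,\dots,v_{d-1}\}$. The windows containing $v_1$ and $v_d$ should be exactly $W_0$ and $W_1$. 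In $W_0$, whose first vertex is $v_0$, the omitted vertex must be one of the $d-i$ vertices of $\{v_2,\dots,v_{d-1}\}\setminus R'$. In $W_1$ it must be one of the $d+1-i$ vertices of $\{v_2,\dots,v_{d+1}\}\setminus S'$. These edges are all distinct, so the degree is $2d+1-2i$.
\end{itemize}
Since $2(d+1-i)-(2d+1-2i)=1$, the two degrees are coprime and $g_i=1$. For the path with $2\le i\le d-2$, I run the same two-set computation inside $\extratightpath{2d}{d-1}$, replacing $d$ by $d-1$ and using the interior windows, which have length $d$. The resulting two degrees again differ by one after taking $2(\cdot)-(\cdot)$, so the gcd is $1$.

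\emph{Main obstacle.} The delicate step is justifying the window claims in the cycle: that only $W_1$ (respectively only $W_0,W_1$) contains the chosen pair. A window could instead wrap around the other way, and the hypothesis $f\ge d+3$ does not exclude this when $f\le 2d$. My first remedy is to use the extra vertices of $R$ and $R'$ to pin down the window, since a wrap-around window cannot contain both the far endpoint and the interior vertices of $R$. This leaves the case $i=2$, where $R$ and $R'$ are empty. There I would instead pick pairs at cyclic distance $d$ and $d-1$ that are as far as possible from each other in the opposite direction, or, for very small $f$, do a direct finite check for $d+3\le f\le 2d$. A fully safe alternative for $i=2$ is to find a $2$-set of degree $1$ outright: take two vertices at distance exactly $d$ and count, checking that the wrap-around contributions do not add further edges.
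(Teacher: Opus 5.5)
Your computations for $i=0$ and $i=1$ in both graphs are fine and match the paper. So is your treatment of $2\le i\le d-2$ for the path, where no wrap-around can occur. The gap is in the cycle case $2\le i\le d-1$. Your degree counts $d+1-i$ and $2d+1-2i$ are only justified for $f\ge 2d+1$. The lemma, however, allows $d+3\le f\le 2d$ (a nonempty range for $d\ge 3$), and none of your proposed remedies covers it.

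When $f\le 2d$, the arc $v_{d+1},\dots,v_f,v_1$ has at most $d+1$ vertices. So the $2d-f+1$ windows containing that arc also contain $\{v_1,v_{d+1}\}$. Each of them covers a prefix and a suffix of $v_2,\dots,v_d$ of total length $2d-f$. Hence your claim that a wrap-around window cannot contain the interior vertices of $R$ is false: for $f=d+3$ and $d\ge 5$, every single $v_j$ with $2\le j\le d$ lies in one of them.

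For $i=2$ this breaks the argument outright. Take $d=4$, $f=7$. The pair $\{v_1,v_5\}$ lies in $W_1,W_4,W_5$, the pair $\{v_1,v_4\}$ lies in $W_7,W_1,W_4$, and both have degree $8$. So your two numbers have gcd $8$.

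The fallbacks do not help either:
\begin{itemize}
\item Once a pair is at cyclic distance $d$ (or $d-1$), its distance the other way is forced to be $f-d$ (or $f-d+1$), so you cannot choose it.
\item A ``direct finite check'' over $d+3\le f\le 2d$ is not finite, since $d$ is arbitrary.
\item No $2$-set has degree $1$ when $d\ge 3$: even for large $f$, a pair at distance $d$ already has degree $d-1$.
\end{itemize}

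The missing idea is to use test sets that are immune to wrap-around, i.e.\ (almost) consecutive ones, which is what the paper does. The complement of a window is a cyclic arc of $f-d-1\ge 2$ vertices, and this is exactly where $f\ge d+3$ is used: such an arc cannot sit inside a one-vertex hole. Consequently every window containing $S_1=\{v_d,\dots,v_{d+i-1}\}$ or $S_2=\{v_{d-1},v_{d+1},\dots,v_{d+i-1}\}$ contains the whole arc that set spans. Counting exactly as you did then gives degrees $(d-i+1)^2$ and $(d-i+1)(d-i)+1$ for every $f\ge d+3$. Their difference is $d-i$, which is coprime to $d-i+1$, so their gcd is $1$.

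Your version would suffice for the paper's applications, where $f=3d$ or $f$ is large, but not for the lemma as stated.
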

\begin{proof}
    By \Cref{def: extra-tight trail tour}, there is a 1-to-1 correspondence between the edges of $\extratightcycle{f}{d}$ and $[f]\times[d]$. Therefore, the number of edges in $\extratightcycle{f}{d}$ is $\Deg\!\big(\extratightcycle{f}{d}\big)_0=f\cdot d$. By \Cref{obs: degree in trails}, we have that every vertex in $\extratightcycle{f}{d}$ has degree $d^2$ using that $f\ge d+3$. Thus, we have $\Deg\!\big(\extratightcycle{f}{d}\big)_1=d^2$.
    Next, we will determine $\Deg\!\big(\extratightcycle{f}{d}\big)_i$ for some fixed $i\in[d-1]\backslash\{1\}$. For this, let $(v_1,\dots,v_f)$ be the vertex sequence of $\extratightcycle{f}{d}$ where we view the indices mod~$f$.
    We will compute $\big\lvert\extratightcycle{f}{d}(S)\big\rvert$ for $S \in \big\{ \{v_d,v_{d+1},\dots,v_{d+i-1}\},\{v_{d-1},v_{d+1},v_{d+2},\dots,v_{d+i-1}\} \big\}$ and show that these numbers are coprime.

    Starting with $S=\{v_d,v_{d+1},\dots,v_{d+i-1}\}$, we determine which pairs $(\iota, \sigma)\in [f]\times[d]$ correspond to an edge that contains~$S$. We need that $\iota\in\{i-1,i,\dots,d\}$ and then we have to choose $\sigma\in[d]$ such that $v_{\iota+\sigma}\not\in S$. There are $d-i$ choices for $\sigma$ unless $\iota=d$ in which case we have $d-i+1$ choices for~$\sigma$. We can conclude that $\big\lvert\extratightcycle{f}{d}(S)\big\rvert=(d-i+2)(d-i)+1=(d-i+1)^2$.

    Doing the same thing for $S=\{v_{d-1},v_{d+1},v_{d+2},\dots,v_{d+i-1}\}$, we see that we need $\iota\in\{i-1,\dots,d-1\}$. Afterwards, we again have $d-i$ choices for $\sigma$ unless $\iota=d-1$ in which case we have $d-i+1$ choices. Therefore, $\big\lvert\extratightcycle{f}{d}(S)\big\rvert=(d-i+1)(d-i)+1=(d-i)^2+(d-i)+1$.
    Because of 
    \[\gcd\!\big((d-i+1)^2,(d-i)^2+(d-i)+1\big)=\gcd\!\big((d-i+1)^2,d-i\big)=1,\]
    we can conclude $\Deg\!\big(\extratightcycle{f}{d} \big)_i=1$.

    For $\extratightpath{2d}{d-1}$, note that the edges of $\extratightpath{2d}{d-1}$ are exactly the edges of a $(d-1)$-uniform extra-tight path with vertex sequence $(v_1,\dots, v_{2d})$. By \Cref{def: extra-tight trail tour}, we have that $\extratightpath{2d}{d-1}$ has $\big(2d-(d-1)\big)\cdot (d-1)+1=d^2$ edges. Furthermore, \Cref{obs: degree in trails} implies that $v_1$ has degree $d-1$ whereas $v_2$ has degree $2d-3$. Thus, $\Deg\!\big(\extratightpath{2d}{d-1}\big)_0=d^2$ and $\Deg\!\big(\extratightpath{2d}{d-1}\big)_1=1$. For $i\in[d-2]\backslash\{1\}$, we can compute $\big\lvert\extratightpath{2d}{d-1}(S)\big\rvert$ for $S \in \big\{ \{v_d,v_{d+1},\dots,v_{d+i-1}\},\{v_{d-1},v_{d+1},v_{d+2},\dots,v_{d+i-1}\} \big\}$. Then we get the same numbers as for $\extratightcycle{f}{d}$ (but with $d$ replaced by $d-1$). Thus, $\Deg\!\big(\extratightpath{2d}{d-1}\big)_i=1$.
    \end{proof}

We are now ready to build the $(E_1,E_2)$-switcher with the help of~$\extratightpath{2d}{d-1}$.

\begin{proof}[Proof of \Cref{lem: switcher}]
We want to apply \Cref{theo: design paper 2 uniformities} twice on carefully chosen $(d-1)$-graphs $G_1$ and $G_2$ which will give us two $\extratightpath{2d}{d-1}$-decompositions $\F$ and~$\F'$.

We apply \Cref{theo: design paper 2 uniformities} with $r\coleq d-1$,  $f\coleq 2d$, $p \coleq 1$. Fix $\gamma,c,1/h,1/n_0 > 0$ small enough such that \Cref{theo: design paper 2 uniformities} holds for every $n\geq n_0$. Let $V$ be an $n$-element set such that $u_1,\dots,u_{d-1},u_d, w_d \in V$, $u_0,w_0 \notin V$ and define $\tilde G$ to be the $(d-1)$-graph on vertex set $V$ with edge set
\[\tilde G=\binom{V}{d-1}\backslash\binom{\{u_1,\dots,u_d,w_d\}}{d-1}.\] Let $\tilde G_1$ be the graph that is constructed from $\tilde G$ by adding the edges in the set $E_1(u_0)=E_2(w_0)=\big\{\{u_1,\dots,u_{i-1},u_{i+1},\dots,u_{d-1},w_d\}: i\in [d-1]\big\}$ and then deleting an edge set $E'\subs\binom{V\backslash\{u_1,\dots,u_d,w_d\}}{d-1}$ of size at most $d^2$ such that after the deletion of $E'$, the number of edges of $\tilde G_1$ is divisible by $d^2$. Similarly, $\tilde G_2$ is constructed from $\tilde G$ by adding the edges in $E_1(w_0)=E_2(u_0)=\big\{\{u_1,\dots,u_{i-1},u_{i+1},\dots,u_{d-1},u_d\}: i\in [d-1]\big\}$ and then deleting the same edge set~$E'$. Note that the number of edges of $\tilde G_2$ is also divisible by~$d^2$. \Cref{lem: Deg-vector of Fcycle and Fpath} then implies that both $\tilde G_1$ and $\tilde G_2$ are $\extratightpath{2d}{d-1}$-divisible.
    
    Clearly, the complete $(d-1)$-graph is $(c,h,p)$-typical if $n$ is large enough. Since $\tilde G_1$ and $\tilde G_2$ are both constructed from a complete $(d-1)$-graph by deleting at most $\big\lvert\binom{\{u_1,\dots,u_d,w_d,\}}{d-1}\big\lvert+\abs{E'}\le \frac{(d+1)\cdot d}{2}+d^2$ edges, they are also $(c,h,p)$-typical provided $n$ is large enough.
    
     Therefore, we can first apply \Cref{theo: design paper 2 uniformities} on $\tilde G_1$, with the $(d-1)$-uniform extra-tight path $\extratightpath{2d}{d-1}$, and the empty $d$-graph~$O$. We get an $\extratightpath{2d}{d-1}$-decomposition $\mathcal{F}=\!\big(F_i^{(d-1)}\big)_{i\in[k]}$. 
    Each $F_i^{(d-1)}$ is the shadow of some $d$-uniform tight path, that we denote by $F_i^{(d)}$. 

     Now let $O=\bigcup_{i\in[k]}F_i^{(d)}$. 
     To compute $\Delta(O)$, fix a set $S\subs V(O)$ of size $d-1$. Since $S$ is an edge in at most one copy of $\extratightpath{2d}{d-1}$ 
     in $\F$, it can be the subset of at most $\Delta\!\big(P_{2d}^{(d)}\big)=2$ edges of~$O$. Thus, $\Delta(O)\le 2\le \gamma n$ for large enough $n$. We can then apply \Cref{theo: design paper 2 uniformities}, but this time 
     on $\tilde G_2$ to get another $\extratightpath{2d}{d-1}$-decomposition $\F'=\!\big(F'^{(d-1)}_i\big)_{i\in[k]}$ such that no $d$-set of $O$ is contained in any $V\!\big(F_i^{(d-1)}\big)$. 
Each $F_i'^{(d-1)}$ is the shadow of some $d$-uniform tight path, that we denote by $F_i'^{(d)}$.
 The key property of the decomposition $\cF'$ implies that no $d$-edge of some $F_{j'}'^{(d)}$ appears as a $d$-edge in some $F_{j}^{(d)}$.
 
    We are now ready to build the switcher.
    We will first build the sequence $T_1$ and then obtain $T_2$ by switching all occurrences of $u_0$ and $w_0$.
    The sequence $T_1$ will be the concatenation of sequences of length $2d+1$ for each member of the decompositions $\cF$ and $\cF'$, together with $2d$ individual ``padding vertices'' inbetween them. Namely, if $(v_1,\dots, v_{2d})$ is the vertex sequence of a copy of $\extratightpath{2d}{d-1}$ in $\cF \cup \cF'$ then we add the vertex sequence
     \begin{equation}\label{eq: u_0 sequence}
         (v_1,\dots, v_{d}, u_0, v_{d+1}, \dots, v_{2d})
     \end{equation}
if the copy is in $\cF$ and add the vertex sequence 
\begin{equation}\label{eq: w_0 sequence}
    (v_1,\dots, v_{d}, w_0, v_{d+1}, \dots, v_{2d})
    \end{equation}
if the copy is in $\cF'$. Furthermore, between these vertex sequences, with \Cref{lem: glue two extra-tight trails} we always add $2d$ entirely new vertices that have not appeared before.
We call these vertices the \emph{unique} vertices of our sequence.
Let $T_1$ be the resulting sequence of length $2k(4d+1) -2d$. 

To see that $T_1$ indeed defines an extra-tight trail, we first note that every interval of length $2d+1$ in $T$ consists of distinct vertices as the sequences of both \eqref{eq: u_0 sequence} and \eqref{eq: w_0 sequence} come from paths and a new vertex and $2d$ unique vertices are inserted in between them.
To have an extra-tight trail we need that no $d$-set $e$ appears multiple times among $d+1$ consecutive elements of $T_1$.

If $e$ contains a unique vertex $v$, then all appearances of $e$ within $d+1$ consecutive elements of $T$ must occur within the interval $S$ of $2d+1$ consecutive elements of $T$ having $v$ at its midpoint. We have seen above that this interval consists of distinct vertices so the placement of $e$ is unique.

If $e$ does not contain any unique vertex, then each occurrence of it within $d+1$ consecutive elements of $T$ must occur within one of the subsequences of type \eqref{eq: u_0 sequence} or type \eqref{eq: w_0 sequence}. 
If furthermore $u_0 \in e$, then $e\setminus \{u_0\}$ is a $(d-1)$-edge in $\tilde G_1$ which appears in exactly one of the copies $F_j^{(d-1)}$ of $\extratightpath{2d}{d-1}$ in the decomposition $\F$, hence the placement of $e$ is also unique. 
If instead $w_0 \in e$ then $e\setminus \{w_0\}$ is a $(d-1)$-edge in $\tilde G_2$ which appears in exactly one of the copies $F_j'^{(d-1)}$ of $\extratightpath{2d}{d-1}$ in the decomposition $\F'$, hence the placement of $e$ is again unique.
Finally, if $u_0,w_0 \not\in e$, then $e$ is contained in either $F_j^{(d)}$ or $F'^{(d)}_j$ for some unique $j\in [k]$. Here the $F_i^{(d)}$ are all disjoint from each other since the $(d-1)$-subsets of their members are distinct due to $\cF$ being a decomposition. Analogously, the $F'^{(d)}_j$ are also all disjoint from each other since $\cF'$ is a decomposition. Finally, any $F_j^{(d)}$ and $F'^{(d)}_{j'}$ are also disjoint as in the creation of $\cF'$, we required that no $d$-subset of the vertex sets $V\!\big(F_j^{(d)}\big)$ is contained in any of the vertex sets $V\!\big(F_{j'}'^{(d)}\big)$. We can thus conclude that the sequence $T$ indeed induces an extra-tight trail. 

Next, we prove that our extra-tight trail $T_1$ covers all edges of $E_1$. For any $j\in[d-1]$, consider the $d$-set $e=\{u_0,u_1,\dots,u_{j-1},u_{j+1},\dots,u_{d-1},w_d\}\in E_1$. Note that $e\backslash\{u_0\}$ is in $\tilde G_1$. As the link of $u_0$ in the extra-tight path of the sequence \eqref{eq: u_0 sequence} is exactly $F_i^{(d-1)}$ and since the $F_i^{(d-1)}$ form a decomposition of $\tilde G_1$, we can conclude that the link of $u_0$ in our extra-tight trail is exactly $\tilde G_1$. Hence, $e$ is covered by the extra-tight trail. An analogous argument shows that the link of $w_0$ in our extra-tight trail is exactly $\tilde G_2$ and that the $d$-set $\{w_0,u_1,\dots,u_{j-1},u_{j+1},\dots,u_{d-1},u_d\}$ of $E_1$ is covered by the extra-tight path of a sequence of \eqref{eq: w_0 sequence}. Hence, the full $E_1$ is covered.

Next, we will show that $V(E_1\cup E_2)$ is an independent set in $T_1 \setminus E_1$.
Let $e\in\binom{\{u_0,\dots,u_d,w_0,w_d\}}{d}$.
If $u_0\in e$, we can use that the link of $u_0$ in $T_1$ is exactly $\tilde G_1$.
By definition of $\tilde G_1$, we have $\tilde G \cap \binom{\{u_1,\dots,u_d,w_0,w_d\}}{d}=E_1$. Hence, $e\not\in T_1 \setminus E_1$.
An analogous argument works if $w_0\in e$.
Thus, we we can assume that $e\in\binom{\{u_1,\dots,u_d,w_d\}}d$.
We will show that $e$ cannot be covered by the extra-tight trail. Suppose it is. Then it must already be covered by one of the sequences of \eqref{eq: u_0 sequence} or \eqref{eq: w_0 sequence}. Without loss of generality, assume it is a sequence of \eqref{eq: u_0 sequence}. But then every $(d-1)$-subset of $e$ is in the link of $u_0$. However, the link of $u_0$ in $T_1$ is $\tilde G_1$ which only contains $d-1$ edges of $\binom{\{u_0,\dots,u_d,w_0,w_d\}}{d-1}$. As the number of $(d-1)$-subsets of $e$ is $d$, this is a contradiction.

Finally, to obtain the extra-tight trail $T_2$ with the same ends as $T_1$ we use the exact same vertex sequence but swap all~$u_0$ and~$w_0$.
Then, as we only swapped the role of two vertices, the vertex-sequence indeed still defines an extra-tight trail.
To see that $T_2$ now covers exactly $(T_1 \setminus E_1)\cup E_2$, note that only the edges containing $u_0$ or $w_0$ are affected when $u_0$ and $w_0$ are swapped. Before the swap, the link of $u_0$ was $\tilde G_1$, whereas it is $\tilde G_2$ after the swap. For $w_0$, it is the other way around. Since, by construction, $\tilde G_1\backslash \tilde G_2=E_1(u_0)=E_2(w_0)$ and $\tilde G_2\backslash \tilde G_1=E_1(w_0)=E_2(u_0)$, the swap only causes that $E_1$ is no longer covered, whereas $E_2$ is. As neither $u_0$ nor $w_0$ appears at an end of the extra-tight trail, the ends of the extra-tight trail are the same before and after the swap.
\end{proof}

\subsection{Proof of the Absorber Lemma}
Recall that the plan is to construct the extra-tight trail $A$ such that for each extra-tight cycle $C$ in the left-over $L \subseteq G[U]$, $A$ can be rewired to path through $C$, while simultaneously containing an appropriate switcher that, when activated, reverts all the effects of the rewiring.
Hence, for {\em any} sequence $S=(u_0,u_1,\dots,u_d)$ potentially contained in $C$, we have to include the sequence $D_S=(w_0,u_1,\dots,u_{d-1},w_d)$ for some $w_0,w_d \in V(G) \setminus U$ as well as the sequence $T_1$ from an $(E_1,E_2)$-switcher 
$(T_1,T_2)$ in our absorption trail $A$.
That way, if some $S$ is used to absorb a cycle $C$, then at $D_S$ we divert the sequence of $A$ through $C$ and swap the corresponding subsequence $T_1$ of $A$ to $T_2$, with only the edges of $C$ covered additionally.
To make it possible to {\em find} such an extra-tight trail $A$ in $G$, it is crucial that the number of potential attachment tuples and the length of the trails $T_1$ are a function of $m$ and $d$, while $1/n\ll 1/m, 1/d$.

	\begin{proof}[Proof of \Cref{lem: Absorber Lemma}]
			Let $1/n\ll\eta\ll 1/m\ll \alpha\ll 1/d\le 1/2$.
			
            Let $\mathcal S$ be the set of all ordered tuples $S=(u_0,u_1,\dots,u_{d-1},u_d)$ of $d+1$ distinct vertices of $U$, such that all edges of the corresponding extra-tight path are in~$G$.
        For each $S=(u_0,u_1,\dots,u_d)\in\mathcal S$, we want to find two vertices $w_0^S$ and $w_d^S$ in $V(G)\backslash U$ and an embedding of $T^S_1\cup T^S_2$ in $G$ where $\big(T^S_1,T_2^S\big)$ is the $\big(E^S_1,E^S_2\big)$-switcher given by \Cref{lem: switcher} with
			\begin{align*}
				E^S_1&=\Big\{\{u_0,u_1,\dots,u_{i-1},u_{i+1},\dots, u_{d-1},w^S_d\}, \{w^S_0,u_1,\dots,u_{i-1},u_{i+1},\dots, u_{d-1},u_d\}: i\in [d-1]\Big\}\\
				E^S_2&=\Big\{\{w^S_0,u_1,\dots,u_{i-1},u_{i+1},\dots, u_{d-1},w^S_d\}, \{u_0,u_1,\dots,u_{i-1},u_{i+1},\dots, u_{d-1},u_d\}: i\in [d-1]\Big\}.
			\end{align*}
			Furthermore, we want to make sure that $V\big(T_1^S\cup T_2^S\big)\cap U=S$ and $V\big(T_1^S\cup T_2^S\big)\cap V\big(T_1^{S'}\cup T_2^{S'}\big)=S\cap S'$ for any other tuple $S'\in\mathcal S$ where $S\cap S'$ is the set of all vertices that appear in both tuples $S$ and $S'$.
			
			Embedding all $T_1^S\cup T_2^S$ in $G$ can be done greedily, by going through all $S\in\mathcal S$ one by one and embedding $T_1^S\cup T_2^S$ one vertex at a time, starting with the vertices of~$S$ which are already in $G$. Every time we embed the next vertex of $T_1^S\cup T_2^S$ in $V\backslash U$, we just have to make sure that all the necessary edges are present in $G$ and that the vertex chosen does not appear in the embedding of any of the other~$T_1^{S'}\cup T_2^{S'}$. As $\abs{V\big(T_1^S\cup T_2^S\big)}$ only depends on $d$ and $\abs{\mathcal S}$ only depends on $m$ and $d$, we can always find a suitable next vertex by the minimum degree condition $\delta (G) \geq (1-\alpha ) n$, provided $\alpha<\binom{\big\lvert V\big(T_1^S\cup T_2^S\big)\big\rvert}{d-1}^{-1}$ and $n$ is chosen large enough.
			
			Let $D_S$ be the extra-tight trail $\big(w_0^S,u_1,\dots,u_{d-1},w_d^S\big)$. 
			We connect all the $D_S$ and $T_1^S$ to one long vertex sequence by iteratively inserting $2d$ entirely new gluing vertices of $V\backslash U$ between them using \Cref{lem: glue two extra-tight trails}. This, and that all new vertices can be chosen distinct, is again possible by the minimum degree condition $\delta (G) \geq (1-\alpha)n$. Finally, we similarly add $d$ entirely new vertices $w_1,\dots, w_d$ of $V\backslash U$ at the beginning and $d$ entirely new vertices $w_d',\dots, w_1'$ of $V\backslash U$ at the end. Let $A$ be the resulting sequence and note that we have made sure that every edge of $A$ does appear in $G$. 
			
			We still have to see however that $A$ induces an extra-tight trail, that is no $d$-subset $e$ appears multiple times among $d+1$ consecutive elements of $A$. If $e$ contains one of the gluing vertices, then $e$ does not appear multiple times by the same reason as in the previous lemma. If $e$ does not contain a gluing vertex, then it is an edge in the extra-tight trail induced by $D_S$ or $T_1^S$ for some $S\in \cS$. First, we argue why $e\not\in T_1^{S'}\cup D_{S'}$ for any $S'\in \cS\setminus\{S\}$. This is because by construction $V(D_S)\cap V(D_{S'})$, $V\big(T_1^S\big)\cap V\big(D_{S'}\big)$, $V(D_S)\cap V\big(T_1^{S'}\big)$, $V\big(T_1^S\big)\cap V\big(T_1^{S'}\big)$ are all contained in $S\cap S'\subs U$ and no edge of $D_S$ and $T_1^{S}$ is entirely in $U$. Thus, we only have to check that $e$ does not appear in both $D_S$ and $T_1^S$. Here we use that, by the definition of a switcher, $V(D_S)\subs V\big(E_1^S\cup E_2^S\big)$ is an independent set in $T_1^S\backslash E_1^S$. Furthermore, $D_S\cap E_1^S=\emptyset$. Thus, we can conclude that $A$ induces an extra-tight trail.
			
			Next, we check that $A$ has all the desired properties stated in the Absorber Lemma. Note that by the same argument as above, $A$ does not contain any edge that lies entirely in~$U$. Furthermore, we can make sure that $\Delta(A)\le\eta n$, as even the size of $A$ depends only on $d$ and $m$. 
			
			To check the final property, let $L$ by any $d$-graph on $U$ with $\delta(L)\ge (1-\alpha)n$ where each 1-degree is divisible by~$d^2$. We have to show that there is an extra-tight trail with edge set $L\cup A$ whose ends are the same as that of $A$. 
			
			We first show this in the case when $L$ can be partitioned into extra-tight cycles of length at least~$d+3$ by absorbing the edge set of each of them into $A$. Let $C$ be one of these extra-tight cycles and let $u_0,\dots,u_d$ be $d+1$ consecutive vertices of $C$ in that order. In other words, $C$ is defined by some cyclic sequence $u_0,\dots,u_d,c_1,\dots,c_k$. Let $S=(u_0,\dots,u_d)$. By construction, $A$ contains the vertex sequence $D_S$, i.e.\@ $w_0^S,u_1,\dots,u_{d-1},w_d^S$. We replace this in $A$ by $w_0^S,u_1,\dots,u_{d-1},u_d,c_1,\dots,c_k,u_0,u_1,\dots,u_{d-1},w_d^{S}$
			and remove $C$ from $L$. 

			By this change, the edges of $E^S_2$ are no longer covered in $A\cup L$, whereas the edges of $E^S_1$ are now covered twice since they were already covered by $T_1^S$. But this is exactly what the $(E^S_1,E^S_2)$-switcher $(T_1^S,T_2^S)$ was created for: by replacing the vertex sequence $T_1^S$ in $A$ by $T_2^S$, we can ensure that the same edges are covered as before. Therefore, this procedure can iteratively absorb each extra-tight cycle of the decomposition of $L$, until we end up with an extra-tight trail covering exactly the edges $A\cup L$ while still having the same ends $(w_1,\dots,w_d)$ and $(w_d',\dots,w_1')$ as $A$.
			
			It remains to show that $L$ can indeed always be decomposed into extra-tight cycles. To use \Cref{theo: design paper 2 uniformities}, we have to make sure that $L$ is $\extratightcycle{f}{d}$-divisible for some fixed $f$. For convenience, here we will aim for $f=3d$. By \Cref{lem: Deg-vector of Fcycle and Fpath} and by the fact that all degrees of $L$ are divisible by $d^2$, we only have to ensure that the number of edges is divisible by~$3d^2$. This is not necessarily the case to begin with, but
			since all degrees of $L$ are divisible by $d^2$, the $d$-uniform Handshake Lemma at least implies that $|L|$ is at divisible by~$d$. Hence, there is an integer $k\in\{0,\dots,3d-1\}$ such that $|L| \equiv kd\pmod{3d^2}$.
			
			Our strategy of partitioning $L$ into extra-tight cycles then starts by finding a copy of $\extratightcycle{3d+k}{d}$ in~$L$. Removing it leaves us with a graph $L'$ satisfying $|L'| \equiv 0\pmod{3d^2}$ as 
			$|EC_{3d+k}^{(d)}| = (3d+k)d$ by \Cref{obs: degree in trails}.
			Hence $L'$ is $\extratightcycle{3d}{d}$-divisible, because deleting a tight cycle does not change the divisibility of the vertex-degrees by $d^2$.
			
			The extra-tight cycle of length $3d+k$ can be found in $L$ vertex by vertex: when choosing the next vertex, one only has to ensure that all the at most $d^2$ edges that are formed with the already chosen vertices in the extra-tight cycle are present in $L$. Each edge can exclude at most $\alpha m$ vertices. Thus, if $\alpha <\frac{1}{d^2}$ and $m$ is large enough, then there is always at least one vertex in $U$ that can be chosen next.
			
			After we removed the extra-tight cycle of length $3d+k$, the remaining graph $L'$ still satisfies $\delta(L')\ge (1-\alpha)m - 4 \geq (1-2\alpha)m$. As $L'$ is $\extratightcycle{3d}{d}$-divisible we apply \Cref{theo: design paper 2 uniformities} with 
			$O\coleq\emptyset$ to find the desired 
			$\extratightcycle{3d}{d}$-decomposition of the graph $L'$.
			
			To conclude, we recapitulate the order in which the parameters are chosen for the entire proof to work. Once $d$ is fixed, the size of each switcher $\big(T^S_1,T^S_2\big)$ is determined. 
			\Cref{theo: design paper 2 uniformities} with parameters $f\coleq 3d$, $r\coleq d$, $p\coleq 1$ defines parameters $c$, $h$, $\gamma$ and $n_0$ such that the statement of \Cref{theo: design paper 2 uniformities} holds for all
			$n_{\ref{theo: design paper 2 uniformities}}\ge n_0$. 
			Afterwards, we fix $\alpha$ small enough such that we can greedily build all switchers in $G$ ($\alpha<\binom{\abs{V(T^S_1\cup T_2^S)}}{d-1}^{-1}$), find all gluing vertices ($\alpha<d^{-2}$) and make sure that each graph $L'$ on $m$ vertices with $\delta(L')\ge (1-2\alpha)m$ is $(c,h,1)$-typical ($\alpha\le \frac{c}{2h}$) so that we can actually apply \Cref{theo: design paper 2 uniformities} for $L'$ with $\extratightcycle{3d}{d}$. Next, we choose $m$ such that it is at least $n_0$ and that we can build the extra-tight cycle of length $k\le 3d$ greedily in any graph $L$ on $m$ vertices with minimum degree $\delta(L)\ge (1-\alpha)m$. Finally, we pick any positive number for $\eta$ and choose $n$ large enough such that all switchers and gluing vertices can be found greedily in any graph $G$ on $n$ vertices with $\delta(G)\ge (1-\alpha)n$ and such that the maximum degree of the final absorber, which only depends on $d$ and $m$, is at most~$\eta n$.
		\end{proof}
		
		\section{Rooted Embeddings}\label{sec: Rooted Embeddings}
		At several places, we will have to find embeddings of a fixed graph $T$ into $G$ where the images of several vertices of $T$ are already determined.
		For example, we already used that in the proof of \Cref{theo: large min degree implies extra-tight trail} when we greedily found $d$ vertices such that they together with some other already determined vertices form an extra-tight trail.
		These embeddings are called ``rooted embeddings'' and their existence has already been studied in \cite{glock2023existence}. For completeness, we repeat their definitions here.
		
		Let $T$ be a $d$-graph and $X\subs V(T)$. A \emph{root} of $(T,X)$ is a set $S\subs X$ such that $\abs{S}\in[d-1]$ and $\abs{T(S)}>0$. A \emph{$G$-labelling} of $(T,X)$ is an injective map $\Lambda\colon X\to V(G)$ and, given a $G$-labelling $\Lambda$, we say that an injective homomorphism $\phi\colon T\to G$ is a \emph{$\Lambda$-faithful embedding} of $(T,X)$ into $G$ if $\phi\vert_X=\Lambda$. Furthermore, a $G$-labelling $\Lambda$ of $(T,X)$ \emph{roots} a vertex set $S\subs V(G)$ if $S\subs \Ima(\Lambda)$ and $\abs{T\big(\Lambda^{-1}(S)\big)}>0$. Finally, the \emph{degeneracy of $T$ rooted at $X$} is the smallest $D$ such that there is an ordering $v_1,\dots,v_k$ of the elements of $V(T)\backslash X$ such that for every $\ell\in [k]$, we have $\abs{T[X\cup\{v_1,\dots,v_\ell\}](v_\ell)}\le D.$
		
		We will need to find several rooted embeddings simultaneously in the same graph $G$ such that the images are edge-disjoint. The following does that and is a slight variation of~\cite[Lemma 5.20]{glock2023existence}.
		On one hand, it is less general since it removes restrictions encoded by a $d+1$-graph $O$ and the concept of a \emph{hull} of~$(T,X)$, which strengthens our condition~\ref{enum: rooted embedding with U disjoint} below. On the other hand, it has the additional requirement that all vertices $v\in V(T)\backslash X$ are embedded in a special vertex set $U\subs V(G)$.
		
		\begin{lemma}\label{lem: rooted embedding with U}
			Let $1/n\ll \gamma \ll \gamma'\ll \xi,1/t,1/D$ and $d\in [t]$\towriteornottowrite{. Suppose that $\alpha\in(0,1]$ is an arbitrary scalar (which might depend on $n$) and let $m\le \alpha\gamma n^d$ be an integer.}{\ and $m\le\gamma n^d$ be an integer.} For every $j\in[m]$, let $T_j$ be a $d$-graph on at most $t$ vertices and $X_j\subs V(T_j)$ such that $T_j[X_j]$ is empty and $T_j$ has degeneracy at most $D$ rooted at~$X_j$. Let $G$ be a $d$-graph on $n$ vertices and $U\subs V(G)$ such that for all $A\subs\binom{V(G)}{d-1}$ with $\abs A\le D$, we have $\abs{\bigcap_{S\in A}G(S)\cap U}\ge \xi n$.
			For every $j\in[m]$, let $\Lambda_j$ be a $G$-labelling of $(T_j,X_j)$. Suppose that for all $S\subs V(G)$ with $\abs S\in[d-1]$, we have that
			\begin{align}
				\abs{\{j\in[m]:\Lambda_j \mbox{ roots }S\}}\le\towriteornottowrite{\alpha}{}\gamma n^{d-\abs S}-1.\label{eq: few roots with U}
			\end{align}
			Then for every $j\in [m]$, there exists a $\Lambda_j$-faithful embedding $\phi_j$ of $(T_j,X_j)$ into $G$ such that the following hold:
			\begin{enumerate}
				\item for all distinct $j,j'\in [m]$, the $d$-graphs $\phi_j(T_j)$ and $\phi_{j'}(T_{j'})$ are edge-disjoint;\label{enum: rooted embedding with U disjoint}
				\item $\Delta(\bigcup_{j\in[m]}\phi_j(T_j))\le\towriteornottowrite{\alpha}{}\gamma' n$;\label{enum: rooted embedding with U maxdeg}
				\item for all $j\in[m]$ and all  $v\in V(T_j)\backslash X_j$ we have $\phi_j(v)\in U$.\label{enum: rooted embedding only uses U}
			\end{enumerate}
		\end{lemma}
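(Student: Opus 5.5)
We prove the lemma by a randomised sequential embedding in the style of the proof of~\cite[Lemma 5.20]{glock2023existence}. The only change is that every candidate image for a non-root vertex is taken from $U$. We process $j=1,\dots,m$ in turn. When treating $j$, let $H_{j-1}\coloneqq\bigcup_{j'<j}\phi_{j'}(T_{j'})$ be the union of the images embedded so far, and call a $(d-1)$-set $S$ \emph{saturated} if $\deg_{H_{j-1}}(S)>\gamma' n/2$. Let $Y_{j-1}$ be the set of saturated sets.

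\textbf{Step 1 (one embedding).} Fix the ordering $v_1,\dots,v_k$ of $V(T_j)\setminus X_j$ that witnesses degeneracy at most $D$. We embed $v_1,\dots,v_k$ one at a time, choosing $\phi_j(v_\ell)$ \emph{uniformly at random} from the admissible vertices. A vertex $x$ is admissible for $v_\ell$ if:
\begin{itemize}
\item $x\in U$;
\item $x$ is not already in the image;
\item for each of the at most $D$ edges $e\ni v_\ell$ of $T_j[X_j\cup\{v_1,\dots,v_\ell\}]$, the $(d-1)$-set $S_e\coleq\phi_j(e\setminus\{v_\ell\})$ satisfies $S_e\cup\{x\}\in G\setminus H_{j-1}$;
\item $x$ creates no $(d-1)$-set, together with already embedded vertices, that lies in $Y_{j-1}$, except those forced by the labelling.
\end{itemize}
By hypothesis, $\big|\bigcap_{S\in A}G(S)\cap U\big|\ge\xi n$ for every family $A$ of at most $D$ $(d-1)$-sets. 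The vertices excluded for the other reasons number at most $t$ (already used), plus $D\gamma' n$ (since every non-saturated $S$ has $\deg_{H_{j-1}}(S)\le\gamma' n/2$), plus a term of order $\gamma'^{-1}t^{d}\gamma n$ (because saturated $(d-1)$-sets containing a fixed small set are few, as $|H_{j-1}|\le t^d m\le t^d\gamma n^d$). Since $\gamma\ll\gamma'\ll\xi$, at least $\xi n/2$ admissible choices remain. Hence the embedding always exists, and properties~\ref{enum: rooted embedding with U disjoint} and~\ref{enum: rooted embedding only uses U} hold by construction.

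\textbf{Step 2 (maximum degree).} It remains to show~\ref{enum: rooted embedding with U maxdeg}, namely that no $(d-1)$-set $S$ ever reaches degree $\gamma' n$. A set $S$ gains degree in step $j$ only if either (a) $\Lambda_j$ roots some nonempty subset $S'\subseteq S$ that lies in $\Ima\Lambda_j$, with the remaining vertices of $S$ chosen randomly, or (b) all vertices of $S$ are random. Every random choice is uniform over at least $\xi n/2$ vertices. Consequently, for fixed $S$ and $S'$ with $|S'|=s$, the probability that the $d-1-s$ vertices of $S\setminus S'$ all land in a given embedding is $O\big((\xi n)^{-(d-1-s)}\big)$. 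Combining this with~\eqref{eq: few roots with U}, the expected degree increment of $S$ is at most
\[
\sum_{s}\gamma n^{d-s}\,O\big(t^d(\xi n)^{-(d-1-s)}\big)\le \gamma' n/4.
\]
The case $S'=\emptyset$ is handled the same way using $m\le\gamma n^d$. After the time at which $S$ becomes saturated, the saturation rule forbids creating $S$ from new random vertices, so $S$ can grow only through rooted contributions. These are already accounted for by the same expectation bound, up to a constant slack of at most $t^d$ per step.

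\textbf{Main obstacle.} The hard step is the concentration in Step 2. The increments are dependent, and the choices are adapted to the history, since admissibility depends on $H_{j-1}$. We will handle this with a supermartingale argument: a Freedman/Azuma-type bound for a sum of indicators whose conditional probabilities are bounded as above. Concretely, we will use the standard lemma from~\cite{glock2023existence} stating that a sum $\sum_j X_j$ of $[0,t^d]$-valued variables satisfying $\E[X_j\mid\text{history}]\le p_j$ with $\sum_j p_j\le\mu$ exceeds $2\mu$ only with probability at most $\exp(-\Omega(\mu/t^d))$. Since $\mu=\Theta(\gamma' n)$, a union bound over the $O(n^{d-1})$ sets $S$ succeeds. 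With positive probability no $(d-1)$-set reaches degree $\gamma' n$, which gives~\ref{enum: rooted embedding with U maxdeg}.
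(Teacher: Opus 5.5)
There is a genuine gap in Step~1, in the count of vertices excluded by your saturation rule. This is exactly where the real difficulty of the lemma lies.

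For $d\ge 3$, a candidate $x$ is forbidden whenever $W'\cup\{x\}\in Y_{j-1}$ for some $(d-2)$-set $W'$ of already embedded vertices. So you need $|Y_{j-1}(W')|\ll \xi n$ for every such $W'$. Double counting only gives $|Y_{j-1}(W')|\le 4\deg_{H_{j-1}}(W')/(\gamma' n)$. The global bound $|H_{j-1}|\le t^d\gamma n^d$ says nothing about the degree of an individual $(d-2)$-set; it merely yields $|Y_{j-1}(W')|=O(\gamma\gamma'^{-1}t^d n^{d-1})$, which is useless once $d\ge 3$. Your estimate $O(\gamma'^{-1}t^d\gamma n)$ is correct only for $d=2$, where $W'=\emptyset$.

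Sets of size $d-2$ can accumulate degree through non-rooted (random) contributions just as $(d-1)$-sets can. Nothing in your scheme controls this: neither the admissibility rules nor Step~2, which tracks only $(d-1)$-sets. Moreover, the uniformity over at least $\xi n/2$ choices that drives Step~2 itself rests on Step~1. So the whole argument is unsupported for every $d\ge 3$.

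The missing idea is to control degrees at every level $i\in[d-1]$ simultaneously. The paper's proof is entirely deterministic. It maintains, for every $i\in[d-1]$ and every $i$-set $S$, the invariant
\[
|G_j(S)|\le\gamma^{2^{-i}}n^{d-i}+(\mathrm{root}(S,j-1)+1)2^t,
\]
where $\mathrm{root}(S,j-1)$ counts the $j'<j$ whose labelling roots $S$. It calls $S$ bad when the first term is exceeded. Using the invariant at level $i-1$ together with $2^{-d}+2^{-i}<2^{-(i-1)}$ (and the bound on $m$ when $i=1$), it shows that every $(i-1)$-set lies in at most $\gamma^{2^{-d}}n$ bad $i$-sets. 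Each new image then avoids every vertex that completes a bad set of any size with already used vertices, and every vertex that forms an edge of $G_j$ with $d-1$ already used vertices. This preserves all invariants and gives edge-disjointness and the degree bound at once.

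You could repair your approach the same way, by imposing your saturation rule at all levels with thresholds $\gamma_i n^{d-i}$, where $\gamma\ll\gamma_1\ll\dots\ll\gamma_{d-1}\le\gamma'$. Alternatively, you could prove concentration for $(d-2)$-sets as well, under a stopping time.

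Note also that what you call the main obstacle is not one. Once $S$ is saturated, your rule lets it gain edges only from embeddings $j$ with $S\subseteq\Ima\Lambda_j$ whose labelling roots $S$, and there are at most $\gamma n$ of these. Hence $\deg(S)\le\gamma' n/2+t(\gamma n+1)\le\gamma' n$ deterministically, and no random choices or martingale inequality are needed. Such concentration would in any case not survive the union bound in the version of the lemma with the factor $\alpha$, where $\alpha\gamma' n$ may be bounded.
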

		
		The proof of this lemma is almost identical to the proof of \cite[Lemma 5.20]{glock2023existence}%
		\towriteornottowrite{, hence, we postpone the writeup to the appendix.}{, hence, we omit it here.}
		We will mainly use this lemma to connect a given set of extra-tight trails into one large extra-tight trail. This is given by the following lemma:
		\begin{lemma}\label{lem: path connecting}
			Let $1/n\ll \gamma\ll\gamma'\ll  \xi,1/D\ll 1/d$. Suppose that $G$ is a $d$-graph on $n$ vertices and let $U\subs V(G)$ such that for all $A\subs\binom{V(G)}{d-1}$ with $\abs{A}\le D$, we have $\abs{\bigcap_{S\in A} G(S)\cap U}\ge \xi n$. Let $\cP$ be a collection of pairwise edge-disjoint extra-tight paths such that each $S\subs V(G)$ of size $d-1$ appears at an end of at most $\gamma n$ of the paths in $\cP$ and $\bigcup_{P\in \cP} P\cap G=\emptyset$.
			
			Then there is a subset $H\subs G$ such that $H\cup\bigcup_{P\in\cP}P$ is the edge set of an extra-tight trail whose first and last $d$ vertices all lie in $U$ and $\Delta(H)\le \gamma'n$.
		\end{lemma}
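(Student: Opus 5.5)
We order the paths of $\cP$ as $P_1,\dots,P_k$, fix their orientations, and join consecutive ones by short connecting trails of $2d$ new vertices each, exactly as in \Cref{lem: glue two extra-tight trails}. The difference is that these connectors are now embedded simultaneously via \Cref{lem: rooted embedding with U}, rather than greedily. We also add two end pieces. At the front we attach $d$ fresh vertices before $P_1$, and at the back $d$ fresh vertices after $P_k$, so that the first and last $d$ vertices lie in $U$. The edge set $H$ is then the union of all edges created by the connectors and end pieces.

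Concretely, for each $j\in[k-1]$ let $T_j$ be the $d$-graph consisting of the edges of the extra-tight trail $(a_1,\dots,a_d,x_1,\dots,x_{2d},b_1,\dots,b_d)$ that contain at least one $x_i$. Here $(a_1,\dots,a_d)$ is the last end of $P_j$ and $(b_1,\dots,b_d)$ is the first end of $P_{j+1}$. Set $X_j\coleq\{a_1,\dots,a_d,b_1,\dots,b_d\}$, and let $\Lambda_j$ map $X_j$ to these actual end vertices. We may assume the two ends are disjoint by choosing the ordering suitably. If some end sets overlap, we can instead route through intermediate fresh vertices or reorder; this is a technical point I expect to handle by a random ordering.

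In each $T_j$ the set $T_j[X_j]$ is empty, and the rooted degeneracy is at most $d^2$ by \Cref{obs: degree in trails}, so we may take $D\ge d^2$. The end pieces are treated the same way, with $X$ equal to a single end. The number of pieces is at most $|\cP|+1$. Since the paths are edge-disjoint and each $(d-1)$-set lies at an end of at most $\gamma n$ paths, we get $|\cP|\le \gamma n^{d}$ up to a constant.

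Next we verify the rooting condition \eqref{eq: few roots with U}. A set $S$ with $|S|\in[d-1]$ is rooted by $\Lambda_j$ only if $S$ lies inside the end sets used by $\Lambda_j$. For $|S|=d-1$ this is at most $2\gamma n$ by hypothesis. For smaller $S$, we sum over the at most $n^{d-1-|S|}$ $(d-1)$-sets containing $S$, which gives $O(\gamma n^{d-|S|})$. After replacing $\gamma$ by a constant multiple, this is the required bound.

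We apply \Cref{lem: rooted embedding with U} to $G$ and $U$. It yields edge-disjoint embeddings with all new vertices in $U$ and maximum degree at most $\gamma' n$, and the resulting edges lie in $G$, which is disjoint from $\bigcup\cP$. Concatenating gives the sequence $P_1,$ connector, $P_2,\dots$. It remains to check that this sequence is an extra-tight trail, meaning every covered $d$-set is covered once. An edge containing a connector vertex lies in exactly one $\phi_j(T_j)$, and within that piece it is unique because the $4d$ vertices in the window are distinct. Edges of the paths are distinct because the paths are edge-disjoint and disjoint from $G$.

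The main obstacle is the injectivity of the windows. The embedded vertices must differ from the end vertices within distance $2d$. This holds automatically because $\phi_j$ is an injective homomorphism on $V(T_j)\supseteq X_j$. A second subtlety arises because the windows straddling $P_j$ and the connector need $P_j$ to have at least $d+1$ vertices so that no edge skips across. If a path is very short, we handle it by the same window argument. Finally, some edges across a connector could coincide with path edges only if they avoid the new vertices, and that is excluded since every such edge lies inside a single path.
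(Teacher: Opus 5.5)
This is essentially the paper's proof. Both arguments join consecutive paths by a connector made of the edges of a short extra-tight path that meet the new vertices. Both embed all connectors at once with \Cref{lem: rooted embedding with U}, and both bound the rooting counts by $2\gamma n^{d-|S|}$ by summing over $(d-1)$-supersets of $S$. The differences are cosmetic. The paper uses $d$ rather than $2d$ connecting vertices. It also gets the ends in $U$ by adding two arbitrary edges of $G[U]$ as dummy paths $P_0,P_{m+1}$, deleted from $G$ beforehand at the cost of halving $\xi$, whereas you use end pieces rooted at a single end. Both variants work.

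The step that fails as stated is making the last end of $P_j$ and the first end of $P_{j+1}$ disjoint by (random) reordering, which you need for $\Lambda_j$ to be injective. Suppose every path in $\cP$ has order $d+1$ and a fixed vertex $v$ as its second vertex, which is compatible with all hypotheses. Then $v$ lies in both ends of every path, so every junction has overlapping ends for every order and orientation.

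You do not actually need disjointness, because your $2d$ connecting vertices allow a clean fix. In $(a_1,\dots,a_d,x_1,\dots,x_{2d},b_1,\dots,b_d)$, an edge through some $a_i$ lies in a window ending at position at most $2d$, so it meets only $x_1,\dots,x_d$. Likewise an edge through some $b_{i'}$ meets only $x_{d+1},\dots,x_{2d}$. Since every edge of $T_j$ contains some $x_i$, identifying each coinciding pair $a_i=b_{i'}$ neither collapses an edge nor merges two edges.

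So replace $T_j$ by this quotient. Then $T_j[X_j]$ is still empty and the rooted degeneracy is still at most $d^2$. The rooting bound is unaffected, since a rooted set still lies inside a single end, and $\Lambda_j$ becomes injective. Your window argument then goes through, because no $d+1$ consecutive entries contain both an $a_i$ and a $b_{i'}$.

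The paper's proof tacitly assumes an injective $\Lambda_j$ too, and with only $d$ connecting vertices this trick is unavailable. There, $w_1^{(j)}=u_1^{(j+1)}=y$ would produce $\{y,\phi_j(v_1),\dots,\phi_j(v_{d-1})\}$ from two different windows. So your choice of $2d$ connecting vertices is the better one. Finally, your concern about short paths is a non-issue: every path has at least $d$ vertices, so no window of $d+1$ entries contains connector vertices from both sides of a path.
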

		\begin{proof}
			Let $\cP=\{P_1,\dots, P_{m}\}$ and $\big(u^{(j)}_1,\dots,u^{(j)}_d\big)$ resp.\@ $\big(w^{(j)}_d,\dots,w^{(j)}_1\big)$ be the first resp.\@ last $d$ vertices of $P_j$ where $j\in[m]$. Since each $S\subs V(G)$ of size $d-1$ appears at an end of at most $\gamma n$ of the paths in $\cP$, we have $m=\abs{\cP}\le \gamma n^d$.
			Further, let $P_0=\big(u^{(0)}_1,\dots,u^{(0)}_d\big)$ and $P_{m+1}=\big(w^{(m+1)}_d,\dots,w^{(m+1)}_1\big)$ be two extra-tight paths given by two arbitrary edges from $G[U]$.
			In the end, these vertices will form the ends of our extra-tight trail.
			
			Let $T$ be the extra-tight path with vertex sequence $(w_d,w_{d-1},\dots,w_1,v_1,\dots,v_d,u_1,\dots,u_d)$ but which only contains those $d$-edges that have non-empty intersection with $\{v_1,\dots,v_d\}$. The idea is to embed a copy of $T$ into $G$ for each $j\in[m]_0$ such that $w_i$ is mapped to $w_i^{(j)}$ and $u_i$ is mapped to $u_i^{(j+1)}$ for each $i\in[d]$.
			Then $P_j$, the embedded copy of $T$, and $P_{j+1}$ form one long extra-tight trail.
			
			To make this more precise, we apply \Cref{lem: rooted embedding with U}\towriteornottowrite{\ with $\alpha_{\ref{lem: rooted embedding with U}}\coleq 1$}{}. For $j\in [m]_0$, define $T_j\coleq T$ and $X_j\coleq\{w_d,\dots,w_1,u_1,\dots,u_d\}$. We choose $D$ depending on $d$ large enough such that $T_j$ has degeneracy at most $D$ rooted at~$X_j$. By assumption, $G_{\ref{lem: rooted embedding with U}}\coleq G-\big\{u^{(0)}_1,\dots,u^{(0)}_d\big\}-\big\{w^{(m+1)}_1,\dots,w^{(m+1)}_d\big\}$ satisfies the requirements for \Cref{lem: rooted embedding with U} with $\xi_{\ref{lem: rooted embedding with U}}\coleq \xi/2$. For $j\in [m]_0$, let $\Lambda_j$ be the $G_{\ref{lem: rooted embedding with U}}$-labelling of $(T_j,X_j)$ that maps $w_i$ to $w_i^{(j)}$ and $u_i$ to $u_i^{(j+1)}$ for each $i\in[d]$.
			
			To check the last condition of \Cref{lem: rooted embedding with U}, let $S\subs V(G)$ with $\abs S\in[d-1]$. Since the extra-tight trail $T$ has $d$ vertices between $w_1$ and $u_1$, the only $j\in[m]$ such that $\Lambda_j$ roots $S$ are those where $S\subs \big\{w^{(j)}_d,\dots,w^{(j)}_1\big\}$ or $S\subs \big\{u^{(j+1)}_1,\dots,u^{(j+1)}_d\big\}$. This happens for at most $2\gamma n^{d-\abs S}$ of the $j\in[m]$. Hence, we can apply \Cref{lem: rooted embedding with U} with $\gamma_{\ref{lem: rooted embedding with U}}\coleq 3\gamma$.
			We get edge-disjoint, $\Lambda_j$-faithful embeddings $\phi_j$ of $(T_j,X_j)$ such that $\Delta(\bigcup_{j\in[m]_0}\phi_j(T_j))\le \gamma'n$. By construction, $H\coleq\bigcup_{j\in[m]_0}\phi_j(T_j)\cup \big\{\{u^{(0)}_1,\dots,u^{(0)}_d\},\{w^{(m+1)}_1,\dots,w^{(m+1)}_d\}\big\}$ has the desired properties.
		\end{proof}
		
		\section{Approximate Decomposition Lemma}\label{sec: Approximate}
		The goal of this section is to prove the Approximate Decomposition \Cref{lem: approximate decomposition} which we will need in the proof of the Cover Down Lemma.
		We employ a technique that was recently used by Gishboliner, Glock, and Sgueglia~\cite{gishboliner2023tight} in a different setting, namely in a work on tight Hamilton cycles. 
		Roughly speaking, the idea is that if one wants to find a long ``path--like'' structure, a natural approach is to use a random walk argument which extends the structure via randomized steps. However, if the structure one needs to find is very long, the random procedure needs guidance to avoid forbidden self-intersections. This can make the analysis quite intricate. Instead, the approach used in~\cite{gishboliner2023tight} is to sample many constant-length paths using the same simple random walk distribution, where self-intersection is not an issue. The obtained paths then form an auxiliary hypergraph in which one can find an almost-perfect matching using standard nibble results. Depending on the specific setup, the matching condition corresponds then to avoiding forbidden intersections between the different paths. Of course, the result of this is not one long structure, but rather many disjoint pieces. So in the final step, one needs to stitch these individual pieces together. 
		
		Since we look at extra-tight trails instead of tight Hamilton cycles, our proof of the Approximate Decomposition Lemma has some differences: we want to cover almost all edges instead of almost all vertices, which is why our auxiliary hypergraph will be defined differently, and we have to analyze the probability that an edge is covered by a randomly sampled constant-length path. Furthermore, the final step of stitching the individual pieces together is a bit different, because we also need to make sure that no edge appears too often at an end of these paths. Also, since we consider extra-tight paths, we cannot use perfect fractional matchings to define the suitable probability for the random walk, but we have to use fractional $K_{d+1}$-decompositions. This is what we define first.  
		
		\newcommand{\myx}{\textbf{x}}
		\begin{definition}
			For a $d$-graph $G$, let $K_{d+1}(G)$ be the set of all $(d+1)$-subsets $S$ of $V(G)$ such that $G[S]$ is a complete $d$-graph.
			A \emph{fractional $K_{d+1}$-decomposition} of $G$ is a function $\myx\colon K_{d+1}(G)\to [0,1]$ such that for every $e\in G$, we have \[\sum_{\substack{S\in K_{d+1}(G)\\e\subs S}}\myx(S)=1.\] For $\mu\in(0,1]$, a fractional $K_{d+1}$-decomposition $\myx$ is \emph{$\mu$-normal} if $\mu n^{-1}\le \myx(S)\le\mu^{-1}n^{-1}$ for all $S\in K_{d+1}(G)$.
		\end{definition}
		
		We need make sure that we have a sufficiently normal fractional $K_{d+1}$-decomposition in our graph~$G$: 
		
		\begin{lemma}\label{lem: there is a mu-normal fractional decomposition}
			Let $1/n\ll \alpha \ll 1/d\le 1/2$. Let $G$ be an $n$-vertex $d$-graph with $\delta(G)\ge (1-\alpha)n$. Then $G$ has a $0.9$-normal fractional $K_{d+1}$-decomposition.
		\end{lemma}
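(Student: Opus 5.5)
We will construct the fractional decomposition explicitly as a small perturbation of the uniform weighting, correcting edge by edge with local gadgets. For an edge $e\in G$ let $k(e)\coleq\abs{\{S\in K_{d+1}(G):e\subs S\}}$. Since $\delta(G)\ge(1-\alpha)n$, for each $e$ at least $n-d-d\alpha n\ge(1-2d\alpha)n$ vertices $v$ have $e\cup\{v\}\in K_{d+1}(G)$: apply the $(d-1)$-degree condition to each of the $d$ sets $e\setminus\{x\}$. So $k(e)=(1\pm 2d\alpha)n$. As a first approximation we put $\myx_0(S)\coleq 1/n$ on every $S\in K_{d+1}(G)$. The edge $e$ then receives total weight $k(e)/n=1-\eps_e$ with $0\le\eps_e\le 2d\alpha$, since $k(e)\le n-d$.

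The task is to add a correction $\myx_1$ with $\abs{\myx_1(S)}\le 0.1/n$ that exactly cancels each deficit $\eps_e$. We use the standard ``edge gadget'' from fractional decompositions (as in Barber--K\"uhn--Lo--Osthus and the Glock et al.\ design papers). For an edge $e$ and a set $Y$ of $d+1$ extra vertices such that $e\cup Y$ spans a complete $d$-graph on $2d+1$ vertices in $G$, there is a fixed signed weighting $\psi_{e,Y}$ on the $(d+1)$-subsets of $e\cup Y$. Its induced edge weight equals $1$ on $e$ and $0$ on every other $d$-subset. Such a weighting exists because inclusion matrices of complete hypergraphs on $\ge 2d+1$ vertices have full rank $\binom{2d+1}{d}$ (Wilson's theorem / Gottlieb--Kantor), and its entries are bounded by a constant $c_d$.

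For each $e$, consider all $(d+1)$-sets $Y$ with $e\cup Y$ complete in $G$. Greedy counting with the codegree condition shows there are $(1\pm O_d(\alpha))\binom{n}{d+1}$ of these. Set
\[\myx_1\coleq\sum_{e\in G}\frac{\eps_e}{N_e}\sum_{Y}\psi_{e,Y},\]
where $N_e$ is the number of valid $Y$. By linearity, each edge $e$ then gets total weight $1-\eps_e+\eps_e=1$, and $\myx_0+\myx_1$ is a fractional $K_{d+1}$-decomposition, provided all values are nonnegative.

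The main step is bounding $\abs{\myx_1(S)}$ for a fixed $S\in K_{d+1}(G)$. A term $(e,Y)$ contributes to $S$ only if $S\subs e\cup Y$. For a given $e$, the number of such $Y$ is $O_d(n^{\abs{e\cup Y}-\abs{e\cup S}})\le O_d(n^{d+1-\abs{S\setminus e}})$, since $S\setminus e$ must lie in $Y$. For each $j=\abs{S\cap e}\in[d]_0$, the number of edges $e$ with $\abs{e\cap S}=j$ is $O_d(n^{d-j})$, and $\abs{S\setminus e}=d+1-j$. Hence the total contribution is
\[O_d\Bigl(\sum_{j}n^{d-j}\cdot\frac{2d\alpha\,c_d}{n^{d+1}}\cdot n^{j}\Bigr)=O_d(\alpha/n).\]
Choosing $\alpha\ll1/d$ makes this at most $0.05/n$, so $\myx(S)\in[0.95/n,1.05/n]\subs[0.9/n,0.9^{-1}/n]$. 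This gives the claimed $0.9$-normal decomposition.

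The main obstacle is the existence and boundedness of the gadget $\psi_{e,Y}$, i.e.\ the linear-algebra fact that a single $d$-set can be isolated by signed $K_{d+1}$-weights inside a complete $(2d+1)$-vertex $d$-graph. We would cite the full-rank result for inclusion matrices, or construct $\psi$ explicitly by the classical alternating-sum formula, and make sure that only $(d+1)$-sets inside $e\cup Y$ are used so that they all lie in $K_{d+1}(G)$.
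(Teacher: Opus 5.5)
Your proposal is correct and follows essentially the same route as the paper. The paper cites the proof of \cite[Theorem 1.5]{BARBER2017148}, which starts from a uniform weighting and then cancels each edge's deficit by spreading it over local gadgets on $2d+1$ vertices, so that every $K_{d+1}$ changes weight by only $O_d(\alpha)$ times the base weight. The only difference in set-up is the base weight: they use $1/\kappa$ with $\kappa$ the average number of $K_{d+1}$'s containing an edge, while you use $1/n$.

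You derive that perturbation bound yourself rather than extracting it from their computation. The gadget you need is exactly the paper's Proposition~\ref{prop: gadget} with $f=d+1$, which also gives an explicit constant $c_d$.
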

		In \cite[Theorem 1.5]{BARBER2017148}, it is shown that $G$ indeed has a fractional $K_{d+1}$-decomposition because $\alpha\ll 1/d$. Unfortunately, the theorem does not make any comments on the $\mu$-normality of the decomposition. However, a closer examination of the proof of \cite[Theorem 1.5]{BARBER2017148} reveals that their decomposition is in fact $\mu$-normal if $\alpha$ is small enough. Since the proof is verbatim the same, we only sketch it.
		
		\proof
		The proof of \cite[Theorem 1.5]{BARBER2017148} works as follows: First, each $K_{d+1}$ is assigned the weight $\omega\coleq 1/\kappa$ where $\kappa\coleq\sum_{e\in G}\kappa_e^{(d+1)}/|G|$ and $\kappa_e^{(d+1)}\in [(1-d\alpha)n,n]$ is the number of copies of $K^{(d)}_{d+1}$ that contain~$e$. Hence, at the beginning each $K_{d+1}$ is assigned the weight $1/\kappa\in [n^{-1}, \frac{1}{1-d\alpha}n^{-1}]$. 
		
		However, this is not necessarily a fractional $K_{d+1}$-decomposition.
		Therefore, the authors change the weights slightly to obtain a fractional $K_{d+1}$-decomposition.
		They show that the weight of each copy of $K^{(d)}_{d+1}$ is changed by at most $\frac{2^{d+2}d^2(d+1)^{2d-1}\alpha\omega}{d!}\sum_{j=0}^d\frac{1}{2^jj!}$.
		Hence, for fixed $d$ and small enough $\alpha$, we have a $0.9$-normal fractional $K_{d+1}$-decomposition.
		\endproof

		Now that we have established that a fractional $K_{d+1}$-decomposition $\myx\colon K_{d+1}(G)\to [0,1]$ exists, we will describe how this can be used to define a probability distribution.
		First, we extend $\myx$ to a function $\binom{V(G)}{d+1}\to [0,1]$ by setting $\myx(S)\coleq0$ for all $S\in\binom{V(G)}{d+1}\backslash K_{d+1}(G)$. We define the following random walk $\Y=(Y_1,Y_2,\dots)$ in~$V(G)$. Let $(V)_d$ be the set of all ordered $d$-tuples of $V(G)$ whose unordered set forms an edge in~$G$. The first $d$ vertices $(Y_1,\dots,Y_d)$ are chosen uniformly at random among~$(V)_d$.
		
		\newcommand{\arr}[1]{\overrightarrow{#1}}
		For $i\ge d$, let $\arr{Z_i}\coleq(Y_{i-(d-1)},\dots, Y_i)$ be the ordered set of the last $d$ vertices of the walk and let $Z_i\coleq\{Y_{i-(d-1)},\dots,Y_i\}$ be the corresponding set. For any $v\in V(G)\backslash Z_i$, we define the transition probability as follows:
		\begin{equation}
			\Pr[Y_{i+1}=v|Y_{i-(d-1)},\dots, Y_i]=\myx(Z_i\cup\{v\}).
		\end{equation}
		
		For $v\in Z_i$, the probability $\Pr[Y_{i+1}=v|Y_{i-(d-1)},\dots, Y_i]$ is set to 0. Note that this indeed defines a probability distribution because
		\[\sum_{v\in V(G)\backslash Z_i}\myx(Z_i\cup\{v\})=\sum_{\substack{S\in K_{d+1}(G)\\Z_i\subs S}}\myx(S)=1.\]
		Here, we use the fact that $Z_i$ is always an edge of~$G$. Note that $\Y$ is equivalent to the random walk $\calZ= \big(\arr{ Z_{d}}, \arr{Z_{d+1}},\dots\big)$. Here, $\calZ$ is a Markov chain with state space~$(V)_d$. Furthermore, if we define $\pi\big(\arr S\big)\coleq \frac{1}{\abs{(V)_d}}$ for all $\arr S\in (V)_d$, then $\pi$ defines a stationary distribution of the Markov chain. Indeed, let $P\big(\arr S, \arr T\big)$ be the transition probability of $\arr S$ to $\arr T$ for any $\arr S, \arr T\in (V)_d$. For any fixed $\arr T=(t_1,\dots, t_d)\in (V)_d$, we then get
		\begin{align*}\sum_{\arr S\in (V)_d}\pi\Big(\arr S\Big)P\Big(\arr S,\arr T\Big)&=\frac{1}{\abs{(V)_d}}\sum_{\arr S\in(V)_d}P\Big(\arr S,\arr T\Big)\\&
			=\frac{1}{\abs{(V)_d}}\sum_{v\in V(G)\backslash\{t_1,\dots, t_{d-1}\}} P\big((v,t_1,\dots,t_{d-1}),(t_1,\dots,t_d)\big)\\&=\frac{1}{\abs{(V)_d}}\sum_{v\in V(G)\backslash\{t_1,\dots, t_{d-1}\}} \myx(\{v,t_1,\dots,t_d\}) =\frac{1}{\abs{(V)_d}}\sum_{\substack{S\in K_{d+1}(G)\\ \{t_1,\dots,t_d\}\subs S}}\myx(S)=\frac{1}{\abs{(V)_d}}\\&=\pi(\arr{T}).
		\end{align*}
		
		The following lemma shows that if $j_1,\dots,j_d$ are $d$ out of $d+1$ consecutive numbers, then the probability that $(Y_{j_1},\dots,Y_{j_d})$ forms a fixed element of $(V)_d$ is $\frac{1}{\abs{(V)_d}}$.
		
		\begin{lemma}\label{lem: Edge appearance uniform}
			Let $i\ge 1$ be fixed and let $i= j_1<j_2<\dots<j_d\le i+d$. Fix an $\arr S=(s_1,\dots,s_d)\in (V)_d$. Then \[\Pr[Y_{j_1}=s_1,\dots, Y_{j_d}=s_d] =\frac{1}{\abs{(V)_d}}.\]
		\end{lemma}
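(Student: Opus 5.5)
The plan is to first show that every window of $d$ consecutive vertices of the walk is uniformly distributed on $(V)_d$. Then the case of a set of $d$ positions that skips one index reduces to one application of the transition rule together with the identity $\sum_{v}\myx(S\cup\{v\})=1$ for edges $S$.

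\textbf{Step 1 (consecutive windows).} I will show by induction on $t\ge d$ that $\arr{Z_t}$ is distributed according to $\pi$, i.e.\ uniformly on $(V)_d$.
\begin{itemize}
\item For $t=d$ this is the definition of the initial distribution.
\item For the inductive step, $\calZ$ is a Markov chain on $(V)_d$ with transition matrix $P$. The computation preceding the lemma shows $\pi P=\pi$. Hence if $\arr{Z_t}\sim\pi$, then $\arr{Z_{t+1}}\sim\pi P=\pi$.
\end{itemize}
Consequently, for every $i\ge 1$ the tuple $(Y_i,\dots,Y_{i+d-1})=\arr{Z_{i+d-1}}$ is uniform on $(V)_d$. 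This settles the lemma when $j_d=i+d-1$, because then $(j_1,\dots,j_d)=(i,\dots,i+d-1)$.

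\textbf{Step 2 (one skipped index).} Otherwise $j_d=i+d$ and $\{j_1,\dots,j_d\}=\{i,\dots,i+d\}\setminus\{k\}$ for a unique $k$ with $i<k<i+d$; note $j_1=i$ rules out $k=i$. Let $\arr{S}_v$ denote the $d$-tuple obtained from $(s_1,\dots,s_{d-1})$ by inserting $v$ at the position corresponding to $k$. We condition on the value $v$ of $Y_k$:
\[
\Pr[Y_{j_1}=s_1,\dots,Y_{j_d}=s_d]=\sum_{v}\Pr\big[\arr{Z_{i+d-1}}=\arr{S}_v\big]\cdot\Pr\big[Y_{i+d}=s_d\mid \arr{Z_{i+d-1}}=\arr{S}_v\big].
\]
The sum runs over those $v\notin\{s_1,\dots,s_d\}$ for which $\arr{S}_v\in(V)_d$. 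The excluded terms vanish for two reasons: $Y_{i+d}$ cannot repeat a vertex of $Z_{i+d-1}$, and $\arr{Z_{i+d-1}}$ always lies in $(V)_d$.

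By the transition rule, the second factor equals $\myx(\{s_1,\dots,s_d,v\})$. By Step 1, the first factor equals $1/\abs{(V)_d}$.

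There is one bookkeeping point to check. For vertices $v$ with $\{s_1,\dots,s_{d-1},v\}\notin G$, the first factor is $0$ rather than $1/\abs{(V)_d}$. But for such $v$ the set $\{s_1,\dots,s_d,v\}$ is not a copy of $K_{d+1}$ in $G$, so $\myx(\{s_1,\dots,s_d,v\})=0$ anyway. Hence we may sum over all $v\in V(G)\setminus\{s_1,\dots,s_d\}$ with first factor $1/\abs{(V)_d}$, giving
\[
\frac{1}{\abs{(V)_d}}\sum_{v\notin \{s_1,\dots,s_d\}}\myx(\{s_1,\dots,s_d\}\cup\{v\})=\frac{1}{\abs{(V)_d}}\sum_{\substack{T\in K_{d+1}(G)\\ \{s_1,\dots,s_d\}\subs T}}\myx(T)=\frac{1}{\abs{(V)_d}}.
\]
The last equality holds because $\{s_1,\dots,s_d\}\in G$ and $\myx$ is a fractional $K_{d+1}$-decomposition.

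\textbf{Main obstacle.} There is no real obstacle. The only points needing care are the stationarity induction in Step 1 and the observation in Step 2 that the $v$ with $\arr{S}_v\notin(V)_d$ contribute nothing on either side.
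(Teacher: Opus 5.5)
Your proposal is correct and follows the paper's proof essentially step for step. Stationarity of the uniform distribution on $(V)_d$ handles the consecutive window. In the skipped-index case you condition on the skipped vertex $v$, use uniformity of $\overrightarrow{Z_{i+d-1}}$ and the transition rule, and sum the fractional $K_{d+1}$-decomposition over the cliques containing $\{s_1,\dots,s_d\}$. Your explicit handling of the $v$ with $\{s_1,\dots,s_{d-1},v\}\notin G$ is slightly more careful than the paper's loosely written restriction ``$\{s_1,\dots,s_d,v\}\in G$''.
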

		\begin{proof}
			If $\{j_1,\dots,j_d\}=\{i,i+1,\dots,i+d-1\}=Z_{i+d-1}$, then $\Pr[Y_{j_1}=s_1,\dots, Y_{j_d}=s_d]=\Pr\!\big[\arr{Z_{i+d-1}}=\arr S\big]=\frac{1}{\abs{(V)_d}}$ where the last equality comes from the fact that the uniform distribution is stationary for the Markov chain $\big(\arr{Z_d},\arr{Z_{d+1}},\dots\big)$.
			
			Hence, we can assume that $j_1=i,\dots, j_{\ell}=i+\ell-1, j_{\ell+1}=i+\ell+1, \dots, j_{d}=i+d$ for some $1\le \ell\le d-1$. Let $E$ be the event that $Y_{j_1}=s_1,\dots, Y_{j_d}=s_d$. Then
			\begin{equation*}
				\begin{adjustbox}{max width=\linewidth}
					$\begin{aligned}
						\Pr[E]&=\sum_{v\in V(G)}\Pr\!\left[E|\arr{Z_{i+d-1}}=(s_1,\dots,s_\ell,v,s_{\ell+1},\dots,s_{d-1})\right]\cdot 
						\Pr\!\left[\arr{Z_{i+d-1}}=(s_1,\dots,s_\ell,v,s_{\ell+1},\dots,s_{d-1})\right]\\
						&=\sum_{\substack{v\in V(G)\\\{s_1,\dots,s_d,v\}\in G}} 
						\Pr\!\left[E|\arr{Z_{i+d-1}}=(s_1,\dots,s_\ell,v,s_{\ell+1},\dots,s_{d-1})\right]\cdot 
						\Pr\!\left[\arr{Z_{i+d-1}}=(s_1,\dots,s_\ell,v,s_{\ell+1},\dots,s_{d-1})\right]\\
						&=\frac{1}{\abs{(V)_d}}\sum_{\substack{v\in V(G)\\\{s_1,\dots,s_d,v\}\in G}} 
						\Pr\!\left[E|\arr{Z_{i+d-1}}=(s_1,\dots,s_\ell,v,s_{\ell+1},\dots,s_{d-1})\right]\\
						&=\frac{1}{\abs{(V)_d}}\sum_{\substack{v\in V(G)\\\{s_1,\dots,s_d,v\}\in G}}
						\myx(\{s_1,\dots,s_d,v\})=\frac{1}{\abs{(V)_d}}.
					\end{aligned}$
				\end{adjustbox}
			\end{equation*}
			
			\vspace{-1cm}
		\end{proof}
		\vspace{.5cm}
		
		With the random walk, we are now able to find the desired probability distribution, which we will use to sample the extra-tight paths later. It is similar to Lemma 5.3 in \cite{gishboliner2023tight}, but here we analyze the probability of an edge appearing in a path instead of a vertex appearing. Furthermore, we bound the probability of an edge appearing at an end of the path, which we will need when we glue the extra-tight paths together into one extra-tight trail.
		
		\newcommand{\eP}{e^\ast}
		\begin{lemma}\label{lem: there is nice distribution}
			Let $1/n\ll1/t\ll\alpha\ll1/d\le 1/2$. Let $G$ be an $n$-vertex $d$-graph with $\delta(G)\ge (1-\alpha)n$. Let $\Omega$ be the set of all extra-tight paths of order $t$ in~$G$. Then there exists a probability distribution on $\Omega$ such that a randomly chosen element $P\in \Omega$ has the following properties:
			
			\begin{enumerate}
				\item\label{enum: Pr(P=Q)} for any given extra-tight path $Q$, we have $\P[P=Q] = \O_t\big(n^{-t}\big)$;
				\item\label{enum: Pr(e in E(P))} for every $e\in G$, we have $\abs{\P[e\in P]-\frac{\eP}{|G|}} = \O_t\big(n^{-d-1}\big)$ where $\eP$ is the number of edges in an extra-tight path of order $t$;
				\item\label{enum: Pr(e at end of P)} for every $e\in G$, we have $\abs{\P[\calE_e]-\frac{2}{|G|}} = \O_t\big(n^{-d-1}\big)$ where $\calE_e$ is the event that the $d$ vertices of $e$ are forming one of the two ends of~$P$.
			\end{enumerate}
		\end{lemma}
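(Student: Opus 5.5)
We take the distribution of $P$ to be that of the first $t$ steps $(Y_1,\dots,Y_t)$ of the random walk $\Y$, with $\myx$ the $0.9$-normal fractional $K_{d+1}$-decomposition from \Cref{lem: there is a mu-normal fractional decomposition}. The outcome is rejected if the walk repeats a vertex; in that case we resample, i.e.\ we condition on the event $\mathcal D$ that $Y_1,\dots,Y_t$ are distinct.

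\textbf{The walk produces extra-tight paths.} Every $d+1$ consecutive entries form a set in $K_{d+1}(G)$, since $\myx$ vanishes elsewhere. Hence all the edges $e_{\iota,\sigma}$ lie in $G$, and on $\mathcal D$ the sequence is an extra-tight path in $\Omega$.

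\textbf{Repetitions are rare.} By normality, each transition probability is at most $0.9^{-1}n^{-1}$. Fix $i<j$ with $j-i>d$. Conditioning on the first $j-1$ steps, the probability that $Y_j=Y_i$ is $\O(n^{-1})$. A union bound over pairs gives $\P[\mathcal D^c]=\O_t(n^{-1})$.

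\textbf{Property \ref{enum: Pr(P=Q)}.} The probability of a fixed starting tuple is $1/\abs{(V)_d}=\O(n^{-d})$. Each further step contributes $\O(n^{-1})$. So the unconditioned probability of a fixed sequence is $\O_t(n^{-t})$, and dividing by $\P[\mathcal D]\ge 1/2$ preserves this bound.

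\textbf{Property \ref{enum: Pr(e at end of P)}.} For the unconditioned walk, the event that $\{Y_1,\dots,Y_d\}=e$ has probability exactly $d!/\abs{(V)_d}=1/|G|$. By \Cref{lem: Edge appearance uniform}, the same holds for $\{Y_{t-d+1},\dots,Y_t\}$. The two events overlap only if the walk repeats vertices, which has probability $\O_t(n^{-d-1})$ by the counting argument below. So the unconditioned probability of $\calE_e$ is $2/|G|\pm\O_t(n^{-d-1})$.

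\textbf{Property \ref{enum: Pr(e in E(P))}.} The edges of an extra-tight path of order $t$ are indexed by $\eP$ position patterns, namely $d$ indices out of $d+1$ consecutive ones. By \Cref{lem: Edge appearance uniform}, each pattern yields the set $e$ with probability exactly $d!/\abs{(V)_d}=1/|G|$. Linearity then gives $\E[\#\text{patterns yielding }e]=\eP/|G|$. It remains to show that $\P[e\in P]$ differs from this expectation by $\O_t(n^{-d-1})$. The two differ only on outcomes where two patterns both yield $e$, which forces a repeated vertex or the same $d$-set at two different patterns.

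\textbf{Main obstacle: passing to the conditioned walk.} We must show that conditioning on $\mathcal D$, and the double-counting above, change these probabilities only by $\O_t(n^{-d-1})$. Crude bounds give only $\O_t(n^{-d})\cdot\O_t(n^{-1})$, so we need joint estimates.

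We use the following counting argument, relying only on the $\O(n^{-1})$ transition bound. Consider the event that $e$ occupies a fixed pattern and, in addition, some vertex repeats at positions $i<j$. Reveal the walk step by step. Fixing the pattern costs $\O(n^{-d})$: either the pattern lies within the first $d$ positions, or each newly forced vertex costs a factor $\O(n^{-1})$. The forced coincidence $Y_j=Y_i$ costs an additional factor $\O(n^{-1})$ at step $j$, unless $j$ is itself a pattern position.

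That exceptional case is handled by noting that then some other position in the pattern window is unconstrained. One reorders the revealing so that one extra constraint is still paid. This gives $\O_t(n^{-d-1})$ for each of the $\O_t(1)$ choices. The same bound covers two distinct patterns yielding the same $d$-set: their union spans at least $d+1$ positions, each forced to $\O(1)$ possible values.

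Hence $\P[e\in P\wedge\mathcal D^c]$ and $\P[\calE_e\wedge\mathcal D^c]$ are $\O_t(n^{-d-1})$. Combined with $\P[\mathcal D]=1-\O_t(n^{-1})$ and $1/|G|=\Theta(n^{-d})$, dividing by $\P[\mathcal D]$ shifts the probabilities by only $\O_t(n^{-d-1})$. This yields \ref{enum: Pr(e in E(P))} and \ref{enum: Pr(e at end of P)}.
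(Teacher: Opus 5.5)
Your proposal is correct and follows essentially the same route as the paper. In both, the walk driven by the $0.9$-normal fractional decomposition is conditioned on self-avoidance, giving $\P[\mathcal D^c]=\O_t(n^{-1})$ and $\P[\{\text{a fixed pattern yields } e\}\wedge\mathcal D^c]=\O_t(n^{-d-1})$ (the paper obtains the latter by counting $\O_t(n^{t-d-1})$ non-self-avoiding walks, each of probability $\O_t(n^{-t})$), and this is then combined with \Cref{lem: Edge appearance uniform}. Your ``exceptional case'' needs no reordering: if the repeat position $j$ lies in the pattern then $i$ cannot, so $Y_i$ is forced into $e$, and that extra factor $\O(n^{-1})$ is paid at position $i$ when revealing in the natural order.
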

		\begin{proof}
			Let $1/n\ll1/t\ll\alpha\ll1/d \le 1/2$ and let $\myx$ be a $0.9$-normal fractional $K_{d+1}$-decomposition whose existence is guaranteed by \Cref{lem: there is a mu-normal fractional decomposition}.
			
			Let $\Y=(Y_1,Y_2,\dots)$ be the random walk defined via $\myx$ as above. Consistent with \cite{gishboliner2023tight}, we will use $\Pr$ as the probability measure corresponding to the random walk whereas $\P$ will denote the desired probability measure on~$\Omega$. Let $\B$ be the event that $Y_1,\dots, Y_t$ are pairwise distinct.
			Let $Q\in \Omega$ be any path with vertices $q_1,\dots, q_t$ in that order. Then we define the distribution on $\Omega$ via
			\[\P[P=Q]\coleq\Pr\!\big[\{Y_1=q_1,\dots,Y_t=q_t\}\cup\{Y_1=q_t,\dots, Y_t=q_1\}|\B\big].\]
			We already know that 
			\[\Pr[Y_1=q_1,\dots, Y_d=q_d]=\frac{1}{\abs{(V)_d}}=\frac{1}{d!\, |G|}=\O_{\alpha}\!\left(n^{-d}\right).\]
			Furthermore, for every $\arr{Z_i}=(Y_{i-d+1},\dots, Y_i)$ and every $v\in V(G)$, we have the following bound on the transition using that $\myx$ is $0.9$-normal:
			\[\Pr[Y_{i+1}=v|Y_{i-d+1},\dots,Y_i]=\myx(Z_i\cup\{v\})\le0.9^{-1}\cdot\frac{1}{n}=\O\!\left(n^{-1}\right).\]
			
			Thus, the chain rule yields $\Pr[Y_1=q_1,\dots,Y_t=q_t]=\O_{t}\!\left(n^{-t}\right)$. The number of walks of order $t$ which are not self-avoiding is $\O_t\!\left(n^{t-1}\right)$. Hence, $\Pr[\B^c]=\O_t\!\left(n^{t-1}\right)\cdot\O_{t}\!\left(n^{-t}\right)=\O_{t}\!\left(n^{-1}\right)$. Finally, we get
			\[\P[P=Q]=\frac{\Pr[Y_1=q_1,\dots,Y_t=q_t]+\Pr[Y_1=q_t,\dots,Y_t=q_1]}{\Pr[\B]}=\O_{t}\!\left(n^{-t}\right).\]
			This proves \ref{enum: Pr(P=Q)}.
			
			For \ref{enum: Pr(e in E(P))} and \ref{enum: Pr(e at end of P)}, we define the following set:
			\[E_W\coleq\!\left\{(j_1,\dots,j_d)\in [t]^d|\exists i\in [t-d+1]:i=j_1<j_2<\dots<j_d\le i+d\right\}.\]
			The set $E_W$ contains all ordered tuples of indices whose vertices in the walk form an edge in the extra-tight path.
			
			Fix an $e\in G$. For each $(j_1,\dots,j_d)\in E_W$, the number of walks of order $t$ which are not self-avoiding and where $\{Y_{j_1},\dots,Y_{j_d}\}=e$ is $\O_{t}\!\left(n^{t-d-1}\right)$ and thus $\Pr[\{\{Y_{j_1},\dots,Y_{j_d}\}=e\}\cap\B^c]=\O_{t}\!\left(n^{t-d-1}\right)\cdot\O_{t}\!\left(n^{-t}\right)=\O_{t}\!\left(n^{-d-1}\right)$.
			Furthermore, we get
			\begin{align*}\Pr[\{\{Y_{j_1},\dots,Y_{j_d}\}=e\}\cap\B | \B]&= \frac{1}{\Pr[\B]}\left(\Pr[\{Y_{j_1},\dots,Y_{j_d}\}=e]-\Pr[\{\{Y_{j_1},\dots,Y_{j_d}\}=e\}\cap\B^c]\right)\\
				&= \frac{1}{1-\O_t(n^{-1}) }\left(\frac{d!}{\abs{(V)_d}}-\O_{t}\!\left(n^{-d-1}\right) \right)=\frac{1}{|G|}-\O_{t}\!\left(n^{-d-1}\right),\end{align*}
			where we used \Cref{lem: Edge appearance uniform} and $\Pr[\B]=1-\O_{t}\!\left(n^{-1}\right)$ for the second equality. 
			
			Note that $\mathcal E_e=(\{\{Y_1,\dots,Y_d\}=e\}\cap\B)\cup (\{\{Y_{t-d+1},\dots,Y_t\}=e\}\cap\B)$ is a disjoint union.
			Hence, we can immediately conclude
			\[\P[\mathcal E_e] = \Pr[\{\{Y_{1},\dots,Y_{d}\}=e\}\cap\B | \B]+\Pr[\{\{Y_{t-d+1},\dots,Y_{j_t}\}=e\}\cap\B | \B]=\frac{2}{|G|}-\O_{t}\!\left(n^{-d-1}\right),\]
			which shows~\ref{enum: Pr(e at end of P)}. For \ref{enum: Pr(e in E(P))}, we can conclude
			\begin{align*}
				\P[e\in P]&=\Pr\!\left[\bigcup_{\{j_1,\dots,j_d\}\in E_W}\big\{\{Y_{j_1},\dots,Y_{j_d}\}=e\big\}\cap\B \Big| \B \right]\\
				&=\sum_{\{j_1,\dots,j_d\}\in E_W}\frac{\Pr\big[\big\{\{Y_{j_1},\dots,Y_{j_d}\}=e\big\}\cap\B\big]}{\Pr[\B]}\ge \frac{\eP}{|G|}-\O_{t}\!\left(n^{-d-1}\right),
			\end{align*}
			where we used $\Pr[\B]\le 1$ for the inequality. For the other direction in \ref{enum: Pr(e in E(P))}, we use $\Pr[\B]=1-\O_{t}\!\left(n^{-1}\right)$ and $\Pr\!\big[\big\{\{Y_{j_1},\dots,Y_{j_d}\}=e\big\}\cap\B\big]\le \Pr\big[\{Y_{j_1},\dots,Y_{j_d}\}=e\big]=\frac{1}{|G|}$ (by \Cref{lem: Edge appearance uniform}) to conclude
			\begin{align*}
				\P[e\in P]&=\sum_{\{j_1,\dots,j_d\}\in E_W}\frac{\Pr\big[\big\{\{Y_{j_1},\dots,Y_{j_d}\}=e\big\}\cap\B\big]}{\Pr[\B]}\le \eP\cdot\frac{\frac{1}{|G|}}{1-\O_{t}\!\left(n^{-1}\right)}\\&=\frac{\eP}{|G|}\!\left(1+\O_{t}\!\left(n^{-1}\right)\right)=\frac{\eP}{|G|}+\O_{t}\!\left(n^{-d-1}\right).\qedhere
			\end{align*}
		\end{proof}
		
		Now, we are ready to prove the existence of a packing of the constant-length extra-tight paths covering almost all edges. There, we will use the following nibble theorem.
		\newcommand{\myeps}{\eps}
		\begin{theorem}[Theorem 1.2 in \cite{ehard2020pseudorandom}]\label{theo: nibble}Let $1/\Delta\ll\delta,1/d \le 1/2$ and $\eps\coleq\delta/(50d^2)$. Let $H$ be a $d$-graph with $\onedegree(H)\le\Delta$ and with $\twodegree(H)\le\Delta^{1-\delta}$ as well as $|H|\le\exp\big(\Delta^{\myeps^2}\big)$. Suppose that $\mathcal W$ is a set of at most $\exp\big(\Delta^{\myeps^2}\big)$ weight functions $\omega\colon H\to\mathbb R_{\ge 0}$ with $\omega(H)\ge\max_{e\in H}\omega(e)\Delta^{1+\delta}$. Then there exists a matching $\mathcal M$ in $H$ such that $\omega(\mathcal M)= (1\pm \Delta^{-\myeps})\omega(H)/\Delta$ for all $ \omega\in\mathcal W$. 
		\end{theorem}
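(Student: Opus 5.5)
Since this statement is quoted from \cite{ehard2020pseudorandom}, we would simply invoke it; if we had to reprove it, the plan would be a Rödl nibble with weight tracking. The goal is to show that every edge $e\in H$ ends up in the matching with probability $(1\pm\Delta^{-\eps})/\Delta$. We would also need enough independence between edges to get concentration of $\omega(\mathcal M)$ simultaneously for all $\omega\in\mathcal W$.

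\textbf{Step 1 (regularise).} The hypothesis only bounds the maximum degree, so we would first embed $H$ into an almost $\Delta$-regular $d$-graph $H'$. To do this, add dummy edges that use at most one original vertex each, plus fresh vertices. This keeps $\twodegree(H')\le\Delta^{1-\delta}$ and $|H'|\le\exp(\Delta^{\eps^2/2})$. A matching of $H'$ restricts to a matching of $H$, and weights extended by $0$ are unchanged. It therefore suffices to treat $\Delta$-regular-ish $H$.

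\textbf{Step 2 (nibble rounds).} In round $i$ we have a current hypergraph $H_i$ on the uncovered vertices, with all degrees $(1\pm e_i)D_i$ and codegrees at most $\Delta^{1-\delta}$. We select each edge independently with probability $\theta/D_i$, for a small constant $\theta$. We keep the selected edges that meet no other selected edge, and delete all vertices covered by selected edges. Using the codegree bound, a fixed edge survives a round with probability $(1\pm O(\theta^2+\Delta^{-\delta}))e^{-d\theta}$, and degrees update to $D_{i+1}\approx D_i e^{-(d-1)\theta}$. These expectations are checked directly. Concentration follows from McDiarmid's inequality (\Cref{lem: McDiarmid}) or a Talagrand-type bound, where the small codegree controls the Lipschitz constants. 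After each round we take a union bound over all vertices and all $\omega\in\mathcal W$, which is affordable since there are at most $\exp(\Delta^{\eps^2})$ events.

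\textbf{Step 3 (weight tracking).} For each $\omega$ we track $\omega(H_i)$ and the weight of the edges placed in the matching during round $i$. By Step 2, the matching edges from round $i$ have weight $\approx\frac{\theta}{D_i}e^{-d\theta}\omega(H_i)$, and the weight of surviving edges decays in a predictable way. Summing the resulting geometric series over $\Theta(\log\Delta)$ rounds, chosen so that $D_i$ stays at least $\Delta^{\delta/2}$, gives $\omega(\mathcal M)=(1\pm\Delta^{-\eps})\omega(H)/\Delta$. The weight that is left unmatched at the end is negligible once $\theta$ and the stopping time are tuned with $\eps=\delta/(50d^2)$.

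\textbf{Main obstacle.} We expect the hardest step to be concentration of $\omega(\mathcal M)$ for arbitrary heterogeneous weights. A single heavy edge could dominate, and the hypothesis $\omega(H)\ge\max_e\omega(e)\Delta^{1+\delta}$ is exactly what prevents this. We would normalise so that $\max_e\omega(e)=1$. Then each round's contribution has expectation at least $\Delta^{\delta}$ times the maximal change caused by resampling one edge's coin, up to codegree factors. This lets McDiarmid give relative error $\Delta^{-\Omega(\delta)}$ per round. The delicate part is keeping the accumulated relative errors $e_i$ below $\Delta^{-\eps}$ across all rounds simultaneously for degrees, codegrees and every $\omega$.
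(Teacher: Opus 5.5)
Invoking \cite{ehard2020pseudorandom} is exactly what the paper does. The theorem is used as a black box and no proof is given, so your first sentence already suffices.

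The fallback nibble sketch, however, would not give the stated precision. With a constant nibble size $\theta$, the vertices deleted because of conflicting selected edges make up a $\Theta(\theta)$ fraction of each round's coverage, and your own series shows this. With $D_i=\Delta e^{-(d-1)\theta i}$ it sums to $\frac{\theta e^{-d\theta}}{1-e^{-\theta}}\cdot\frac{\omega(H)}{\Delta}=\bigl(1-(d-\tfrac12)\theta+O(\theta^2)\bigr)\frac{\omega(H)}{\Delta}$. That is a constant relative error, not $\Delta^{-\varepsilon}$. You need $\theta=\Delta^{-c}$ with $\varepsilon<c\ll\delta$. This means on the order of $\theta^{-1}\log\Delta$ rounds rather than $\Theta(\log\Delta)$, and the per-round errors must be small enough that their sum over all rounds stays below $\Delta^{-\varepsilon}$.

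Your stopping rule also fails. You only maintain the fixed codegree bound $\Delta^{1-\delta}$, so the codegree correction to the survival probability is really of order $\theta\Delta^{1-\delta}/D_i$, not $\theta\Delta^{-\delta}$. This stops being small once $D_i\le\Delta^{1-\delta}$, long before $\Delta^{\delta/2}$. You do not need to go that far. It is enough to stop once the surviving vertex fraction $e^{-\theta i}$ drops below $\Delta^{-\varepsilon}$, i.e.\ at degrees around $\Delta^{1-(d-1)\varepsilon}$. This is why $\varepsilon=\delta/(50d^2)$ is taken much smaller than $\delta$.

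McDiarmid's inequality over the independent edge coins cannot give the per-round concentration you claim, because it ignores that each coin is $1$ only with probability $\theta/D_i$. For example, let $\omega$ be the indicator of a set of $\Delta^{1+\delta}$ edges, which is allowed by the hypothesis. The matched weight in the first round then has mean about $\theta\Delta^{\delta}$. But the bounded-difference constants satisfy $\sum_f c_f^2\ge\sum_e\omega(e)^2=\Delta^{1+\delta}$, so the tail bound is vacuous. The same happens for vertex degrees, where $\sum_f c_f^2$ is at least of order $D_i^2$ against a squared mean of order $\theta^2D_i^2$. You need a variance-sensitive tool instead, such as a Freedman- or Bernstein-type martingale bound, Talagrand's inequality with certificates, or a typical-bounded-differences inequality.

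A minor point: $|H'|\le\exp(\Delta^{\varepsilon^2/2})$ is backwards, since adding edges cannot push $|H'|$ below the original bound. Regularising gives only $|H'|\le\mathrm{poly}(\Delta)\,|H|$, though that weaker bound still suffices for the union bounds.
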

		
		\begin{lemma}\label{lem: approximate decomposition with multiple paths}
			Suppose $1/n\ll \gamma\ll\alpha\ll1/d\le 1/2$. Let $G$ be an $n$-vertex $d$-graph with $\delta(G)\ge(1-\alpha)n$. Then there exists a packing $\mathfrak{P}$ of extra-tight paths such that 
			\begin{enumerate}
				\item\label{enum: small leftover} the leftover $L$, of all edges of $G$ not covered by any of the paths of the packing, satisfies $\codegree(L)\le \gamma n$;
				\item\label{enum: not at too many ends} each $S\in \binom{V(G)}{d-1}$ appears at an end of at most $\gamma n$ of these paths. 
			\end{enumerate} 
		\end{lemma}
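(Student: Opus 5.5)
We follow the sampling-plus-nibble strategy of \cite{gishboliner2023tight}. Fix $t$ with $\gamma\ll 1/t\ll\alpha$ and take the probability distribution on the set $\Omega$ of extra-tight paths of order $t$ given by \Cref{lem: there is nice distribution}.

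\textbf{Sampling.} We sample independently $N\coleq \lambda n^{d}$ paths $P_1,\dots,P_N$ from this distribution, where $\lambda\coleq |G|/(\eP n^{d})\cdot \Delta_0 n$ and $\Delta_0\coleq n^{1/2}$, say. With this choice every edge $e\in G$ lies in about $N\eP/|G|=\Delta_0$ sampled paths in expectation. By \Cref{lem: there is nice distribution}\ref{enum: Pr(e in E(P))}, each such count is $(1\pm\O_t(n^{-1}))\Delta_0$ in expectation, and Chernoff (\Cref{theo: Chernoff}) with a union bound shows that all counts are $(1\pm n^{-1/5})\Delta_0$.

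\textbf{Auxiliary hypergraph.} We form the auxiliary $\eP$-graph $H$ with vertex set $G$ whose edges are the edge sets of the sampled paths. A matching in $H$ is then exactly a packing of edge-disjoint extra-tight paths. The degrees in $H$ are $\Delta_0(1\pm n^{-1/5})$. The codegrees are bounded as follows: two fixed edges $e,e'$ lie together in a random path with probability $\O_t(n^{-d-1})$, since they span at least $d+1$ vertices and we can use \Cref{lem: there is nice distribution}\ref{enum: Pr(P=Q)} summed over paths containing both. Hence the expected codegree is $\O_t(N n^{-d-1})=\O_t(\Delta_0/n)$, which is $\le\Delta_0^{1-\delta}$ with high probability by Chernoff.

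\textbf{Applying the nibble.} We apply \Cref{theo: nibble} with the following weight functions:
\begin{itemize}
\item for each $S\in\binom{V(G)}{d-1}$, $\omega_S(P)\coleq$ the number of edges of $P$ containing $S$;
\item for each $S$, $\omega'_S(P)\coleq \mathbb 1[S\text{ appears at an end of }P]$.
\end{itemize}
This is polynomially many weight functions. For the first family, $\omega_S(H)=\sum_{e\supseteq S}\deg_H(e)\approx \deg_G(S)\Delta_0$, so the matching covers $(1\pm\Delta_0^{-\eps})\deg_G(S)$ edges at $S$. Hence $\deg_L(S)\le 2\Delta_0^{-\eps}n\le\gamma n$, giving \ref{enum: small leftover}.

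For the second family, \Cref{lem: there is nice distribution}\ref{enum: Pr(e at end of P)} gives $\E[\omega'_S(H)]\approx N\cdot n\cdot 2/|G|=\O(\Delta_0 n/\eP)$. The matching therefore has $\omega'_S(\mathcal M)\approx\omega'_S(H)/\Delta_0=\O(n/\eP)$, which is at most $\gamma n$ once $\eP\ge d(t-d)$ is large compared to $1/\gamma$. This is where we use $\gamma\ll$ the relevant constants. If the ordering $\gamma\ll 1/t$ prevents this, we instead order the constants as $1/t\ll\gamma$; alternatively, if $\omega'_S(H)$ is too small for the nibble's weight condition, we add a dummy constant weight to it. This gives \ref{enum: not at too many ends}.

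\textbf{Main obstacle.} The crux is verifying the hypotheses of \Cref{theo: nibble}. This requires checking the codegree bound through the $\O_t(n^{-d-1})$ estimate for pairs of edges, and checking that the weight condition $\omega(H)\ge\max\omega\cdot\Delta^{1+\delta}$ holds. The latter is fine since $\omega_S(H)\approx n\Delta_0$ while $\max\omega_S\le d^2$, and $n\ge\Delta_0^{1+\delta}$. The remaining risk is the precise interplay of constants in the end-count bound, where the $\O(n/\eP)$ estimate must be compared against $\gamma n$.
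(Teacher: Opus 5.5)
Your route is the paper's: sample constant-length paths from the distribution of \Cref{lem: there is nice distribution}, treat them as edges of an auxiliary $\eP$-graph on $G$, and apply \Cref{theo: nibble} with the same two weight families $\omega_S,\omega'_S$. Two points need fixing before this is a proof.

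First, the constants. Since $\omega'_S(\mathcal M)\approx 2\deg_G(S)/\eP$, the end condition forces $t$ to be large compared with $1/\gamma$; the paper essentially takes $t=1/\gamma$. Your opening choice $\gamma\ll 1/t$ cannot work, so commit to $1/t\ll\gamma$. This costs nothing elsewhere, because your leftover bound $(\O(n^{-1/5})+\Delta^{-\eps})n$ is $o(n)$ for every fixed $t$. The dummy-weight fallback is also unnecessary: the minimum degree condition gives $\omega'_S(H)=\Theta(n\Delta_0/\eP)\gg\Delta^{1+\delta}$. Second, your formula for $\lambda$ gives $N\eP/|G|=\Delta_0 n$, not $\Delta_0$; take $N\coleq\Delta_0|G|/\eP\approx n^{d+1/2}$.

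The genuine difference from the paper is the sampling scale.
\begin{itemize}
\item The paper samples $N=n^{t-1/2}$ paths. At that scale repeated samples are unavoidable (about $N^2n^{-t}=n^{t-1}$ coinciding pairs). It therefore needs the second-moment bound on $Z_e$ to show that deleting duplicates removes few paths through each edge. After that $H$ is simple, and $\Delta_2(H)\le t!\,n^{t-d-1}\le\Delta^{1-\delta}$ holds deterministically.
\item With $N\approx n^{d+1/2}$ you should state explicitly that $\binom N2\max_Q\P[P=Q]=\O_t(n^{2d+1-t})=o(1)$ once $t\ge 2d+2$. Then with high probability no path is sampled twice and $H$ is simple. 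You never mention this, but it is exactly what lets you skip the variance computation.
\end{itemize}
The price of your choice is that the codegree bound becomes probabilistic. There \Cref{theo: Chernoff} is the wrong tool, since the mean $\O_t(n^{-1/2})$ lies far below the threshold. Use instead the direct estimate that the codegree of $e,f$ in $H$ is at least $k$ with probability at most $\binom Nk\bigl(\O_t(n^{-d-1})\bigr)^k\le\bigl(\O_t(n^{-1/2})\bigr)^k$. For $k=4d+1$ this survives the union bound over all pairs $e,f$.

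With these adjustments your version is correct and somewhat shorter than the paper's.
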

		\begin{proof}
			Let $t\coleq 1/\gamma$ be an integer, and $\delta$, $\beta$ be such that $1/n\ll\delta\ll\beta\ll 1/t\ll\alpha\ll 1/d$.
			
			Set $N\coleq n^{t-1/2}$ and let $\cP\coleq\{P_i:i\in [N]\}$ be a set of $N$ extra-tight paths of order $t$ sampled independently according to the distribution of \Cref{lem: there is nice distribution}. For an edge $e\in G$, let $X_e\coleq\{ i\in [N]:e\in P_i\}$. Similarly, let $W_e$ be the set of indices $i\in [N]$ where $e$ appears at an end of~$P_i$. By \Cref{enum: Pr(e in E(P))} and \Cref{enum: Pr(e at end of P)} of \Cref{lem: there is nice distribution}, we have $\E[|X_e|]=N\cdot \frac{\eP}{|G|}\big(1\pm\O_{t}\big(n^{-1}\big)\big)=\eP \frac{n^{t-1/2}}{|G|}\big(1\pm\O_{t}\big(n^{-1}\big)\big)$ and $\E[\abs{W_e}]=N\cdot\frac{2}{|G|}\big(1\pm\O_{t}\big(n^{-1}\big)\big)=2\frac{n^{t-1/2}}{|G|}\big(1\pm\O_{t}\big(n^{-1}\big)\big)$ where, as before, $\eP$ is the number of edges in an extra-tight path of order~$t$. Therefore, the Chernoff bound (\Cref{theo: Chernoff}) and a union bound yield that, with high probability,\begin{equation}\label{eq: X_e}\abs{X_e}=(1\pm\beta/3)\eP\frac{n^{t-1/2}}{|G|}=\Theta_t\!\left(n^{t-d-1/2}\right)\hspace{.5cm}\text{and}\hspace{.5cm}\abs{W_e}=(1\pm\beta/3)2\frac{n^{t-1/2}}{|G|}=\Theta_t\!\left(n^{t-d-1/2}\right)\end{equation}
			hold for all $e\in G$.
			
			Right now, the paths in $\cP$ are not necessarily unique. In the next step, we want to delete paths with the same edge set. However, we also have to make sure that there is no edge $e$ where too many paths are deleted that contain~$e$. Therefore, we have to bound how many redundant paths contain a fixed edge~$e$.
			
			Fix an edge $e\in G$. For $i<j\in [N]$, let $Y_{i,j}$ be the indicator variable of the event $\lbrace e\in P_i=P_j\rbrace$ and set $Z_e\coleq\sum_{i<j\in[N]}Y_{i,j}$. First, we compute the expectation of~$Z_e$. For this, it is enough to note that
			\[\P[Y_{i,j}=1]=\P[e\in P_i]\cdot\P[P_j=P_i]=\O_t\!\left(n^{-d-t}\right)\]
			using \ref{enum: Pr(P=Q)} and \ref{enum: Pr(e in E(P))} of \Cref{lem: there is nice distribution}. Thus, linearity of expectation yields $\E[Z_e]=\binom{N}{2}\P[Y_{i,j}=1]=\O_t\!\left(n^{t-d-1}\right)$. 
			
			We want to use the second moment method. For this, we need to bound the variance of~$Z_e$. Thus, we will determine the covariance of the $Y_{i,j}$ next. Let $i,i',j,j'\in[N]$ be pairwise distinct, then $\Cov[Y_{i,j},Y_{i',j'}]=0$ and $\Var[Y_{i,j}]=\E[Y_{i,j}^2] - \E[Y_{i,j}]^2=\O_t\!\left(n^{-d-t}\right)$. Furthermore, 
			\begin{align*}
				\Cov[Y_{i,j},Y_{i',j}]&=\E[Y_{i,j}\cdot Y_{i',j}]-\E[Y_{i,j}]\cdot\E[Y_{i',j}]=\P[e\in P_i=P_{i'}=P_j]-\O_t\!\left(n^{-2t-2d}\right)\\
				&=\O_t\!\left(n^{-2t-d}\right)-\O_t\!\left(n^{-2t-2d}\right)=\O_t\!\left(n^{-2t-d}\right)
			\end{align*}
			again using \ref{enum: Pr(P=Q)} and \ref{enum: Pr(e in E(P))} of \Cref{lem: there is nice distribution}. Similarly, we get $\Cov[Y_{i,j},Y_{i,j'}]=\O_t\!\left(n^{-2t-d}\right)$ and $\Cov[Y_{i,j},Y_{j,j'}]=\O_t\!\left(n^{-2t-d}\right)$. Hence, Bienaymé's identity yields
			\begin{align*}
				\Var[Z_e]&=\sum_{i<j\in[N]}\Var[Y_{i,j}]+\sum_{i<j<k\in[N]}2\Cov[Y_{i,k},Y_{j,k}]+2\Cov[Y_{i,j},Y_{i,k}]+2\Cov[Y_{i,j},Y_{j,k}]\\
				&= \binom{N}{2}\O_t\!\left(n^{-d-t}\right)+6\binom{N}{3}\O_t\!\left(n^{-2t-d}\right)=\O_t\!\left(n^{t-d-1}\right)+\O_t\!\left(n^{t-d-3/2}\right)=\O_t\!\left(n^{t-d-1}\right).
			\end{align*}
			Finally Chebyshev's inequality yields
			\begin{align*}
				\P\big[\abs{Z_e-\E[Z_e]}\ge n^{t-d-1}\big]&\le\frac{\Var[Z_e]}{n^{2t-2d-2}}=\O_t\!\left(n^{-t+d+1}\right).
			\end{align*}
			By a union bound over all $\binom{n}{d}$ ways to choose the edge $e\in G$, we can conclude that there is a large constant $C_t$ depending on $t$ such that 
			\begin{equation}\label{eq: Z_e}
				Z_e\le C_t n^{t-d-1} \end{equation} holds for all $e\in G$ with high probability where we use the fact that $1/t\ll 1/d$. 
			
			We fix a collection $\cP$ of paths that satisfy \eqref{eq: X_e} and \eqref{eq: Z_e} for all~$e$. Let $\cP'\subs\cP$ be the collection obtained from $\cP$ by deleting $P_i, P_j$ for every pair $i<j\in[N]$ with $P_i=P_j$. Let $X_e'\coleq\lbrace i\in X_e:P_i\in \cP'\rbrace$ and $W_e'\coleq\lbrace i\in W_e:P_i\in \cP'\rbrace$. By \eqref{eq: X_e} and \eqref{eq: Z_e}, we get that $\abs{X_e'}\ge (1-\beta/2)\eP\frac{n^{t-1/2}}{|G|}$ and $\abs{W_e'}\ge (1-\beta/2)2\frac{n^{t-1/2}}{|G|}$.
			
			We define an auxiliary $\eP$-graph $H$ with the edges of $G$ as vertices and the paths from $\cP'$ as edges, i.e.~$H=\{P :P\in\cP'\}$. Since we deleted paths with the same edge set, $H$ does not have multiple edges and $|H|=\abs{\cP'}$. To apply \Cref{theo: nibble}, we have to establish bounds on $\onedegree(H)$ and $\twodegree(H)$. By \eqref{eq: X_e}, we have $\deg_H(\{e\})\le\abs{X_e}\le (1+\beta/3)\eP\frac{n^{t-1/2}}{|G|}$. Hence, we can set $\Delta\coleq\ceil{(1+\beta/3)\eP\frac{n^{t-1/2}}{|G|}}=\Theta_t\big(n^{t-d-1/2}\big)$ and get $\onedegree(H)\le\Delta$. Next, we check $\twodegree(H)$. For $e,f\in G$, the number of extra-tight paths of order $t$ that contain $e$ and $f$ is at most $t!\cdot n^{t-\abs{e\cup f}}$. Thus, $\twodegree(H)\le t!n^{t-d-1}\le\Delta^{1-\delta}$ as $\delta\ll 1/t$. Finally, we also have $|H|\le N=n^{t-1/2}\le\exp\big(\Delta^{\myeps^2}\big)$ where $\myeps\coleq\delta/50\eP{}^2$. Thus, $H$ is suitable for \Cref{theo: nibble}.
			
			This theorem will give us a matching that corresponds to the paths we are looking for. To ensure that the paths fulfill the requirements of the lemma, we now define suitable weight functions. 
			
			For $S\in\binom{V(G)}{d-1}$, let $\omega_S\colon H\to[4]_0$ be defined such that $\omega_S(P)$ is the number of edges of $P$ which contain~$S$. Note that, since $S$ has size $d-1$, at most $4$ edges in the extra-tight path $P$ can contain~$S$. Fix an $S\in\binom{V(G)}{d-1}$.
			Then
			\[\omega_S(H)=\sum_{e\supseteq S}\abs{X'_e}\ge (1-\beta/2)\eP\frac{n^{t-1/2}}{|G|}\deg_G(S)=\Omega_t\!\left(n^{t-d+1/2}\right),\]
			where we use $\deg_G(S)\ge \delta(G)\ge (1-\alpha)n$ for the last equality.
			Since $\max_{P\in H}\omega_S(P)\Delta^{1+\delta} \le 4 \Delta^{1+\delta}=\O_t\!\left(n^{(t-d-1/2)(1+\delta)}\right)$ and $\delta\ll 1/t$, we can conclude $\omega_S(H)\ge \max_{P\in H}\omega_S(P)\Delta^{1+\delta}$.
			Hence, $\omega_S$ fulfills the requirement of \Cref{theo: nibble}.
			These weight functions will ensure that our matching obeys~\ref{enum: small leftover}. For \ref{enum: not at too many ends}, we do the following: For every $S\in\binom{V(G)}{d-1}$, let $\omega'_S\colon H\to \{0,1\}$ be defined such that $\omega_S(P)$ is 1 if and only if $S$ appears at an end of~$P$. We have
			\[\omega_S'(H)=\sum_{e\supseteq S}\abs{W'_e}\ge(1-\beta/2)2\frac{n^{t-1/2}}{|G|}\deg_G(S)\ge \max_{P\in H}\omega'_S(P)\Delta^{1+\delta}.\]
			Hence, $\omega'_S$ fulfills the requirement of \Cref{theo: nibble}.
			
			By \Cref{theo: nibble}, there is a matching $\mathcal{M}\subs H$ such that for all $S\in \binom{V(G)}{d-1}$, we have $\omega_S(\mathcal M)=\left(1\pm\Delta^{-\myeps}\right)\omega_S(H)/\Delta$ and $\omega'_S(\mathcal M)=\left(1\pm\Delta^{-\myeps}\right)\omega'_S(H)/\Delta$. We claim that the family $\mathfrak P$ of extra-tight paths corresponding to the edges of $\mathcal{M}$ are extra-tight paths fulfilling \ref{enum: small leftover} and \ref{enum: not at too many ends} (recall $\gamma\coleq 1/t$).
			
			To check \ref{enum: small leftover}, let $S\in \binom{V(G)}{d-1}$. We have to lower bound the number of edges containing $S$ covered by the extra-tight paths. Since $\mathcal{M}$ is a matching, the corresponding extra-tight paths are edge-disjoint. Therefore, the edges containing $S$ covered by the paths are exactly 
			\begin{align*}\omega_S(\mathcal M)&=\!\left(1\pm\Delta^{-\myeps}\right)\omega_S(H)/\Delta\\&\ge\!\left(1-\Delta^{-\myeps}\right)(1-\beta/2)\eP\frac{n^{t-1/2}}{|G|}\deg_G(S)/\ceil{(1+\beta/3)\eP\frac{n^{t-1/2}}{|G|}}\\ 
				&\ge\frac{\!\left(1-\Delta^{-\myeps}\right)(1-\beta/2)}{(1+\beta/2)}\deg_G(S)\ge\!\left(1-\frac{1}{t}\right)\deg_G(S).
			\end{align*}
			This shows that the leftover $L$ satisfies $\Delta(L)\le\frac{n}{t}$ which proves~\ref{enum: small leftover}. Furthermore, we have
			\begin{align*}
				\omega_S'(\mathcal M)&\le\left(1+\Delta^{-\myeps}\right)\omega'_S(H)/\Delta\le \left(1+\Delta^{-\myeps}\right)\sum_{e\supseteq S}\abs{W_e}/\Delta\\&\le\left(1+\Delta^{-\myeps}\right)n(1+\beta/3)2\frac{n^{t-1/2}}{|G|}/\!\ceil{(1+\beta/3)\eP\frac{n^{t-1/2}}{|G|}}\le\frac{\left(1+\Delta^{-\myeps}\right)2}{\eP}n\\
				&\le\frac{1}{t}n.
			\end{align*}
			This shows that no set $S\in\binom{V(G)}{d-1}$ appears at an end of more than $\frac{n}{t}$ paths which proves~\ref{enum: not at too many ends}.
			
			Hence, both properties are satisfied.
		\end{proof}
		
		We are now ready to prove the Approximate Decomposition Lemma. There, we just have to take the packing of the extra-tight paths found in the previous lemma and glue them together. For this, it is necessary to set aside a small random edge set before.
		
		\begin{proof}[Proof of the Approximate Decomposition \Cref{lem: approximate decomposition}]
			Let $1/n\ll \gamma_0 \ll \beta\ll \gamma\ll \alpha,1/D\ll 1/d\le 1/2$.
			
			Let $R$ be a random graph obtained by taking each edge of $G$ independently with probability~$\beta$. For each $S\in\binom{V(G)}{d-1}$, we have that $\beta (1-\alpha)n\le \E[R(S)]\le \beta n$ where we used that $\delta(G)\ge (1-\alpha)n$. Similarly, for each $A\subs \binom{V(G)}{d-1}$ with $\abs A\le D$, we have that $\E\big[\abs{\bigcap_{S\in A}R(S)}\big]\ge \beta^D(1-D\alpha)n$. By the Chernoff bound (\Cref{theo: Chernoff}) and a union bound, we get that there is a set $R$ such that $\Delta(R)\le 2\beta n$ and for all $A\subs \binom{V(G)}{d-1}$ with $\abs A\le D$, we have $\abs{\bigcap_{S\in A}R(S)}\ge \frac{1}{2}\beta^D(1-D\alpha) n$.
			
			Let $G'\coleq G-R$ and note $\delta(G')\ge \delta(G)-\Delta(R)\ge (1-2\alpha)n$. Thus, we can apply \Cref{lem: approximate decomposition with multiple paths} with $G_{\ref{lem: approximate decomposition with multiple paths}}\coleq G'$, $\alpha_{\ref{lem: approximate decomposition with multiple paths}}\coleq 2\alpha$, and $\gamma_{\ref{lem: approximate decomposition with multiple paths}}\coleq\gamma_0$. We get a packing $\mathfrak P$ of extra-tight paths such that the leftover $L'$ satisfies $\Delta(L')\le \gamma_0 n$ and each $S\in \binom{V(G)}{d-1}$ appears at an end of at most $\gamma_0 n$ of these paths.
			
			Finally, we connect these paths by applying \Cref{lem: path connecting} with $G_{\ref{lem: path connecting}}\coleq R$, $U_{\ref{lem: path connecting}}\coleq V(R)$, $\xi_{\ref{lem: path connecting}}\coleq \frac{1}{2}\beta^D(1-D\alpha)$, $\mathcal P_{\ref{lem: path connecting}}\coleq\mathfrak P$, $\gamma_{\ref{lem: path connecting}}\coleq\gamma_0$, and $\gamma'_{\ref{lem: path connecting}}$ any suitable number.
			Here we use that each $S\in \binom{V(G)}{d-1}$ appears at an end of at most $\gamma_0 n$ extra-tight paths in~$\mathfrak P$ and that $\gamma_0 \ll \xi_{\ref{lem: path connecting}}$.
			We get a subset $H\subs R$ such that $H\cup\bigcup_{P\in\mathfrak P}P$ is the edge set of an extra-tight trail whose leftover $L\subs L' \cup R$ in $G$ satisfies $\Delta(L)\le\Delta(L' \cup R)\le \gamma_0 n + 2 \beta n \le \gamma n$. 
		\end{proof}
		
		\section{Cover Down Lemma }\label{sec: Cover Down}
		In this section, we will prove the Cover Down \Cref{lem: Cover Down}. The general idea of the proof is as follows. By the Approximate Decomposition \Cref{lem: approximate decomposition}, we can decompose $G-G[U]$ into an extra-tight trail and a small leftover~$L$. The question is how we can incorporate $L$ into one big extra-tight trail. We do this by setting aside a small graph $G'$ in the beginning, which has the following absorbing property: No matter what (even smaller) edge set $L$ is left over by the Approximate Decomposition, $G'\cup L$ can be partitioned into extra-tight cycles of constant length. This alone is not enough since we also have to connect these cycles to our extra-tight trail. Therefore, we also ensure that each of these constant length cycles has $2d$ consecutive vertices in~$U$. Then, we can ``cut'' each cycle at these $2d$ vertices to get a set of extra-tight paths that have all ends in $U$ and still cover all edges outside of~$G[U]$. Then, we can glue all these paths together into one extra-tight trail using edges in~$G[U]$.
		
		The following lemma precisely states the necessary absorbing property:
		
		\begin{lemma}[Supercomplex Lemma]\label{lem: Supercomplex Lemma}
			Let $1/n\ll \gamma \ll \eta\ll\alpha,\mu\ll 1/f\ll 1/d\le 1/2$ with $\alpha<\mu^{1.1}$. Let $G$ be a $d$-graph on $n$ vertices and $U\subs V(G)$ with $\abs{U}=\mu n$ and $\delta(G)\ge (1-\alpha)n$. Then there is a subgraph $G'\subs G$ with $\Delta(G')\le \eta n$ with the following property: for every $L\subs G$ with $\Delta(L)\le \gamma n$ such that $d^2\mid \deg_{G'\cup L}(v)$ for every $v\in V(G)\backslash U$, it is possible to find a packing of copies of $\extratightcycle{f}{d}$ in $G'\cup L$, covering all edges in $(G'\cup L)\backslash(G'\cup L)[U]$ and each of these extra-tight cycles has at most $d$ vertices outside of~$U$.
		\end{lemma}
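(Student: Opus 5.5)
We propose to follow the supercomplex/cover-down approach of \cite{glock2023existence}, with all the necessary absorption taking place at vertices outside $U$. The constraint that each extra-tight cycle has at most $d$ vertices outside $U$ means that every edge $e$ with $e\not\subs U$ must lie in a copy of $\extratightcycle{f}{d}$ whose vertices outside $U$ are exactly a set $W$ of size at most $d$ with $e\cap(V\setminus U)\subs W$. Consequently the problem splits according to $W$.

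\textbf{Step 1: reduce to link decompositions.} Fix an ordering $V(G)\setminus U=\{x_1,\dots,x_{n'}\}$. For each copy of $\extratightcycle{f}{d}$ that we intend to use, let $W$ be its vertex set outside $U$. Since $f\ge 2d+1\gg d$, we can choose the cycle so that the vertices of $W$ are consecutive on it. Then every edge meeting $W$ lies, after deleting its $W$-part, in a small ``local'' structure inside $U$, namely a union of links. For a single vertex $x\notin U$, the edges of the cycle through $x$ form $\{x\}\vee\extratightpath{2d}{d-1}$, and the remaining $2d$ consecutive vertices give the required consecutive run in $U$. This suggests the following plan. First cover the edges having at least $2$ vertices outside $U$ by cycles whose set $W$ is a fixed edge-part. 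Then, for each vertex $x\notin U$, decompose the remaining link $(G'\cup L)(x)[U]$, which is a $(d-1)$-graph on $U$, into copies of $\extratightpath{2d}{d-1}$ via \Cref{theo: design paper 2 uniformities}. Each such path, with $x$ inserted in the middle, extends to an extra-tight cycle by adding further vertices of $U$, using edges of $G[U]$. This is the same trick as in the switcher construction.

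\textbf{Step 2: the reserved graph $G'$.} Choose $G'$ randomly: keep each edge of $G$ meeting $V\setminus U$ with probability $\eta$, and keep a sparse random subgraph of $G[U]$ for connecting. Then for every $x$ and $L$, the link $(G'\cup L)(x)[U]$ is $(c,h,\eta)$-typical on $U$, since $\Delta(L)\le\gamma n\ll\eta\mu n$. The edges of $L$ with $j\ge2$ vertices outside $U$ are treated first, in order of decreasing $j$, by greedily embedding, via \Cref{lem: rooted embedding with U}, short cycles through each such edge that use only $U$-vertices for the remaining vertices. This moves the leftover into edges with exactly one vertex outside $U$, and it perturbs links by only $O(\gamma)n^{d-2}$ per $(d-2)$-set. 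The divisibility requirement for \Cref{theo: design paper 2 uniformities} by \Cref{lem: Deg-vector of Fcycle and Fpath} is only $d^2\mid |(G'\cup L)(x)[U]|$, and this holds because $\deg_{G'\cup L}(x)\equiv0\pmod{d^2}$ and the cycles removed so far each contribute $d^2$ to $\deg(x)$ by \Cref{obs: degree in trails}.

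\textbf{Step 3: the decompositions and completion.} For each $x$, apply \Cref{theo: design paper 2 uniformities} to $H_x:=(G'\cup L)(x)[U]$ with $F=\extratightpath{2d}{d-1}$, taking $O$ to be the $d$-sets already used, so that the associated tight paths $P_{2d}^{(d)}$ lying inside $U$ avoid each other. The different vertices $x$ then give edge-disjoint edges containing $x$ automatically. The edges inside $U$ created by the different $x$, namely the tight-path edges $F^{(d)}$ and the closing segments, must also be disjoint. We handle this by processing the vertices $x$ sequentially and enlarging $O$ as we go. The maximum degree of $O$ accumulates to about $\sum_x O(1)$ per $(d-1)$-set, which is too large. \textbf{This is the main obstacle}: $n'$ vertices each use a constant number of $d$-edges per $(d-1)$-set of $U$, so their total load on $G[U]$ is $\Theta(n)$, which is not small compared with $|U|=\mu n$. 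I would try the following to handle it. Give each $x$ a random subset $U_x\subs U$ of size $\sqrt{\mu}\,|U|$ (or use the randomly reserved $G'$ edges inside $U$ split into $n'$ colour classes), and apply the design theorem to $H_x[U_x]$, with the inside-$U$ completion edges chosen from the $x$-th colour class via \Cref{lem: rooted embedding with U}. The $\mu^{1.1}>\alpha$ condition together with $\Delta(T[U])\le\mu^2 n$ in the Cover Down Lemma suggests that the budget works out. Finally, the edges of $G'[U]$ that are left unused may remain uncovered, since the statement only requires covering edges outside $(G'\cup L)[U]$.
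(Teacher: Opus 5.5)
There is a genuine gap, and it goes beyond the accumulation problem you flag. Every edge inside $U$ of your cycles must lie in $G'\cup L$. These are the $d+1$ edges of the tight path $P^{(d)}_{2d}$ underlying each copy of $\extratightpath{2d}{d-1}$ in $H_x$, plus the closing segment. But $\Delta(G')\le\eta n\ll\mu n$ makes $G'[U]$ sparse in $U$, and \Cref{theo: design paper 2 uniformities} cannot put these edges there.

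\textbf{Edges inside $U$ cannot be forced into $G'$.} The theorem only lets you make the $d$-subsets of each copy \emph{avoid} an $O$ with $\Delta(O)\le\gamma\abs{U}$, where $\gamma\ll p$ and $p\approx\eta$ is the density of $H_x$. Forcing them into $G'[U]$ would need $O\supseteq\binom{U}{d}\setminus G'$, whose maximum degree is almost $\abs{U}$. Even forcing them merely into $G[U]$ costs degree up to $\alpha n\gg\gamma\abs{U}$, since $\eta\ll\alpha$.

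\textbf{Disjointness across different $x$ also fails.} A set $T\in\binom{U}{d-1}$ lies in $H_{x'}$ for about $\eta n$ vertices $x'$. So the accumulated $O$ has degree $\Theta(\eta n)=\Theta(p\abs{U}/\mu)$, far above $\gamma\abs{U}$.

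\textbf{Your remedies do not repair this.} Applying the theorem to $H_x[U_x]$ leaves the other edges of $H_x$ uncovered, in particular the $L$-edges at $x$, and it does not address the first problem. Splitting $G'[U]$ into $n'$ colour classes gives classes of bounded degree, far below any typicality threshold.

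\textbf{Edges with two or more vertices outside $U$ are never covered.} Your random $G'$ contains $\Theta(\eta n^d)$ such edges. Step~2 treats only those in $L$. \Cref{lem: rooted embedding with U} cannot take that many roots when $G'$ is only about $(D,\eta^{D}\mu)$-rich in $U$. You also cannot leave these edges out of $G'$. For $d\ge3$, two vertices at distance at most $d$ on $\extratightcycle{f}{d}$ lie in at least $d-1\ge2$ common edges. Hence any cycle through such an edge of $L$ needs further edges of the same kind.

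\textbf{Missing idea.} All levels have to be decomposed simultaneously: the links of outside sets $W$ of every size $1\le\abs{W}\le d$, together with the edges inside $U$. This is exactly what the supercomplex machinery of \cite{glock2023existence} provides.

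The paper proceeds as follows.
\begin{enumerate}
\item It takes $G'$ to be the $d$-layer of a sparse $(\eps,\xi,f,d)$-supercomplex $\Gsuper$ inside the skew complex $\Gskew$, all of whose $f$-sets have at most $d$ vertices outside $U$ (\Cref{lem:skew_super_complex}). This is built by keeping edges with $i$ vertices outside $U$ with probability $(\mu/3fd)^i$, then sparsifying.
\item Given $L$, it covers $L$ by valid cycles via rooted embeddings into $U$ (\Cref{lem: Cover L}).
\item It restores $\Fast_f$-divisibility by removing some $S\subs\binom{U}{d}$ and a graph $H$ with a valid $\extratightcycle{f}{d}$-decomposition (\Cref{lem: fixing degree}).
\item It applies \Cref{theo: weakly regular + super implies decomposition} once to the entire remainder.
\end{enumerate}
Validity of every cycle is inherited from $\Gskew$. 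So no per-vertex decompositions, and no sharing of edges inside $U$, ever has to be arranged by hand.
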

		
		The proof of this lemma is quite intricate and uses the heavy machinery of supercomplexes that were introduced in~\cite{glock2023existence}. Therefore, we first see how it can be used to prove the Cover Down \Cref{lem: Cover Down}.
		
		\begin{proof}[Proof of the Cover Down \Cref{lem: Cover Down}]
			Let $1/n\ll \beta \ll \gamma \ll\eta\ll \gamma'\ll \xi \ll \alpha,\mu\ll 1/D\ll 1/f\ll 1/d\le 1/2$ with $\alpha<\mu^{1.1}$. Apply \Cref{lem: Supercomplex Lemma} with $\gamma_{\ref{lem: Supercomplex Lemma}}\coleq 2\gamma$ to get a subgraph $G'\subs G$ with $\Delta(G')\le \eta n$ and the absorbing property given in \Cref{lem: Supercomplex Lemma}. Let $G''\coleq G-G'$. Note that $\delta(G'')\ge \delta(G)-\Delta(G')\ge (1-2\alpha)n$.
			
			Next, we set aside a small edge set $R$, which we use later to extend an extra-tight trail into~$U$. Let $R$ be a random graph obtained by taking each edge of $G''$ independently with probability~$\beta$. For each $A\subs \binom{V(G'')}{d-1}$ of size $d$, we have $\abs{\bigcap_{S\in A}G''(S)\cap U}\ge(\mu-d\cdot 2\alpha)n\ge \frac{1}{2}\mu n$. Thus, $\E\big[\bigcap_{S\in A}R(S)\cap U\big]\ge \frac{1}{2}\beta^d\mu n$. By the Chernoff bound (\Cref{theo: Chernoff}) and a union bound, we get that there is a set $R$ such that $\Delta(R)\le 2\beta n$ and for all $A\subs \binom{V(G'')}{d-1}$ of size $d$, we have $\abs{\bigcap_{S\in A}R(S)\cap U}\ge\frac13\beta^d \mu n$. Let $G'''\coleq G''-R$ and note that $\delta(G''')\ge (1-3\alpha)n$. 
			
			Thus, we can apply the Approximate Decomposition \Cref{lem: approximate decomposition} with $G_{\ref{lem: approximate decomposition}}\coleq G'''-G'''[U]$ and $\alpha_{\ref{lem: approximate decomposition}}\coleq 3\alpha+\mu$ to get an extra-tight trail $T'$ with vertex sequence $(v_1,\dots,v_d,\dots,w_1,\dots,w_d)$ such that the leftover $L'$ of all edges of $G'''-G'''[U]$ not covered by the trail satisfies $\Delta(L')\le \gamma n$. Thanks to the property of $R$, we can now greedily find vertices $u_1,\dots,u_{2d}\in U$ such that \[\big(u_1,\dots,u_d,\underbrace{v_1,\dots,v_d,\dots,w_1,\dots,w_d}_{T'},u_{d+1},\dots,u_{2d}\big)\]
			is an extra-tight trail $T''$ where $T''\backslash T'\subs R$. 
			
			This is the only time where we need $R$, so now we are in the situation that each edge in $G-G[U]$ is either in $G'$, the extra-tight trail $T''$ (whose ends are in $U$), or the leftover $L$ with $\Delta(L)\le \Delta(L')+\Delta(R)\le 2\gamma n$. Since both ends of $T''$ are in $U$, we get $d^2\mid \deg_{T''}(v)$ for all $v\in V(G)\backslash U$. This, together with the assumption that $d^2\mid \deg_G(v)$ holds for all $v\in V(G)\backslash U$ implies that $d^2\mid \deg_{G'\cup L}(v)$ for all $v\in V(G)\backslash U$. Thus, by the property of $G'$ given by \Cref{lem: Supercomplex Lemma}, we get a packing $\mathcal C$ of extra-tight cycles of order at least $f$ covering all edges in $(G'\cup L)\backslash(G'\cup L)[U]$ and each of these extra-tight cycles has at most $d$ vertices outside of~$U$. 
			
			Since $f$ is large enough depending on $d$, this means that each $C\in \mathcal \C$ has $2d$ consecutive vertices $(u_1',\dots,u_{2d}')$ which are all in~$U$. By ``cutting'' $C$ between $u_d'$ and $u_{d+1}'$, we get an extra-tight path $P$ with vertex sequence $(u_d',\dots,u_1',\dots,u_{2d}',\dots,u_{d+1}')$. Note that $C\backslash P\subs G[U]$. Let $\mathcal P$ be the collection of all extra-tight paths, which we get by cutting each cycle in $\mathcal C$ in that way, together with the extra-tight trail~$T''$. Note that by $\Delta(G'\cup L)\le \Delta(G')+\Delta(L)\le 2\eta n$, each $S\in\binom{U}{d-1}$ is at the end of at most $2\eta n+1\le 3\eta n$ paths in~$\mathcal P$.
			
			We are now in the situation that $G\backslash G[U]\subs \bigcup_{P\in\mathcal P}P\subs G$, both ends of every $P\in\mathcal P$ lie entirely in $U$, and $\Delta\big(\bigcup_{P\in\mathcal P}P[U]\big)\le 2+\Delta(G')+\Delta(L)\le 2\eta n$. In a last step, we have to connect all the extra-tight trails in $\mathcal P$ into one extra-tight trail~$T$. 
			
			For this, we use \Cref{lem: path connecting} with $G_{\ref{lem: path connecting}}\coleq G[U]-\bigcup_{P\in\mathcal P}P$, $U_{\ref{lem: path connecting}}\coleq V(G_{\ref{lem: path connecting}})$, and $\gamma_{\ref{lem: path connecting}}\coleq 2\eta$. Since $\delta(G_{\ref{lem: path connecting}})\ge \delta(G[U])-2\eta n\ge (\mu-\alpha)n-2\eta n\ge (\mu-2\alpha)n$, we get that for all $A\subs\binom{V(G_{\ref{lem: path connecting}})}{d-1}$ with $\abs{A}\le D$, we have $\abs{\bigcap_{S\in A}G(S)}\ge (\mu-D\cdot 2\alpha)n\ge \xi n$. Thus, the requirements of \Cref{lem: path connecting} are fulfilled and we get an $H\subs G[U]$ such that $H\cup\bigcup_{P\in\mathcal P}P$ is the edge set of an extra-tight trail $T$ with $\Delta(H)\le \gamma'n$. Thus $\Delta(T[U])\le \gamma'n+2\eta n\le \mu^2n$ as desired.
		\end{proof}
		
		\subsection{Proof of the Supercomplex Lemma~\ref{lem: Supercomplex Lemma}}
		The last thing to do is to prove the Supercomplex \Cref{lem: Supercomplex Lemma}. As the name suggests, we use supercomplexes to prove this lemma, which were introduced in~\cite{glock2023existence}. To align with their nomenclature, we will omit the word \emph{simplicial} when referring to a simplicial complex from now on. In this section, we state the necessary lemmas and then prove the Supercomplex Lemma assuming these lemmas.
		
		First we need some notation.
		Suppose $G$ is a complex and $e \subseteq V(G)$.
		Define $G(e)$ as the complex on the vertex set $V(G) \backslash e$ containing all sets $e' \subseteq V(G) \backslash e$ such that $e \cup e' \in G$. For $i\in\mathbb N_0$, we write $G^{(i)}$ for the $i$-graph on the same vertex set as $G$ consisting of all $i$-sets of~$G$. For a set $S$, we let $G[S]$ be the complex on $S\cap V(G)$ containing all $e\in G$ with $e\subs S$. For a $d$-graph $H$ and a complex $G$, let $G[H]$ be the complex on $V(G)$ with edge set $G[H]\coleq \!\left\{e\in G:\binom{e}{d}\subs H\right\}$ and $G-H\coleq G[G^{(d)}-H]$. Furthermore, we extend our notion of $F$-decompositions to complexes in the following way:
		\begin{definition}
			If $F$ is a $d$-graph with $\abs{V(F)}=f$ and $G$ a complex, then $G$ is called \emph{$F$-divisible} if $G^{(d)}$ is $F$-divisible. An \emph{$F$-packing} in $G$ is an $F$-packing $\F$ in $G^{(d)}$ where $V(F')\in G^{(f)}$ for all $F'\in\F$. If, additionally $\bigcup_{F'\in\F}F'=G^{(d)}$, then $\F$ is an \emph{$F$-decomposition}.
			
			A $d$-graph $F$ is \emph{weakly regular} if there are positive integers $s_0,\dots,s_{d-1}$ such that for all $i\in[d-1]_0$ and $S\in\binom{V(F)}{i}$, we have $\abs{F(S)}\in\{0,s_i\}$.
		\end{definition}
		
		The definition of an $(\eps,\xi,f,d)$-supercomplex is quite technical. Since we do not need it for now, we postpone its definition (\Cref{def: complex,def: supercomplex}) to Section~\ref{sec: Existence of Supercomplex}, where we prove the existence of our desired supercomplex. All we need for now is that a supercomplex is a complex with the following property, which was the main motivation for \cite{glock2023existence} to consider supercomplexes in the first place.
		
		\begin{theorem}[\cite{glock2023existence}, Theorem 4.7]\label{theo: weakly regular + super implies decomposition}
			For all $d \in \mathbb N$, the following is true.
			Let $1/n \ll\eps \ll \xi,1/f$ and $f > d$.
			Let $F$ be a weakly regular $d$-graph on $f$ vertices
			and let $G$ be an $F$-divisible $(\eps,\xi,f,d)$-supercomplex on $n$ vertices.
			Then $G$ has an $F$-decomposition.
		\end{theorem}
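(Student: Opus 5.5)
Since this theorem is imported from \cite{glock2023existence}, we would not reprove it from scratch in the paper; but if one had to establish it, the plan would follow the iterative absorption scheme that the present paper imitates for extra-tight trails, now carried out inside a supercomplex. The supercomplex axioms are exactly designed so that they are inherited by random subsets of vertices and by $G(e)$-links. So the first step is a vortex lemma in the spirit of \Cref{lem:There is a vortex}: we find a nested sequence $V(G)=U_0\supseteq U_1\supseteq\dots\supseteq U_\ell$ with $|U_i|=\lfloor\mu|U_{i-1}|\rfloor$ and $|U_\ell|=m$ constant. Each $G[U_i]$ should again be an $(\eps_i,\xi_i,f,d)$-supercomplex, with parameters degrading only mildly along the sequence.

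Second, before any covering, we set aside an absorber for the final set $U_\ell$. Because $|U_\ell|=m$ is a constant, there are only boundedly many possible $F$-divisible leftovers $L\subseteq G^{(d)}[U_\ell]$. For each such $L$ we want an exact $L$-absorber, that is, a $d$-graph $A_L$ in $G$ such that both $A_L$ and $A_L\cup L$ have $F$-decompositions (inside the complex, using $f$-sets of $G$). Using the supercomplex's extendability property, these finitely many absorbers can be embedded edge-disjointly and with tiny maximum degree.

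Third, we iterate a Cover Down lemma. Given $U_i$ and $U_{i+1}$, we cover all edges of $G[U_i]^{(d)}$ not inside $U_{i+1}$ by edge-disjoint copies of $F$, using only few edges inside $U_{i+1}$. The natural route is in two parts:
\begin{itemize}
\item a boosted approximate decomposition (nibble on the auxiliary hypergraph of $F$-copies weighted by a normal fractional decomposition, as in \Cref{lem: approximate decomposition with multiple paths});
\item an exact covering of the sparse leftover by $F$-copies that have all but $d-1$ vertices inside $U_{i+1}$. This is done by induction on uniformity: one decomposes the link complexes $G(S)$ for $S$ of size $<d$ into the corresponding link graphs of $F$, and weak regularity of $F$ is used so that these links behave uniformly.
\end{itemize}
$F$-divisibility of $G$ propagates so that the final leftover on $U_\ell$ is $F$-divisible. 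The reason is that removing copies of $F$ preserves divisibility, since $\Deg(F)_i$ divides every $|F(S)|$ when $F$ is weakly regular. The absorber then completes the decomposition.

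The main obstacle is building the absorbers for arbitrary $F$: one needs the transformer machinery, which converts $L$ into a canonical divisible graph via switching steps, and then absorbs the canonical one. This is where the genuinely new combinatorics of \cite{glock2023existence} sits, and we would take that construction as given.
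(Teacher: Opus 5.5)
Your outline is correct and follows the same route as the source. The paper gives no proof of this theorem; it imports it from \cite{glock2023existence}, whose argument is the iterative-absorption scheme you describe (a vortex of random subsets that remain supercomplexes, exact absorbers built via transformers for every divisible leftover on $U_\ell$, and a Cover Down step combining a boosted nibble with induction on the uniformity inside link complexes), and this is also the template the present paper adapts for extra-tight trails.

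Two small corrections:
\begin{itemize}
\item Removing copies of $F$ preserves divisibility for every $F$, simply because $\Deg(F)_i$ is a gcd, so weak regularity plays no role there. Its actual role is in the link induction: for weakly regular $F$, each link $F(S')$ is again weakly regular with $\Deg(F(S'))_j=\Deg(F)_{|S'|+j}$. Hence $F$-divisibility of $G$ passes to the link complexes $G(S)$, and the induction hypothesis applies.
\item The covering copies in the Cover Down step must be allowed up to $d$ vertices outside $U_{i+1}$, not $d-1$, since some leftover edges lie entirely outside $U_{i+1}$.
\end{itemize}
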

		
		We will use this theorem in the proof of the Supercomplex Lemma to partition $G'\cup L$ into extra-tight cycles. Note that extra-tight cycles are not weakly regular, but this will not be a problem because of a result from \cite{glock2023existence} (cf.~\Cref{lem: F decomposes weakly regular graph}) which allows us to pass from a graph which is not weakly regular to a graph which is. 
		However, as mentioned in the statement of the Supercomplex Lemma, we also need that each of the extra-tight cycles has at most $d$ vertices outside of~$U$. To guarantee this, we will find a ``skew'' supercomplex, where each $f$-set in the supercomplex has at most $d$ vertices outside of~$U$.
		
		\begin{definition}
			For $d<f$, a $d$-graph $G$ and $U\subs V(G)$, we define the \emph{skew $(G,f,d)$-complex} $\Gskew$ to be the complex generated by all $f$-sets $S$ where $G[S]$ is a clique and $\abs{U\cap S}\le d$. 
		\end{definition}
		
		If we can find a supercomplex $\Gsuper\subs\Gskew$ and partition $\Gsuper$ into extra-tight cycles of order $f$, then each extra-tight cycle will have at most $d$ vertices outside of $U$ since its vertices form an $f$-set in~$\Gskew$.
		
		In the following, we give a proof outline of the Supercomplex \Cref{lem: Supercomplex Lemma}, stating all the relevant lemmas. Afterwards, we will prove the Supercomplex Lemma assuming all the relevant lemmas. These will then be proven in the succeeding subsections. To shorten the statements, we introduce the following terminology:
		
		\begin{definition}
			A $d$-graph is called \emph{$(D,\rho)$-rich} in a set $U\subs V(G)$ if for all $A\subs\binom{V(G)}{d-1}$ with $\abs A \le D$, we have $\abs{\bigcap_{S\in A}G^{(d)}(S)\cap U}\ge\rho n$.
			A complex $G$ is called \emph{$(D,\rho)$-rich} in a set $U\subs V(G)$ if $G^{(d)}$ is $(D,\rho)$-rich in~$U$.
			
			An extra-tight cycle in a $d$-graph $G$ with $U\subs V(G)$ is \emph{valid} if it has at most $d$ vertices outside of~$U$.
		\end{definition}
		
		\paragraph{Proof outline for the Supercomplex \Cref{lem: Supercomplex Lemma}.}
		
		In a first step, we find a skew supercomplex $\Gsuper$ in the skew $(G,f,d)$-complex~$\Gskew$. This is given by the following lemma where the 1.1 can be replaced by any number bigger than 1:
		\begin{lemma}[Existence of skew supercomplex]
			\label{lem:skew_super_complex}
			Let $1/n\ll \eps \ll \xi, \rho\ll\eta\ll \alpha , \mu \ll 1/D,1/f \ll 1/d\le 1/2$ with $\alpha<\mu^{1.1}$. Let $G$ be a $d$-graph on $n$ vertices and $U\subseteq V(G)$ with $\abs U=\mu n$ and $\delta(G)\ge (1-\alpha)n$ and $\Gskew$ the skew $(G,f,d)$-complex. Then there is a subcomplex $\Gsuper\subs\Gskew$ that is an $(\eps,\xi,f,d)$-supercomplex, which is $(D,\rho)$-rich in~$U$, and where $\Delta\big(\Gsuper^{(d)}\big)\le \eta n$.
		\end{lemma}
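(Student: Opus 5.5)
We will obtain $\Gsuper$ as a random sparse subcomplex of $\Gskew$, chosen so that it inherits the supercomplex properties of $\Gskew$ in a scaled-down form. The definition of an $(\eps,\xi,f,d)$-supercomplex (postponed to \Cref{def: supercomplex}) asks for regularity, density, and ``extendability'' conditions. These say roughly that the $d$-sets are covered by $f$-sets in a near-uniform way, and that every bounded family of $(d-1)$- and $d$-sets has many common extensions to cliques of the complex. Our plan has two stages. First, check that $\Gskew$ itself is an $(\eps',\xi',f,d)$-supercomplex with $\xi'$ depending only on $\mu,f,d$. Second, sparsify it randomly, keeping the relative properties while reducing the maximum degree.

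\textbf{Step 1: $\Gskew$ is a supercomplex.} For a $d$-set $e$ with $j=|e\setminus U|\le d$ vertices outside $U$, count the $f$-cliques of $G$ containing $e$ that have at most $d-j$ further vertices outside $U$. Since $\delta(G)\ge(1-\alpha)n$ and $\alpha<\mu^{1.1}\ll\mu$, almost every $(f-d)$-set with the prescribed split between $U$ and $V(G)\setminus U$ completes $e$ to a clique. So this count is $(1\pm O(\alpha/\mu))$ times an explicit polynomial in $\mu n$ and $n$ that depends only on $j$. The catch is that this polynomial is not the same for different $j$, so $\Gskew$ is not regular in the sense required. Our first attempt at fixing this is to assign $f$-sets weights depending on their $U$-profile so that every $d$-set receives the same total weight. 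We would find these weights by solving a small linear system indexed by $j\in[d]_0$ and the number of outside vertices of the $f$-set; its solution should be positive because all coefficients are $\Theta_{\mu,f}(1)$ powers of $n$. We then use the weights as sampling probabilities. Extendability follows similarly: given a bounded set of roots, require the new vertices to lie in $U$. The condition $\alpha<\mu^{1.1}$ guarantees that a $\xi'$-fraction of such choices survive, with $\xi'\approx\mu^{f}/2$.

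\textbf{Step 2: random sparsification.} Pick each $f$-set of $\Gskew$ independently with probability $p\cdot w(S)$, where $w$ is the weight from Step 1 and $p=\eta/ 2$ (up to normalisation). Let $\Gsuper$ be the complex generated by the chosen $f$-sets. The expected number of chosen $f$-sets through a $d$-set is then roughly $p\,n^{f-d}$ times a common constant, so the uniformity required by the supercomplex definition holds up to Chernoff errors. Every $d$-set of $\Gsuper$ lies in some chosen $f$-set, and the degree of a $(d-1)$-set in $\Gsuper^{(d)}$ is at most $\eta n$ with high probability, since each candidate $d$-set survives with probability at most about $p\,\cdot O(1)$. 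Extendability and the $(D,\rho)$-richness in $U$ pass to $\Gsuper$ with parameters multiplied by powers of $p$ by Chernoff and union bounds, because all relevant counts are polynomial in $n$ and only polynomially many events are involved. This explains the hierarchy $\xi,\rho\ll\eta$.

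\textbf{Main obstacle.} We expect the main difficulty to be the regularity condition for a non-homogeneous complex. Vertices inside and outside $U$ play genuinely different roles, and the supercomplex definition also requires regularity and extendability of every link $G(b)$, not only of $\Gskew$ itself. We therefore need weights that equalise the $d$-set degrees simultaneously in every link. If a single weighting cannot do this, the fallback is to use the ``boost'' approach of \cite{glock2023existence}. There one first obtains a weakly regular fractional decomposition of $\Gskew^{(d)}$ into $f$-cliques via its density, and then samples $f$-sets according to it, which yields the $\eps$-regularity directly.
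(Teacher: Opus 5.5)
Your proposal has a genuine gap: Step 1 is false. $\Gskew$ is not an $(\eps',\xi',f,d)$-complex for any $\xi'\gg\eps'$, and no weighting of its $f$-sets can repair this.

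Take any $\psi\colon\Gskew^{(f)}\to[0,\infty)$ and put $W(e)=\sum_{Q\supseteq e}\psi(Q)$, where type means the number of vertices outside $U$. Since $f$-sets of $\Gskew$ have type at most $d$, only type-$d$ $f$-sets contain type-$d$ $d$-sets. Each of them contains exactly one type-$d$ $d$-set but $d(f-d)$ type-$(d-1)$ $d$-sets. Hence $\sum_{e\text{ of type }d-1}W(e)\ge d(f-d)\sum_{e\text{ of type }d}W(e)$. However, there are only about $\frac{d\mu}{1-\mu}$ times as many type-$(d-1)$ $d$-sets as type-$d$ ones. So the average load on type $d-1$ is at least roughly $\frac{(f-d)(1-\mu)}{\mu}\gg1$ times the average load on type $d$. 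Regularity must hold for every $d$-set (as $G[Y]^{(d)}=G^{(d)}$), so this forces $g=O(\eps')$.

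In your triangular system this shows up as $a_{d-1,d}/a_{dd}\approx(f-d)(1-\mu)/\mu$, which makes $g_{d-1}<0$. Positive coefficients do not give a positive solution. Your fallback, a fractional $f$-clique decomposition of $\Gskew^{(d)}$, is ruled out by the same computation.

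The missing idea is to rebalance the $d$-layer first. The paper keeps each type-$i$ $d$-edge independently with probability $w_i=(\mu/(3fd))^i$ and works in $\Gskew[X]$. Now type-$(d-1)$ $d$-sets outnumber type-$d$ ones by a factor of about $3fd^2>d(f-d)$, so the obstruction disappears. Concretely, $a_{i\ell}/a_{\ell\ell}\le(3d)^{i-\ell}$ for $i<\ell$, and \Cref{lem:linear algebra} yields positive type weights. These estimates have to be established in every link intersection $\bigcap_{b\in B}G'(b)$, which is \Cref{lem: Gskew has a subcomplex full of advantages}.

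Even then, type-only weights equalise the loads only up to a relative error $O(\alpha/\mu)$, which is your own estimate. Regularity needs error $\eps\ll\xi\ll\alpha$, and Chernoff bounds do not remove a bias in the expectation. The paper therefore first corrects each load to exactly $(g/2)n^{f-d}$ using the signed gadgets $\psi_{e,J}$ of \Cref{prop: gadget}. The upper bound on $|G^{(f+d)}_\ell(Q)|$ keeps the corrected weights at least $g_\ell/4$, and only then is $Y$ sampled.

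Step 2 also sparsifies at the wrong level. Suppose $\Gsuper$ is generated by $f$-sets chosen with constant probability $p\,w(S)$. Each $d$-set lies in $\Theta(n^{f-d})$ candidate $f$-sets, so it survives with probability $1-o(1)$, not $O(p)$, and $\Delta(\Gsuper^{(d)})$ is not reduced. Making $p$ small enough to thin the $d$-layer would destroy regularity and density. Moreover, a complex generated by $f$-sets has no $(f+d)$-sets at all, so it cannot be $(\xi,f+d,d)$-dense. Note too that the $f$-graph $Y$ is only a witness for the complex property and does not change $G^{(d)}$.

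The paper instead keeps each $d$-edge with probability $\eta/2$ and passes to the induced complex $G[H]$ (\Cref{cor: supercomplex sparsification}, via \cite[Corollary 5.19]{glock2023existence}). There every $f$-set and $(f+d)$-set survives with constant probability, so the supercomplex and richness properties persist with weaker constants while the maximum degree drops to $\eta n$.
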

		The $d$-layer $\Gsuper^{(d)}$ is exactly the $d$-graph $G'$ whose existence the Supercomplex Lemma guarantees. To prove its absorbing property, we then assume that we have a small edge set~$L$. We cover this edge-set with valid extra-tight cycles using the following lemma, where $G'$ takes the role of~$G$:
		
		\begin{lemma}\label{lem: Cover L}
			Let $1/n\ll\gamma\ll\gamma'\ll\eps,\xi,\rho\ll1/D\ll 1/f\ll  1/d\le 1/2$. Suppose $G$ is a $d$-graph on $n$ vertices that is $(D,\rho)$-rich on a set $U\subs V(G)$. Furthermore, let $L\subs \binom{V(G)}{d}$ be disjoint to $G$ with $\Delta(L)\le \gamma n$. Then there is a collection $\mathcal F$ of edge-disjoint copies of valid $\extratightcycle{f}{d}$ such that
			\begin{enumerate}
				\item\label{enum: Cover L, cover in L cup G} $L\subs\bigcup_{F\in\F} F\subs L\cup G$;
				\item\label{enum: Cover L, max deg} $\Delta(\bigcup_{F\in\F}F)\le \gamma'n$.
			\end{enumerate}
		\end{lemma}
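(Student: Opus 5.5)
We will cover $L$ edge by edge, embedding for each $e\in L$ one copy of a valid $\extratightcycle{f}{d}$ that contains $e$ and uses no other edge of $L$. All other edges of the copy are taken from $G$, and all its new vertices are placed in $U$. The tool is the rooted embedding \Cref{lem: rooted embedding with U}, applied to all $e\in L$ simultaneously.

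Fix $e\in L$ and enumerate its vertices as $(x_1,\dots,x_d)$. Let $T$ be the $d$-graph obtained from an extra-tight cycle $\extratightcycle{f}{d}$ with vertex sequence $(x_1,\dots,x_d,y_1,\dots,y_{f-d})$ by deleting the single edge $\{x_1,\dots,x_d\}$. Set $X\coloneqq\{x_1,\dots,x_d\}$. Then $T[X]$ is empty. Every vertex of $T$ lies in at most $d^2$ edges, so $T$ has degeneracy at most $d^2\le D$ rooted at $X$. Take $\Lambda_e$ to map $x_i$ to the $i$-th vertex of $e$.

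Because $\Delta(L)\le\gamma n$, the number of labellings is at most $|L|\le\gamma n^d$. Moreover, a set $S$ with $|S|\in[d-1]$ is rooted by $\Lambda_e$ only if $S\subseteq e$, so the number of such $e$ is at most $\binom{n}{d-1-|S|}\gamma n\cdot$const, which is at most $\gamma_1 n^{d-|S|}$. Hence \eqref{eq: few roots with U} holds with $\gamma_1$ a constant multiple of $\gamma$.

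Apply \Cref{lem: rooted embedding with U} with host graph $G$, with $U$, and with $\xi_{\ref{lem: rooted embedding with U}}\coloneqq\rho$; richness is exactly its hypothesis. This gives faithful embeddings $\phi_e$ with three properties:
\begin{itemize}
\item their images are pairwise edge-disjoint;
\item the maximum degree of their union is at most $\gamma' n$;
\item all $y_j$ are embedded into $U$.
\end{itemize}
Since each embedding is injective and lands in $G$, the set $F_e\coloneqq\phi_e(T)\cup\{e\}$ is a copy of $\extratightcycle{f}{d}$ lying in $G\cup\{e\}$. Its only vertices outside $U$ are among the $d$ vertices of $e$, so it is valid.

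Distinct copies are edge-disjoint. The $\phi_e(T)$ are pairwise edge-disjoint and lie in $G$, which is disjoint from $L$, and the added edges $e$ are distinct. This gives \ref{enum: Cover L, cover in L cup G}. For \ref{enum: Cover L, max deg}, the maximum degree is at most $\gamma'n+\Delta(L)\le 2\gamma' n$, and adjusting the constants finishes the proof.

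The one delicate point is the root-counting bound \eqref{eq: few roots with U}, in particular for small $|S|$. This bound needs the maximum-degree assumption $\Delta(L)\le\gamma n$ rather than just the size of $|L|$.
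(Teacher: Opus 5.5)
Your proof is correct and is essentially the paper's argument. Delete the edge $\{x_1,\dots,x_d\}$ from $\extratightcycle{f}{d}$, root the remaining graph at $X=\{x_1,\dots,x_d\}$, embed one copy per $e\in L$ with all new vertices in $U$ via \Cref{lem: rooted embedding with U} (with $(D,\rho)$-richness as the hypothesis and $\Delta(L)\le\gamma n$ bounding the roots), and add $e$ back. You are also slightly more careful than the paper in two places: you account for the extra $\Delta(L)\le\gamma n$ in the maximum-degree bound (ii), and you spell out why the resulting cycles are pairwise edge-disjoint.
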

		
		The previous lemma used a few edges of $G'$, but the majority of edges of $G'$ is still not covered by valid extra-tight cycles. The goal is to cover them now. For that, we have to make sure that it is still a reasonable supercomplex and that it is reasonably rich. The following lemma shows that this is the case:
		
		\begin{lemma}[(i) is shown in \cite{glock2023existence}, Lemma 5.9 (v)]\label{lem: super minus a little is super}
			Let $1/n\ll \gamma\ll \eps,\xi\ll 1/D,1/d,1/f$ with $f>d$. Let $G$ be a complex on $n$ vertices and let $S$ be an $d$-graph on $V(G)$ with $\Delta(S)\le \gamma n$.
			
			\begin{enumerate}
				\item\label{enum: super minus a little is super} If $G$ is an $(\eps,\xi,f,d)$-supercomplex, then $G-S$ is an $\big(2\eps, \xi/2, f,d\big)$-supercomplex.
				\item\label{enum: rich minus a little is rich} If $G$ is $(D,\rho)$-rich in $U\subs V(G)$, then $G-S$ is $(D,\rho/2)$-rich.
			\end{enumerate}
		\end{lemma}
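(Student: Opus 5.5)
Part (i) is exactly \cite[Lemma 5.9(v)]{glock2023existence}, so there is nothing new to prove there: we simply invoke that result with the same hierarchy $1/n\ll\gamma\ll\eps,\xi\ll 1/d,1/f$. The only care needed is to check that our notion of $G-S$ for a complex (namely $G[G^{(d)}-S]$, i.e.\ deleting every set of $G$ that contains a $d$-subset lying in $S$) coincides with theirs. It does, since we copied their notation verbatim.

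For part (ii), we argue directly from the definition of richness. Fix $A\subs\binom{V(G)}{d-1}$ with $\abs A\le D$. By richness of $G$, the set $X\coleq\bigcap_{S'\in A}G^{(d)}(S')\cap U$ has size at least $\rho n$. Since $(G-S)^{(d)}=G^{(d)}-S$, a vertex $x\in X$ fails to lie in $\bigcap_{S'\in A}(G-S)^{(d)}(S')$ only if $S'\cup\{x\}\in S$ for some $S'\in A$. For each fixed $S'\in A$, the number of such $x$ is at most $\deg_S(S')\le\Delta(S)\le\gamma n$. Hence at most $D\gamma n$ vertices are lost in total.

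This gives
\[\abs{\textstyle\bigcap_{S'\in A}(G-S)^{(d)}(S')\cap U}\ge \rho n-D\gamma n\ge \rho n/2,\]
provided $\gamma\le\rho/(2D)$. This is guaranteed by adding $\rho$ to the hierarchy, say $1/n\ll\gamma\ll\rho,1/D$, which is consistent with how the lemma is applied (there $\gamma\ll\rho$). There is no real obstacle here; the only point to watch is that the hierarchy in the statement should be read as allowing $\gamma\ll\rho$.
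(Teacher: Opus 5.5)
Your proposal is correct and follows the same approach as the paper. Part (i) is cited from \cite{glock2023existence}, and for part (ii) the paper also uses that each $(d-1)$-set in $A$ lies in at most $\gamma n$ edges of $S$, so at most $D\gamma n$ vertices of $U$ are lost and at least $(\rho-D\gamma)n$ remain. Your observation that getting $\rho n/2$ requires $\gamma\le\rho/(2D)$, i.e.\ $\gamma\ll\rho$, fills in a hierarchy condition that the paper leaves implicit.
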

		
		Thus, we can still use \Cref{theo: weakly regular + super implies decomposition} to partition the remaining edges of $G'$ into extra-tight cycles. However, \Cref{theo: weakly regular + super implies decomposition} still imposes two obstacles: It can only decompose a supercomplex into a graph $F$ which is weakly regular (and extra-tight cycles are not weakly regular), and the supercomplex must be $F$-divisible. The first obstacle is easily dealt with by the following already known result:
		
		\begin{lemma}[\cite{glock2023existence}, Lemma 9.2]\label{lem: F decomposes weakly regular graph}
			Let $2\le d< f$. Let $F$ be any $d$-graph on $f$ vertices. There exists a weakly regular $d$-graph $F^\ast$ on at most $2f\cdot f!$ vertices which has an $F$-decomposition.
		\end{lemma}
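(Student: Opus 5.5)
The plan is to take $F^\ast$ to be a complete multipartite-type $d$-graph, since these are automatically weakly regular, and then decompose it into \emph{transversal} copies of $F$. Concretely, fix parts $V_1,\dots,V_f$, each of size $m$, and let $K$ be the $d$-graph whose edges are exactly the $d$-sets meeting $d$ distinct parts. Then an $i$-set $S$ with $i\le d-1$ has $\abs{K(S)}=0$ if $S$ is not transversal, and otherwise $\abs{K(S)}=\binom{f-i}{d-i}m^{d-i}$, which depends only on $i$. So $K$ is weakly regular. Its order is $fm$, so to respect the bound of $2f\cdot f!$ vertices I aim for $m\le 2\cdot f!$.

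It remains to decompose $K$ into copies of $F$. Each copy will be placed transversally: vertex $i$ of $F$ goes into part $V_{\sigma(i)}$ for some $\sigma\in S_f$, at a coordinate $a_i\in\Z_m$. For a fixed $d$-set $T$ of parts, the $\sigma$ that cover $T$-type edges are those with $\sigma^{-1}(T)\in F$. There are exactly $N\coleq\abs{F}\,d!\,(f-d)!\le f!$ such $\sigma$, and this number is independent of $T$ by symmetry. The coordinate vectors $a$ will be drawn from MDS-type codes over $\Z_m$ (Reed--Solomon style, taking $m$ prime or working over a field, which needs $m\ge f$). Such codes have the property that the projection to any $d$ coordinates is either a bijection or an affine sublattice of controlled index. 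This makes every transversal edge of type $T$ covered the same number of times, with no dependence on the choice of $\sigma$.

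The key step, and the main obstacle, is removing multiplicities. Naively, every $T$-edge is covered exactly $N$ times, once for each admissible $\sigma$. My first attempt would fix this by giving each $\sigma$ its own coset of a smaller code $C'$ of dimension $d-1$ inside a code $C$ of dimension $d$. The projection of $C'$ onto any $d$ coordinates is a hyperplane, so for each $T$ the $N$ relevant cosets should partition $\Z_m^{T}$. This requires solving a combinatorial assignment: give each $\sigma$ a coset label so that, for every $T$, the labels of the $\sigma$ covering $T$ form a complete system of representatives. If a direct choice of $m$ with $m=N$ is impossible (for example, because $N$ is not a prime power), I would instead use several coset labels per $\sigma$ and scale $m$ to a suitable multiple. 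This keeps $m\le 2f!$, which is where the factor $2$ in the bound comes from.

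Once such an assignment exists, the copies form an $F$-decomposition of $K$, and $F^\ast\coleq K$ has the required properties. Verifying weak regularity and counting vertices is routine. The existence of a consistent coset assignment is the heart of the argument.
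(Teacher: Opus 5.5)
Your proposal has a genuine gap, and it sits in the step you call the heart of the argument. The consistent coset assignment does not exist in general, so the construction cannot be completed.

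To see why, write $\Sigma_T\coleq\{\sigma\in S_f:\sigma^{-1}(T)\in F\}$ and $\sigma(F)\coleq\{\sigma(e):e\in F\}$. Fix one coset $\ell$ of $C'$ in $C$, and let $Z_\ell$ be the (multi)set of placements $\sigma$ carrying the label $\ell$. You require that, for every $d$-set $T$ of parts, the labels of the $\sigma\in\Sigma_T$ form a complete system of representatives. This says precisely that $\abs{Z_\ell\cap\Sigma_T}=1$ for every $T$. Since $T\in\sigma(F)$ if and only if $\sigma\in\Sigma_T$, the family $\{\sigma(F):\sigma\in Z_\ell\}$ is then an $F$-decomposition of the clique $K_f^{(d)}$ on the $f$ parts.

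So your scheme can only work when $K_f^{(d)}$ is itself $F$-decomposable. In that case $F^\ast\coleq K_f^{(d)}$ already proves the lemma. Using several labels per $\sigma$ or a larger $m$ does not help, because the obstruction holds label by label.

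A concrete failure is $d=2$, $f=3$, and $F$ the path with edges $\{1,2\},\{2,3\}$. Here $N=4$ and $\Sigma_{\{a,b\}}=\{\sigma:\sigma(2)\in\{a,b\}\}$. Any two of the six permutations lie in a common $\Sigma_T$, so they would need six distinct labels out of four. With several labels per $\sigma$, let $B_c$ be the multiset of labels carried by the two $\sigma$ with $\sigma(2)=c$. You would need each of $B_1+B_2$, $B_1+B_3$, $B_2+B_3$ to contain every label exactly once. This forces $2B_c$ to do so, which is impossible. In short, $K_3$ has three edges and $2\nmid 3$.

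The underlying problem is that you insist on $F^\ast$ being the \emph{complete} $f$-partite graph while drawing all coordinate vectors from one code that projects bijectively onto every $d$-set of coordinates. Weak regularity does not require completeness. One way to salvage the idea is as follows. Take a linear MDS code $C'\subs\mathbb{F}_q^f$ of dimension $d-1$ with $q\ge f$. Let $\lambda_T$ be a nonzero functional on $\mathbb{F}_q^T$ vanishing on the projection of $C'$. For each $\sigma$, use the copies indexed by $b_\sigma+C'$ with $b_\sigma\in\mathbb{F}_q^f$ \emph{arbitrary}.

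These copies are edge-disjoint as soon as, for every $T$, the values $\lambda_T(b_\sigma|_T)$ with $\sigma\in\Sigma_T$ are pairwise distinct. A greedy choice achieves this once $q>\abs{F}N$. Every transversal $i$-set with $1\le i\le d-1$ then has degree $q^{d-1-i}\,i!\,(f-i)!\binom{d}{i}\abs{F}$, so the union is weakly regular.

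This gives an $F^\ast$ of bounded order, which is all the paper uses later. It does not obviously give one on at most $2f\cdot f!$ vertices. In your write-up, the bound $m\le 2\cdot f!$ after ``scaling'' is likewise asserted without justification.
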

		
		\begin{definition}
			Let $\Fast_f$ be the weakly regular $d$-graph which has an $\extratightcycle{f}{d}$-decomposition whose existence is given by the previous lemma.
		\end{definition}
		
		Thus, we can partition the supercomplex into copies of $\Fast_f$ and thus into copies of $\extratightcycle{f}{d}$ using \Cref{theo: weakly regular + super implies decomposition} as soon as it is $\Fast_f$-divisble. This is done with the following lemma:
		
		\begin{lemma}[Degree Fixing Lemma]\label{lem: fixing degree}
			Let $1/n\ll\gamma \ll\rho\ll1/D\ll 1/f\ll 1/d\le 1/2$. Let $G$ be a $d$-graph on $n$ vertices that is $(D,\rho$)-rich on a set $U\subs V(G)$. Furthermore, assume that $d^2\mid\deg_G(v)$ for all $v\in V(G)\backslash U$. Then there exist disjoint subsets $S$ and $H$ of $G$ such that
			\begin{enumerate}
				\item\label{enum: fixing degree, S} $S\subs\binom{U}{d}$;
				\item\label{enum: fixing degree, H} $H$ has an $\extratightcycle{f}{d}$-decomposition where each copy of $\extratightcycle{f}{d}$ is valid;
				\item\label{enum: fixing degree, G-S-H} $G-S-H$ is $\Fast_f$-divisible;
				\item\label{enum: fixing degree, max deg} $\Delta(S\cup H)\le \gamma n$.
			\end{enumerate}
		\end{lemma}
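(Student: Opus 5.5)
The plan is to fix the divisibility conditions level by level, from the top uniformity $i=d-1$ down to $i=0$, in the style of the degree-fixing arguments of \cite{glock2023existence}.

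\textbf{Setup.} Write $g_i\coleq\Deg(\Fast_f)_i$. Since $\Fast_f$ has an $\extratightcycle{f}{d}$-decomposition, and every vertex of $\extratightcycle{f}{d}$ has degree $d^2$ by \Cref{lem: Deg-vector of Fcycle and Fpath}, every vertex of $\Fast_f$ has degree a multiple of $d^2$. Hence $d^2\mid g_1$, and in general $g_i$ is a constant depending only on $f$ and $d$.

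There are two kinds of moves, with different constraints.
\begin{itemize}
\item Sets $T$ with $T\not\subs U$ can only be touched through $H$, so through valid copies of $\extratightcycle{f}{d}$. A valid copy is one all of whose vertices except at most $d$ lie in $U$.
\item Sets $T\subs U$ can be handled much more cheaply by deleting edges of $\binom{U}{d}$ into $S$.
\end{itemize}
All gadgets will be of constant size. They will be embedded edge-disjointly, with $U$-vertices chosen fresh, via \Cref{lem: rooted embedding with U}. Each $T$ needs only $O_f(1)$ gadgets rooted at it, so condition \eqref{eq: few roots with U} holds comfortably, and the lemma gives $\Delta(S\cup H)\le\gamma n$. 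The $(D,\rho)$-richness in $U$ supplies the common neighbourhoods needed for these embeddings.

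\textbf{Step 1: $i$-sets not inside $U$, for $i=d-1,\dots,2$.} For an $i$-set $T\not\subs U$, I want a small gadget that contributes to the degree of $T$ a prescribed residue $r_T$ mod $g_i$. Its effect on every other $j$-set ($j\ge i$) not contained in $U$ should vanish modulo $g_j$. Effects on sets inside $U$, and on lower levels, may be arbitrary, since those are fixed later.

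I would build the gadget as follows.
\begin{itemize}
\item Take a valid extra-tight cycle of order $f$ whose at most $d$ outside vertices are exactly $T\setminus U$ (placed consecutively). Its remaining vertices, including $T\cap U$, are placed in $U$.
\item Because $\Deg(\extratightcycle{f}{d})_i=1$ for $i\ge2$, I can combine a bounded number of such cycles, rooted at different positions relative to $T$. This lets me realise degree $\equiv r_T$ on $T$ while giving the other outside $i$-sets degrees $\equiv0$ mod $g_i$.
\item To get this combination I would mimic the pair-of-positions computation in the proof of \Cref{lem: Deg-vector of Fcycle and Fpath}, which produced the coprime degrees $(d-i+1)^2$ and $(d-i)^2+(d-i)+1$. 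Then I would take multiples of each position type, with the multiplicities chosen modulo $g_i$ to hit the target residue.
\end{itemize}
Processing top-down guarantees that lower-level disturbances are harmless.

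\textbf{Step 2: vertices and edge count.} For $i=1$, consider a vertex $v\notin U$. Its degree in $G$ minus what Step 1 removed is still divisible by $d^2$, because every extra-tight cycle removed gives each of its vertices degree $d^2$ or $0$. So $\deg(v)\equiv a d^2 \pmod{g_1}$ for some $a$. I then remove $g_1/d^2-a$ further valid cycles through $v$ with all other vertices in $U$, which fixes $v$.

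Next, all $i$-sets $T\subs U$ for $i\in[d-1]$ are fixed by choosing $S$. For each such $T$ I delete suitable edges of $\binom{U}{d}$ containing $T$. Doing this top-down, with the small "shifter" configurations inside $U$ as in \cite{glock2023existence}, corrects a single $T$ without disturbing the other $i$-sets. Finally, $i=0$ (the total edge count mod $g_0$) is fixed by deleting a bounded number of further edges inside $U$. These deletions only create changes at lower levels, which are then recorrected.

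\textbf{Main obstacle.} I expect the hardest part to be Step 1. One has to design a gadget made only of whole valid extra-tight cycles that changes the $i$-degree of one outside set $T$ by a prescribed residue. At the same time it must be neutral modulo $g_i$ on all other outside $i$-sets, which may share up to $d-1$ outside vertices with $T$. I would first try to exploit the freedom of choosing the cyclic position of $T\setminus U$ within the cycle, and if necessary attach "cancelling" cycles that differ only in $U$-vertices.
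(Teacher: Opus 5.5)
\textbf{There is a genuine gap: the direction of neutrality is backwards.} Adding or removing an edge $e$ changes $|G(T)|$ for every $T\subsetneq e$. So any correction at level $i$, whether by single edges or by whole cycles, disturbs the \emph{higher} levels, and your later steps undo what the earlier ones achieved.
\begin{itemize}
\item For $d\ge 3$, a valid cycle through $v\notin U$ with all other vertices in $U$ (your Step 2) changes the $2$-degrees of the pairs $\{v,u\}$, $u\in U$, by the cycle codegrees. By rotational symmetry these include the coprime values $(d-1)^2$ and $(d-2)^2+(d-2)+1$ from \Cref{lem: Deg-vector of Fcycle and Fpath}.
\item Those pairs are not inside $U$, so they were settled in Step 1. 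Moreover $\Deg(\Fast_f)_2>1$: $\Fast_f$ is weakly regular and contains pairs of codegree at least $(d-1)^2$. So Step 2 breaks level $2$.
\item Likewise, the final deletions that fix $|G|$ modulo $\Deg(\Fast_f)_0$ change the $j$-degrees of all subsets of the deleted edges, for every $j\in[d-1]$. They do not ``only create changes at lower levels''.
\end{itemize}
The machinery of \cite{glock2023existence} runs the opposite way. A level-$k$ adapter is $(b_0,\dots,b_{k-1})$-divisible, i.e.\ neutral on lower levels with arbitrary effect above. Accordingly SHIFT$_k$ is established for $k=0,1,\dots,d-1$ in increasing order.

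\textbf{The Step~1 gadget is never constructed.} You rightly identify it as the crux, but $\Deg(\extratightcycle{f}{d})_i=1$ does not supply it. That fact only controls gcds within one copy, not the many other outside $i$- and $j$-sets each cycle meets through its $U$-vertices. Nor does \cite{glock2023existence} provide single-set corrections. It provides only the alternating $\pm\Deg(F)_k$ patterns of shifters on the $2^k$ sets $\{x_i^{z_i}:i\in[k]\}$, which the balancer (Lemma~11.9 there) assembles into an arbitrary admissible correction.

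\textbf{How the paper proceeds instead.} It uses two steps.
\begin{enumerate}
\item \Cref{lem: Fixing 1-degree} makes $G-S$ $\extratightcycle{f}{d}$-divisible by deleting edges inside $U$ only. By \Cref{lem: Deg-vector of Fcycle and Fpath}, only $|G|$ modulo $fd$ and vertex degrees modulo $d^2$ matter, and vertices outside $U$ are already fine by hypothesis. The fix is bottom-up: first at most $fd$ edges for the edge count, then $\pm1$ moves between pairs of vertices via a gadget whose edge count is divisible by $fd$.
\item \Cref{lem: make divisible} upgrades this to $\Fast_f$-divisibility. It embeds the shifters via \Cref{lem: rooted embedding with U}, with all non-root vertices in $U$ and roots anywhere. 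Validity is automatic: each copy of $\extratightcycle{f}{d}$ in a shifter's decomposition meets each pair $\{x_i^0,x_i^1\}$ at most once, so it has at most $k\le d-1$ vertices outside $U$.
\end{enumerate}
So neither your outside/inside split nor any one-set gadget is needed.
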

		
		With all the relevant lemmas stated, we will now prove the Supercomplex Lemma, following the proof outline above.
		
		\begin{proof}[Proof of the Supercomplex \Cref{lem: Supercomplex Lemma}]
			Let $1/n\ll \gamma\ll\gamma'\ll  \eps\ll\xi,\rho\ll\eta\ll\alpha,\mu\ll 1/D,1/f\ll 1/d\le 1/2$ with $\alpha<\mu^{1.1}$. Let $\Gskew$ be the skew $(G,f,d)$-complex and $\Gsuper\subs\Gskew$ the $(\eps,\xi,f,d)$-supercomplex that is $(D,\rho)$-rich in $U$ and where $\Delta\big(\Gsuper^{(d)}\big)\le \mu n$. We claim that $G'\coleq \Gsuper^{(d)}$ is the $d$-graph whose existence is claimed by the Supercomplex \Cref{lem: Supercomplex Lemma}. Note that $G'$ is $(D,\rho)$-rich and that $\Delta(G')\le\mu n$ is already given. Thus, we just have to check the absorbing property of~$G'$.
			
			For this, let $L\subs G$ be any edge set with $\Delta(L)\le\gamma n$ such that $d^2\mid\deg_{G'\cup L}(v)$ for every $v\in V(G)\backslash U$. We have to find a packing of valid extra-tight cycles in $G'\cup L$, each of order at least $f$, covering all edges in $(G'\cup L)\backslash(G'\cup L)[U]$.
			
			Applying \Cref{lem: Cover L} with $G_{\ref{lem: Cover L}}\coleq G'$, we get a collection $\mathcal F$ of edge-disjoint copies of valid $\extratightcycle{f}{d}$ in $L\cup G$ with $L\subs\bigcup_{F\in\F}F$ and $\Delta\big(\bigcup_{F\in\F}F\big)\le \gamma' n$. Let $G''\coleq G'-\bigcup_{F\in\F}F=(G'\cup L)-\bigcup_{F\in \F}F$. By \Cref{lem: super minus a little is super}, $G''$ is still $(D,\rho/2)$-rich. Furthermore, we have $d^2\mid\deg_{G''}(v)$ for all $v\in V(G)\backslash U$ since $d^2\mid\deg_{G'\cup L}(v)$ and $d^2\mid\deg_{\bigcup_{F\in\F}F}(v)$. 
			
			Thus, we can apply \Cref{lem: fixing degree} with $G_{\ref{lem: fixing degree}}\coleq G''$, $\rho_{\ref{lem: fixing degree}}\coleq\rho/2$. We get disjoint subsets $S\subs G''[U]$ and $H\subs G''$ where $H$ has a decomposition into valid $\extratightcycle{f}{d}$, $G'''\coleq G''-S-H$ is $\Fast_f$-divisible and $\Delta(S\cup H)\le \gamma n$.
			
			Applying \Cref{lem: super minus a little is super} with $\gamma_{\ref{lem: super minus a little is super}}\coleq 2\gamma$, we get that $\Gsuper[G''']$ is an $(2\eps,\xi/2,f,d)$-supercomplex. Since it is also $\Fast_f$ divisible and $\Fast_f$ is weakly regular, we can apply \Cref{theo: weakly regular + super implies decomposition} with $G_{\ref{theo: weakly regular + super implies decomposition}}\coleq G[G''']$ $\eps_{\ref{theo: weakly regular + super implies decomposition}}\coleq 2\eps$, $\xi_{\ref{theo: weakly regular + super implies decomposition}}\coleq \xi/2$ and get an $\Fast_f$-decomposition. Since $\Fast_f$ is $\extratightcycle{f}{d}$-divisible, this gives us an $\extratightcycle{f}{d}$-decomposition of~$G''$. By definition of $\Gsuper[G''']\subs \Gskew$, every $\extratightcycle{f}{d}$ in that decomposition is valid. Hence, we are done.
		\end{proof}
		
		\subsection{Proof of Lemma~\ref{lem: Cover L} and Lemma~\ref{lem: super minus a little is super}}
		The only things left to prove are the Lemmas \ref{lem:skew_super_complex}, \ref{lem: Cover L}, \ref{lem: super minus a little is super} \ref{enum: rich minus a little is rich}, and~\ref{lem: fixing degree}. We start with the two shortest proofs which do not need extra preparations. 
		\begin{proof}[Proof of \Cref{lem: Cover L}]
			We want to apply \Cref{lem: rooted embedding with U}\towriteornottowrite{\ with $\alpha \coleq 1$}{}. Let $L=\{e_1,\dots, e_m\}$. Since $\Delta(L)\le\gamma n$, we have $m\le \gamma n^d$. Suppose $\{v_1,\dots,v_d\}\in\extratightcycle{f}{d}$. For each $j\in[m]$, let $T_j$ be $EC'\coleq \extratightcycle{f}{d}-\{v_1,\dots,v_d\}$ and $X_j=\{v_1,\dots,v_d\}$. Pick $D$ large enough such that $EC'$ has degeneracy at most $D$ rooted at~$X_j$. By assumption, $G$ fulfills the minimum degree in $U$ condition of \Cref{lem: rooted embedding with U}. Finally, let $\Lambda_j$ be the $G$-labelling of $(T_j,X_j)$ that sends the vertices of $\{v_1,\dots,v_d\}$ to the vertices of $e_j\in L$. By $\Delta(L)\le\gamma n$, each $S\subs V(G)$ with $\abs S\in[d-1]$ is contained in at most $\gamma n^{d-\abs S}$ edges of $L$ and, therefore, at most $\gamma n^{d-\abs S}$ many $\Lambda_j$ root~$S$.
			
			Thus, we can apply \Cref{lem: rooted embedding with U} with $\gamma_{\ref{lem: rooted embedding with U}}\coleq 2\gamma$ and get, for every $j\in[m]$ a $\Lambda_j$-faithful embedding of $(T_j,X_j)$ that are edge-disjoint, where $\phi_j(v)\in U$ for all $v\in V(T_j)\backslash X_j$ and where $\Delta(\bigcup_{j\in[m]}\phi_j(T_j))\le\gamma'n$. Let $\F\coleq\{\phi_j(T_j)\cup\{e_j\}:j\in[m]\}$. Then $\F$ is the collection of edge-disjoint copies of valid $\extratightcycle{f}{d}$ that fulfill \ref{enum: Cover L, cover in L cup G} and~\ref{enum: Cover L, max deg}. 
		\end{proof}
		\begin{proof}[Proof of \Cref{lem: super minus a little is super} \ref{enum: rich minus a little is rich}.] 
			Let $A\subs\binom{V(G)}{d-1}$ be an arbitrary set with $\abs A\le D$. Since $G$ is $(D,\rho)$-rich, we know that $\abs{\bigcap_{e\in A}G^{(d)}(e)\cap U}\ge \rho n$. Each $e\in A$ can be contained in at most $\gamma n$ sets of~$S$. Thus, $\abs{\bigcap_{e\in A}(G-S)^{(d)}(e)\cap U}\ge(\rho-D\gamma)n$.
		\end{proof}
		
		\subsection{Proof of the Degree Fixing Lemma~\ref{lem: fixing degree}}
		The goal of this section is to prove the Degree Fixing \Cref{lem: fixing degree}.
		
		To prove this, we will mainly use the following version of~\cite[Lemma 9.4]{glock2023existence}. On one hand, our version is simpler than \cite[Lemma 9.4]{glock2023existence} since it only considers the case $O=\emptyset$ and $G$ is assumed to be $F$-divisible. On the other hand, we have an additional condition that the $F$-copies that partition $H$ have at most $d$ vertices outside of a predefined vertex set~$U$. Note that the $D-D^\ast$ of \cite[Lemma 9.4]{glock2023existence} is an $H$ here and the $H\cup D^\ast$ of \cite[Lemma 9.4]{glock2023existence} is $G-H$ here. 
		
		\begin{lemma}\label{lem: make divisible}
			Let $1/n\ll \gamma \ll \xi,1/f^\ast$ and $d\in[f^\ast-1]$. Let $F$ be a $d$-graph. Let $F^\ast$ be a $d$-graph on $f^\ast$ vertices which has an $F$-decomposition.
			Let $G$ be an $F$-divisble $d$-graph on $n$ vertices and $U\subs V(G)$ such that for all $A\subs\binom{V(G)}{d-1}$ with $\abs A\le \binom{f^\ast-1}{d-1}$, we have $\abs{\bigcap_{S\in A}G(S)\cap U}\ge \xi n$. Then there exists a subgraph $H\subs G$ such that 
			\begin{enumerate}
				\item\label{enum: make divisible, Delta(H) small} $\Delta(H)\le \gamma^{-1}$;
				\item\label{enum: make divisible, G-H divisible} $G-H$ is $F^\ast$-divisible;
				\item\label{enum: make divisible, H has special F-decomposition} $H$ has an $F$-decomposition where each copy of $F$ has at most $d$ vertices outside of~$U$.
			\end{enumerate}
		\end{lemma}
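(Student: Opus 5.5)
The plan is to follow the proof of \cite[Lemma 9.4]{glock2023existence} almost verbatim. Every place where that proof embeds gadgets greedily or through rooted embeddings, we will instead use \Cref{lem: rooted embedding with U}, which forces all non-root vertices into $U$.

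\textbf{Divisibility gap.} $G$ is $F$-divisible but possibly not $F^\ast$-divisible. Since $F^\ast$ has an $F$-decomposition, $\Deg(F)_i$ divides $\Deg(F^\ast)_i$ for every $i\in[d-1]_0$. For each $S\in\binom{V(G)}{i}$ the residue $|G(S)| \bmod \Deg(F^\ast)_i$ therefore lies in the subgroup generated by $\Deg(F)_i$. We fix these residues level by level, from $i=d-1$ down to $i=0$. The residue of the $(d-1)$-sets is adjusted first. Changing $i$-degrees should not disturb $j$-degrees for $j>i$, so fixing lower levels must not break higher ones.

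\textbf{Gadgets.} For the adjustment we use the gadgets of \cite{glock2023existence}: ``degree shifters'', i.e.\ $d$-graphs $T$ with a root set $X$ that have an $F$-decomposition. Removing such a gadget from $G$ changes $|G(S)|$ by $\Deg(F)_i$ (modulo $\Deg(F^\ast)_i$) for one prescribed $i$-set $S\subs X$. For every other set, and in particular for every $j$-set with $j>i$, the degree changes by a multiple of $\Deg(F^\ast)_j$. Each such gadget is a union of boundedly many copies of $F$ glued along $X$, with all non-root vertices new. So each $F$-copy meets $V(G)\setminus U$ only inside the root set, which has size at most $d$ because the roots are a single $i$-set $S$ with $i\le d-1$ together with at most a few fixed attachment vertices. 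This construction has to be checked against the explicit shifters of \cite{glock2023existence}. If some shifter needs more than $d$ root vertices outside $U$, we would first move the shift into $U$: we attach a copy of $F$ that meets $V(G)\setminus U$ in exactly $S$ plus $d-|S|$ vertices of $U$, and then perform the remaining corrections entirely inside $U$. That this routing works is the main obstacle, and I expect it to be the heart of the proof.

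\textbf{Embedding.} For each $i$ and each $S\in\binom{V(G)}{i}$ we need a bounded number (at most $\Deg(F^\ast)_i$) of gadgets rooted at $S$. Each gadget has bounded size $t=t(f^\ast)$ and degeneracy at most $\binom{f^\ast-1}{d-1}$ rooted at $X$. The number of labellings rooting any fixed set is at most a constant, which is far below $\gamma n^{d-|S|}$. The richness hypothesis $\left|\bigcap_{S\in A}G(S)\cap U\right|\ge\xi n$ is exactly the condition \Cref{lem: rooted embedding with U} requires. Applying it level by level yields edge-disjoint embeddings with all new vertices in $U$. The union $H$ has an $F$-decomposition in which each copy has at most $d$ vertices outside $U$, which gives~\ref{enum: make divisible, H has special F-decomposition}. $G-H$ is $F^\ast$-divisible by the shifter property, which gives~\ref{enum: make divisible, G-H divisible}.

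\textbf{Degree bound.} Item~\ref{enum: make divisible, Delta(H) small} asks for $\Delta(H)\le\gamma^{-1}$, which is a constant bound rather than the $\gamma' n$ that \Cref{lem: rooted embedding with U} gives. To get it, we embed gadgets for $(d-1)$-sets at the top level only. Since each $(d-1)$-set $S$ receives $O(1)$ gadgets, the new vertices of different gadgets are chosen greedily so that no vertex of $U$ is reused too often in a common $(d-1)$-set, as in \cite{glock2023existence}. A simple greedy embedding that avoids any $(d-1)$-set already of degree $\gamma^{-1}/2$ in $H$ works, because the excluded sets block only $o(n)$ choices.
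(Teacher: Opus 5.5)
Your overall plan, rerunning the proof of \cite[Lemma 9.4]{glock2023existence} with \Cref{lem: rooted embedding with U} in place of the usual rooted embedding lemma so that all non-root vertices land in $U$, is what the paper does. However, the gadget mechanism you describe does not exist, and the step you call the heart of the proof is settled by a property you do not use.

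A gadget that shifts the degree of a \emph{single} $i$-set by $\Deg(F)_i$ modulo $\Deg(F^\ast)_i$, while all other sets keep their residues, violates the handshake identities. Take $d=2$, $F=K_2$, $F^\ast=C_4$. Then $\Deg(F)=(1,1)$ and $\Deg(F^\ast)=(4,2)$, and your level-$1$ gadget would be a graph with exactly one vertex of odd degree. Your top-down order also fails. For $F^\ast=K_4$ we have $\Deg(F^\ast)=(6,3)$, and a gadget with all vertex degrees divisible by $3$ has $2|E|\equiv 0\pmod 3$. So it cannot change the number of edges by $1$ modulo $6$.

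The paper instead uses, for $k\in[d-1]$, the shifters $T_k$ of \cite[Lemma 11.2]{glock2023existence}. These have $2k$ roots $x_1^0,\dots,x_k^0,x_1^1,\dots,x_k^1$. Their $k$-degrees are $\equiv(-1)^{\sum_i z_i}\Deg(F)_k \pmod{\Deg(F^\ast)_k}$ on the $2^k$ sets $\{x_i^{z_i}:i\in[k]\}$ and $\equiv 0$ on all other $k$-sets. Every $j$-degree with $j<k$ stays divisible by $\Deg(F^\ast)_j$, while higher levels may be disturbed. The shifters are placed along balancers $\Omega_k$, which are multisets of $2k$-tuples from all of $V(G)$ with $\Delta(\Omega_k)\le 2^k(k!)^2\Deg(F^\ast)_k$ \cite[Lemma 11.9]{glock2023existence}. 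All copies $T_{\mathbf v}$, $\mathbf v\in\Omega$, together with $\Deg(F^\ast)_0/\Deg(F)_0$ copies of $F$ for level $0$, are embedded at once as $D$. The levels are then fixed \emph{bottom-up}, $k=0,1,\dots,d-1$, by choosing $D^\ast\subseteq D$ such that $(G-D)\cup D^\ast$ is $F^\ast$-divisible. Finally $H=D-D^\ast$.

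With these shifters the root set has up to $2d-2$ vertices, which exceeds $d$ for $d\ge 3$, and the roots may all lie outside $U$. So your count ``one $i$-set plus a few attachment vertices'' is not available. The missing observation is property (SH1) of shifters: the $F$-decomposition of $T_k$ can be chosen so that every copy of $F$ meets each pair $\{x_i^0,x_i^1\}$ in at most one vertex. Hence every copy contains at most $k\le d-1$ roots. Since \Cref{lem: rooted embedding with U} puts all other vertices into $U$, every copy of $F$ in $H$ has at most $d-1$ vertices outside $U$. No rerouting through $U$ is needed, and the rerouting you sketch is left unproved.

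Your greedy justification of $\Delta(H)\le\gamma^{-1}$ is also incomplete. How many choices of a new vertex $x$ make $S''\cup\{x\}$ heavy depends on the degree in $H$ of the already embedded $(d-2)$-set $S''$, and so on down to single vertices. So ``only $o(n)$ choices are blocked'' requires the multi-level $\BAD_i$ bookkeeping from the proof of \Cref{lem: rooted embedding with U}. The paper gets the constant bound directly from that lemma in its scaled form, with a factor $\alpha$ that may depend on $n$ as in \cite[Lemma 5.20]{glock2023existence}, taking $\alpha=\gamma^{-2}/n$. At most $n^{k-|S|}\Delta(\Omega_k)$ tuples of $\Omega_k$ root a given set $S$, so the rooting hypothesis holds. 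The resulting degree bound $\alpha\gamma' n$ then does not depend on $n$.
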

		Since the proof of this lemma is very similar to the proof of \cite[Lemma 9.4]{glock2023existence}, \towriteornottowrite{we postpone the writeup of the proof to the appendix}{we omit it here}.
		
		Broadly speaking, the lemma enables us to turn an $\extratightcycle{f}{d}$-divisible supercomplex into an $\Fast_f$-divisible supercomplex. Thus, we only have to find a way to turn $G$ into an $\extratightcycle{f}{d}$-divisible supercomplex. 
		By \Cref{lem: Deg-vector of Fcycle and Fpath}, we know that we only need to adjust the number of edges and the 1-degrees of $G$ in order to make it $\extratightcycle{f}{d}$-divisible. The following lemma shows how this can be done by only deleting edges inside~$G[U]$.
		
		\begin{lemma}\label{lem: Fixing 1-degree}
			Let $1/n \ll \gamma\ll\rho\ll  1/D\ll 1/f, 1/d\le 1/2$. Let $G$ be a $d$-graph on $n$ vertices that is $(D,\rho)$-rich on a set $U\subs V(G)$. Furthermore, assume that $d^2\mid\deg_G(v)$ for all $v\in V(G)\backslash U$.
			
			Then there is a subset $S\subs G[U]$ with $\Delta(S)\le\gamma n$ such that $d^2\mid\deg_{G-S}(v)$ holds for all $v\in V(G)$ and where $(fd)\mid |G-S|$.
		\end{lemma}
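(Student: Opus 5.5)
We build $S$ inside $G[U]$ from small gadgets, in two stages. First we fix the $1$-degrees modulo $d^2$ for all vertices of $U$, and then we fix the total number of edges modulo $fd$. Vertices outside $U$ already satisfy $d^2\mid\deg_G(v)$, and since $S\subs G[U]$ their degrees are never touched. All gadgets will be chosen greedily, using the $(D,\rho)$-richness of $G$ in $U$, and each gadget has constant size. Each vertex lies in only $O_d(1)$ gadgets, except that one designated vertex lies in $O(n)$ edges of $S$ at most. This keeps $\Delta(S)$ small.

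\textbf{Stage 1.} Order $U=\{x_1,\dots,x_{\mu n}\}$ and process $x_1,\dots,x_{|U|-1}$ in turn. The basic gadget is a pair of $d$-edges $e=\{x_i\}\cup Q$ and $e'=\{x_{i+1}\}\cup Q$ with a common $(d-1)$-set $Q\subs U$. Deleting $e$ from $S$ and adding $e'$ moves one unit of degree from $x_i$ to $x_{i+1}$ and changes the degrees in $Q$ by $-1$ and $+1$, which net to zero. Since we only delete edges, we use the following variant instead. Pick $a\in[d^2-1]_0$ with $\deg(x_i)-a\equiv 0\pmod{d^2}$. Choose $a$ sets $Q_1,\dots,Q_a$ and $(d-1)(d^2-1)\cdot a$ further edges so that, together with edges through $x_{i+1}$, every vertex of $\bigcup Q_j$ loses a multiple of $d^2$. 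A cleaner way to do this is to delete, for each $j\le a$, a copy of a fixed constant-size $d$-graph $K$ rooted at $x_i,x_{i+1}$ in which $x_i$ has degree $\equiv1$, $x_{i+1}$ has degree $\equiv-1$, and every other vertex has degree $\equiv 0\pmod{d^2}$. Such a $K$ exists, for example as a union of $d^2$ shifted copies of an extra-tight path whose end degrees are computed via \Cref{obs: degree in trails}, or more simply by taking a large enough $d$-partite-type construction.

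The remaining vertices of each copy are embedded into $U$ disjointly from everything used so far. This is possible because the number of forbidden vertices is $O(n)$, which is too many for pure disjointness, so we only require that the new vertices avoid previously heavily-used vertices, exactly as in the proof of the Claim in \Cref{theo: precise value of H(n d)}. Richness with $|A|\le D$ ensures that at least $\rho n/2$ candidates remain. Each step fixes $x_i$ and only shifts the error to $x_{i+1}$.

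At the last vertex $x_{|U|}$, the handshake lemma gives $\sum_v\deg_{G-S}(v)=d|G-S|$. We now need this sum to be $\equiv 0\pmod{d^2}$. To arrange this, before Stage 1 we delete a few additional edges inside $U$: at most $d-1$ disjoint edges, analogous to the set $M$ in the proof of \Cref{theo: precise value of H(n d)}, so that $|G-S|\equiv0\pmod d$. Once every other vertex is fixed, the last vertex is then automatically fixed as well.

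\textbf{Stage 2.} Now $|G-S|$ is divisible by $d$ and all degrees are divisible by $d^2$. We want $fd\mid |G-S|$. Since $d\mid |G-S|$ already, it suffices to delete $k\cdot d$ further edges, with $k\in[f-1]_0$ chosen appropriately, in a way that preserves degrees mod $d^2$. We delete $k$ copies of the extra-tight cycle $\extratightcycle{d+4}{d}$, shifted if necessary. By \Cref{obs: degree in trails}, each such copy has $(d+4)d$ edges and all its degrees equal $d^2$. Since $\gcd(d+4,f)$ may not be $1$, we instead use cycles $\extratightcycle{f+1}{d}$, which have $(f+1)d\equiv d\pmod{fd}$ edges. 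Deleting $k$ vertex-disjoint copies in $U$, found greedily via richness, then finishes the adjustment.

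The main obstacle is the maximum degree bound $\Delta(S)\le\gamma n$. A chain of $|U|$ gadgets must not concentrate on any $(d-1)$-set. We handle this by choosing the non-root vertices of each gadget among vertices not yet used more than a constant number of times, which costs at most a $\gamma$-fraction of $U$. Since $\gamma\ll\rho$, richness still provides many valid choices at every step.
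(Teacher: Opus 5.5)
Your proposal is correct and is essentially the paper's argument: a constant-size gadget is deleted with bounded multiplicity and bounded vertex usage along root pairs in $U$, and the handshake lemma fixes the last vertex. The gadget has degrees $\equiv 1$ and $\equiv -1 \pmod{d^2}$ at its two roots and $\equiv 0$ at all other vertices. There is one structural difference. The paper secures $fd\mid|G-S|$ at the start and builds its gadget with $fd\mid|H|$. You instead secure only $d\mid|G-S|$ first and finish with $d^2$-regular copies of $\extratightcycle{f+1}{d}$. This works because any such $K$ has $d|K|=\sum_v\deg_K(v)\equiv 1-1\equiv 0\pmod{d^2}$, so Stage~1 preserves $d\mid|G-S|$; state this explicitly, since your last-vertex argument depends on it. The one step you leave unproved is the existence of $K$. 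Get it as the paper does: take any $d^2$-regular $d$-graph (e.g.\ $\extratightcycle{k}{d}$ with $k\ge d+3$) and replace one edge $\{v_1,\dots,v_d\}$ by $\{v_0,v_2,\dots,v_d\}$ for a new vertex $v_0$. This gives $\deg(v_0)=1$ and $\deg(v_1)=d^2-1$. It also guarantees that $\{v_0,v_1\}$ is not an edge when $d=2$, so repeated copies on the same root pair can be edge-disjoint.
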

		\begin{proof}
			
			Take any $d^2$-regular $d$-graph whose number of edges is divisible by $fd$ and let $\{v_1,\dots,v_d\}$ be one of its edges. Add a new vertex $v_0$ and replace $\{v_1,\dots,v_d\}$ by $\{v_0,v_2,v_3,\dots,v_d\}$. Let $H$ be the resulting graph. In this graph, all vertices have a degree that is divisible by $d^2$ except for $v_0$ whose degree is $1$ and $v_1$ whose degree is congruent to $-1$ mod~$d^2$. By embedding a copy of $H$ into $G$ and putting its edges into $S$, we can decrease the degree of the image of $v_0$ in $G-S$ by 1 mod $d^2$ and increase the degree of the image of $v_1$ in $G-S$ by $1$ mod~$d^2$. In this way, we can adjust the degree of every vertex in~$U$ like we did it in the proof of \Cref{theo: precise value of H(n d)}.
			
			First, we take up to $fd$ arbitrary edges of $G[U]$ into $S$ such that $(fd)\mid |G-S|$. In particular, the $d$-uniform Handshake Lemma implies now that the sum of vertex degrees in $G-S$ is divisible by~$d^2$. After this step, we want to adjust the degree by only moving copies of $H$ into~$S$. In the following set $T$, we collect all pairs $(u,w)$ where we want to embed a copy of $H$ in such a way that $v_0$ is mapped to $u$ and $v_1$ is mapped to~$w$. If we move this copy of $H$ into $S$, then the degree of $u$ in $G-S$ will decrease by $1\pmod{d^2}$ whereas the degree of $w$ will increase by $1\pmod{d^2}$.
			
			\textbf{Claim.} There is a digraph $T$ on the vertex set $V(G)$ such that the following conditions hold:
			\begin{enumerate}
				\item for all $v\in V(G)$, $\deg_{G-S}(v)-\first_T(v)+\second_T(v) \equiv 0\pmod{d^2}$;
				\item for all $v\in V(G)$, $\first_T(v)+\second_T(v)\le 5d^2$.
			\end{enumerate}
			\begin{claimproof}
				The claim is proved in the same way as the claim in the proof of \Cref{theo: precise value of H(n d)}.
			\end{claimproof}
			
			Let $T$ be the digraph given by the claim with an arbitrary edge ordering. We will apply \Cref{lem: rooted embedding with U} with \towriteornottowrite{$\alpha \coleq 1$, }{}$m\coleq \abs T\le 5d^2 n$, $\gamma'_{\ref{lem: rooted embedding with U}}\coleq \gamma/2$, $\gamma_{\ref{lem: rooted embedding with U}}$ suitable small, and $G_{\ref{lem: rooted embedding with U}}\coleq G-S$. If $(u,w)$ is the $j$-th edge of $T$, we have $T_j\coleq H$, $X_j=\{v_0,v_1\}$, $\Lambda_j(v_0)=u$, and $\Lambda(v_1)=w$. One can easily see that all conditions of \Cref{lem: rooted embedding with U} are satisfied.
			
			Thus, we get $\Lambda_j$-faithful embeddings $\phi_j$ of $(T_j,X_j)$ into $U$ such that the images are pairwise edge-disjoint and also edge-disjoint to the at most $3d^2$ edges in~$S$. Furthermore, we have $\Delta(\bigcup_{j\in[m]}\phi_j(T_j))\le\gamma n/2$. By adding the edges of all the images to $S$, we get that all 1-degrees in $G-S$ are divisible by $d^2$, $(fd)\mid |G-S|$, and $\Delta(S)\le\gamma n$.
		\end{proof}
		
		Now, we are ready to prove the Degree Fixing Lemma. Here, we only need to apply the previous lemma to make $G$ $\extratightcycle{f}{d}$-divisible and then \Cref{lem: make divisible} to make that graph $\Fast_f$-divisible.
		
		\begin{proof}[Proof of the Degree Fixing \Cref{lem: fixing degree}]
			By \Cref{lem: Fixing 1-degree}, there is a subset $S\subs G[U]$ with $\Delta(S)\le \gamma n$ such that for $G'\coleq G-S$, we have $d^2\mid\deg_{G'}(v)$ holds for all $v\in V(G)$, and where $(fd)\mid |G'|$. This, together with \Cref{lem: Deg-vector of Fcycle and Fpath}, shows that $G'$ is $\extratightcycle{f}{d}$-divisible. By \Cref{lem: super minus a little is super}, we get that $G'$ is $(D,\rho/2)$-rich.
			
			Thus, we can apply \Cref{lem: make divisible} with $F_{\ref{lem: make divisible}}\coleq \extratightcycle{f}{d}$, $F^\ast_{\ref{lem: make divisible}}\coleq \Fast_f$, $\xi_{\ref{lem: make divisible}}\coleq\rho/2$, and $G_{\ref{lem: make divisible}}\coleq G'$ using that $G'$ is $(D,\rho/2)$-rich. We get a subgraph $H\subs G'^{(d)}$ such that $\Delta(H)\le \gamma^{-1}$, $G'-H$ is $\Fast_f$-divisible, and $H$ has an $\extratightcycle{f}{d}$-decomposition where each copy of $F$ has at most $d$ vertices outside of~$U$.
		\end{proof}
		
		\subsection{Proof of the Existence of a Skew Supercomplex (Lemma~\ref{lem:skew_super_complex})}\label{sec: Existence of Supercomplex}
		The final step is to prove \Cref{lem:skew_super_complex}, i.e.\@ the existence of a $(D,\rho)$-rich, $(\eps,\xi,f,d)$-supercomplex $\Gsuper\subs \Gskew$ with $\Delta\big(\Gsuper^{(d)}\big)\le\eta n$. First, we show that we can ignore the maximum degree condition because we get it for free in the end. This is done by the following previously known result: 
		
		\begin{lemma}[\cite{glock2023existence}, Corollary 5.19]\label{lem: supercomplex sparsification}
			Let $1/n\ll \eps,\xi\ll \eta,1/f$ and $d\in[f-1]$. Suppose that $G$ is an $(\eps,\xi,f,d)$-supercomplex on $n$ vertices and that $H\subs G^{(d)}$ is a random subgraph obtained by including every edge of $G^{(d)}$ independently with probability~$\eta$. Then with high probability, $G[H]$ is a $(4\eps,\xi^2,f,d)$-supercomplex.
		\end{lemma}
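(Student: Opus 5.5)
This is a previously known result, but to see why it holds I would verify, one by one, the defining conditions of an $(\eps,\xi,f,d)$-supercomplex (\Cref{def: supercomplex}) for the random subcomplex $G[H]$. Recall that these conditions are imposed not only on $G$ but on every link complex $G(b)$ with $b\in\binom{V(G)}{i}$, $i\in[d]_0$. They are of three types. First, \emph{regularity}: there is a weighting $y$ of the $f$-sets of $G(b)$ under which every $d$-edge receives total weight $1\pm\eps$. Second, \emph{density}: every $(d-i)$-set of the link lies in at least $\xi$-proportionally many $f$-cliques. Third, \emph{extendability}: every small family of $(d-i)$-sets has many common extensions to cliques. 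The plan is to show that each condition survives random edge-sampling with probability $\eta$, with parameters degraded only by a factor polynomial in $\eta$. These factors are then absorbed by passing from $\xi$ to $\xi^2$ (using $\xi\ll\eta$) and from $\eps$ to $4\eps$.

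\textbf{Step 1: regularity.} An $f$-set $Q\in G^{(f)}$ survives in $G[H]$ exactly when all $\binom fd$ of its $d$-subsets are sampled. This happens with probability $p\coleq\eta^{\binom fd}$. For an edge $e$, the weight $\sum_{Q\supseteq e}y(Q)$ becomes a sum over surviving $Q$, conditioned on $e\in H$. I would rescale the weighting to $y'\coleq y/\eta^{\binom fd-1}$, so that its conditional expectation is still $1\pm\eps$. Concentration is where the work lies. The relevant random variable is a function of the independent edge indicators in $G^{(d)}$, and each other edge $e'$ affects only those $Q$ containing $e\cup e'$. Since every $y(Q)=O(n^{d-f})$, McDiarmid's inequality (\Cref{lem: McDiarmid}) or a Kim--Vu-type polynomial concentration applies and gives deviations $o(1)$ with failure probability $e^{-n^{c}}$. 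A union bound over all $O(n^d)$ edges, and over all links $G(b)$, then gives $(1\pm 2\eps)$-regularity with high probability. There is slack up to $4\eps$.

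\textbf{Step 2: density and extendability.} Each such count is a count of copies of a fixed bounded configuration rooted at a given tuple. In $G$ it is at least $\xi n^{k}$. Each copy survives with probability $\eta^{c}$, where $c$ is bounded in terms of $f$ and $d$. So the expected count is at least $\eta^{c}\xi n^{k}$, which exceeds $2\xi^2n^{k}$ because $\xi\ll\eta$. These counts are again low-degree polynomials in the independent edge indicators, and a bounded-differences argument gives concentration up to a factor $1/2$. Taking a union bound over the polynomially many roots finishes this step.

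The step I expect to be the main obstacle is the concentration in Step 1 at the level of the links $G(b)$. There $b$ fixes up to $d$ vertices, the counts become smaller, and the Lipschitz constants must be controlled carefully so that the error remains $o(\eps)$ rather than merely $O(\eps)$. I would handle this by exposing the edges containing $b$ separately and applying the concentration inequality conditionally on that exposure. This is the content of \cite{glock2023existence}, and it is why the result can be imported there as Corollary 5.19.
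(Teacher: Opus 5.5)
The paper does not prove this statement; it imports it from \cite{glock2023existence}. Your strategy is the natural one: keep the witnesses from $G$, compute the survival probability of each counted configuration, concentrate with \Cref{lem: McDiarmid}, and take a union bound. I see no fatal gap. However, you are checking a misremembered definition, and the point that genuinely needs an argument in the links is not the one you single out.

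\textbf{The definitions.} By \Cref{def: supercomplex}, you must treat every intersection $\bigcap_{b\in B}G(b)$ with $B\subs G^{(h)}$ and $1\le|B|\le 2^h$, not just single links $G(b)$. For $G[H]$ the range is $B\subs G[H]^{(h)}$. This equals $G^{(h)}$ for $h<d$ and $H$ for $h=d$, so for $h=d$ you condition on $B\subs H$ and take a union bound over all $B\subs G^{(d)}$.

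An $(\eps,\xi,f,d)$-complex is witnessed by an $f$-graph $Y$ such that $G[Y]$ is $(\eps,g,f,d)$-regular for some $g\ge\xi$, $(\xi,f+d,d)$-dense and $(\xi,f,d)$-extendable (\Cref{def: complex}). There is no normalised weighting to rescale. Instead, keep $Y$ and pass to $G[H][Y]$. Then $g$ drops to $\eta^{c}g\ge\xi^2$ (using $\xi\ll\eta,1/f$), while the additive error becomes $\eta^{c}\eps+o(1)\le 2\eps$.

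Density must be checked for $(f+d)$-sets all of whose $f$-subsets lie in $Y$, not for cliques. Extendability must be checked for the same set $X$ and for all $d$-sets $e\subs X$, whether or not they are edges, counting those $Q$ with $\binom{Q\cup e}{d}\setminus\{e\}\subs H$. Stated this way, your Step 2 goes through unchanged.

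\textbf{The links.} In $\bigcap_{b\in B}G[H](b)$, an $(f-h)$-set $Q'\supseteq e'$ survives iff every $d$-subset of $Q'\cup b$ lies in $H$ for every $b\in B$. This includes the $d$-subsets meeting $b$ in fewer than $h$ vertices. So this link is not the link of $G$ sparsified on its own $(d-h)$-layer, and what must be argued is that survival is uniform (regularity needs a single $g$).

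Uniformity holds because $Q'$ avoids $\bigcup B$. The number of these $d$-sets is $\sum_T\binom{f-h}{d-|T|}$, summed over all sets $T$ contained in some $b\in B$, and this depends only on $B$. Hence, given $e'\cup b\in H$ for all $b\in B$, every $Q'$ survives with the same probability $\eta^{c_B}$, where $c_B$ is this number minus $|B|$.

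The concentration, by contrast, is no harder than for $h=0$:
\begin{itemize}
\item The regularity counts are still of order $n^{f-d}$, since $(f-h)-(d-h)=f-d$; they do not become smaller.
\item Every relevant $d$-set other than the conditioned $e'\cup b$ has $j\ge 1$ vertices outside $e'\cup\bigcup B$. It affects at most $n^{f-d-j}$ of the sets $Q'$, and there are $O(n^j)$ such $d$-sets.
\item Hence the squared Lipschitz constants sum to $O(n^{2(f-d)-1})$. A single application of McDiarmid gives deviation $n^{f-d-1/3}$ with failure probability $\exp(-\Omega(n^{1/3}))$.
\end{itemize}
No separate exposure of the edges containing $b$ is needed. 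Since $1/n\ll\eps$, there is also no $o(\eps)$ versus $O(\eps)$ issue.
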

		
		\begin{corollary}\label{cor: supercomplex sparsification}
			Let $1/n\ll \eps,\xi,\rho\ll \eta,1/D,1/f\le 1/d$. Suppose that $G$ is an $(\eps,\xi,f,d)$-supercomplex on $n$ vertices which is $(D,\rho)$-rich in $U\subs V(G)$. Then there is a subcomplex $G'\subs G$ that is a $(4\eps,\xi^2,f,d)$-supercomplex, $(D,\rho^2)$-rich in $U$ and where $\Delta(G'^{(d)})\le \eta n$.
		\end{corollary}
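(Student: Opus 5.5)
We apply \Cref{lem: supercomplex sparsification} to $G$ with the same $\eta$. Let $H\subs G^{(d)}$ be the random subgraph obtained by keeping each edge of $G^{(d)}$ independently with probability $\eta/2$, and set $G'\coleq G[H]$. By \Cref{lem: supercomplex sparsification} (applied with $\eta/2$ in place of $\eta$, which is allowed since $\eps,\xi\ll\eta$), with high probability $G'$ is a $(4\eps,\xi^2,f,d)$-supercomplex. It then suffices to show that two further properties also hold with high probability: the maximum degree bound and richness. A union bound then gives a single outcome of $H$ satisfying all three properties.

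For the maximum degree, fix $S\in\binom{V(G)}{d-1}$. Then $|G'^{(d)}(S)|=|H(S)|$ is a sum of at most $n$ independent indicators with mean at most $\eta n/2$. By \Cref{theo: Chernoff} (padding with extra independent variables if the mean is smaller, or bounding directly), $\P[|H(S)|>\eta n]\le \exp(-\Omega(\eta n))$. A union bound over the at most $n^{d-1}$ choices of $S$ gives $\Delta(G'^{(d)})\le\eta n$ with high probability.

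For richness, fix $A\subs\binom{V(G)}{d-1}$ with $|A|\le D$. By assumption, $W\coleq\bigcap_{S\in A}G^{(d)}(S)\cap U$ has size at least $\rho n$. A vertex $v\in W$ survives in $\bigcap_{S\in A}H(S)\cap U$ if and only if all $|A|$ edges $S\cup\{v\}$ with $S\in A$ lie in $H$. These edges are distinct for distinct $S$, so this happens with probability $(\eta/2)^{|A|}\ge(\eta/2)^D$. Different $v$ involve disjoint edge sets, so these events are independent. Hence the surviving count has mean at least $(\eta/2)^D\rho n$, and by \Cref{theo: Chernoff} it is at least $\frac12(\eta/2)^D\rho n\ge\rho^2 n$ with probability $1-\exp(-\Omega(n))$; the last inequality uses $\rho\ll\eta,1/D$. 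A union bound over the at most $n^{(d-1)D}$ choices of $A$ shows that $G'$ is $(D,\rho^2)$-rich in $U$ with high probability.

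There is no real obstacle here. The only points needing care are the hierarchy, namely that $\rho\ll\eta,1/D$ makes $\frac12(\eta/2)^D\ge\rho$, and the check that the edge sets $\{S\cup\{v\}:S\in A\}$ are disjoint for distinct $v$, which is what makes the Chernoff bound applicable.
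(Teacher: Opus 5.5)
Your proof is correct and is essentially the paper's argument: you keep each edge of $G^{(d)}$ with probability $\eta/2$, invoke \Cref{lem: supercomplex sparsification} for the supercomplex property, and use Chernoff plus union bounds for both the maximum degree and the $(D,\rho^2)$-richness. Your version is slightly more careful than the paper's sketch: you use the correct expectation $(\eta/2)^{|A|}\rho n$, and you note independence across $v\in W$, which holds because $W$ is disjoint from $\bigcup A$, so the edge sets $\{S\cup\{v\}:S\in A\}$ are pairwise disjoint.
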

		\begin{proof}
			Let $H\subs G^{(d)}$ be a random subgraph obtained by including every edge of $G^{(d)}$ independently with probability~$\eta/2$. 
			By \Cref{lem: supercomplex sparsification}, $G[H]$ is a $(4\eps,\xi^2,f,d)$-supercomplex with high probability. Furthermore, each $(d-1)$-set is expected to have a degree of at most $(\eta/2)n$. Chernoff and union bound imply that $\Delta(G[H]^{(d)})\le \eta n$ holds with high probability as well. Finally, for each $A\subs \binom{V(G)}{d-1}$ with $\abs{A}\le D$, we have $\abs{\bigcap_{S\in A}G^{(d)}(S)\cap U}\ge\rho n$. Thus, we expect that $\abs{\bigcap_{S\in A}G[H]^{(d)}(S)\cap U}\ge \eta^D\rho n$. Using Chernoff and union bound again, we get that $G[H]$ is $(D,\rho^2)$-rich with high probability.
		\end{proof}
		
		Thus, we just have to find a $(D,\rho)$-rich, $(\eps, \xi,f,d)$-supercomplex in~$\Gskew$. To do this, we have to consider the precise definition of a supercomplex. This can also be found in~\cite{glock2023existence}. We repeat it here for completeness. 
		
		The crucial property appearing in the next definition is that of \textit{regularity}, which means that every $d$-set of a given complex $G$ is contained in roughly the same number of $f$-sets. If we view $G$ as a complex which is induced by some $d$-graph, this means that every edge lies in roughly the same number of cliques of size~$f$.
		
		\begin{definition}[\cite{glock2023existence}, Definition 4.1]\label{def: complex}
			Let $G$ be a complex on $n$ vertices, $f \in \mathbb{N}$ and $d \in [f - 1]_0$, with $0 \leq \eps, g, \xi \leq 1$. We say that $G$ is
			\begin{itemize}
				\item[(i)] $(\eps, g, f, d)$-\textit{regular}, if for all $e \in G^{(d)}$, we have 
				\[
				\abs{G^{(f)}(e)} = (g \pm \eps)n^{f - d};
				\]
				\item[(ii)] $(\xi, f, d)$-\textit{dense}, if for all $e \in G^{(d)}$, we have
				\[
				\abs{G^{(f)}(e)} \geq \xi n^{f - d};
				\]
				\item[(iii)] $(\xi, f, d)$-\textit{extendable}, if $G^{(d)}$ is empty or there exists a subset $X \subseteq V(G)$ with $\abs X \geq \xi n$ such that for all $e \in \binom{X}{d}$, there are at least $\xi n^{f - d}$ many $(f - d)$-sets $Q \subseteq V(G) \backslash e$ such that $\binom{Q \cup e}{d} \backslash \{e\} \subseteq G^{(d)}$.
			\end{itemize}
			We say that $G$ is a \textit{full} $(\eps, \xi, f, d)$-\textit{complex} if $G$ is
			\begin{itemize}
				\item $(\eps, g, f, d)$-regular for some $g \geq \xi$,
				\item $(\xi, f + d, d)$-dense,
				\item $(\xi, f, d)$-extendable.
			\end{itemize}
			
			We say that $G$ is an $(\eps, \xi, f, d)$-\textit{complex} if there exists an $f$-graph $Y$ on $V(G)$ such that $G[Y]$ is a full $(\eps, \xi, f, d)$-complex. Note that $G[Y]^{(d)} = G^{(d)}$ (recall that $d < f$). 
		\end{definition}
		
		\begin{definition}[\cite{glock2023existence}, Definition 4.3]\label{def: supercomplex}
			Let $G$ be a complex. We say that $G$ is an \emph{$(\eps,\xi,f,d)$-supercomplex} if for every $h \in [d]_0$ and every set $B \subseteq G^{(h)}$ with $1 \le \abs B \le 2^h$, we have that $\bigcap_{b \in B} G(b)$ is an $(\eps,\xi,f-h,d-h)$-complex. 
		\end{definition}
		
		\begin{definition}
			Let $G$ be a complex and $U\subs V(G)$. We say that a set $e\in G$ has \emph{type $i$} with respect to $U$ if $\abs{e\cap (V(G)\backslash U)}=i$. By $G_i$, we denote the set of sets of $G$ with type~$i$.
		\end{definition}
		
		We start by proving a lemma about~$\Gskew$. Morally, it says that for each vertex set $A$, number $s\in[\abs A,d+f]$, and type $j\in[d]_0$, there are roughly as many ways to extend $A$ to an $s$-set of type $j$
		in $\Gskew$ as one would expect. 
		Since the definition of supercomplexes requires that every $\bigcap_{b\in B}G(b)$ has certain properties, we not only show it for $\Gskew$, but also for every such intersection.
		\begin{lemma}\label{lem: Gskew has two properties}
			Let $1/n\ll\alpha , \mu \ll 1/f \ll 1/d\le 1/2$. Let $G$ be a $d$-graph on $n$ vertices and $U\subseteq V(G)$ with $\abs U=\mu n$ and $\delta(G)\ge (1-\alpha)n$ and $\Gskew$ the skew $(G,f,d)$-complex. Then the following properties are fulfilled for all $h\in[d]_0$, $B\subs \Gskew^{(h)}$ with $1\le\abs B\le 2^h$ and $G''\coleq\bigcap_{b\in B}\Gskew(b)$, $d'\coleq d-h$, $f'\coleq f-h$, $\alpha'\coleq f\mu^{-1}(2^{d+1}\cdot d)^d\alpha$, $n'\coleq n-\abs{\bigcup_{b\in B}b}$:
			\begin{enumerate}
				\item\label{enum: Every possible set can be extended} for every vertex set $A\subs V(G'')$ of size $a\in[d',d'+f']$ and type $k\in[d']_0$, every $s\in[a,f'+d']$ and every $j\in[k,d']$, there are
				\[\big(1\pm\alpha'\big)\frac{(1-\mu)^{j-k}}{(j-k)!}\cdot\frac{\mu^{s-j-a+k}}{(s-j-a+k)!}n'^{s-a}\]
				vertex sets $S\subs V(G'')$ containing $A$ of size $s$ and type $j$ such that $\binom{S}{d'}\backslash\binom{A}{d'}\subs G''^{(d')}$.
				\item\label{enum: Every possible edge can be extended} for every edge $A\in G''$ of size $a\in[d',d'+f']$ and type $k\in[d']_0$, every $s\in[a,f'+d']$ and every $j\in[k,d']$, there are
				\[\big(1\pm\alpha'\big)\frac{(1-\mu)^{j-k}}{(j-k)!}\cdot\frac{\mu^{s-j-a+k}}{(s-j-a+k)!}n'^{s-a}\]
				vertex sets $S\subs V(G'')$ containing $A$ of size $s$ and type $j$ such that $S\in G''^{(s)}$.
			\end{enumerate}
		\end{lemma}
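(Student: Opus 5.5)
We first unpack what $G''$ looks like. Write $W\coleq\bigcup_{b\in B}b$, so $|W|\le 2^h h\le 2^d d$, and let $k_0$ be the number of vertices of $W$ outside $U$. A set $S\subs V(G)\backslash W$ lies in $G''$ if and only if $S\cup b\in\Gskew$ for every $b\in B$. Since $\Gskew$ is generated by $f$-cliques with at most $d$ vertices outside $U$, this happens if and only if each $S\cup b$ extends to an $f$-clique of $G$ with at most $d$ vertices outside $U$. Because $\delta(G)\ge(1-\alpha)n$, any set of size at most $f+d$ that spans a clique can be extended greedily inside $U$ to an $f$-clique. This is where $\alpha\ll\mu$ is used, via $\alpha<\mu^{1.1}$: each of the at most $\binom{f+d}{d-1}$ constraints forbids at most $\alpha n\ll\mu n$ vertices of $U$. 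So the only genuine restrictions are that $G[S\cup b]$ is a clique and that $|S\cup b|\le f$ and $|(S\cup b)\backslash U|\le d$ for each $b$. The size bound $|S|\le f'+d'$ in the statement is compatible with this after accounting for $|b|=h$. The type condition $j\le d'$ is what the skew restriction enforces on $S$ relative to the types of the $b$'s.

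With this description, both parts reduce to a single counting statement. We count ordered choices of the $s-a$ new vertices, with $j-k$ of them in $V\backslash U$ and $s-j-a+k$ of them in $U$, such that every $d'$-subset of $S$ not inside $A$, together with each $b\in B$, gives $d$-sets that are edges of $G$. Equivalently, all $d$-subsets of $S\cup b$ that contain a new vertex must be edges. In part (ii) one additionally needs the $d$-subsets of $A\cup b$ to be edges, but this is already given because $A\in G''$. So (ii) follows from the argument for (i) together with the clique-extension remark above, which upgrades ``all required $d$-sets present'' to ``$S\in G''^{(s)}$''.

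For the count in (i), we add the new vertices one at a time. When a vertex is added, it must lie in the common link of at most $|B|\binom{f+d}{d-1}\le 2^d\binom{f+d}{d-1}$ many $(d-1)$-sets. By the minimum degree condition, this excludes at most $2^d\binom{f+d}{d-1}\alpha n$ vertices. The candidate pools are $U\backslash W$ of size $\mu n\pm O(1)$ and $V\backslash(U\cup W)$ of size $(1-\mu)n\pm O(1)$. The relative error in the $U$-pool is therefore at most about $2^d\binom{f+d}{d-1}\alpha/\mu$, and it is smaller in the other pool. We also have to subtract $O(1)$ for the vertices already chosen.

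Multiplying over the $s-a\le f+d$ steps and dividing by $(j-k)!\,(s-j-a+k)!$ to pass from ordered to unordered choices, we get
\[(1\pm\alpha')\frac{((1-\mu)n')^{j-k}}{(j-k)!}\cdot\frac{(\mu n')^{s-j-a+k}}{(s-j-a+k)!},\]
where $n'=n-|W|$ absorbs the $O(1)$ terms. The accumulated error is $(1\pm c\alpha/\mu)^{f+d}$, which lies within $1\pm\alpha'$ for $\alpha'=f\mu^{-1}(2^{d+1}d)^d\alpha$, using $\alpha\ll\mu$ and $1/n\ll\alpha$.

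The main obstacle is the first step: correctly describing membership in $G''$ and $\Gskew$. In particular, we need that the type bound for $S$ in $G''$ is exactly the bound that makes $S\cup b$ extendable for every $b\in B$ simultaneously. This has to account for the fact that the $b$'s may contain vertices outside $U$, so that the outside budget for $S$ can drop below $d'$. We also need that completing to $f$-cliques inside $U$ never fails. We would verify this by the greedy extension inside $U$, treating the worst case $|W\backslash U|$ separately and checking that the range $j\le d'$ remains admissible.
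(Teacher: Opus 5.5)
Your counting scheme is the paper's, and it handles part (i): you add the new vertices one at a time from $U\setminus W$ and $V(G)\setminus(U\cup W)$, each $(d-1)$-set whose link must contain the new vertex excludes at most $\alpha n$ candidates, the upper bound is trivial, and you divide by the factorials. The gap is your description of $G''$, which breaks part (ii).

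Reading $\Gskew$ as generated by $f$-cliques, you get $\abs{S\cup b}\le f$, i.e.\ $s\le f'$. But (ii) is claimed for all $s\le f'+d'$. For $h<d$ and $f'<s\le f'+d'$ we have $\abs{S\cup b}=s+h>f$, so under your reading $G''^{(s)}=\emptyset$ and (ii) would be false. Two of your claims are therefore incorrect: that the size bound is ``compatible after accounting for $\abs b=h$'', and that a clique of size up to $f+d$ extends inside $U$ to an $f$-clique. Consequently your final upgrade to $S\in G''^{(s)}$ cannot be carried out. This is not a boundary case: $s=f'+d'$ is exactly what is used later, for $\abs{G''^{(f'+d')}_\ell(e)}$, for $\abs{G''^{(f'+d')}_\ell(Q)}$, and for the $(\xi,f+d,d)$-density required of a supercomplex.

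What the paper's proof actually uses is that $\Gskew$ contains every set $S$ with $\binom{S}{d}\subseteq G$ and $\abs{S\setminus U}\le d$; it argues ``$S''\cup b\in\Gskew$ if $\binom{S''\cup b}{d}\subseteq\Gskew^{(d)}$''. With that reading no extension step is needed. Choose each new vertex $v$ with $T\cup\{v\}\in G$ for all $T\in\binom{S'\cup b}{d-1}$ and $b\in B$. Together with $A\cup b\in\Gskew$, this makes every $S\cup b$ a clique.

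The type condition you single out as the main obstacle is automatic. It is imposed for each $b$ separately, not for $W$ as a whole, so $\abs{(S\cup b)\setminus U}\le j+\abs b\le d'+h=d$. The outside budget never drops below $d'$.

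Two minor points:
\begin{itemize}
\item In (i), your ``Equivalently'' is not an equivalence, because the condition only involves the $d$-sets $e\cup b$ containing all of $b$. Imposing the stronger clique condition is still harmless for a lower bound.
\item Your per-step constant $2^d\binom{f+d}{d-1}$ is not below $(2^{d+1}d)^d$ when $f\gg d$. The paper's estimate $\binom{m+2^hh}{d-1}\le(2^{d+1}d)^d$ has the same slip, and any constant depending only on $f,d$ suffices downstream.
\end{itemize}
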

		\begin{proof}
			Fix an $h\in[d]_0$, $B\subs\Gskew^{(h)}$ with $1\le\abs B\le 2^h$ and define $G'', d', f', \alpha', n'$ as in the statement of the lemma.
			
			To check \ref{enum: Every possible set can be extended}, take a set $A\subs V(G'')$ of size $a\in[d',d'+f']$ and type $k\in[d']_0$. Furthermore, fix an $s\in[a,f'+d']$ and a $j\in[k,d']$. Clearly, there are at most 
			\[\frac{\big((1-\mu)n'\big)^{j-k}}{(j-k)!}\cdot\frac{(\mu n')^{s-j-a+k}}{(s-j-a+k)!}\] vertex sets $S\subs V(G'')$ that contain $A$ and are of size $s$ and type~$j$. Hence, we only have to lower bound the number of vertex sets $S\subs V(G'')$ containing $A$ of size $s$ and type $j$ such that $\binom{S}{d'}\backslash\binom{A}{d'}\subs G''^{(d')}$. 
			
			We choose the vertices of $S$ one at a time. Suppose we have already picked $m$ vertices (including those of $A$). We have to pick the next vertex $v$ in the following way: For every $b\in B$ and every $(d-h-1)$-subset $T$ of the already chosen vertices, the set $b\cup T\cup\lbrace v\rbrace$ must be in $\Gskew^{(d)}$. By the minimum degree condition of $G$, we get that each choice of $b$ and $T$ can exclude at most $\alpha n$ vertices. Hence and because of $\abs{\bigcup_{b\in B}b}\le 2^h\cdot h$, there are definitely at least $\Big(1-\mu-\binom{m+2^h\cdot h}{d-1}\alpha\Big)n\ge \big(1-\mu-(2^{d+1}\cdot d)^{d}\alpha\big)n'$ vertices outside of $U$ which we can pick next. Similarly, there are at least $\big(\mu-(2^{d+1}\cdot d)^{d}\alpha\big)n'$ vertices inside of~$U$. Therefore, the number of vertex sets $S$ with the desired properties is at least
			\begin{align*}
				&\frac{\Big(\big(1-\mu-(2^{d+1}\cdot d)^{d}\alpha\big)n'\Big)^{j-k}}{(j-k)!}\cdot \frac{\Big(\big(\mu-(2^{d+1}\cdot d)^{d}\alpha\big)n'\Big)^{s-j-a+k}}{(s-j-a+k)!}\\
				&\ge \frac{\big(\big(1-\mu^{-1}(2^{d+1}\cdot d)^{d}\alpha\big)(1-\mu)\big)^{j-k}}{(j-k)!}\cdot \frac{\big(\big(1-\mu^{-1}(2^{d+1}\cdot d)^{d}\alpha\big)\mu\big)^{s-j-a+k}}{(s-j-a+k)!}n'^{s-a}\\
				&\ge \big(1-(s-a)\mu^{-1}(2^{d+1}\cdot d)^{d}\alpha\big) \frac{(1-\mu)^{j-k}}{(j-k)!}\cdot \frac{\mu^{s-j-a+k}}{(s-j-a+k)!}n'^{s-a}
			\end{align*}
			In the last step, we used Bernoulli's inequality.
			Therefore, the number is 
			\[(1\pm \alpha') \frac{(1-\mu)^{j-k}}{(j-k)!}\cdot \frac{\mu^{s-j-a+k}}{(s-j-a+k)!}n'^{s-a}.\]
			Condition \ref{enum: Every possible edge can be extended} is checked similarly: Let $A\in G''$ be an edge of size $a\in[d',d'+f']$ and type $k\in[d']_0$ and fix an $s\in[a,f'+d']$ and a $j\in[k,d']$. There are at most 
			\[\frac{\big((1-\mu)n'\big)^{j-k}}{(j-k)!}\cdot\frac{(\mu n')^{s-j-a+k}}{(s-j-a+k)!}\] edges $S\in G''$ that contain $A$ and are of size $s$ and type~$j$. Hence, we only have to lower bound the number of edges $S\in G''$ containing $A$ of size $s$ and type~$j$.
			
			We choose the vertices of $S$ one at a time. Suppose we have already picked $m$ vertices (including those of $A$) such that these $m$ vertices form an edge $S'$ in~$G''$. Then we want to pick the next vertex $v$ in such a way that $S''\coleq S'\cup \lbrace v\rbrace$ is also an edge in~$G''$. This is the case if for every $b\in B$, $S''\cup b$ is in~$\Gskew$. By the definition of $\Gskew$, this is the case if $\binom{S''\cup b}{d}\subs\Gskew^{(d)}$. Since $S'$ is in $G$, every $d$-subset of $S''\cup b$ that does not contain $v$ is already known to be in $\Gskew^{(d)}$. We have to choose $v$ such that for every $T\in \binom{S'\cup b}{d-1}$, $T\cup\lbrace v\rbrace\in\Gskew^{(d)}$. By the minimum degree condition of $\Gskew$, each choice of $T$ can exclude at most $\alpha n$ vertices. Hence and because of $\abs{\bigcup_{b\in B}b}\le 2^h\cdot h$, there are definitely at least $\Big(1-\mu-\binom{m+2^h\cdot h}{d-1}\alpha\Big)n\ge \Big(1-\mu-\big(2^{d+1}\cdot d\big)^{d}\alpha\Big)n'$ vertices outside of $U$ which we can pick next. Similarly, there are at least $\Big(\mu-\big(2^{d+1}\cdot d\big)^{d}\alpha\Big)n'$ vertices inside of~$U$. The remaining calculations are exactly as in the proof of~\ref{enum: Every possible set can be extended}.
		\end{proof}
		
		In the following definition, we will list all the properties we need to show that a complex is an $(\eps,\xi,f,d)$-complex for certain $\eps$ and~$\xi$.
		
		\begin{definition}
			We say that a complex $G$ on $n$ vertices is \emph{$(\alpha,\mu,f,d,c,C)$-advantageous} if the following properties are fulfilled with $w_j\coleq\big(\frac{\mu}{3fd}\big)^j$ for all $j\in[d]_0$:
			
			\begin{itemize}
				\item for all $i\in[d]_0$, $e\in G^{(d)}_i$ and $\ell\in[i,d]$
				\begin{equation}
					\abs{G^{(f+d)}_\ell(e)}\ge c\cdot \mu^{f-\ell+\ell\binom{f+d-1}{d-1}}\cdot n^{f}\label{eq: size of G^(f+d)(e)}
				\end{equation}
				\item for all $i\in [d]_0$, $Q\in G^{(f)}_i$ and $\ell\in[i,d]$
				\begin{equation}\label{eq: size of G^(f+d)(Q)}
					\abs{G^{(f+d)}_{\ell}(Q)}\le C\cdot \mu^{f-\ell+i-i\cdot\binom{f-1}{d-1}+\ell\cdot\binom{f+d-1}{d-1}}\cdot n^{d}
				\end{equation}
				\item 
				for all $i\in[d]_0$, $e\in G^{(d)}_i$ and $\ell\in[i,d]$
				\begin{equation}
					\abs{G^{(f)}_\ell(e)}=\big(1\pm\alpha\big)\frac{(1-\mu)^{\ell-i}}{(\ell-i)!}\cdot\frac{\mu ^{f-\ell-d+i}}{(f-\ell-d+i)!}\cdot w_i^{-1}\prod_{t=0}^dw_t^{\binom{\ell}{t}\binom{f-\ell}{d-t}}\cdot n^{f-d}\label{eq: size of G^(f)(e)}
				\end{equation}
				
				\item for all $i\in[d]_0$, $e\in\binom{V(G)}{d}$ of type $i$, and $\ell\in[i,d]$, the number of vertex sets $S$ of type $\ell$ and size $f$ containing $e$ such that $\binom{S}{d}\backslash\{e\}\subs G^{(d)}$ is at least
				\begin{equation}
					c\cdot \mu^{f-\ell-d+\ell\cdot\binom{f-1}{d-1}}\cdot n^{f-d}.\label{eq: size of E_f,ell(e) imprecise}
				\end{equation}
			\end{itemize}
		\end{definition}
		
		The $G'$ in the following lemma will be our final $(\eps,\xi, f,d)$-supercomplex.
		
		\begin{lemma}\label{lem: Gskew has a subcomplex full of advantages}
			Let $1/n\ll\rho\ll \alpha , \mu \ll c,1/C\ll 1/D,1/f \ll 1/d\le 1/2$ with $\alpha<\mu^{1.1}$. Let $G$ be a $d$-graph on $n$ vertices and $U\subseteq V(G)$ with $\abs U=\mu n$ and $\delta(G)\ge (1-\alpha)n$. Then there is a subcomplex $G'\subs\Gskew$ that is $(D,\rho)$-rich in $U$ and such that for all $h\in[d]_0$, $B\subs G'^{(h)}$ with $1\le\abs B\le 2^h$, $G''\coleq\bigcap_{b\in B}G'(b)$ is $(2\alpha',\mu,f',d',c,C)$-advantageous on $n'$ vertices where $d'\coleq d-h$, $f'\coleq f-h$, $\alpha'\coleq f\mu^{-1}(2^{d+1}\cdot d)^d\alpha$, $n'\coleq n-\abs{\bigcup_{b\in B}b}$.
		\end{lemma}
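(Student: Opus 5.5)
The plan is to take $G'$ to be a random sparsification of $\Gskew$ whose retention probabilities depend on type. Let $H\subs\Gskew^{(d)}$ be the random $d$-graph obtained by keeping each $d$-set of type $t$ independently with probability $w_t=\big(\frac{\mu}{3fd}\big)^t$, and set $G'\coleq\Gskew[H]$. The shape of \eqref{eq: size of G^(f)(e)} points to exactly this choice. For an $f$-set $S$ of type $\ell$, the number of its $d$-subsets of type $t$ is $\binom{\ell}{t}\binom{f-\ell}{d-t}$. Hence, conditioned on $e\in H$ with $e$ of type $i$, the probability that $S\supseteq e$ survives is $w_i^{-1}\prod_{t=0}^d w_t^{\binom{\ell}{t}\binom{f-\ell}{d-t}}$.

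All required bounds for $G''\coleq\bigcap_{b\in B}G'(b)$ come from two steps.

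\textbf{Expectations.} An edge of $G''$ of size $a$ corresponds to an edge of $G'$ containing $\bigcup_{b\in B}b$. I would therefore rerun the computation inside $\Gskew$ using \Cref{lem: Gskew has two properties}. Part~\ref{enum: Every possible edge can be extended} counts $f'$-sets and $(f'+d')$-sets of each type $\ell$ through a given $e$ or $Q$ up to a factor $1\pm\alpha'$. Part~\ref{enum: Every possible set can be extended} gives the count needed for \eqref{eq: size of E_f,ell(e) imprecise}, where only $\binom{S}{d}\setminus\{e\}$ must be present. Multiplying each count by the survival probability yields \eqref{eq: size of G^(f)(e)} in expectation. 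For \eqref{eq: size of G^(f+d)(e)} and \eqref{eq: size of E_f,ell(e) imprecise} I would crudely bound the surviving probability from below by $\mu^{\ell\binom{f+d-1}{d-1}}$ times constants. Here the point is that $w_t\ge(\mu/3fd)^t$, and type-$0$ edges (entirely in $U$) survive with probability $1$, so only the at most $\ell\binom{f+d-1}{d-1}$ sets meeting the outside contribute powers of $\mu$. These constants are absorbed into $c$. For the upper bound \eqref{eq: size of G^(f+d)(Q)}, I would count the at most $n^{\ell-i}(\mu n)^{d-\ell+i}$ extensions of $Q$. Each requires the new $d$-sets, roughly $\ell\binom{f+d-1}{d-1}-i\binom{f-1}{d-1}$ of them weighted by type, to survive, and $C$ absorbs the constant. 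Richness follows similarly. For $A\subs\binom{V}{d-1}$ with $\abs A\le D$, the sets $S\cup\{u\}$ with $u\in U$ have type at most $d-1$. In $\Gskew$ there are at least $(\mu-D\alpha)n$ common candidates $u$, and each survives all $D$ requirements with probability at least $w_{d-1}^D$. This gives expectation $\gg\rho n$, since $\rho\ll\alpha,\mu$.

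\textbf{Concentration.} Every quantity is a sum of indicators of order $n^{f-d}$ or $n^{d}$ (resp.\ $n$ for richness), each being a monotone function of the independent edge variables of $H$. Changing one $d$-set of $H$ affects only those $f$-sets through both $e$ and that $d$-set, i.e.\ $O(n^{f-d-1})$ of them. So McDiarmid's inequality (\Cref{lem: McDiarmid}) applies after restricting to the $O(n^{f-1})$ relevant coordinates, giving deviation $o(n^{f-d})$ with probability $1-\exp(-n^{\Omega(1)})$. One caveat: the coordinate count $n^{f-1}$ against Lipschitz constant $n^{f-d-1}$ gives variance proxy $n^{2f-d-3}$, which is smaller than $n^{2(f-d)}$ only when $d<3$. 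For larger $d$, I would instead use a Kim--Vu/Janson-type polynomial concentration, or split into the $O(1)$ coordinates that actually share $d-1$ vertices with $e$. The loss is then only $(1\pm\alpha)$ versus $(1\pm 2\alpha')$, which leaves room. A union bound over the polynomially many choices of $h$, $B$, $e$, $Q$, $\ell$ and $A$ finishes the argument.

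The main obstacle is this concentration step. Choosing retention probabilities whose exponents match the stated formulas is bookkeeping; making the error small relative to the precise $(1\pm2\alpha')$ window in \eqref{eq: size of G^(f)(e)}, uniformly over all links $G''$, needs the polynomial concentration to be set up correctly. I would first try a Kim--Vu style bound using that all partial derivatives of order at least $1$ are $O(n^{f-d-1})$ after conditioning on $e\in H$.
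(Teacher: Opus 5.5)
This is essentially the paper's proof: the same random $G'=\Gskew[X]$ with each type-$t$ $d$-set kept independently with probability $w_t=\big(\frac{\mu}{3fd}\big)^t$, all expectations read off from \Cref{lem: Gskew has two properties}, and concentration plus a union bound over the polynomially many choices of $h,B,e,Q,\ell,A$. One caveat on \eqref{eq: size of G^(f+d)(Q)}: your count gives the factor $\mu^{d-\ell+i}$ where the statement prints $\mu^{f-\ell+i}$, and the gap $\mu^{d-f}$ is not a constant that $C$ can absorb; however, the printed exponent is a typo shared by the paper's own computation, and the correct bound is all that \Cref{lem:advantageous implies complex} needs.

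Where the paper simply cites Chernoff, your belief that McDiarmid fails for $d\ge 3$ rests on a miscount. Grouping coordinates $e'$ by $k=\abs{e'\cap e}<d$ gives $O(n^{d-k})$ of them, each changing the count by at most $n^{f-2d+k}$, so $\sum_{e'}c_{e'}^2=O(n^{2f-2d-1})$ for every $d$. Hence \Cref{lem: McDiarmid} already gives deviation at most $\eps n^{f-d}$, for any fixed $\eps>0$, with failure probability $e^{-\Omega(n)}$, exactly as in the density step of \Cref{lem:advantageous implies complex}.
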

		\begin{proof}
			We create a random set $X$ of $d$-edges in the following way.
			For all $i\in[d]_0$, any edge in $\Gto d_i$ is taken into $X$ with probability $w_i\coleq\big(\frac{\mu}{3fd}\big)^i$ independently of each other.
			Let $G'\coleq \Gskew[X]$. We will show now that with high probability, $G'$ will fulfill all desired properties. 
			
			First, we show $(D,\rho)$-richness in~$U$.
			For this, let $A\subs\binom{V(G')}{d-1}$ with $\abs{A}\le D$. We need to show $\abs{\bigcap_{S\in A}G'^{(d)}(S)\cap U}\ge\rho n$. 
			Indeed, for each $e\in A$, there are at most $\alpha n$ elements in $U$ such that $e\cup \{u\}\not\in G$ by the minimum degree condition of~$G$. Therefore, $\abs{\bigcap_{S\in A}G^{(d)}(S)\cap U}\ge(\mu-\abs A\alpha)n$. 
			Thus, the expected size of $\bigcap_{S\in A}G'^{(d)}(S)\cap U$ is at least 
			\[w_d^{\abs A}(\mu-\abs A\alpha)n\ge \!\left(\frac\mu{3fd}\right)^{dD}(\mu-D\alpha)n.\]
			By Chernoff's bound (\Cref{theo: Chernoff}) and a union bound over the polynomially many choices of $A$, we get that with high probability, $G'$ is $(D,\rho)$-rich in~$U$.
			
			Next, let $h\in[d]_0$, $B\subs G'^{(h)}$ with $1\le\abs B\le 2^h$ and $G''\coleq\bigcap_{b\in B}G'(b)$, $d'\coleq d-h$, $f'\coleq f-h$, $\alpha'\coleq f\mu^{-1}(2^{d+1}\cdot d)^d\alpha$, $n'\coleq n-\abs{\bigcup_{b\in B}b}$. 
			We will show that $G''$ is $(2\alpha', \mu, f', d',c,C)$-advantageous. Note that $1/n'\ll \alpha',\mu\ll1/f\ll 1/d$ holds because $\alpha<\mu^{1.1}$.
			
			Let $A\subs V(G'')$ be a vertex set of size $a\in[d',d'+f']$ and type $k\in[d']_0$. Let $s\in[a,f'+d']$ and $j\in[k,d']$. We define $E_{s,j}(A)$ to be the number of vertex sets $S\subs V(G'')$ of size $s$ and type $j$ containing $A$ such that $\binom S{d'}\backslash \binom A{d'}\subs G''^{(d')}$. By \Cref{lem: Gskew has two properties} \ref{enum: Every possible set can be extended}, we get
			
			\[\E[E_{s,j}(A)]=\big(1\pm\alpha'\big)\frac{(1-\mu)^{j-k}}{(j-k)!}\cdot\frac{\mu^{s-j-a+k}}{(s-j-a+k)!}\cdot \prod_{t=0}^{d'} w_t^{-\binom{k}{t}\binom{a-k}{d'-t}}\prod_{t=0}^{d'}w_t^{\binom{j}{t}\binom{s-j}{d'-t}}n'^{s-a}.\]
			Similarly, if $A$ is even an edge in $\bigcap_{b\in B}\Gskew(b)$, we can define $E'_{s,j}(A)$ to be the number of edges $S\in G''^{(s)}$ of type $j$ containing~$A$. 
			Then we get with the help of property \Cref{lem: Gskew has two properties} \ref{enum: Every possible edge can be extended}
			\[\E\!\left[E'_{s,j}(A)|A\in G''\right]=\big(1\pm\alpha'\big)\frac{(1-\mu)^{j-k}}{(j-k)!}\cdot\frac{\mu^{s-j-a+k}}{(s-j-a+k)!}\cdot \prod_{t=0}^{d'} w_t^{-\binom{k}{t}\binom{a-k}{d'-t}}\prod_{t=0}^{d'}w_t^{\binom{j}{t}\binom{s-j}{d'-t}}n'^{s-a}.\]
			
			Therefore and by Chernoff (\Cref{theo: Chernoff}), with high probability, the following hold for all $h\in[d]_0$, $B\subs G'^{(h)}$ with $1\le\abs B\le 2^h$ and $G''\coleq\bigcap_{b\in B}G'(b)$ simultaneously\footnote{In the following equations, we frequently use the identity $\sum_{j=0}^dj\cdot\binom{i}{j}\binom{f-i}{d-j}=i\cdot\binom{f-1}{d-1}$ which holds since both count the number of ways to pick a committee with $d$ members out of $i$ women and $f-i$ men and then choose a woman from the committee as president.}:
			
			\setlength{\FrameSep}{0pt}
			\noindent\begin{minipage}{0.13\linewidth}
				\begin{framed}
					\begin{align*}
						a&=d'\\
						k&=i\\
						s&=f'+d'\\
						j&=\ell
					\end{align*}
				\end{framed}
			\end{minipage}%
            \begin{minipage}{0.86\linewidth}
				\begin{align*}
					\abs{G''^{(f'+d')}_\ell(e)}&=\big(1\pm2\alpha'\big)\frac{(1-\mu)^{\ell-i}}{(\ell-i)!}\cdot\frac{\mu^{f'-\ell+i}}{(f'-\ell+i)!}\cdot w_i^{-1}\prod_{t=0}^{d'}w_t^{\binom{\ell}{t}\binom{f'+d'-\ell}{d'-t}}\cdot n'^{f'}\nonumber\\&=\Theta_{d',f'}\!\left(\mu^{f'-\ell+i}\cdot\mu^{-i}\cdot\mu^{\sum_{t=0}^{d'}t\binom{\ell}{t}\binom{f+d'-\ell}{d'-t}}\cdot n'^{f'}\right)\nonumber\\&=\Theta_{d',f'}\!\left(\mu^{f'-\ell+\ell\binom{f'+d'-1}{d'-1}}\cdot n'^{f'}\right)
				\end{align*}
			\end{minipage}
			
			\noindent for all $i\in[d']_0$, $e\in G''^{(d')}_i$ and $\ell\in[i,d']$;
			
			\noindent\begin{minipage}{0.13\linewidth}
				\begin{framed}
					\begin{align*}
						a&=f'\\
						k&=i\\
						s&=f'+d'\\
						j&=\ell
					\end{align*}
				\end{framed}
			\end{minipage}\begin{minipage}{0.86\linewidth}
				\begin{align*}
					\abs{G''^{(f'+d')}_{\ell}(Q)}&=\big(1\pm2\alpha'\big)\frac{(1-\mu)^{\ell-i}}{(\ell-i)!}\cdot\frac{\mu^{f'-\ell+i}}{(d'-\ell+i)!}\prod_{t=0}^{d'}w_t^{-\binom{i}{t}\binom{f-i}{d'-t}}\prod_{t=0}^{d'}w_t^{\binom{\ell}{t}\binom{f'+d'-\ell}{d'-t}}n'^{d'}\nonumber\\
					&=\Theta_{d',f'}\!\left(\mu^{f'-\ell+i}\cdot\mu^{-\sum_{t=0}^{d'}t\binom{i}{t}\binom{f'-i}{d'-t}}\cdot\mu^{\sum_{t=0}^{d'}t\cdot\binom{\ell}{t}\binom{f'+d'-\ell}{d'-t}}\cdot n'^{d'}\right)\nonumber\\
					&=\Theta_{d',f'}\!\left(\mu^{f'-\ell+i-i\cdot\binom{f'-1}{d'-1}+\ell\cdot\binom{f'+d'-1}{d'-1}}\cdot n'^{d'}\right)
				\end{align*}
			\end{minipage}
			
			\noindent for all $i\in [d']_0$, $Q\in G''^{(f')}_i$ and $\ell\in[i,d']$;
			
			\noindent \begin{minipage}{0.1\linewidth}
				\begin{framed}
					\begin{align*}
						a&=d'\\
						k&=i\\
						s&=f'\\
						j&=\ell
					\end{align*}
				\end{framed}
			\end{minipage}\begin{minipage}{0.89\linewidth}
				\begin{align*}
					\abs{G''^{(f')}_\ell(e)}&=\big(1\pm2\alpha'\big)\frac{(1-\mu)^{\ell-i}}{(\ell-i)!}\cdot\frac{\mu^{f'-\ell-d'+i}}{(f'-\ell-d'+i)!}\cdot w_i^{-1}\prod_{t=0}^{d'}w_t^{\binom{\ell}{t}\binom{f'-\ell}{d'-t}}\cdot n'^{f'-d'}
				\end{align*}
			\end{minipage}
			
			\noindent for all $i\in[d']_0$, $e\in G''^{(d')}_i$ and $\ell\in[i,d']$;
			
			\noindent \begin{minipage}{0.1\linewidth}
				\begin{framed}
					\begin{align*}
						a&=d'\\
						k&=i\\
						s&=f'\\
						j&=\ell
					\end{align*}
				\end{framed}
			\end{minipage}\begin{minipage}{0.89\linewidth}
				\begin{align*}
					\abs{E_{f',\ell}(e)}&=\big(1\pm2\alpha'\big)\frac{(1-\mu)^{\ell-i}}{(\ell-i)!}\cdot\frac{\mu ^{f'-\ell-d'+i}}{(f'-\ell-d'+i)!}\cdot w_i^{-1}\prod_{t=0}^{d'}w_t^{\binom{\ell}{t}\binom{f'-\ell}{d'-t}}\cdot n'^{f'-d'}\nonumber\\
					&=\Theta_{d',f'}\!\left(\mu^{f'-\ell-d'+i}\cdot\mu^{-i}\cdot\mu^{\sum_{t=0}^{d'} t\binom{\ell}{t}\binom{f'-\ell}{d'-t}}\cdot n'^{f'-d'}\right)\nonumber\\
					&=\Theta_{d',f'}\!\left(\mu^{f'-\ell-d'+\ell\cdot\binom{f'-1}{d'-1}}\cdot n'^{f'-d'}\right)
				\end{align*}
			\end{minipage}
			
			\noindent for all $i\in[d']_0$, $e\in \binom{V(G'')}{d'}$ of type $i$, and $\ell\in[i,d']$.
		\end{proof}
		All that is left to do now, is to show that the fact that all the $\bigcap_{b\in B}G'(b)$ are advantageous implies that they all are $(\eps,\xi,f,d)$-complexes for certain $\eps$ and~$\xi$.
		In that proof, we use the following two results:
		
		\begin{proposition}[\cite{glock2023existence}]\label{prop: gadget}
			Let $f>d\ge 2$ and let $e$ and $J$ be disjoint sets with $\abs e=d$ and $\abs J=f$. Let $G$ be the complete complex on $e\cup J$. There exists a function $\psi\colon \Gto f\to \mathbb R$ such that
			\begin{enumerate}
				\item for all $e'\in \Gto d$, $\sum_{Q\in\Gto f(e')}\psi(Q\cup e')=\begin{cases}
					1,&e'=e\\0,&e'\ne e;
				\end{cases}$
				\item for all $Q\in\Gto f$, $\abs{\psi(Q)}\le \frac{2^{d-j}(d-j)!}{\binom{f-d+j}{j}}$, where $j\coleq\abs{e\cap Q}$.
			\end{enumerate}
		\end{proposition}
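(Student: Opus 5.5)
We take $\psi$ to be symmetric: $\psi(Q)$ depends only on $j=\abs{e\cap Q}$, say $\psi(Q)=\psi_j$ for $j\in[d]_0$. Then the linear system in (i) collapses to $d+1$ equations in $\psi_0,\dots,\psi_d$, which is triangular. We solve it by back-substitution and prove the bound (ii) by downward induction on $j$.

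\textbf{The reduced system.} Fix $e'\in\Gto d$ with $\abs{e'\cap e}=i$. An $f$-set $Q\supseteq e'$ is determined by $Q\setminus e'$, which is an $(f-d)$-subset of the remaining $f$ vertices. Of these, $d-i$ lie in $e$ and $f-d+i$ lie in $J$. Requiring $\abs{Q\cap e}=j$ means choosing $j-i$ vertices from $e\setminus e'$ and $f-d-j+i$ from the rest. So the number of such $Q$ is
\[\binom{d-i}{j-i}\binom{f-d+i}{f-d-j+i}=\binom{d-i}{j-i}\binom{f-d+i}{j}.\]
Hence (i) becomes, for every $i\in[d]_0$,
\[\sum_{j=i}^{d}\binom{d-i}{j-i}\binom{f-d+i}{j}\psi_j=\mathbb 1[i=d].\]
This is right, because $e'=e$ exactly when $i=d$. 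The system is upper triangular with nonzero diagonal entries $\binom{f-d+i}{i}$. So it has a unique solution: $\psi_d=1/\binom{f}{d}$, and for $i<d$,
\[\psi_i=-\binom{f-d+i}{i}^{-1}\sum_{j=i+1}^{d}\binom{d-i}{j-i}\binom{f-d+i}{j}\psi_j .\]

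\textbf{The bound.} We prove $\abs{\psi_j}\le 2^{d-j}(d-j)!/\binom{f-d+j}{j}$ by downward induction on $j$. The case $j=d$ holds with equality. For $i<d$, use the recurrence, the triangle inequality and the inductive hypothesis. The key inequality is $\binom{f-d+i}{j}\le\binom{f-d+j}{j}$ for $j\ge i$, which cancels the binomial denominators from the hypothesis. Writing $m=d-i$ and $k=j-i$, this gives
\[\abs{\psi_i}\le\binom{f-d+i}{i}^{-1}\sum_{k=1}^{m}\binom{m}{k}2^{m-k}(m-k)!=\binom{f-d+i}{i}^{-1}m!\,2^m\sum_{k=1}^m\frac{2^{-k}}{k!}.\]
Since $\sum_{k\ge1}2^{-k}/k!=e^{1/2}-1<1$, we get $\abs{\psi_i}\le 2^{m}m!/\binom{f-d+i}{i}$, which is (ii).

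\textbf{Main obstacle.} The only delicate point is the bound (ii). A naive estimate of the inverse of the triangular matrix would accumulate factors that grow with $f$. The comparison $\binom{f-d+i}{j}\le\binom{f-d+j}{j}$ removes all dependence on $f$ except through the diagonal entry. The remaining combinatorial sum is then controlled by the factor $e^{1/2}-1<1$. Everything else is bookkeeping.
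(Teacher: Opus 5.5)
Your argument is correct and complete. The symmetric ansatz reduces (i) to the upper-triangular system you wrote, and the downward induction gives exactly the bound in (ii), using $\binom{f-d+i}{j}\le\binom{f-d+j}{j}$ and $\sum_{k\ge 1}2^{-k}/k!<1$. The paper does not prove this proposition itself but imports it from \cite{glock2023existence}; as far as I recall, your back-substitution argument is essentially the one used there.
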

		
		\begin{lemma}\label{lem:linear algebra}
			Consider the upper-triangular $n\times n$-matrix
			\[A\coleq\begin{pmatrix}a_{11}&a_{12}&\dots&a_{1n}\\
				0&a_{22}&\dots&a_{2n}\\
				\vdots&\vdots&\ddots&\vdots\\
				0&0&\dots&a_{nn}\end{pmatrix}\]
			with non-negative entries, positive diagonal entries and such that $\frac{a_{ij}}{a_{jj}}\le\frac{1}{2n}$ for all $i<j$.
			Then, \[A\begin{pmatrix}
				x_1\\x_2\\\vdots\\x_n
			\end{pmatrix}=\begin{pmatrix}
				1\\1\\\vdots\\1
			\end{pmatrix}\]
			has a solution with $\frac{1}{2a_{ii}}\le x_i\le \frac{1}{a_{ii}}$ for all $i\in[n]$.
		\end{lemma}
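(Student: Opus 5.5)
Since $A$ is upper triangular with positive diagonal, the system $Ax=\mathbf 1$ has a unique solution, which I will compute by back substitution, $x_i=\frac{1}{a_{ii}}\bigl(1-\sum_{j>i}a_{ij}x_j\bigr)$, starting from $x_n=1/a_{nn}$. The claim is then proved by downward induction on $i$: assuming $\frac{1}{2a_{jj}}\le x_j\le\frac{1}{a_{jj}}$ for all $j>i$, I show the same bounds for $x_i$.

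For the induction step I use only the upper bound on the $x_j$ together with nonnegativity. Since $a_{ij}\ge 0$ and $x_j\ge 0$ for $j>i$, the sum $\sum_{j>i}a_{ij}x_j$ is nonnegative, which gives $x_i\le 1/a_{ii}$. For the lower bound, $x_j\le 1/a_{jj}$ yields
\[\sum_{j>i}a_{ij}x_j\le\sum_{j>i}\frac{a_{ij}}{a_{jj}}\le (n-i)\cdot\frac{1}{2n}\le\frac12,\]
and therefore $x_i\ge\frac{1}{a_{ii}}\bigl(1-\frac12\bigr)=\frac{1}{2a_{ii}}$. The base case $i=n$ holds trivially, since there $x_n=1/a_{nn}$.

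There is no real obstacle in this argument. The one point to watch is that the hypothesis bounds $a_{ij}/a_{jj}$, normalising by the diagonal entry of the \emph{column}, and not $a_{ij}/a_{ii}$. This is exactly the quantity that appears once we use $x_j\le 1/a_{jj}$, so no extra ratio between diagonal entries ever enters the estimate. The lower bound $x_j\ge 1/(2a_{jj})$ is not needed in the step itself; it is simply the part of the conclusion we carry forward, while nonnegativity of the $x_j$ is what the induction actually uses.
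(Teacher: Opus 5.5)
Your proposal is correct and is essentially the paper's proof: downward induction from $i=n$ via back substitution, with nonnegativity of the $x_j$ giving $x_i\le 1/a_{ii}$ and the bound $\sum_{j>i}a_{ij}/a_{jj}\le \frac{n-i}{2n}\le\frac12$ giving $x_i\ge \frac{1}{2a_{ii}}$. One small point: the nonnegativity of $x_j$ that you use in the step comes from the carried-forward lower bound $x_j\ge \frac{1}{2a_{jj}}>0$, so that bound is in fact used.
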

		\begin{proof}
			We prove the statement by induction from $i=n$ down to 1. For $x_n$, this holds by the last equation.
			
			Suppose, it has already been shown for $x_n,\dots,x_{i+1}$. Then, the $i$-th equation yields
			\[x_i=\frac{1-\sum_{k=i+1}^{n}a_{ik}x_k}{a_{ii}}.\]
			By assumption and induction hypothesis, $a_{ik}x_k$ is non-negative, whence $x_i\le\frac{1}{a_{ii}}$. Furthermore, we get 
			\[x_i=\frac{1-\sum_{k=i+1}^{n}a_{ik}x_k}{a_{ii}}\ge \frac{1-\sum_{k=i+1}^{n}\frac{a_{ik}}{a_{kk}}}{a_{ii}}\ge\frac{1-\frac{n-i}{2n}}{a_{ii}}\ge \frac{1}{2a_{ii}}.\qedhere\]
		\end{proof}
		
		We are now ready to show that the $G'$ from \Cref{lem: Gskew has a subcomplex full of advantages} is indeed an $(\eps,\xi,f,d)$-supercomplex.
		
		\begin{lemma}\label{lem:advantageous implies complex}
			Let $1/n\ll\eps\ll \xi\ll  \alpha , \mu \ll c,1/C\ll 1/f \ll 1/d\le 1/2$. Let $G$ be a complex on $n$ vertices which is $(\alpha,\mu,f,d,c,C)$-advantageous. Then $G$ is an $(\eps,\xi,f,d)$-complex.
		\end{lemma}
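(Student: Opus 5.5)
We have to exhibit an $f$-graph $Y$ on $V(G)$ such that $G[Y]$ is a full $(\eps,\xi,f,d)$-complex. That means $G[Y]$ has to be $(\eps,g,f,d)$-regular for some $g\ge\xi$, $(\xi,f+d,d)$-dense, and $(\xi,f,d)$-extendable. The plan is to build $Y$ randomly: each $f$-set $Q\in G^{(f)}$ of type $\ell$ is kept independently with a probability $p_\ell$ that depends only on its type. The vector $(p_0,\dots,p_d)$ is chosen so that every $d$-edge, of whatever type, has expected $Y$-degree exactly $g n^{f-d}$ up to a $(1\pm 2\alpha)$ factor.

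\textbf{Step 1: choosing the $p_\ell$.} By \eqref{eq: size of G^(f)(e)}, the number $|G^{(f)}_\ell(e)|$ for $e\in G^{(d)}_i$ is, up to a factor $1\pm\alpha$, a quantity $a_{i\ell}n^{f-d}$ depending only on $i$ and $\ell$, and it vanishes for $\ell<i$. We therefore need to solve the system
\[\sum_{\ell\ge i}a_{i\ell}p_\ell=g\qquad\text{for all }i\in[d]_0.\]
This is upper-triangular, and we will solve it with \Cref{lem:linear algebra}. What has to be checked is that $a_{i\ell}/a_{\ell\ell}\le 1/(2(d+1))$ for $i<\ell$.

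Comparing the explicit formulas, the ratio $a_{i\ell}/a_{\ell\ell}$ is
\[\frac{(1-\mu)^{\ell-i}\,\mu^{i-\ell}}{(\ell-i)!}\cdot\frac{w_i^{-1}}{w_\ell^{-1}}\cdot\frac{(f-2\ell+\dots)!}{(f-\ell-d+i)!}.\]
Since $w_\ell/w_i=(\mu/3fd)^{\ell-i}$, the powers of $\mu$ cancel. What remains is about $(3fd)^{-(\ell-i)}$ times a factorial quotient. I expect that quotient to be at most $f^{\ell-i}$, which makes the ratio at most $(3d)^{-(\ell-i)}\le 1/(2(d+1))$.

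This is precisely why the weights $w_j$ were introduced, and I regard it as the main place where the computation must work out. If the factorial bookkeeping is off, I would first recheck that the $w$-products in \eqref{eq: size of G^(f)(e)} for fixed $\ell$ cancel between rows $i$ and $\ell$, apart from the $w_i^{-1}$ factor.

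We then rescale by $g$, a suitable constant times $\min_\ell a_{\ell\ell}$. This is polynomial in $\mu$ and hence at least $\xi$, and it keeps all $p_\ell\le1$. \Cref{lem:linear algebra} then gives $p_\ell=\Theta(g/a_{\ell\ell})$.

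\textbf{Step 2: concentration and regularity.} We apply Chernoff (\Cref{theo: Chernoff}) with a union bound over all $e\in G^{(d)}$. The $Y$-degree of $e$ has expectation $(g\pm 2\alpha g)n^{f-d}$, and this is $\Omega(n^{f-d})$. So with high probability $|G[Y]^{(f)}(e)|=(g\pm 3\alpha g)n^{f-d}$. Here there is a subtlety: regularity needs error $\eps\ll\xi$, whereas we only get $\alpha$-accuracy. I would handle this with the gadget \Cref{prop: gadget}. First we randomly choose $Y$ with a slightly reduced density. We then add or remove a correction $f$-graph, built from signed gadget weights $\psi$ summed over many small configurations inside the $f+d$ cliques counted in \eqref{eq: size of G^(f+d)(e)}, to push each degree exactly to its target. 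The cost is a perturbation of every other degree by at most $\eps$, controlled by \eqref{eq: size of G^(f+d)(Q)}. This mimics the regularisation argument in \cite{glock2023existence}, and I expect this correction to be the technically heaviest part. The upper bound \eqref{eq: size of G^(f+d)(Q)}, compared with the lower bound \eqref{eq: size of G^(f+d)(e)}, is exactly what guarantees that the total correction at any $e$ is small.

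\textbf{Step 3: density and extendability.} Each $(f+d)$-clique of type $\ell$ through $e$ survives in $G[Y]$ when all of its $f$-subsets lie in $Y$. This happens with probability a product of the $p$'s, which is polynomial in $\mu$. Combining this with \eqref{eq: size of G^(f+d)(e)} and Chernoff (or Janson), each $e$ lies in at least $\xi n^{f}$ surviving $(f+d)$-sets, so $G[Y]$ is $(\xi,f+d,d)$-dense. Extendability does not involve $Y$ at all. We take $X=V(G)$ and apply \eqref{eq: size of E_f,ell(e) imprecise} with $\ell=i$, which gives at least $c\mu^{O(f^d)}n^{f-d}\ge\xi n^{f-d}$ suitable sets $Q$ for every $d$-set $e$. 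Together with Steps 1 and 2, this shows that $G$ is an $(\eps,\xi,f,d)$-complex.
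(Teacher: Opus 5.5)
Your Steps~1 and~3 are essentially the paper's argument. You solve the same triangular system with $a_{i\ell}/a_{\ell\ell}\le(3d)^{-(\ell-i)}$ via \Cref{lem:linear algebra}, and extendability needs no randomness. For density, note that plain Chernoff does not apply to the dependent $(f+d)$-set events, but Janson or McDiarmid (as in the paper) does. The gap is in Step~2. The gadgets of \Cref{prop: gadget} are signed, real-valued weights on $f$-sets, not $f$-graphs. So ``adding or removing a correction $f$-graph built from signed gadget weights'' to an already sampled $Y$ is not a well-defined operation, and it cannot push every degree exactly (or even to within $\eps n^{f-d}$) to its target. A signed fractional correction applied to a $0/1$ indicator would need a further rounding, whose errors you would again have to correct. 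Moreover, you can only add sets outside $Y$ and delete sets inside $Y$, whereas the sign pattern of the correction has nothing to do with $Y$. Also, gadgets do not ``perturb every other degree by at most $\eps$'': by property~(i) they leave every other degree unchanged. What must be controlled is the size of the correction at each individual $f$-set $Q$.

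The missing observation is that the $\alpha$-error is deterministic: it comes from \eqref{eq: size of G^(f)(e)}, not from the sampling. So it should be removed at the fractional level \emph{before} sampling. Concretely, put $\psi'(Q)\coleq p_\ell/2$ for $Q$ of type $\ell$. For $e\in G^{(d)}_i$ put $c_e\coleq\big((g/2)n^{f-d}-\sum_{j\ge i}(p_j/2)|G^{(f)}_j(e)|\big)/|G^{(f+d)}_i(e)|$. Then set $\psi\coleq\psi'+\sum_i\sum_{e\in G^{(d)}_i}c_e\sum_{J\in G^{(f+d)}_i(e)}\psi_{e,J}$, where $\psi_{e,J}$ is the gadget on $e\cup J$ and $e\cup J$ has the same type as $e$. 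Property~(i) of the gadget makes the weighted degree of every $e$ exactly $(g/2)n^{f-d}$. Next, $|c_e|=O(\alpha g n^{f-d}/|G^{(f+d)}_i(e)|)$, and combining this with \eqref{eq: size of G^(f+d)(Q)} and \eqref{eq: size of G^(f+d)(e)} gives $|\psi(Q)-p_\ell/2|=O(\alpha\mu^{f-d}p_\ell)<p_\ell/4$. Hence $\psi$ is a genuine probability, bounded below by $p_\ell/4$. Only now sample each $Q$ into $Y$ independently with probability $\psi(Q)$. Chernoff then gives $|G[Y]^{(f)}(e)|=(1\pm n^{-\Omega(1)})(g/2)n^{f-d}$, which is $(\eps,g/2,f,d)$-regularity because $1/n\ll\eps$. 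Since $Y$ remains a product measure with all probabilities at least $p_\ell/4$, your density argument in Step~3 goes through unchanged.
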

		\begin{proof}
			Recall that $w_j=\big(\frac{\mu}{3fd}\big)^j$ for all $j\in[d]_0$ by the definition of $(\alpha,\mu,f,d,c,C)$-advantageous. Consider the $(d+1)\times (d+1)$-matrix $A=(a_{i\ell})_{i,\ell\in[d]_0}$ with
			\[ a_{i\ell}\coleq\begin{cases}
				\frac{(1-\mu)^{\ell-i}}{(\ell-i)!}\cdot\frac{\mu^{f-\ell-d+i}}{(f-\ell-d+i)!}\cdot w_i^{-1}\prod_{j=0}^dw_j^{\binom{\ell}{j}\binom{f-\ell}{d-j}}&\text{if }i\le \ell\\
				0&\text{if }\ell< i.
			\end{cases}\]
			This matrix is upper-triangular with non-negative entries. For $i\in[d]_0$, we get 
			\begin{align*}
				a_{ii}&=\frac{\mu^{f-d}}{(f-d)!}\cdot w_i^{-1}\prod_{j=0}^dw_j^{\binom{i}{j}\binom{f-i}{d-j}}=\frac{\mu^{f-d}}{(f-d)!}\cdot \!\left(\frac{3fd}{\mu}\right)^i\cdot\!\left(\frac{\mu}{3fd}\right)^{\sum_{j=0}^dj\cdot\binom{i}{j}\binom{f-i}{d-j}}\\
				&=\frac{\mu^{f-d}}{(f-d)!}\cdot \!\left(\frac{3fd}{\mu}\right)^i\cdot\!\left(\frac{\mu}{3fd}\right)^{i\cdot\binom{f-1}{d-1}}.
			\end{align*}
			We can conclude
			\begin{equation}\label{eq:bounds on a_ii}
				a_{ii}=\Theta_{f,d}\!\left(\mu^{f-d-i+i\cdot\binom{f-1}{d-1}}\right).
			\end{equation}
			Furthermore,
			\begin{align*}
				\frac{a_{i\ell}}{a_{\ell\ell}}&=\frac{(1-\mu)^{\ell-i}}{(\ell-i)!}\cdot\mu^{i-\ell}\cdot\frac{(f-d)!}{(f-d+i-\ell)!}\cdot\frac{w_\ell}{w_i}\le\mu^{i-\ell}\cdot f^{\ell-i}\cdot\frac{w_\ell}{w_i}=(3d)^{i-\ell}\le\frac{1}{2(d+1)}
			\end{align*}
			for all $i<\ell$. By \Cref{lem:linear algebra}, the linear equations
			\[A\begin{pmatrix}
				g_0'\\g_1'\\\vdots\\g_{d}'
			\end{pmatrix}=\begin{pmatrix}
				1\\1\\\vdots\\1
			\end{pmatrix}\]
			have a solution with $\frac{1}{2a_{ii}}\le g_i'\le \frac{1}{a_{ii}}$ for all $i\in[d]_0$. By \eqref{eq:bounds on a_ii}, $g_i'=\Theta_{f,d}\!\left(\mu^{-f+d+i-i\cdot\binom{f-1}{d-1}}\right)$. Setting $g\coleq1/g_d'$ and $g_i\coleq gg'_i$, we get
			\begin{equation}\label{eq:equations of g_i}A\begin{pmatrix}
					g_0\\g_1\\\vdots\\g_{d}
				\end{pmatrix}=\begin{pmatrix}
					g\\g\\\vdots\\g
			\end{pmatrix}\end{equation}
			with $g_d=1$ and \begin{align}\label{eq:bounds on g}g&=\Theta_{f,d}\!\left(\mu^{f-2d+d\cdot\binom{f-1}{d-1}}\right)\\\label{eq:bounds on g_i} g_i&=\Theta_{f,d}\!\left(\mu^{-d+i+(d-i)\binom{f-1}{d-1}}\right)\end{align} for all $i\in[d-1]_0$.
			\paragraph{}
		
			\renewcommand{\Gto}[1]{G^{(#1)}} 
			
			We want to find $f$-subsets $Y$ such that $G[Y]$ is a full $(\eps,\xi,f,d)$-complex. Assume that there is a function $\psi\colon \Gto f\to [0,1]$ such that for every $e\in\Gto d$, we have
			\[\sum_{Q'\in\Gto{f}(e)}\psi(Q'\cup e)=(g/2)n^{f-d}\]
			and $g_\ell/4\le \psi(Q)\le 1$ for all $Q\in\Gto f_\ell$. We can then take every $Q\in\Gto f$ independently into $Y$ with probability $\psi(Q)$. We will show now that in that case, $G[Y]$ is a full $(\eps,\xi,f,d)$-complex. 
			
			Indeed, we have $\E\!\left[\abs{G[Y]^{(f)}(e)}\right]=(g/2)n^{f-d}$. Thus, \Cref{theo: Chernoff} implies
			\[\P\!\left[\abs{G[Y]^{(f)}(e)}\ne \!\left(1\pm n^{-(f-d)/2.01}\right)(g/2)n^{f-d}\right]\le2\exp\!\left(-\frac{n^{-\frac{2}{2.01}(f-d)}}{3}(g/2)n^{f-d}\right)\le e^{-n^{0.004}}\]
			This shows that with high probability, $G[Y]$ is $\big(n^{-(f-d)/2.01}, g/2,f,d\big)$-regular and, hence, $(\eps,g/2,f,d)$-regular. Note that $g/2>\xi$ by~\eqref{eq:bounds on g}.
			
			For the density, let $e\in \Gto d$ and $Q\in\Gto{f+d}(e)$. Then
			\[\P\!\left[Q\in G[Y]^{(f+d)}(e)\right]=\prod_{Q'\in\binom{Q\cup e}{f}}\psi(Q')\ge(g_0/4)^{\binom{f+d}{f}}\overset{\eqref{eq:bounds on g_i}}\ge\mu^{d\binom{f-1}{d-1}\binom{f+d}{d}}.\]
			Since $\abs{\Gto{f+d}(e)}\ge \abs{\Gto{f+d}_d(e)}\overset{\eqref{eq: size of G^(f+d)(e)}}\ge\mu^{f+d\cdot\binom{f+d-1}{d-1}}\cdot n^f$, \[\E\!\left[G[Y]^{(f+d)}(e)\right]\ge \mu^{f+d\binom{f-1}{d-1}\binom{f+d}{d}+d\cdot\binom{f+d-1}{d-1}}n^f\ge \mu^{1.5d\binom{f-1}{d-1}\binom{f+d}{d}}n^f.\]
			Here, we want to apply McDiarmid's Inequality (\Cref{lem: McDiarmid}). For each $Q\in G^{(f)}$, let $X_Q$ be the indicator variable that $Q$ is in~$Y$. If $\abs{Q\cap e}=\ell$, then changing the value of $X_Q$ changes $\abs{G[Y]^{(f+d)}(e)}$ by at most~$n^{\ell}$. Since there are at most $n^{f-\ell}$ many such $f$-sets $Q$, McDiarmid's Inequality implies
			\[\resizebox{\linewidth}{!}{$\P\!\left[\abs{G[Y]^{(f+d)}(e)}\le\mu^{2d\binom{f-1}{d-1}\binom{f+d}{d}}n^f\right]\le2\exp\!\left(-\E\!\left[\abs{G[Y]^{(f+d)}(e)}\right]^2/\sum_{\ell=0}^dn^{2\ell}\cdot n^{f-\ell}\right)\le \exp\!\left(-n^{f-d-0.01}\right).$}\]
			Therefore, with high probability, $G[Y]$ is $(\xi, f+d,d)$-dense.
			
			Finally, we have to check extendability. Fix any set $X\subs V(G)$ of size $\xi n$. Let $e\in \binom{X}{d}$. By \eqref{eq: size of E_f,ell(e) imprecise}, the number of $(f-d)$-sets $Q\subs V(G)\backslash e$ such that $\binom{Q\cup e}{d}\backslash\lbrace e\rbrace\subs\Gto d\subs G[Y]^{(d)}$ is at least $\mu^{f+d\cdot\binom{f-1}{d-1}}\cdot n^{f-d}\ge \xi n^{f-d}$. Therefore $G[Y]$ is $(\xi, f,d)$-extendable which concludes the proof that $G$ is an $(\eps,\xi, f,d)$-complex.
			
			It remains to show that such a $\psi$ exists. By \Cref{prop: gadget}, for every $e\in\Gto d$ and $J\in\Gto{f+d}(e)$, there exists a function $\psi_{e,J}\colon\Gto f\to\mathbb R$ such that
			\begin{enumerate}
				\item\label{enum: psi_e,J=0 outside} $\psi_{e,J}(Q)=0$ for all $Q\not\subs e\cup J$;
				\item\label{enum: sum of psi_e,J either 0 or 1} for all $e'\in\Gto d$, $\sum_{Q'\in\Gto f(e')}\psi_{e,J}(Q'\cup e')=\begin{cases}
					1,&e'=e\\
					0,&e'\ne e;
				\end{cases}$
				\item\label{enum: psi_e,J is small} for all $Q\in\Gto f$, $\abs{\psi_{e,J}(Q)}\le\frac{2^{d-j(d-j)!}}{\binom{f-d+j}{j}}$, where $j\coleq\abs{e\cap Q}$.
			\end{enumerate}
			We want to define a function $\psi\colon \Gto f\to [0,1]$ such that for each $e\in \Gto d$, we get
			\[\sum_{Q'\in \Gto f(e)}\psi(Q'\cup e)=(g/2)n^{f-d}.\]
			Let $\psi'\colon\Gto f\to[0,1]$ be the function which maps an $f$-set of type $i$ to $g_i/2$.
			
			For every $e\in \Gto d_i$, we define
			\[c_e\coleq \frac{(g/2)n^{f-d}-\sum_{j=i}^d(g_j/2)\abs{\Gto f_j(e)}}{\abs{\Gto{f+d}_{i}(e)}}.\]
			Note that by \eqref{eq: size of G^(f)(e)} and \eqref{eq:equations of g_i}, we have $\sum_{j=i}^dg_j\abs{\Gto f_j(e)}=(1\pm \alpha)\sum_{j=i}^dg_ja_{ij}n^{f-d}=\big(1\pm\alpha\big)gn^{f-d}$ whence \[\abs{c_e}\le \frac{\alpha g n^{f-d}}{2\abs{\Gto{f+d}_i(e)}}\overset{\eqref{eq: size of G^(f+d)(e)}}=\O_{c}\!\left(\alpha g\mu^{-f+i-i\binom{f+d-1}{d-1}}\cdot n^{-d}\right).\]
			Define $\psi_i\colon\Gto f\to[0,1]$ as \[\psi_i\coleq\sum_{e\in\Gto d_i}c_e\sum_{J\in \Gto{f+d}_i(e)}\psi_{e,J}\] for every $i\in[d]_0$. Note that by \ref{enum: psi_e,J=0 outside}, $\psi_i(Q)=0$ for all $f$-sets $Q$ of type larger than~$i$. Finally let \[\psi\coleq\psi'+\sum_{i=0}^d\psi_i.\]
			
			For every $e\in\Gto d_i$, we have
			\begin{align*}
				\sum_{Q'\in \Gto f(e)}\psi(Q'\cup e)&=\sum_{j=i}^d(g_j)/2\abs{\Gto f_j(e)}+\sum_{i'=0}^d\sum_{e'\in\Gto d_{i'}}c_{e'}\sum_{J\in \Gto{f+d}_{i'}(e')}\sum_{Q'\in\Gto f(e)}\psi_{e',J}(Q'\cup e)\\
				&\overset{\ref{enum: sum of psi_e,J either 0 or 1}}{=}\sum_{j=i}^d(g_j)/2\abs{\Gto f_j(e)}+c_e\abs{\Gto{f+d}_i(e)}=(g/2)n^{f-d},
			\end{align*}
			as desired. The last thing to check is that $\psi(Q)$ is between $g_\ell/4$ and 1 for all $Q\in \Gto f_\ell$. Let $Q\in\Gto f_\ell$. Then
			\begin{align*}
				\abs{\psi(Q)-g_\ell/2}&=\abs{\sum_{i=\ell}^d\psi_i(Q)}\overset{\ref{enum: psi_e,J=0 outside}}{\le}\sum_{i=\ell}^d\sum_{\substack{e\in\Gto d_i,J\in\Gto{f+d}_i(e)\\Q\subs e\cup J}}\abs{c_e}\abs{\psi_{e,J}(Q)}\\
				&\overset{\ref{enum: psi_e,J is small}}\le\sum_{i=\ell}^d\abs{\Gto{f+d}_i(Q)}\cdot\O_{c}\!\left(\alpha g\mu^{-f+i-i\binom{f+d-1}{d-1}}\cdot n^{-d}\right)\\
				&\overset{\eqref{eq: size of G^(f+d)(Q)}}{\le}\sum_{i=\ell}^d\O_{C}\!\left(\mu^{f-i+\ell-\ell\cdot\binom{f-1}{d-1}+i\cdot\binom{f+d-1}{d-1}}\cdot n^d\right)\cdot\O_{c}\!\left(\alpha g\mu^{-f+i-i\binom{f+d-1}{d-1}}\cdot n^{-d}\right)\\&=\sum_{i=\ell}^d\O_{c,C}\!\left(\alpha g\mu^{\ell-\ell\cdot\binom{f-1}{d-1}}\right)\overset{\eqref{eq:bounds on g}}{=}\O_{c,C}\!\left(\alpha \cdot \mu^{f-2d+d\cdot\binom{f-1}{d-1}+\ell-\ell\cdot\binom{f-1}{d-1}}\right)\\&=\O_{c,C}\!\left(\alpha \cdot\mu^{f-d}\cdot \mu^{-d+\ell+(d-\ell)\cdot\binom{f-1}{d-1}}\right)\overset{\eqref{eq:bounds on g_i}}{=}\O_{c,C}\!\left(\alpha \cdot\mu^{f-d}\cdot g_\ell\right)
			\end{align*}
			If $\alpha$ and $\mu$ are small enough, this is smaller than $g_\ell/4$.
		\end{proof}
		
		\begin{proof}[Proof of \Cref{lem:skew_super_complex}]
			Let $G'\subs \Gskew$ be the subcomplex given by \Cref{lem: Gskew has a subcomplex full of advantages}. We claim that $G'$ is an $(\eps,\xi,f,d)$-supercomplex. Let $h\in[d]_0$, $B\subs G'^{(h)}$ with $1\le\abs B\le 2^h$ and $G''\coleq\bigcap_{b\in B}G'(b)$. We have to show that $G''$ is an $(\eps,\xi,f-h,d-h)$-complex. By \Cref{lem: Gskew has a subcomplex full of advantages}, we know that $G''$ is $(\alpha',\mu,f',d',c,C)$-advantageous where $d'\coleq d-h$, $f'\coleq f-h$, $\alpha'\coleq f\mu^{-1}(2^{d+1}\cdot d)^d\alpha$, $n'\coleq n-\abs{\bigcup_{b\in B}b}$. By \Cref{lem:advantageous implies complex}, this implies that $G''$ is an $(\eps,\xi,f',d')$-complex. Thus, $G'$ is an $(\eps,\xi,f,d)$-supercomplex which is $(D,\rho)$-rich by \Cref{lem: Gskew has a subcomplex full of advantages}.
			
			Applying \Cref{cor: supercomplex sparsification} on $G'$ yields the sparse supercomplex we are looking for.
		\end{proof}
		
		\section{Concluding Remarks} \label{sec: Conclusion}
		
		\paragraph{Refined absorption.}
		In the proof of \Cref{theo: large min degree implies extra-tight trail} one could potentially replace the use of the iterative absorption method in \Cref{sec: Cover Down} with ``refined'' absorbers, developed recently by Delcourt and Postle~\cite{delcourt2024refined}.  
		Very roughly speaking, they prove the existence of an absorber $A$ together with a collection of cliques such that for any divisible leftover $L$, one can decompose $A\cup L$ using only cliques from the specified collection. Crucially, this collection of cliques can be chosen so sparse that every edge lies only in a constant number of them. 
		Unfortunately for our application one would need an appropriate refined absorption statement with {\em extra-tight cycles} instead of cliques. While this sort of result quite possibly holds, it is not currently available in the literature. 
		Furthermore, even with such an alternative approach, the key novel contributions of our paper (the reduction of the diameter problem to a hypergraph decomposition result via turns, the construction of the switchers, and the approximate decomposition lemma with boosted parameters) would still be necessary.
		
		\paragraph{Inching towards geometry.}
		It is well known (cf.~\cite{santosdefinesH}) that for the purposes of the maximum diameter of a $d$-dimensional polytope with $n$ facets it is sufficient to consider those which are  {\em simple}, that is every vertex is contained in exactly $d$ of the facets.  
		Each vertex of a simple polytope $P$ can then be encoded as a $d$-subset of its $n$ facets, giving rise to a $d$-graph $\cF = \cF (P)$ on vertex set $[n]$. The dual graph $G(\cF)$ is exactly the vertex/edge graph of  $P$. Indeed, two vertices of $P$ are connected by an edge of $P$ if and only if the two corresponding $d$-sets have an intersection of size $d-1$ (corresponding to the $d-1$ facets of $P$ containing the edge). 
		Since the vertex/edge graph of $P$ is connected, so is $ G(\cF)$, and hence $H_s(n,d-1)$ is an upper bound on the diameter of polytopes.  
		The simplicial $d$-complexes $\langle \cF (P) \rangle$ coming from polytopes have in fact a much richer topological structure, beyond the connectivity of their dual graph. 
		As a first step, the diameter of pseudomanifolds without boundary were considered by Criado and Santos~\cite{criado2017maximum}. The simplicial $d$-complex generated by a $(d+1)$-graph $\cF$ is called a {\em pseudomanifold without boundary} if every member of $\partial \cF$ is contained in exactly two members of $\cF$. 
		After improvements of \cite{criado2021randomized}, Bohman and Newman~\cite{bohman2022complexes} determined the correct asymptotics of the maximum diameter $H_{pm}(n,d)$ of $d$-dimensional pseudomanifolds without boundary on $n$ vertices. 
		We can determine the precise value for $d=2$ and every large enough~$n$. A precise bound for $d\geq 3$ and every large enough $n$ seems to be quite a challenge.
		
		In another direction, the nature of our construction requires the lower bound $n_0$ on the number of vertices to be significantly large in terms of $d$. For the connection of the Polynomial Hirsch Conjecture to the Simplex Method, the particular range when $n$ is linear in $d$ is of quite a relevance, as many of the worst case examples are in there. Santos' product construction~\cite{santosdefinesH} does give an exponential lower bound in this range, but more precise, possibly exact results would be of great interest. 
		
		\paragraph{The optimal minimum degree condition for the existence of extra-tight tours.} 
		In relation to Theorem~\ref{cor: extra-tight tour covering everything} it would be very interesting to determine, or at least estimate, the supremum of those $\alpha >0$ for which every $d$-graph on 
		$n > n_0 (d,\alpha)$ vertices with $\delta (G) > (1-\alpha)n$, satisfying that every vertex has degree divisible by $d^2$ admits an extra-tight Euler tour. The answer is not even known for $d=2$.
		
		\section*{Acknowledgements}
		Stefan Glock is funded by the Deutsche Forschungsgemeinschaft (DFG, German Research Foundation) – 542321564.
		Silas Rathke is funded by the Deutsche Forschungsgemeinschaft (DFG, German Research Foundation) under Germany's Excellence Strategy – The Berlin Mathematics Research Center MATH+ (EXC-2046/1, project ID: 390685689).
		
		\bibliographystyle{amsplain}
		\bibliography{bib}

@misc{2-dimensional,
  author = {Olaf Parczyk and Silas Rathke and Tibor Szab\'{o}},
  title  = {The maximum diameter of $2$-dimensional simplicial complexes},
  howpublished = {arXiv:2511.10144},
  year   = {(2025)}
}

@article {ehard2020pseudorandom,
    AUTHOR = {Ehard, Stefan and Glock, Stefan and Joos, Felix},
     TITLE = {Pseudorandom hypergraph matchings},
   JOURNAL = {Combin. Probab. Comput.},
  FJOURNAL = {Combinatorics, Probability and Computing},
    VOLUME = {29},
      YEAR = {2020},
    NUMBER = {6},
     PAGES = {868--885},
      ISSN = {0963-5483,1469-2163},
   MRCLASS = {05C65 (05C15 05C70 05D15 05D40)},
  MRNUMBER = {4173135},
MRREVIEWER = {Ioan\ Tomescu},
       DOI = {10.1017/s0963548320000280},
       URL = {https://doi.org/10.1017/s0963548320000280},
}

@article {glock2020euler,
    AUTHOR = {Glock, Stefan and Joos, Felix and K\"uhn, Daniela and Osthus,
              Deryk},
     TITLE = {Euler tours in hypergraphs},
   JOURNAL = {Combinatorica},
  FJOURNAL = {Combinatorica. An International Journal on Combinatorics and
              the Theory of Computing},
    VOLUME = {40},
      YEAR = {2020},
    NUMBER = {5},
     PAGES = {679--690},
      ISSN = {0209-9683,1439-6912},
   MRCLASS = {05C65 (05B40 05C45 05C81)},
  MRNUMBER = {4181762},
MRREVIEWER = {Tomoki\ Yamashita},
       DOI = {10.1007/s00493-020-4046-8},
       URL = {https://doi.org/10.1007/s00493-020-4046-8},
}

@article {HirschCounterexample,
    AUTHOR = {Santos, Francisco},
     TITLE = {A counterexample to the {H}irsch conjecture},
   JOURNAL = {Ann. of Math. (2)},
  FJOURNAL = {Annals of Mathematics. Second Series},
    VOLUME = {176},
      YEAR = {2012},
    NUMBER = {1},
     PAGES = {383--412},
      ISSN = {0003-486X,1939-8980},
   MRCLASS = {52A37 (51M20 52A40 52B05 52B12)},
  MRNUMBER = {2925387},
MRREVIEWER = {J.\ B\"ohm},
       DOI = {10.4007/annals.2012.176.1.7},
       URL = {https://doi.org/10.4007/annals.2012.176.1.7},
}

@article {KalaiKleitman,
    AUTHOR = {Kalai, Gil and Kleitman, Daniel J.},
     TITLE = {A quasi-polynomial bound for the diameter of graphs of
              polyhedra},
   JOURNAL = {Bull. Amer. Math. Soc. (N.S.)},
  FJOURNAL = {American Mathematical Society. Bulletin. New Series},
    VOLUME = {26},
      YEAR = {1992},
    NUMBER = {2},
     PAGES = {315--316},
      ISSN = {0273-0979,1088-9485},
   MRCLASS = {52B55},
  MRNUMBER = {1130448},
MRREVIEWER = {W.\ J.\ Firey},
       DOI = {10.1090/S0273-0979-1992-00285-9},
       URL = {https://doi.org/10.1090/S0273-0979-1992-00285-9},
}

@article {dkebski2017harmonious,
    AUTHOR = {D{\k{e}}bski, Micha{\l} and Lonc, Zbigniew and Rz{\k{a}}{\.z}ewski, Pawe{\l}},
     TITLE = {Harmonious and achromatic colorings of fragmentable
              hypergraphs},
   JOURNAL = {European J. Combin.},
  FJOURNAL = {European Journal of Combinatorics},
    VOLUME = {66},
      YEAR = {2017},
     PAGES = {60--80},
      ISSN = {0195-6698,1095-9971},
   MRCLASS = {05C15 (05C65)},
  MRNUMBER = {3692137},
MRREVIEWER = {Christina\ Zarb},
       DOI = {10.1016/j.ejc.2017.06.013},
       URL = {https://doi.org/10.1016/j.ejc.2017.06.013},
}

@article {CHUNG199243,
    AUTHOR = {Chung, Fan and Diaconis, Persi and Graham, Ron},
     TITLE = {Universal cycles for combinatorial structures},
   JOURNAL = {Discrete Math.},
  FJOURNAL = {Discrete Mathematics},
    VOLUME = {110},
      YEAR = {1992},
    NUMBER = {1-3},
     PAGES = {43--59},
      ISSN = {0012-365X,1872-681X},
   MRCLASS = {05A99 (05C20)},
  MRNUMBER = {1197444},
MRREVIEWER = {Giuseppe\ Pellegrino},
       DOI = {10.1016/0012-365X(92)90699-G},
       URL = {https://doi.org/10.1016/0012-365X(92)90699-G},
}

@book {janson2011random,
    AUTHOR = {Janson, Svante and {\L}uczak, Tomasz and Rucinski, Andrzej},
     TITLE = {Random graphs},
    SERIES = {Wiley-Interscience Series in Discrete Mathematics and
              Optimization},
 PUBLISHER = {Wiley-Interscience, New York},
      YEAR = {2000},
     PAGES = {xii+333},
      ISBN = {0-471-17541-2},
   MRCLASS = {05C80 (60C05 82B41)},
  MRNUMBER = {1782847},
MRREVIEWER = {Mark\ R.\ Jerrum},
       DOI = {10.1002/9781118032718},
       URL = {https://doi.org/10.1002/9781118032718},
}

@article {glock2023existence,
    AUTHOR = {Glock, Stefan and K\"uhn, Daniela and Lo, Allan and Osthus,
              Deryk},
     TITLE = {The existence of designs via iterative absorption: hypergraph
              {$F$}-designs for arbitrary {$F$}},
   JOURNAL = {Mem. Amer. Math. Soc.},
  FJOURNAL = {Memoirs of the American Mathematical Society},
    VOLUME = {284},
      YEAR = {2023},
    NUMBER = {1406},
     PAGES = {v+131},
      ISSN = {0065-9266,1947-6221},
      ISBN = {978-1-4704-6024-2; 978-1-4704-7444-7},
   MRCLASS = {05-02 (05Bxx 05C65 05Dxx)},
  MRNUMBER = {4572074},
MRREVIEWER = {Junling\ Zhou},
       DOI = {10.1090/memo/1406},
       URL = {https://doi.org/10.1090/memo/1406},
}

@misc{bohman2022complexes,
  title={Complexes of nearly maximum diameter},
  author={Bohman, Tom and Newman, Andrew},
  howpublished={arXiv:2204.11932},
  year={(2022)}
}

@article {criado2021randomized,
    AUTHOR = {Criado, Francisco and Newman, Andrew},
     TITLE = {Randomized construction of complexes with large diameter},
   JOURNAL = {Discrete Comput. Geom.},
  FJOURNAL = {Discrete \& Computational Geometry. An International Journal
              of Mathematics and Computer Science},
    VOLUME = {66},
      YEAR = {2021},
    NUMBER = {2},
     PAGES = {687--700},
      ISSN = {0179-5376,1432-0444},
   MRCLASS = {05E45 (05C12 05D40)},
  MRNUMBER = {4292759},
MRREVIEWER = {Oana\ Stefania\ Olteanu},
       DOI = {10.1007/s00454-020-00248-2},
       URL = {https://doi.org/10.1007/s00454-020-00248-2},
}

@article {criado2017maximum,
    AUTHOR = {Criado, Francisco and Santos, Francisco},
     TITLE = {The maximum diameter of pure simplicial complexes and
              pseudo-manifolds},
   JOURNAL = {Discrete Comput. Geom.},
  FJOURNAL = {Discrete \& Computational Geometry. An International Journal
              of Mathematics and Computer Science},
    VOLUME = {58},
      YEAR = {2017},
    NUMBER = {3},
     PAGES = {643--649},
      ISSN = {0179-5376,1432-0444},
   MRCLASS = {55U10 (52B05 57Q05 90C05 90C60)},
  MRNUMBER = {3690665},
MRREVIEWER = {Nicholas\ A.\ Scoville},
       DOI = {10.1007/s00454-017-9888-5},
       URL = {https://doi.org/10.1007/s00454-017-9888-5},
}

@article {gishboliner2023tight,
    AUTHOR = {Gishboliner, Lior and Glock, Stefan and Sgueglia, Amedeo},
     TITLE = {Tight {H}amilton cycles with high discrepancy},
   JOURNAL = {Combin. Probab. Comput.},
  FJOURNAL = {Combinatorics, Probability and Computing},
    VOLUME = {34},
      YEAR = {2025},
    NUMBER = {4},
     PAGES = {565--584},
      ISSN = {0963-5483,1469-2163},
   MRCLASS = {05C35 (05C45 05C65)},
  MRNUMBER = {4929049},
       DOI = {10.1017/s0963548325000057},
       URL = {https://doi.org/10.1017/s0963548325000057},
}

@misc{Keevash,
  title={The existence of designs},
  author={Keevash, Peter},
  howpublished={arXiv:1401.3665},
  year={(2014)}
}

@article {barber2020minimalist,
    AUTHOR = {Barber, Ben and Glock, Stefan and K\"uhn, Daniela and Lo,
              Allan and Montgomery, Richard and Osthus, Deryk},
     TITLE = {Minimalist designs},
   JOURNAL = {Random Structures Algorithms},
  FJOURNAL = {Random Structures \& Algorithms},
    VOLUME = {57},
      YEAR = {2020},
    NUMBER = {1},
     PAGES = {47--63},
      ISSN = {1042-9832,1098-2418},
   MRCLASS = {05C51 (05B05 05C35 05C80)},
  MRNUMBER = {4120592},
MRREVIEWER = {J\'ozsef\ Balogh},
       DOI = {10.1002/rsa.20915},
       URL = {https://doi.org/10.1002/rsa.20915},
}

@article {santosdefinesH,
    AUTHOR = {Santos, Francisco},
     TITLE = {Recent progress on the combinatorial diameter
              of polytopes and simplicial complexes},
   JOURNAL = {TOP},
  FJOURNAL = {TOP. Transactions in Operations Research},
    VOLUME = {21},
      YEAR = {2013},
    NUMBER = {3},
     PAGES = {482--484},
      ISSN = {1134-5764,1863-8279},
   MRCLASS = {52B05 (90C05 90C60)},
  MRNUMBER = {3112027},
       DOI = {10.1007/s11750-013-0296-6},
       URL = {https://doi.org/10.1007/s11750-013-0296-6},
}

@article {BARBER2017148,
    AUTHOR = {Barber, Ben and K\"uhn, Daniela and Lo, Allan and Montgomery,
              Richard and Osthus, Deryk},
     TITLE = {Fractional clique decompositions of dense graphs and
              hypergraphs},
   JOURNAL = {J. Combin. Theory Ser. B},
  FJOURNAL = {Journal of Combinatorial Theory. Series B},
    VOLUME = {127},
      YEAR = {2017},
     PAGES = {148--186},
      ISSN = {0095-8956,1096-0902},
   MRCLASS = {05C65 (05C51 05C69 05C72)},
  MRNUMBER = {3704659},
MRREVIEWER = {Wei\ Gao},
       DOI = {10.1016/j.jctb.2017.05.005},
       URL = {https://doi.org/10.1016/j.jctb.2017.05.005},
}

@misc{polymath3,
 author = {Gil Kalai},
 title = {Polymath 3: Polynomial {H}irsch Conjecture}, 
 note = {\url{http://gilkalai.wordpress.com/2010/09/29/polymath-3-polynomial-hirsch-conjecture}}, 
 year = {2010}
}

@misc{gould2025advancing,
  title={Advancing the {R}ödl Nibble: New bounds on matchings and the list chromatic index of hypergraphs},
  author={Gould, Stephen and Kelly, Tom},
  howpublished={arXiv:2511.11375},
  year={(2025)}
}

@misc{delcourt2024refined,
  title={Refined absorption: A new proof of the existence conjecture},
  author={Delcourt, Michelle and Postle, Luke},
  howpublished={arXiv:2402.17855},
  year={(2024)}
}

@article {Todd2024,
    AUTHOR = {Todd, Michael J.},
     TITLE = {An improved {K}alai-{K}leitman bound for the diameter of a
              polyhedron},
   JOURNAL = {SIAM J. Discrete Math.},
  FJOURNAL = {SIAM Journal on Discrete Mathematics},
    VOLUME = {28},
      YEAR = {2014},
    NUMBER = {4},
     PAGES = {1944--1947},
      ISSN = {0895-4801,1095-7146},
   MRCLASS = {52B05 (05C12 90C05)},
  MRNUMBER = {3278836},
       DOI = {10.1137/140962310},
       URL = {https://doi.org/10.1137/140962310},
}

@article {Sukegawa2019,
    AUTHOR = {Sukegawa, Noriyoshi},
     TITLE = {An asymptotically improved upper bound on the diameter of
              polyhedra},
   JOURNAL = {Discrete Comput. Geom.},
  FJOURNAL = {Discrete \& Computational Geometry. An International Journal
              of Mathematics and Computer Science},
    VOLUME = {62},
      YEAR = {2019},
    NUMBER = {3},
     PAGES = {690--699},
      ISSN = {0179-5376,1432-0444},
   MRCLASS = {52B05 (52B11 90C05)},
  MRNUMBER = {3996942},
MRREVIEWER = {Lionel\ Pournin},
       DOI = {10.1007/s00454-018-0016-y},
       URL = {https://doi.org/10.1007/s00454-018-0016-y},
}

@article {Eisenbrand2010,
    AUTHOR = {Eisenbrand, Friedrich and H\"ahnle, Nicolai and Razborov,
              Alexander and Rothvo\ss, Thomas},
     TITLE = {Diameter of polyhedra: limits of abstraction},
   JOURNAL = {Math. Oper. Res.},
  FJOURNAL = {Mathematics of Operations Research},
    VOLUME = {35},
      YEAR = {2010},
    NUMBER = {4},
     PAGES = {786--794},
      ISSN = {0364-765X,1526-5471},
   MRCLASS = {52B05 (05B40 05C12 05C90)},
  MRNUMBER = {2777514},
MRREVIEWER = {Ren\ Ding},
       DOI = {10.1287/moor.1100.0470},
       URL = {https://doi.org/10.1287/moor.1100.0470},
}

@article {MR3152073,
    AUTHOR = {Kim, Edward D.},
     TITLE = {Polyhedral graph abstractions and an approach to the linear
              {H}irsch conjecture},
   JOURNAL = {Math. Program.},
  FJOURNAL = {Mathematical Programming},
    VOLUME = {143},
      YEAR = {2014},
    NUMBER = {1-2},
     PAGES = {357--370},
      ISSN = {0025-5610,1436-4646},
   MRCLASS = {05C12 (52B05 52B40 90C05)},
  MRNUMBER = {3152073},
MRREVIEWER = {Mathieu\ Dutour Sikiri\'c},
       DOI = {10.1007/s10107-012-0611-2},
       URL = {https://doi.org/10.1007/s10107-012-0611-2},
}
		
		\towriteornottowrite{
			\section{Proof of \Cref{lem: rooted embedding with U}}
			
			The following proof is almost verbatim the proof of~\cite[Lemma 5.20]{glock2023existence}.
			The differences are the following: 
			\begin{itemize}
				\item The letter $r$ is replaced by~$d$.
				\item Since we do not have the concept of a hull anymore, $K_j$ is no longer the hull of $(\phi_j(T_j),\Ima(\Lambda_j))$, but just the edge set of $\phi_j(T_j)$. 
				\item In the inductive argument, it is no longer assumed that $\phi_1,\dots,\phi_j$ obey (ii) of~\cite[Lemma 5.20]{glock2023existence}. Instead, they have to obey \ref{enum: rooted embedding only uses U}. 
				\item The letter $U$ in the old proof is replaced by $W$ since the letter $U$ is already taken. 
				\item When $T_j$ is iteratively embedded, one has to make sure that $\phi_j(v_\ell)$ is defined to be in~$U$. However, this is not a problem since the condition $\abs{\bigcap_{S\in A}G(S)}\ge \xi n$ of \cite[Lemma 5.20]{glock2023existence} is $\abs{\bigcap_{S\in A}G(S)\cap U}\ge \xi n$ in this new version.
				\item Since condition (ii) of \cite[Lemma 5.20]{glock2023existence} is gone, there is no need anymore to define $O_{r+1}$.
			\end{itemize}
			\begin{proof}
				For $j\in[m]$ and a set $S\subs V(G)$ with $\abs S\in[d-1]$, let \[\troot(S,j):=\abs{\{j'\in[j]:\Lambda_{j'} \mbox{ roots }S\}}.\] We will define $\phi_1,\dots,\phi_m$ successively. Once $\phi_j$ is defined, we let $K_j$ denote the edge set of $\phi_j(T_j)$.
				
				Suppose that for some $j\in[m]$, we have already defined $\phi_1,\dots,\phi_{j-1}$ such that $K_1,\dots,K_{j-1}$ are edge-disjoint, $\phi_{j'}(v)\in U$ for all $j'\in[j-1]$ and all $v\in V(T_{j'})\backslash X_{j'}$, and the following holds for $G_{j}:=\bigcup_{j'\in[j-1]}K_{j'}$, all $i\in[d-1]$ and all $S\in \binom{V(G)}{i}$:
				\begin{align}
					\abs{G_{j}(S)}\le \alpha \gamma^{(2^{-i})}n^{d-i}+ \big(\troot(S,j-1)+1\big)2^t.\label{embedding degree bound}
				\end{align}
				Note that \eqref{embedding degree bound} together with \eqref{eq: few roots with U} implies that for all $i\in[d-1]$ and all $S\in \binom{V(G)}{i}$, we have
				\begin{align}
					\abs{G_{j}(S)}\le 2\alpha\gamma^{(2^{-i})}n^{d-i}.\label{embedding degree bound new}
				\end{align}
				
				We will now define a $\Lambda_j$-faithful embedding $\phi_j$ of $(T_j,X_j)$ into $G$ such that $K_j$ is edge-disjoint from $G_j$, \ref{enum: rooted embedding only uses U} holds for $j$, and \eqref{embedding degree bound} holds with $j$ replaced by $j+1$. For $i\in[d-1]$, define $\BAD_i\coleq \{S\in\binom{V(G)}{i}:\abs{G_j(S)}\ge \alpha\gamma^{(2^{-i})}n^{d-i}\}$.
				We view $\BAD_i$ as an $i$-graph. We claim that for all $i\in[d-1]$,
				\begin{align}
					\Delta(\BAD_i)\le \gamma^{(2^{-d})}n.\label{degeneracy embedding max degree}
				\end{align}
				Consider $i\in[d-1]$ and suppose that there exists some $S\in\binom{V(G)}{i-1}$ such that $\abs{\BAD_i(S)}>\gamma^{(2^{-d})}n$. We then have that
				\begin{align*}
					\abs{G_j(S)}&=\frac{1}{d-i+1}\sum_{v\in V(G)\backslash S}\abs{G_j(S\cup \{v\})}\ge d^{-1} \sum_{v\in \BAD_i(S)}\abs{G_j(S\cup \{v\})}\\
					&\ge d^{-1}\abs{\BAD_i(S)} \alpha\gamma^{(2^{-i})}n^{d-i}  \ge d^{-1}\gamma^{(2^{-d})}n\alpha  \gamma^{(2^{-i})}n^{d-i}=d^{-1}\alpha \gamma^{(2^{-d}+2^{-i})}n^{d-(i-1)}.
				\end{align*}
				This contradicts \eqref{embedding degree bound new} if $i-1>0$ since $2^{-d}+2^{-i}<2^{-(i-1)}$. If $i=1$, then $S=\emptyset$ and we have $\abs{G_j}\ge d^{-1}\alpha\gamma^{(2^{-d}+2^{-1})}n^{d}$, which is also a contradiction since $\abs{G_{j}}\le m\binom{t}{d}\le \binom{t}{d} \alpha\gamma n^d$ and $2^{-d}+2^{-1}<1$ (as $d\ge 2$ if $i\in [d-1]$). This proves~\eqref{degeneracy embedding max degree}.
				
				We now embed the vertices of $T_j$ such that the obtained embedding $\phi_j$ is $\Lambda_j$-faithful. First, embed every vertex from $X_j$ at its assigned position. Since $T_j$ has degeneracy at most $D$ rooted at $X_j$, there exists an ordering $v_1,\dots,v_{k}$ of the vertices of $V(T_j)\backslash X_j$ such that for every $\ell\in[k]$, we have
				\begin{align}
					\abs{T_j[X_j\cup \{v_1,\dots,v_\ell\}](v_\ell)}\le D.\label{degeneracy ordering}
				\end{align}
				Suppose that for some $\ell\in [k]$, we have already embedded $v_1,\dots,v_{\ell-1}$ into~$U$. We now want to define $\phi_j(v_\ell)\in U$.
				Let $W\coleq\{\phi_j(v):v\in X_j\cup \{v_1,\dots,v_{\ell-1}\}\}$ be the set of vertices which have already been used as images for $\phi_j$.
				Let $A$ contain all $(d-1)$-subsets $S$ of $W$ such that $\phi_j^{-1}(S)\cup \{v_\ell\}\in T_j$. We need to choose $\phi_j(v_\ell)$ from the set $(\bigcap_{S\in A}G(S)\cap U)\backslash W$ in order to complete $\phi_j$ to an injective homomorphism from $T_j$ to~$G$. By~\eqref{degeneracy ordering}, we have $\abs A\le D$. Thus, by assumption, $\abs{\bigcap_{S\in A}G(S)\cap U}\ge \xi n$.
				
				For $i\in[d-1]$, let $O_i$ consist of all vertices $x\in V(G)$ such that there exists some $S\in \binom{W}{i-1}$ such that $S\cup \{x\}\in \BAD_i$ (so $\BAD_1= \binom{O_1}{1}$). We have $$\abs{O_i}\le \binom{\abs W}{i-1}\Delta(\BAD_i)\overset{\eqref{degeneracy embedding max degree}}{\le} \binom{t}{i-1}\gamma^{(2^{-d})}n.$$
				Let $O_d$ consist of all vertices $x\in V(G)$ such that $S\cup \{x\}\in G_j$ for some $S\in \binom{W}{d-1}$. By~\eqref{embedding degree bound new}, we have that $\abs{O_d}\le \binom{\abs{W}}{d-1}\Delta(G_j)\le  \binom{t}{d-1}2\alpha\gamma^{(2^{-(d-1)})}n \le \binom{t}{d-1} \gamma^{(2^{-d})}n$.
				
				Crucially, we have \[\abs{\bigcap_{S\in A}G(S)\cap U}-\abs W-\sum_{i=1}^{d}\abs{O_i} \ge \xi n-t-2^t\gamma^{(2^{-d})}n >0.\] Thus, there exists a vertex $x\in U$ such that $x\notin W\cup O_1\cup \dots \cup O_{d}$ and $S\cup \{x\}\in G$ for all $S\in A$. Define $\phi_j(v_\ell):=x$.
				
				Continuing in this way until $\phi_j$ is defined for every $v\in V(T_j)$ yields an injective homomorphism from $T_j$ to $G$ that satisfies \ref{enum: rooted embedding only uses U}. By definition of $O_d$, $K_j$ is edge-disjoint from $G_j$. It remains to show that \eqref{embedding degree bound} holds with $j$ replaced by $j+1$. Let $i\in[d-1]$ and $S\in \binom{V(G)}{i}$. If $S\not\in \BAD_i$, then we have $\abs{G_{j+1}(S)}\le \abs{G_{j}(S)}+\binom{t-i}{d-i}\le \alpha\gamma^{(2^{-i})}n^{d-i}+2^t$, so \eqref{embedding degree bound} holds. Now, assume that $S\in \BAD_i$. If $S\subs \Ima(\Lambda_{j})$ and $\abs{T_j(\Lambda_{j}^{-1}(S))}>0$, then $\troot(S,j)=\troot(S,j-1)+1$ and thus $\abs{G_{j+1}(S)}\le \abs{G_{j}(S)}+\binom{t-i}{d-i}\le  \alpha\gamma^{(2^{-i})}n^{d-i}+ (\troot(S,j-1)+1)2^t+\binom{t-i}{d-i} \le \gamma^{(2^{-i})}n^{d-i}+ (\troot(S,j)+1)2^t$ and \eqref{embedding degree bound} holds. Suppose next that $S\not\subs \Ima(\Lambda_{j})$. We claim that $S\not\subs V(\phi_j(T_j))$. Suppose, for a contradiction, that $S\subs V(\phi_j(T_j))$. Let $\ell\coleq\max\{\ell'\in[k]:\phi_j(v_{\ell'})\in S\}$. (Note that the maximum exists since $(S\cap V(\phi_j(T_j)))\backslash \Ima(\Lambda_{j})$ is not empty.) Hence, $x:=\phi_j(v_\ell)\in S$. Recall that when we defined $\phi_j(v_\ell)$, $\phi_j(v)$ had already been defined for all $v\in X_j\cup \{v_1,\dots,v_{\ell-1}\}$ and hence $S\backslash\{x\}\subs W$. But since $S\in \BAD_i$, we have $x\in O_i$, in contradiction to $x=\phi_j(v_\ell)$. Thus, $S\not\subs V(\phi_j(T_j))=V(K_j)$, which clearly implies that $\abs{G_{j+1}(S)}=\abs{G_j(S)}$ and \eqref{embedding degree bound} holds. The last remaining case is if $S\subs \Ima(\Lambda_{j})$ but $\abs{T_j(\Lambda_{j}^{-1}(S))}=0$. But then $\abs{K_j(S)}=0$ and therefore $\abs{G_{j+1}(S)}=\abs{G_j(S)}$ as well.
				
				Finally, if $j=m$, then the fact that \eqref{embedding degree bound} holds with $j$ replaced by $j+1$ together with \eqref{eq: few roots with U} implies that $\Delta(\bigcup_{j\in[m]}\phi_j(T_j))\le 2 \alpha\gamma^{(2^{-(d-1)})}n \le \alpha\gamma^{(2^{-d})} n$.
			\end{proof}
			\section{Proof of \Cref{lem: make divisible}}
			\newcommand{\Set}[1]{\{#1\}}
			\def\sm{$\backslash$}
			\def\In{\subseteq}
			
			The following is taken almost verbatim from section 11 of \cite{glock2023existence}. The differences are the following:
			\begin{itemize}
				\item The letter $r$ is replaced by $d$.
				\item We only state the things from section 11 of \cite{glock2023existence} that are necessary to understand the proof of \Cref{lem: make divisible}. In particular, all proofs of the preceding lemmas are omitted.
				\item Instead of \cite[Lemma 5.20]{glock2023existence}, we use \Cref{lem: rooted embedding with U} to embed the shifters $T_k$ into $G$ such that each element in $V(T_k)\backslash X_k$ is mapped into $U\subs V(G)$. That way, we can ensure in the end that $H$ has an $F$-decomposition where each copy of $F$ has at most $d$ vertices outside of $U$. 
				\item We introduce the letters $\gamma'$ and $\gamma''$ since we no longer care about the exponents of $\gamma$.
				\item Claim 2 is removed because our statement does not state that the $F$-decomposition must be 1-well separated.
				\item Since \Cref{lem: make divisible} is less general than the original, we do the last part only for $G-D$ and not for general $H$.
			\end{itemize}
			\begin{definition}[Definition 11.1 in \cite{glock2023existence}]
				Let $1\le k<d$ and let $F,F^\ast$ be $d$-graphs.
				Given a $d$-graph $T_k$ and distinct vertices $x^0_1, \dots, x^0_k  , x^1_{1},\dots, x^1_{k}$ of $T_k$, we say that $T_k$ is an \emph{$( x^0_1, \dots, x^0_k ,  x^1_{1},\dots, x^1_{k} )$-shifter with respect to $F,F^\ast$} if the following hold:
				\begin{enumerate}[label=\rm{(SH\arabic*)}]
					\item $T_k$ has a $1$-well separated $F$-decomposition $\cF$ such that for all $F'\in \cF$ and all $i\in[k]$, $|V(F')\cap\Set{x_i^0,x_i^1}|\le 1$;\label{shifter decomposable}
					\item $|T_k(S)|\equiv 0\mod{\Deg(F^\ast)_{|S|}}$ for all $S\In V(T_k)$ with $|S|<k$;\label{shifter low degrees}
					\item for all $S\in \binom{V(T_k)}{k}$, \begin{align*}
						|T_k(S)| \equiv
						\begin{cases}
							(-1)^{\sum_{i \in [k]}z_i } \Deg(F)_{k} \mod{ \Deg(F^\ast)_{k} }& \text{if $S = \{ x_i^{z_i} : i \in [k] \}$, }\\
							0 \mod{\Deg(F^\ast)_{k} }& \text{otherwise.}
						\end{cases}
					\end{align*}\label{shifter degrees}
				\end{enumerate}
			\end{definition}
			\begin{lemma}[Lemma 11.2 in \cite{glock2023existence}\label{lem:shifters exist}]
				Let $1\le k<d$, let $F,F^\ast$ be $d$-graphs and suppose that $F^\ast$ has a $1$-well separated $F$-decomposition $\cF$. Let $f^\ast:=|V(F^\ast)|$. There exists an $(  x^0_1, \dots, x^0_k  ,  x^1_{1},\dots, x^1_{k}  )$-shifter $T_k$ with respect to $F,F^\ast$ such that $T_k[X]$ is empty and $T_k$ has degeneracy at most $\binom{f^\ast-1}{d-1}$ rooted at $X$, where $X:=\Set{x^0_1, \dots, x^0_k ,x^1_{1},\dots, x^1_{k}}$.
			\end{lemma}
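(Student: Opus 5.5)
We will build the shifter by induction on $k$, following the standard construction from \cite{glock2023existence}. The basic gadget is a \emph{flip}: take two edge-disjoint copies of $F^\ast$ minus one copy $F_0$ of $F$ from its $1$-well separated $F$-decomposition $\cF$, glued so that the remaining copies of $F$ form an $F$-decomposition of the gadget. Since $F^\ast$ is $F$-decomposable, $\Deg(F)_i \mid \Deg(F^\ast)_i$ for all $i$. Removing $F_0$ from $F^\ast$ changes the degrees only by the degrees of $F_0$. These are $\equiv 0$ modulo $\Deg(F)_i$, but they are controlled modulo $\Deg(F^\ast)_i$ only through $F^\ast$ itself.

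For $k=1$ we follow the construction in \cite{glock2023existence}. Pick an edge $e$ of $F$ and a vertex $x\in e$. Take one copy $F^\ast_0$ of $F^\ast$ containing $F_0$ with $x\mapsto x_1^0$, and a second copy $F^\ast_1$ in which the corresponding copy of $F$ has $x\mapsto x_1^1$. Remove the two copies of $F$ and glue along all other vertices so that the leftover degrees cancel modulo $\Deg(F^\ast)_i$. Concretely, set $T_1 := (F^\ast_0 - F_0)\cup F_1$, where $F_1$ is $F_0$ with $x_1^0$ replaced by $x_1^1$. Then $T_1$ has an $F$-decomposition consisting of $\cF\setminus\{F_0\}$ together with $F_1$, and the well-separatedness of $\cF$ gives that no copy meets both $x_1^0$ and $x_1^1$. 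For every set $S$ avoiding $x_1^0,x_1^1$, the degree $|T_1(S)|$ equals $|F^\ast(S)|\equiv 0$. For $S\ni x_1^1$, the degree is $|F_1(S)|$, and for $S \ni x_1^0$ it is $|F^\ast(S)|-|F_0(S)|$. For $|S|=1$ this yields exactly $\pm\Deg(F)_1$ modulo $\Deg(F^\ast)_1$ once $x$ is chosen with $|F(x)|\equiv\Deg(F)_1$. Such a choice is possible after replacing $F$ by a suitable combination of vertex choices; we would take a Bézout combination of several such moves.

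For general $k$, we will define $T_k$ recursively as a signed combination of shifters. Take $T_{k-1}$ on $x^0_1,\dots,x^0_{k-1},x^1_1,\dots,x^1_{k-1}$ attached with an extra root $x_k^0$ in its link. Add to it a "negated" copy, obtained by embedding $F^\ast$-complements, in which $x_k^0$ is replaced by $x_k^1$. The sign alternation $(-1)^{\sum z_i}$ then arises as in an inclusion--exclusion. Sets $S$ with $|S|<k$ get contributions that cancel modulo $\Deg(F^\ast)_{|S|}$. Negation is implemented by taking $(\Deg(F^\ast)_k/\Deg(F)_k)-1$ copies, so that $-\Deg(F)_k$ is represented positively.

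The degeneracy bound $\binom{f^\ast-1}{d-1}$ rooted at $X$ will come from a construction in which every non-root vertex lies in copies of $F^\ast$ built around $X$. Ordering the non-root vertices copy by copy, each new vertex has back-degree at most $\binom{f^\ast-1}{d-1}$. The emptiness of $T_k[X]$ holds because no edge of any gadget lies entirely in $X$, as guaranteed by \ref{shifter decomposable}.

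The main obstacle is the bookkeeping that verifies \ref{shifter low degrees} and \ref{shifter degrees} for all $S$ that meet $X$ only partially. Rather than redo this computation, we would defer to the proof of Lemma~11.2 in \cite{glock2023existence}, which this statement reproduces verbatim.
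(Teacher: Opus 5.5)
The paper does not prove this lemma. It is Lemma~11.2 of \cite{glock2023existence}, quoted with the proof omitted, so your closing deferral is exactly what the paper does. Your $k=1$ gadget is correct. $(F^\ast_0-F_0)\cup F_1$ is a copy of $F^\ast$ in which the edges of one decomposition member through $x_1^0$ are moved to $x_1^1$. It therefore has degree $\equiv -|F(x)|$ at $x_1^0$, degree $|F(x)|$ at $x_1^1$, and $F^\ast$-degrees elsewhere. A Bézout combination of such gadgets meeting only in $X$, with signs flipped by swapping $x_1^0,x_1^1$, is a shifter with the stated degeneracy. One small correction: it is not well-separatedness that keeps $x_1^0,x_1^1$ out of a common copy, but simply $x_1^1\notin V(F^\ast_0)$. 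Well-separatedness is what you need to check that the new decomposition is again $1$-well separated.

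For $k\ge 2$ your sketch is not an argument, and two of its steps would fail. Write $b_i:=\Deg(F^\ast)_i$ and $h_i:=\Deg(F)_i$.

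\emph{Negation by multiples.} Taking $m:=b_k/h_k-1$ copies of a piece multiplies \emph{all} its degrees by $m$. On a set $S$ avoiding $\{x_k^0,x_k^1\}$, where the piece and its swapped version have the same degree $a$, you get $(m+1)a=(b_k/h_k)a$ rather than $a-a$. This need not vanish modulo $b_{|S|}$, so your claimed lower-level cancellation does not follow. The only negation valid at every level simultaneously is the complement $F^\ast-F'$, since $|F^\ast(S)|\equiv 0 \pmod{b_{|S|}}$ for all $S$.

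\emph{Shared non-root vertices.} The inclusion--exclusion requires the $2^k$ signed copies $F_z$ of $F$ to share one common image $W$ of $V(F)\setminus S$. Here $S=\{s_1,\dots,s_k\}\subseteq V(F)$ is sent to $\{x_i^{z_i}\}$. Otherwise a $k$-set consisting of some $w\in W$ and roots from $k-1$ of the pairs lies in only one of $F_z$ and $F_{z'}$ ($z'$ flips the remaining coordinate). It then keeps an uncancelled degree, violating (SH3). With a common $W$, however, all $F_z$ contain the edges of $F$ that miss some $s_i$, so they are not edge-disjoint. Neither your ``copy by copy'' degeneracy argument nor ``attaching $x_k^0$ in the link of $T_{k-1}$'' (which does not define a $d$-graph) addresses this. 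That is the bookkeeping you defer, and it is the real content of the lemma.

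Here is one way to supply the missing idea. With $|z|:=\sum_i z_i$, the signed sum $\sum_z(-1)^{|z|}\mathds{1}_{F_z}$ vanishes on every set missing some pair and equals $(-1)^{|z|}|F(S)|$ on the transversal $\{x_i^{z_i}\}$. It is realised, up to a signed sum of copies of $F^\ast$, by an honest $d$-graph built from the following pieces:
\begin{itemize}
\item $F_{\mathbf 0}$ itself;
\item $F^\ast_z-F_z$ for each $z$ with $|z|$ odd;
\item for each $z\neq\mathbf 0$ with $|z|$ even, the union of $F'^\ast-F'$ over the members $F'\neq F_z$ of a copy $F^\ast_z\supseteq F_z$.
\end{itemize}
Every copy of $F^\ast$ used here lives on fresh vertices outside $W\cup X$. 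Since $\cF$ is $1$-well separated, every edge of $F^\ast-F'$ contains a vertex outside $V(F')$. Hence all pieces are edge-disjoint, and (SH1) follows similarly. Since the graph differs from the signed sum by copies of $F^\ast$, (SH2) and (SH3) hold with $|F(S)|$ in place of $h_k$. For the degeneracy, order $W$ first (its back-edges lie in $F_{\mathbf 0}$), then the fresh vertices of the copies $F^\ast_z$, then those of the copies $F'^\ast$. This gives degeneracy at most $\binom{f^\ast-1}{d-1}$. A Bézout combination over $k$-sets $S$, as in your $k=1$ case, finishes.
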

			Let $\phi: \binom{V}{d} \rightarrow \mathbb{Z}$. (Think of $\phi$ as the multiplicity function of a multi-$d$-graph.)
			We extend $\phi$ to $\phi : \bigcup_{ k \in [d]_0 } \binom{V}{k} \rightarrow \mathbb{Z}$ by defining for all $S \subseteq V$ with $|S| = k \le d$,
			\begin{align}
				\phi (S) := \sum_{S' \in \binom{V}{d} : S \subseteq S'} \phi (S').\label{function extension to lower sets}
			\end{align}
			Thus for all $0\le i \le k \le d$ and all $S \in \binom{V}{i}$,
			\begin{align}
				\binom{d-i  }{k-i} \phi (S) = \sum_{S' \in \binom{V}{k} : S \subseteq S'} \phi (S').\label{handshaking for functions}
			\end{align}

			For $k \in [d-1]_0$ and $b_0,\dots,b_k \in \mathbb{N}$, we say that $\phi$ is \emph{$(b_0, \dots, b_k)$-divisible} if $b_{|S|} \mid \phi(S)$ for all $S\In V$ with $|S|\le k$.
			
			If $G$ is a $d$-graph with $V(G)\In V$, we define $\mathds{1}_G\colon \binom{V}{d}\to \Z$ as
			$$\mathds{1}_G(S):=\begin{cases} 1 & \mbox{if }S\in G;\\ 0 & \mbox{if }S\notin G.\end{cases}$$
			and extend $\mathds{1}_G$ to $\bigcup_{ k \in [d]_0 } \binom{V}{k}$ as in \eqref{function extension to lower sets}.
			Hence, for a set $S\In V$ with $|S|<d$, we have $\mathds{1}_G(S)=|G(S)|$. Thus, \eqref{handshaking for functions} corresponds to the handshaking lemma for $d$-graphs.
			
			Clearly, if $G$ and $G'$ are edge-disjoint, then we have $\mathds{1}_G+\mathds{1}_{G'}=\mathds{1}_{G\cup G'}$.
			Moreover, for a $d$-graph $F$, $G$ is $F$-divisible if and only if $\mathds{1}_G$ is $(\Deg(F)_0,\dots,\Deg(F)_{d-1})$-divisible.
			
			\begin{definition}[Definition 11.6 in \cite{glock2023existence}]\label{def:adapter}
				Let $V$ be a vertex set and $k,d,b_0,\dots,b_k,h_k\in \N$ be such that $k<d$ and $h_k\mid b_k$.
				For distinct vertices $x^0_1, \dots, x^0_{k}, x^1_{1},\dots, x^1_{k} $ in $V$, we say that $\tau\colon \binom{V}{d}\to \Z$ is an \emph{$( x^0_1, \dots, x^0_k  ,  x^1_{1},\dots, x^1_{k} )$-adapter with respect to $(b_0,\dots,b_k;h_k)$} if $\tau$ is $(b_0,\dots,b_{k-1})$-divisible and for all $S \in \binom{V}{k}$,
				\begin{align*}
					\tau(S) \equiv
					\begin{cases}
						(-1)^{\sum_{i \in [k]}z_i }h_k \mod{ b_k }& \text{if $S = \{ x_i^{z_i} : i \in [k] \}$, }\\
						0 \mod{ b_k }& \text{otherwise.}
					\end{cases}
				\end{align*}
			\end{definition}
			
			\begin{fact}[Fact 11.7 in \cite{glock2023existence}]\label{fact:shifter is adapter}
				If $T$ is an $\mathbf{x}$-shifter with respect to $F,F^*$, then $\mathds{1}_T$ is an $\mathbf{x}$-adapter with respect to $(\Deg(F^\ast)_0,\dots,\Deg(F^\ast)_k;\Deg(F)_k)$.
			\end{fact}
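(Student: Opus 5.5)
The statement to prove is Fact 11.7: if $T$ is an $\mathbf{x}$-shifter with respect to $F,F^\ast$, then $\mathds{1}_T$ is an $\mathbf{x}$-adapter with respect to $(\Deg(F^\ast)_0,\dots,\Deg(F^\ast)_k;\Deg(F)_k)$. The plan is to unfold \Cref{def:adapter} for $\tau:=\mathds{1}_T$ and check each clause against \ref{shifter decomposable}--\ref{shifter degrees}.

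The starting point is the extension \eqref{function extension to lower sets}. For any $S\subseteq V$ with $|S|<d$ it gives $\mathds{1}_T(S)=|T(S)|$. Every set we must examine has size at most $k<d$, so every condition on $\tau$ turns into a condition on link sizes in $T$.

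First, the adapter definition presupposes $h_k\mid b_k$, which here reads $\Deg(F)_k\mid\Deg(F^\ast)_k$. This follows from \ref{shifter decomposable}, or more simply from the hypothesis that $F^\ast$ has an $F$-decomposition. For $S\subseteq V(F^\ast)$ with $|S|=k$, we have $|F^\ast(S)|=\sum_{F'}|F'(S)|$, summing over copies $F'$ in the decomposition. Each summand is divisible by $\Deg(F)_k$, since $F'\cong F$ and $\Deg(F)_k$ is the gcd of the $k$-degrees of $F$; a set $S\not\subseteq V(F')$ contributes $0$. Hence every $k$-degree of $F^\ast$ is divisible by $\Deg(F)_k$, and therefore so is their gcd $\Deg(F^\ast)_k$.

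Second, $(b_0,\dots,b_{k-1})$-divisibility of $\tau$ means $\Deg(F^\ast)_{|S|}\mid\mathds{1}_T(S)=|T(S)|$ for all $|S|<k$. This is exactly \ref{shifter low degrees}. One caveat: sets $S\subseteq V$ not contained in $V(T)$ have $\mathds{1}_T(S)=0$, which is trivially divisible, so it does not matter that \ref{shifter low degrees} only ranges over $S\subseteq V(T)$.

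Third, the congruence for $S\in\binom{V}{k}$ is precisely \ref{shifter degrees} read through $\mathds{1}_T(S)=|T(S)|$, with $h_k=\Deg(F)_k$ and $b_k=\Deg(F^\ast)_k$. Again, sets outside $V(T)$ give $0$ and fall under the ``otherwise'' case, because every special set $\{x_i^{z_i}\}$ lies inside $V(T)$.

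There is no real obstacle here. The only points needing care are the divisibility $h_k\mid b_k$ and the out-of-$V(T)$ sets, and both are handled above.
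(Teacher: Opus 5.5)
Your proof is correct and is exactly the intended argument: the paper (following \cite{glock2023existence}) states this Fact without proof, since it is nothing more than (SH2) and (SH3) read through $\mathds{1}_T(S)=|T(S)|$ for $|S|<d$, with sets not contained in $V(T)$ contributing $0$. The one inaccuracy is your remark that $\Deg(F)_k\mid\Deg(F^\ast)_k$ follows from (SH1); it does not, because for $d=2$, $k=1$, $F=K_3$, $F^\ast=K_2$, two vertex-disjoint triangles through $x_1^0$ and $x_1^1$ form a shifter (here (SH2) and (SH3) are vacuous modulo $\Deg(K_2)_0=\Deg(K_2)_1=1$) while $\Deg(K_3)_1=2$, so this standing requirement of the adapter definition has to come from the context in which $F^\ast$ is $F$-decomposable, as in your second justification and in the application within the proof of \Cref{lem: make divisible}.
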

			
			\begin{definition}[Definition 11.8 in \cite{glock2023existence}]\label{def:balancer}
				Let $d,k,b_0,\dots,b_k\in \N$ with $k<d$ and let $U,V$ be sets with $U\In V$. Let $\Omega_{k}$ be a multiset containing ordered tuples $\mathbf{x}=(x_1,\dots,x_{2k})$, where $x_1,\dots,x_{2k}\in U$ are distinct. We say that $\Omega_k$ is a \emph{$(b_0, \dots, b_{k})$-balancer for $V$ with uniformity $d$ acting on $U$} if for any $h_k\in \N$ with $h_k\mid b_k$, the following holds: let $\phi\colon \binom{V}{d} \rightarrow \mathbb{Z}$ be any $(b_0, \dots, b_{k-1},h_k)$-divisible function such that $S\In U$ whenever $S\in \binom{V}{k}$ and $\phi(S)\not\equiv 0\mod{b_k}$. There exists a multisubset $\Omega'$ of $\Omega_{k}$ such that $\phi + \tau_{\Omega'}$ is $(b_0, \dots, b_k)$-divisible, where $\tau_{\Omega'} := \sum_{\mathbf{x} \in \Omega'} \tau_{\mathbf{x}}$ and $\tau_{\mathbf{x}}$ is any $\mathbf{x}$-adapter with respect to $(b_0,\dots,b_k;h_k)$.

				For a set $S \in \binom{V}{k}$, let $\deg_{\Omega_{k}}(S)$ be the number of $\mathbf{x} = ( x_1, \dots, x_{2k} ) \in \Omega_k$ such that $|S \cap \{x_i,x_{i+k}\}| =1$ for all $i \in [k]$.
				Furthermore, we denote $\Delta (\Omega_{k})$ to be the maximum value of $\deg_{\Omega_{k}}(S)$ over all $S \in \binom{V}{k}$.
			\end{definition}
			
			\begin{lemma}[Lemma 11.9 in \cite{glock2023existence}]
				\label{lem:balancer}
				Let $1 \le k < d $.
				Let $b_0, \dots, b_k \in \mathbb{N}$ be such that $\binom{d-s}{k-s} b_s \equiv 0 \mod{b_{k}}$ for all $s \in [k]_0$.
				Let $U$ be a set of $n \ge 2k$ vertices and $U\In V$.
				Then there exists a $(b_0, \dots, b_{k})$-balancer~$\Omega_k$ for~$V$ with uniformity $d$ acting on $U$ such that $\Delta( \Omega_k) \le 2^{k}  (k!)^2b_k$.
			\end{lemma}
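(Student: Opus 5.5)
The balancer $\Omega_k$ is built as a union of small symmetric ``switching gadgets'', one gadget for each $k$-set that has to be corrected, in the spirit of the digraph argument in the proof of \Cref{theo: precise value of H(n d)}. I would argue by induction on $k$, or equivalently by a peeling argument on the $k$-sets of $U$.

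Fix an ordering of $U$ and a distinguished $2k$-set of ``anchor'' vertices $a_1,\dots,a_k,b_1,\dots,b_k\in U$; this uses $n\ge 2k$. For each $S=\{s_1<\dots<s_k\}\in\binom{U}{k}$, put into $\Omega_k$ a bounded number of tuples of the form $\mathbf{x}=(x_1,\dots,x_{2k})$ with $x_i\in\{s_i,a_i\}$ and $x_{i+k}$ the other element. Adding the corresponding adapter $\tau_{\mathbf{x}}$ changes $\phi(S)$ by $\pm h_k$ modulo $b_k$. The only other $k$-sets affected are the $2^k-1$ ``mixed'' transversals of the pairs $\{s_i,a_i\}$, and each of these uses at least one anchor vertex. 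Repeating such a tuple up to $b_k/h_k$ times (multiset usage) lets us correct $\phi(S)$ to any residue in $h_k\mathbb{Z}/b_k$, which is all we need, since $\phi(S)\equiv 0 \bmod h_k$ by $(b_0,\dots,b_{k-1},h_k)$-divisibility. I would handle the $k$-sets in decreasing order of $|S\cap\{a_1,\dots,a_k\}|$ reversed: first every $S$ disjoint from the anchors, then those meeting one anchor, and so on. The side effects of each correction fall only on sets containing strictly more anchors, so those are fixed later. A side condition is that every anchor-containing $k$-set must itself admit a gadget whose side effects are ``even more anchored''. For this I would use a second anchor layer, or choose the pairs $\{s_i,a_{\pi(i)}\}$ so that anchors are never paired with themselves, which yields a triangular system.

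The final stage is the $k$-set consisting only of anchors, with nothing left to push its error into. There the divisibility hypothesis of the lemma, $\binom{d-s}{k-s}b_s\equiv 0 \bmod b_k$, enters through the handshake identity \eqref{handshaking for functions}. After all other $k$-sets inside the relevant $(k{+}\cdots)$-neighbourhood are $\equiv 0 \bmod b_k$, summing $\phi$ over $k$-supersets of an $s$-set $T$ gives $\binom{d-s}{k-s}\phi(T)\equiv 0 \bmod b_k$, because $b_s\mid\phi(T)$. This forces the remaining anchor sets to be $\equiv 0$ as well. The same triangular argument shows the result is $(b_0,\dots,b_k)$-divisible, using that every adapter is $(b_0,\dots,b_{k-1})$-divisible by definition.

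For the degree bound, a fixed $S$ is ``hit'' (in the sense of $\deg_{\Omega_k}$) only by gadgets whose pair-transversal contains $S$. There are at most $2^k$ choices of which coordinates come from anchors, at most $k!$ pairings, and at most $k!$ orderings, each used at most $b_k$ times. This gives $2^k(k!)^2b_k$.

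The main obstacle is the anchor bookkeeping: a single fixed anchor set makes the anchor $k$-sets have huge degree. To keep $\Delta(\Omega_k)$ bounded, I would instead choose the partner vertices via a bounded-degree structure (e.g. pairing each $s_i$ with its successor in the ordering), so that no vertex set is over-used. I would then verify the triangularity and the final handshake step in that setting.
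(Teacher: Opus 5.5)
\paragraph{Verdict: genuine gap.} Your basic mechanism is right. An $\mathbf x$-adapter changes the $k$-level only by $\pm h_k$ on the $2^k$ transversals of the pairs $\{x_i,x_{i+k}\}$. Putting $b_k$ copies of each tuple into $\Omega_k$ lets you realise any multiple of $h_k$ without knowing $h_k$ in advance. Divisibility at the lower levels is preserved, and the hypothesis enters through the handshake identity $\binom{d-s}{k-s}\phi(T)=\sum_{S\supseteq T,\,|S|=k}\phi(S)$.

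What is missing is the actual content of the lemma: a bounded-degree family of tuples, an elimination order, and a family of unprocessed $k$-sets that the divisibility conditions provably force to vanish. The scheme you describe cannot be completed, for three reasons.

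\begin{itemize}
\item \textbf{Anchor monotonicity fails.} If $S$ contains an anchor $s_j$ paired with some $y$, the transversal $(S\setminus\{s_j\})\cup\{y\}$ has no more anchors than $S$. So ``side effects only on sets with strictly more anchors'' fails for every anchor-containing $S$, however many anchor layers you add.
\item \textbf{Too few terminal sets.} Every octahedron lies in the kernel of $f\mapsto\bigl(\sum_{S\supseteq T}f(S)\bigr)_{T\in\binom{U}{k-1}}$, which has rank $\binom{n}{k-1}$ on $\mathbb{Q}^{\binom{U}{k}}$ for $n\ge 2k-1$. Octahedra with distinct leading sets are linearly independent. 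Hence any triangular elimination leaves at least $\binom{n}{k-1}$ of the $k$-sets unprocessed. For $k\ge 2$ this cannot be a single all-anchor set (or boundedly many), so the final handshake step must kill a large, carefully chosen family.
\item \textbf{The degree problem is open.} Successor pairing is undefined when $S$ contains two consecutive vertices. It also leaves too much unprocessed: for $k=2$, all edges through $u_n$ or between consecutive vertices. The alternating $4$-cycle with pairs $\{u_{n-3},u_{n-1}\},\{u_{n-2},u_n\}$ lives on these edges and satisfies every handshake condition. Your count $2^k(k!)^2b_k$ is therefore not tied to any construction.
\end{itemize}

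\paragraph{The missing idea.} It is the induction on $k$ through links, which you mention but never use. Here is a route that works.
\begin{itemize}
\item \textbf{Link step.} Fix an ordering $u_1,\dots,u_n$ of $U$. For $u=u_j$, the link $R\mapsto\phi(R\cup\{u\})$ is a $(b_1,\dots,b_{k-1},h_k)$-divisible function in uniformity $d-1$. The hypothesis is invariant under $(d,k,b_0,\dots,b_k)\mapsto(d-1,k-1,b_1,\dots,b_k)$, so by induction (base case $k=1$: tuples $(u_j,u_{j+1})$) there is a $(b_1,\dots,b_k)$-balancer acting on the current $U\setminus\{u\}$.
\item \textbf{Extending tuples.} Extend each of its tuples $\mathbf y$ to $\mathbf x$ by adding the pair $\{u,w\}$, with $w\notin\mathbf y$ chosen among $u_{j+1},\dots,u_{j+2k-1}$. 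Any $\mathbf x$-adapter restricts on the link of $u$ to a $\mathbf y$-adapter. So all $k$-sets through $u$ can be made $\equiv 0\pmod{b_k}$, with side effects only inside $U\setminus\{u\}$.
\item \textbf{Peeling and finishing.} Peeling $u_1,u_2,\dots$ in turn shrinks the support to $\binom{X}{k}$ with $|X|=2k-1$. There any two $k$-sets intersect, so inclusion--exclusion gives
\[(-1)^{k+1}\phi(S)\equiv\sum_{T\subsetneq S}(-1)^{|T|}\binom{d-|T|}{k-|T|}\phi(T)\equiv 0\pmod{b_k},\]
using the hypothesis for every $s\in[k-1]_0$.
\item \textbf{Degree bound.} A fixed $k$-set occurs as a transversal in at most $k+k(2k-1)=2k^2$ roles. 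Hence $\Delta(\Omega_k)\le 2k^2\,\Delta(\Omega_{k-1})$, which is exactly the recursion that yields $2^k(k!)^2b_k$.
\end{itemize}
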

			
			\begin{fact}[Fact 11.10 in \cite{glock2023existence}]\label{fact:gcd connection}
				Let $F$ be a $d$-graph. Then for all $0\le i \le k<d$, we have $\binom{d-i}{k-i}\Deg(F)_i\equiv 0\mod{\Deg(F)_k}$.
			\end{fact}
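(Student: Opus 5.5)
We prove the fact directly from the definition of the divisibility vector, using the handshaking identity \eqref{handshaking for functions} applied to $\phi=\mathds{1}_F$ (viewed on $V=V(F)$). Fix $0\le i\le k<d$ and an arbitrary $S\in\binom{V(F)}{i}$. By \eqref{handshaking for functions} with $\phi=\mathds{1}_F$,
\[\binom{d-i}{k-i}\,|F(S)|=\binom{d-i}{k-i}\mathds{1}_F(S)=\sum_{S'\in\binom{V(F)}{k}:\,S\subseteq S'}\mathds{1}_F(S')=\sum_{S'\in\binom{V(F)}{k}:\,S\subseteq S'}|F(S')|.\]
The identity itself is a double count: each edge $e\in F$ containing $S$ contains exactly $\binom{d-i}{k-i}$ sets $S'$ of size $k$ with $S\subseteq S'\subseteq e$.

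By definition of $\Deg(F)_k$, it divides every $|F(S')|$ with $|S'|=k$. It therefore divides the right-hand side, and hence divides $\binom{d-i}{k-i}|F(S)|$ for every $S\in\binom{V(F)}{i}$.

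It remains to pass from individual $|F(S)|$ to their gcd $\Deg(F)_i$. By Bézout's identity, $\Deg(F)_i=\sum_{S}c_S|F(S)|$ for suitable integers $c_S$, with $S$ ranging over $\binom{V(F)}{i}$. Multiplying by $\binom{d-i}{k-i}$ expresses $\binom{d-i}{k-i}\Deg(F)_i$ as an integer combination of the quantities $\binom{d-i}{k-i}|F(S)|$. Each of these is divisible by $\Deg(F)_k$, so $\binom{d-i}{k-i}\Deg(F)_i\equiv 0\pmod{\Deg(F)_k}$.

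The degenerate cases need only a brief check. If $i=k$ the statement is trivial, since $\binom{d-i}{0}=1$. If $F$ is empty, all gcds are $0$ and the congruence holds trivially under the convention $\gcd(0,\dots)=0$ with $0\mid 0$.

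I expect no real obstacle here. The only point needing care is to use the Bézout step rather than a single set $S$, since the gcd is over all $i$-sets.
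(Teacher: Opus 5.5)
Your proof is correct and is essentially the standard argument. The double-counting identity $\binom{d-i}{k-i}|F(S)|=\sum_{S'\supseteq S,\,|S'|=k}|F(S')|$ shows that $\Deg(F)_k$ divides $\binom{d-i}{k-i}|F(S)|$ for every $i$-set $S$, and passing to the gcd over all $S$ (your B\'ezout step) finishes it. The paper itself does not prove this fact but quotes it from \cite{glock2023existence}, and the handshaking identity for $\mathds{1}_F$ that it records just beforehand is exactly the tool your argument uses.
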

			
			\begin{proof}[Proof of \Cref{lem: make divisible}]
				Let $x_1^0,\dots,x_{d-1}^0,x_1^1,\dots,x_{d-1}^1$ be distinct vertices (not in $V(G)$). For $k\in[d-1]$, let $X_k:=\Set{x_1^0,\dots,x_{k}^0,x_1^1,\dots,x_{k}^1}$.
				By Lemma~\ref{lem:shifters exist}, for every $k\in[d-1]$, there exists an $( x^0_1, \dots, x^0_k , x^1_{1},\dots, x^1_{k} )$-shifter $T_k$ with respect to $F,F^\ast$ such that $T_k[X_k]$ is empty and $T_k$ has degeneracy at most $\binom{f^\ast-1}{d-1}$ rooted at $X_k$. Note that \ref{shifter decomposable} implies that
				\begin{align}
					|T_k(\Set{x_i^0,x_i^1})|=0\mbox{ for all }i\in[k].\label{no degenerate root}
				\end{align}
				
				We may assume that there exist $t\ge \max_{k\in[d-1]}|V(T_k)|$, $\gamma'$, and $\gamma''$ such that $1/n\ll \gamma\ll \gamma'\ll\gamma'' \ll 1/t \ll \xi,1/f^\ast$.
				Let $\Deg(F) = (h_0, h_1, \dots, h_{d-1})$ and let $\Deg(F^\ast) = (b_0, b_1, \dots, b_{d-1})$. Since $F^\ast$ is $F$-decomposable and thus $F$-divisible, we have $h_k\mid b_k$ for all $k\in[d-1]_0$.
				
				By Fact~\ref{fact:gcd connection}, we have $\binom{d - i}{k - i} b_{i} \equiv 0 \mod{ b_{k} }$ for all $0 \le i \le k < d$. For each $k\in[d-1]$ with $h_k<b_k$, we apply Lemma~\ref{lem:balancer} to obtain a $(b_0, \dots, b_{k})$-balancer~$\Omega_k$ for~$V(G)$ with uniformity $d$ acting on $V(G)$ such that $\Delta( \Omega_k) \le 2^{k}(k!)^2 b_k$. For values of $k$ for which we have $h_k=b_k$, we let $\Omega_k:=\emptyset$.
				For every $k\in[d-1]$ and every $\mathbf{v}=(v_1,\dots,v_{2k})\in \Omega_k$, define the labelling $\Lambda_{\mathbf{v}}\colon X_k\to V(G)$ by setting $\Lambda_{\mathbf{v}}(x_i^0):=v_i$ and $\Lambda_{\mathbf{v}}(x_i^1):=v_{i+k}$ for all $i\in[k]$.
				
				For technical reasons, let $T_0$ be a copy of $F$ and let $X_0:=\emptyset$. Let $\Omega_0$ be the multiset containing $b_0/h_0$ copies of $\emptyset$, and for every $\mathbf{v}\in \Omega_0$, let $\Lambda_{\mathbf{v}}\colon X_0 \to V(G)$ be the trivial $G$-labelling of $(T_0,X_0)$. Note that $T_0$ has degeneracy at most $\binom{f^\ast-1}{d-1}$ rooted at $X_0$. Note also that $\Lambda_{\mathbf{v}}$ does not root any set $S\In V(G)$ with $|S|\in[d-1]$.
				
				We will apply Lemma~\ref{lem: rooted embedding with U} in order to find faithful embeddings of the $T_k$ into $G$ where each element in $V(T_k)\backslash X_k$ is mapped into $U\subs V(G)$. Let $\Omega:=\bigcup_{k=0}^{d-1}\Omega_k$. Let $\alpha:=\gamma^{-2}/n$.
				
				\begin{NoHyper}
					\begin{claim}\label{claim:rooted ok}
						For every $k\in[d-1]$ and every $S\In V(G)$ with $|S|\in[d-1]$, we have $|\set{\mathbf{v}\in \Omega_k}{\Lambda_{\mathbf{v}}\mbox{ roots }S}|\le d^{-1}\alpha \gamma n^{d-|S|}$. Moreover, $|\Omega_k|\le d^{-1}\alpha \gamma n^{d}$.
					\end{claim}
				\end{NoHyper}
				
				\claimproof
				Let $k\in[d-1]$ and $S\In V(G)$ with $|S|\in[d-1]$. Consider any $\mathbf{v}=(v_1,\dots,v_{2k})\in \Omega_k$ and suppose that $\Lambda_{\mathbf{v}}$ roots $S$, i.e.~$S\In \Set{v_1,\dots,v_{2k}}$ and $|T_k(\Lambda_{\mathbf{v}}^{-1}(S))|>0$. Note that if we had $\Set{x_i^0,x_i^1}\In \Lambda_{\mathbf{v}}^{-1}(S)$ for some $i\in[k]$ then $|T_k(\Lambda_{\mathbf{v}}^{-1}(S))|=0$ by \eqref{no degenerate root}, a contradiction. We deduce that $|S\cap \Set{v_i,v_{i+k}}|\le 1$ for all $i\in[k]$, in particular $|S|\le k$. Thus there exists $S'\supseteq S$ with $|S'|=k$ and such that $|S'\cap \Set{v_i,v_{i+k}}|=1$ for all $i\in[k]$. However, there are at most $n^{k-|S|}$ sets $S'$ with $|S'|=k$ and $S'\supseteq S$, and for each such $S'$, the number of $\mathbf{v}=(v_1,\dots,v_{2k})\in \Omega_k$ with $|S'\cap \Set{v_i,v_{i+k}}|=1$ for all $i\in[k]$ is at most $\Delta(\Omega_k)$. Thus, $|\set{\mathbf{v}\in \Omega_k}{\Lambda_{\mathbf{v}}\mbox{ roots }S}|\le n^{k-|S|}\Delta(\Omega_k)\le n^{d-1-|S|} 2^{k}(k!)^2 b_k \le d^{-1}\alpha \gamma n^{d-|S|}$. Similarly, we have $|\Omega_k|\le n^k\Delta(\Omega_k)\le d^{-1}\alpha \gamma n^{d}$.
				\endclaimproof
				
				Claim~\ref*{claim:rooted ok} implies that for every $S\In V(G)$ with $|S|\in [d-1]$, we have
				$$|\set{\mathbf{v}\in \Omega}{\Lambda_{\mathbf{v}}\mbox{ roots }S}|\le \alpha \gamma n^{d-|S|}-1,$$
				and we have $|\Omega|\le b_0/h_0+\sum_{k=1}^{d-1}|\Omega_k|\le \alpha \gamma n^d$.
				Therefore, by Lemma~\ref{lem: rooted embedding with U}, for every $k\in[d-1]_0$ and every $\mathbf{v}\in \Omega_k$, there exists a $\Lambda_{\mathbf{v}}$-faithful embedding $\phi_{\mathbf{v}}$ of $(T_k,X_k)$ into $G$, such that, letting $T_{\mathbf{v}}:=\phi_{\mathbf{v}}(T_k)$, the following hold:
				
				\begin{enumerate}[label=\rm{(\alph*)}]
					\item for all distinct $\mathbf{v}_1,\mathbf{v}_2\in \Omega$, $T_{\mathbf{v}_1}$ and $T_{\mathbf{v}_2}$ are edge-disjoint;\label{rooted embedding:disjoint new}
					\item $\Delta(\bigcup_{\mathbf{v}\in \Omega}T_{\mathbf{v}})\le \alpha \gamma' n$;\label{rooted embedding:maxdeg new}
					\item for all $\mathbf v\in\Omega$, $\phi_{\mathbf v}(v)\in U$ for all $v\in V(T_k)$.
				\end{enumerate}
				
				Let $$D:=\bigcup_{\mathbf{v}\in \Omega}T_{\mathbf{v}}.$$ By \ref{rooted embedding:maxdeg new}, we have $\Delta(D)\le \gamma^{-2}\gamma'\le\gamma''$. 
				
				For every $k\in[d-1]$ and $\mathbf{v}\in \Omega_k$, we have that $T_{\mathbf{v}}$ is a $\mathbf{v}$-shifter with respect to $F,F^\ast$ by definition of $\Lambda_{\mathbf{v}}$ and since $\phi_{\mathbf{v}}$ is $\Lambda_{\mathbf{v}}$-faithful. Thus, by Fact~\ref{fact:shifter is adapter},
				\begin{align}
					\mathds{1}_{T_{\mathbf{v}}}\mbox{ is a }\mathbf{v}\mbox{-adapter with respect to }(b_0,\dots,b_k;h_k).\label{shifter is adapter}
				\end{align}
				
				Clearly, for every $\mathbf{v}\in \Omega_0$, $T_{\mathbf{v}}$ is a copy of $F$ and thus has an $F$-decomposition $\cF_{\mathbf{v}}=\Set{T_{\mathbf{v}}}$. Moreover, for each $k\in[d-1]$ and all $\mathbf{v}=(v_1,\dots,v_{2k})\in \Omega_k$, $T_{\mathbf{v}}$ has an $F$-decomposition $\cF_{\mathbf{v}}$ by \ref{shifter decomposable}. Therefore,
				\begin{equation}\label{eq: all shifter dec}\text{for every $\Omega'\In \Omega$, $\bigcup_{\mathbf{v}\in \Omega'}T_{\mathbf{v}}$ has an separated $F$-decomposition.}
				\end{equation}
				
				In particular, $D$ is $F$-divisible and as $G$ is $F$-divisble, $G-D$ is $F$-divisible as well.
				
				We will now show that there exists a subgraph $D^\ast\In D$ such that $(G-D) \cup D^\ast$ is $F^\ast$-divisible and $D-D^\ast$ has an $F$-decomposition.
				
				We will inductively prove that the following holds for all $k\in[d-1]_0$:
				\begin{itemize}
					\item[SHIFT$_{k}$] there exists $\Omega_k^\ast\In \Omega_0\cup \dots \cup \Omega_k$ such that $\mathds{1}_{(G-D)\cup D_k^\ast}$ is $(b_0,\dots,b_k)$-divisible, where $D_k^\ast:=\bigcup_{\mathbf{v}\in \Omega_k^\ast}T_{\mathbf{v}}$.
				\end{itemize}
				We first establish SHIFT$_0$. Since $G-D$ is $F$-divisible, we have $|G-D|\equiv 0\mod{h_0}$. Since $h_0\mid b_0$, there exists some $0\le a< b_0/h_0$ such that $|G-D|\equiv ah_0 \mod{b_0}$.
				Let $\Omega_0^\ast$ be the multisubset of $\Omega_0$ consisting of $b_0/h_0-a$ copies of $\emptyset$. Let $D_0^\ast:=\bigcup_{\mathbf{v}\in \Omega_0^\ast}T_{\mathbf{v}}$. Hence, $D_0^\ast$ is the edge-disjoint union of $b_0/h_0-a$ copies of $F$. We thus have $|(G-D)\cup D_0^\ast|\equiv ah_0+|F|(b_0/h_0-a)\equiv ah_0+b_0-ah_0\equiv 0\mod{b_0}$. Therefore, $\mathds{1}_{(G-D)\cup D_0^\ast}$ is $(b_0)$-divisible, as required.
				
				Suppose now that SHIFT$_{k-1}$ holds for some $k \in [d-1]$, that is, there is $\Omega_{k-1}^\ast\In \Omega_0\cup \dots \cup \Omega_{k-1}$ such that $\mathds{1}_{(G-D)\cup D_{k-1}^\ast}$ is $(b_0,\dots,b_{k-1})$-divisible, where $D_{k-1}^\ast:=\bigcup_{\mathbf{v}\in \Omega_{k-1}^\ast}T_{\mathbf{v}}$.
				Note that $D_{k-1}^\ast$ is $F$-divisible by \eqref{eq: all shifter dec}. Thus, since both $G-D$ and $D_{k-1}^\ast$ are $F$-divisible, we have $\mathds{1}_{(G-D)\cup D_{k-1}^\ast}(S)=|((G-D)\cup D_{k-1}^\ast)(S)|\equiv 0\mod{h_k}$ for all $S\in \binom{V(G)}{k}$. Hence, $\mathds{1}_{(G-D)\cup D_{k-1}^\ast}$ is in fact $(b_0,\dots,b_{k-1},h_k)$-divisible. Thus, if $h_k=b_k$, then $\mathds{1}_{(G-D)\cup D_{k-1}^\ast}$ is $(b_0,\dots,b_k)$-divisible and we let $\Omega_k':=\emptyset$. Now, assume that $h_k<b_k$.
				Recall that $\Omega_k$ is a $(b_0, \dots, b_k)$-balancer and that $h_k\mid b_k$.
				Thus, there exists a multisubset $\Omega_k'$ of $\Omega_k$ such that the function $\mathds{1}_{(G-D)\cup D_{k-1}^\ast} + \sum_{\mathbf{v} \in \Omega_k'} \tau_{\textbf{v}}$ is $( b_0, \dots, b_k)$-divisible, where $\tau_{\textbf{v}}$ is any $\mathbf{v}$-adapter with respect to $(b_0, \dots, b_{k};h_k)$.
				Recall that by \eqref{shifter is adapter} we can take $\tau_{\textbf{v}}=\mathds{1}_{T_{\mathbf{v}}}$.
				In both cases, let
				\begin{align*}
					\Omega_k^\ast&:=\Omega_{k-1}^\ast \cup \Omega_k'\In \Omega_0\cup \dots \cup \Omega_k;\\
					D_k'&:=\bigcup_{\mathbf{v}\in \Omega_k'}T_{\mathbf{v}};\\
					D_k^\ast&:=\bigcup_{\mathbf{v}\in \Omega_k^\ast}T_{\mathbf{v}}=D_{k-1}^\ast \cup D_k'.
				\end{align*}
				Thus, $\sum_{\mathbf{v} \in \Omega_k'} \tau_{\textbf{v}} = \mathds{1}_{D_k'}$ and hence $\mathds{1}_{(G-D)\cup D_{k}^\ast}=\mathds{1}_{(G-D)\cup D_{k-1}^\ast}+ \mathds{1}_{D_k'}$ is $( b_0, \dots, b_k)$-divisible, as required.
				
				Finally, SHIFT$_{d-1}$ implies that there exists $\Omega_{d-1}^\ast\In \Omega$ such that $\mathds{1}_{(G-D)\cup D^\ast}$ is $(b_0,\dots,b_{d-1})$-divisible, where $D^\ast:=\bigcup_{\mathbf{v}\in \Omega_{d-1}^\ast}T_{\mathbf{v}}$. Clearly, $D^\ast\In D$, and we have that $(G-D) \cup D^\ast$ is $F^\ast$-divisible. Finally, by~\eqref{eq: all shifter dec}, $$D-D^\ast=\bigcup_{\mathbf{v}\in \Omega \sm \Omega_{d-1}^\ast}T_{\mathbf{v}}$$ has an $F$-decomposition $\cF$. By setting $H\coleq D-D^\ast$, we complete the proof.
			\end{proof}
		}{}
	\end{document}